\title{Elliptic dimers on minimal graphs and genus 1 Harnack curves}
\author{%
  C\'edric Boutillier%
  \thanks{%
    {\small Sorbonne Université, CNRS,
      Laboratoire de Probabilités Statistique et Modélisation, LPSM, UMR 8001,
      F-75005 Paris, France; Institut Universitaire de France.
      \texttt{cedric.boutillier@sorbonne-universite.fr}
}},
  David Cimasoni%
  \thanks{%
    {\small Université de Genève, Section de Mathématiques, 1211 Genève 4, Suisse.
     \texttt{david.cimasoni@unige.ch}
    }},
  B\'eatrice de Tili\`ere%
  \thanks{{\small %
PSL University-Dauphine, CNRS, UMR 7534, CEREMADE, 75016 Paris, France; Institut Universitaire de France.}{\small\texttt{ detiliere@ceremade.dauphine.fr}}
}
}
\begin{document}
\maketitle

\begin{abstract}
This paper provides a comprehensive study of the dimer model on infinite minimal graphs
with Fock's elliptic weights~\cite{Fock}. Specific instances of such models were
studied in~\cite{BdTR1,BdTR2,dT_Mass_Dirac}; we now handle the general genus 1
case, thus proving a non-trivial extension of the genus 0 results
of~\cite{Kenyon:crit,KO:Harnack} on isoradial critical models. We give an
explicit local expression for a two-parameter family of inverses of the
Kasteleyn operator with no periodicity assumption on the underlying graph.
  When the minimal graph satisfies a natural condition,
  we construct a family of dimer Gibbs measures from these inverses,
  and describe the phase diagram of the model by deriving asymptotics of 
  correlations in each phase.
In the $\ZZ^2$-periodic case,
this gives an alternative description of the full set of
  ergodic Gibbs measures constructed in~\cite{KOS}.
We also 
establish a correspondence between elliptic dimer models on
periodic minimal graphs and Harnack curves of genus 1.
Finally, we show that a bipartite dimer model is invariant
under the shrinking/expanding of~$2$-valent vertices and spider moves if and
only if the associated Kasteleyn coefficients are antisymmetric and satisfy
Fay's trisecant identity.
\end{abstract}

\section{Introduction}

This paper gives a full description of the bipartite dimer model on infinite, minimal graphs, with Fock's elliptic weights~\cite{Fock}.
In many instances, this finishes the study
initiated in~\cite{BdTR1} and~\cite{BdTR2} of models of statistical mechanics
related to dimers on infinite isoradial graphs
with local elliptic weights. Indeed, the massive Laplacian operator on a planar graph~$\Gs$
of~\cite{BdTR1} is related to the massive Dirac
operator~\cite{dT_Mass_Dirac}, which corresponds to an elliptic dimer model on
the bipartite double graph $\GD$, while the $Z$-invariant Ising model on~$\Gs$
of~\cite{BdTR2} is studied through an elliptic dimer model on the bipartite
graph~$\GQ$ (see Section~\ref{sec:previous_genus1}).
The papers~\cite{BdTR1,BdTR2} solve two specific instances of
elliptic bipartite dimer models; we now solve the general case. 

Let us be more precise.
Let $\Gs$ be an infinite \emph{minimal graph}~\cite{Thurston,GK},
meaning that it is planar, bipartite, and that its oriented train-tracks do not self intersect and do not form parallel bigons.
As proved in~\cite{BCdT:immersion}, a graph is minimal if and only if it
admits
a \emph{minimal immersion},
a concept generalising
that of isoradial embedding \cite{Kenyon:crit,KeSchlenk}.
Moreover, the space of such immersions can be described as an explicit subset of the space of half-angles maps associated to oriented train-tracks of $\Gs$,
see Section~\ref{sec:tt-def} below.
Minimal graphs with such half-angle maps
give the correct framework to study these models, see~\cite[Section~4.3]{BCdT:immersion}.
We consider \emph{Fock's elliptic Kasteleyn operator}~\cite{Fock}~$\Ks^{(t)}$ whose
non-zero coefficients correspond to edges of $\Gs$;
for an edge $\ws\bs$ of $\Gs$, the coefficient $\Ks^{(t)}_{\ws,\bs}$ is
explicitly given by
\begin{equation*}
\Ks^{(t)}_{\ws,\bs}=\frac{\theta(\beta-\alpha)}{\theta(t+\mapd(\bs)-\beta)\theta(t+\mapd(\bs)-\alpha)}, 
\end{equation*}
where $\theta(z)=\theta_1(z;q)=\theta_1(z|\tau)$ is Jacobi's first theta function,
$q=e^{i\pi\tau}$, $\tau$ {belongs to $i\RR_{>0}$} and $t\in\RR+\frac{\pi}{2}\tau$;
$\alpha,\beta$ are the half-angles assigned to the two train-tracks crossing the
edge $\ws\bs$, see Figure~\ref{fig:around_rhombus};
$\mapd$ is Fock's discrete Abel map, see Sections~\ref{sec:abel_map} and~\ref{sec:Kast_def}. 

Fock~\cite{Fock} actually introduces such an adjacency operator for all~$\ZZ^2$-periodic bipartite graphs,
for all parameters~$\tau$ and~$t$, and for theta functions of arbitrary genus;
in the present paper, we restrict ourselves to the genus 1 case, hence the name
\emph{Fock's elliptic operator},
and drop the periodicity assumption
(apart from the specifically dedicated Section~\ref{sec:periodic}).
Fock does not address the question of this operator being \emph{Kasteleyn}, \emph{i.e.},
corresponding to a dimer model with \emph{positive} edge weights.
Our first result, Proposition~\ref{prop:kastorient}, proves that this is indeed the
case when the graph is minimal, when the half-angles are chosen so as to define a
minimal immersion of~$\Gs$, and when the parameters $\tau,t$ are tuned as above.   

We now fix the parameter $t\in\RR+\frac{\pi}{2}\tau$ and omit it from the notation. 
One of our main results is an explicit \emph{local} expression for a two parameter family of
inverses~$(\A^{u_0})_{u_0\in D}$ of the elliptic Kasteleyn operator $\Ks$, where
$D:= (\RR/\pi\ZZ+[0,\frac{\pi}{2}\tau])\setminus \{\alpha_T\ ;\ T\in\T\}$ is
pictured in Figure~\ref{fig:domaineD_intro}, see also Figure~\ref{fig:domaineD},
and $(\alpha_T)_{T\in\T}$ are the half-angles assigned to {the elements of $\T$, consisting of the oriented} train-tracks of $\Gs$, {see Section~\ref{sec:tt-def}}.  
\begin{figure}[ht]
\centering
\def\svgwidth{5cm}
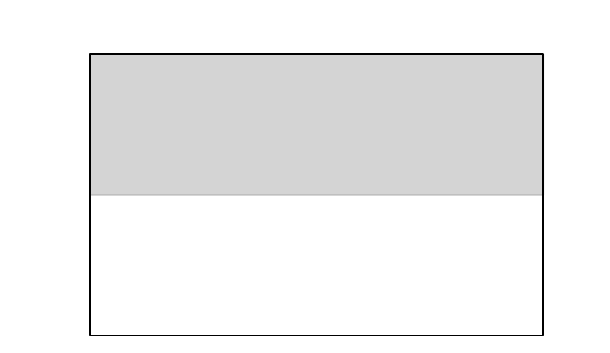
\caption{The domain $D$ as a shaded area of the torus $\TT(q):=\CC/\Lambda$,
  where $\Lambda$ is the lattice generated by $\pi$ and $\pi\tau$. The different
  cases corresponding to the possible locations of the parameter $u_0$. 
}
\label{fig:domaineD_intro}
\end{figure}
This result, which has remarkable probabilistic consequences, 
can be stated as follows, see also Definition~\ref{def:operator_A},
Lemma~\ref{lem:Kinv_alternative} and Theorem~\ref{thm:K_inverse_family}. 
 
\begin{thm}\label{thm:intro_main_1}
For every $u_0\in D$, the operator $\A^{u_0}$ whose coefficients are given,
for every black vertex $\bs$ and every white vertex $\ws$,
by
\[
\A^{u_0}_{\bs,\ws}=\frac{i\theta'(0)}{2\pi}\int_{\Cs_{\bs,\ws}^{u_0}}g_{\bs,\ws}(u)du,
\]
is an inverse of the elliptic Kasteleyn operator $\Ks$. The function $g$ is
defined in Section~\ref{sec:null_functions},
and the contour of integration~$\Cs_{\bs,\ws}^{u_0}$
in Section~\ref{sec:preliminaries_contours}, see also Figure~\ref{fig:domains_of_integration}. 
\end{thm}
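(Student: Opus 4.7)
The strategy is to verify directly that $\Ks\A^{u_0}$ equals the identity on the lattice of white vertices, by exchanging the finite sum defining $\Ks$ with the contour integral defining $\A^{u_0}$ and then applying the residue theorem on the torus $\TT(q)=\CC/\Lambda$. Concretely, for two white vertices $\ws_1,\ws_2$ I would compute
\[
(\Ks\A^{u_0})_{\ws_1,\ws_2}
=\sum_{\bs\sim\ws_1}\Ks_{\ws_1,\bs}\,\A^{u_0}_{\bs,\ws_2}
=\frac{i\theta'(0)}{2\pi}\sum_{\bs\sim\ws_1}\Ks_{\ws_1,\bs}\int_{\Cs^{u_0}_{\bs,\ws_2}} g_{\bs,\ws_2}(u)\,du,
\]
and check that this equals $\delta_{\ws_1,\ws_2}$. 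The two technical inputs are (i) a local algebraic identity satisfied, for each fixed $u$, by the integrand $\sum_{\bs\sim\ws_1}\Ks_{\ws_1,\bs}\,g_{\bs,\ws_2}(u)$, and (ii) a careful deformation of the $\bs$-dependent contours $\Cs^{u_0}_{\bs,\ws_2}$ to a single common contour.

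For (i), I would use the defining transition rule of Fock's function $g_{\bs,\ws}(u)$ from Section~\ref{sec:null_functions}: for any two black vertices $\bs,\bs'$ sharing the common neighbor $\ws_1$, the ratio $g_{\bs,\ws_2}(u)/g_{\bs',\ws_2}(u)$ is an explicit product of theta factors attached to the train-tracks crossing the edges $\bs\ws_1$ and $\bs'\ws_1$, controlled by the Abel map $\mapd$. Combining this transition with the explicit ratio-of-theta form of $\Ks_{\ws_1,\bs}$, the sum over $\bs\sim\ws_1$ should collapse, after being placed over a common theta denominator, to a single meromorphic function of $u$ on $\TT(q)$ with an explicit, simple divisor concentrated either at $u_0$ or at the train-track half-angles around $\ws_1$. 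This is the elliptic analogue of Kenyon's residue identity from~\cite{Kenyon:crit} and of the corresponding ones in~\cite{BdTR1,BdTR2,dT_Mass_Dirac}; at this step the key algebraic input is Fay's trisecant identity for $\theta_1$, singled out in the paper's abstract as the characteristic identity of this family of weights.

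For (ii), the contour $\Cs^{u_0}_{\bs,\ws_2}$ separates the train-track poles $\alpha_T$ of $g_{\bs,\ws_2}(u)$ according to the combinatorics of a path from $\bs$ to $\ws_2$, and this separation genuinely depends on $\bs$. I would isotope all contours $\Cs^{u_0}_{\bs,\ws_2}$, $\bs\sim\ws_1$, to a single contour $\Cs$ independent of $\bs$, tracking the extra residues at the $\alpha_T$ for the train-tracks crossed during the deformation. These spurious residues pair off thanks to the antisymmetry $\theta(\beta-\alpha)=-\theta(\alpha-\beta)$ of the Kasteleyn coefficients, which is precisely the antisymmetry property emphasised in the abstract. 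Once all terms sit on the common contour, the identity from~(i) combined with the residue theorem on $\TT(q)$ yields $\delta_{\ws_1,\ws_2}$, the normalising constant $\frac{i\theta'(0)}{2\pi}$ being designed exactly so that a single residue contributes $1$.

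The main obstacle I anticipate is the contour bookkeeping, not the algebra. The three positions of $u_0\in D$ shown in Figure~\ref{fig:domaineD_intro} produce three genuinely different topological configurations for $\Cs$ on the torus, and each case must be treated separately: the extra residues picked up during the contour deformations, together with any contributions localised at $u=u_0$ in Cases~2 and~3, must be shown to reassemble into the same final answer $\delta_{\ws_1,\ws_2}$, independently of $u_0$. Verifying that all these boundary and torsion contributions collapse correctly — and, crucially, that the whole argument goes through without any periodicity assumption on $\Gs$ — is what I expect to consume most of the work.
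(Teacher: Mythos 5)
Your plan follows the same Kenyon-style route as the paper's proof of Theorem~\ref{thm:K_inverse_family}: the pointwise identity you call (i) is exactly Proposition~\ref{prop:ker}, i.e.\ the telescoping form of Fay's identity (Corollary~\ref{cor:Faystrisecant}), and it says that $\sum_{\bs\sim\ws_1}\Ks_{\ws_1,\bs}\,g_{\bs,\ws_2}(u)$ vanishes \emph{identically} in $u$ — not that it is a meromorphic function with a divisor at $u_0$; the parameter $u_0$ enters only through the contour, never through the integrand, so the whole $\delta_{\ws_1,\ws_2}$ must come from the contour mismatch in (ii). Your treatment of the diagonal term is essentially sound: for $\ws_2=\ws_1$ each $g_{\bs,\ws_1}$ has exactly two poles on $C_0$, at the two half-angles of the edge $\bs\ws_1$; consecutive edges around $\ws_1$ share a half-angle at which the residues of $\Ks_{\ws_1,\bs}\,g_{\bs,\ws_1}$ are $\pm 1/\theta'(0)$ and cancel, and since the crossing points of the contours with $C_0$ wind once around, exactly one residue survives and yields $1$. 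The paper packages this same bookkeeping into an auxiliary function $H^{u_0}$ with a unit jump across $\Cs^{u_0}_{\bs,\ws}$ (Lemma~\ref{lem:Kinv_alternative}); your direct deformation is an acceptable substitute. Note also that on an infinite graph you must verify \emph{both} compositions, $\Ks\A^{u_0}=\mathrm{Id}$ on $\CC^\Ws$ and $\A^{u_0}\Ks=\mathrm{Id}$ on $\CC^\Bs$; the second is symmetric but cannot be omitted.

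The genuine gap is in the off-diagonal case. There $g_{\bs,\ws_2}$ has one pole on $C_0$ for each train-track crossing a minimal path from $\bs$ to $\ws_2$ from right to left, and the residues at these poles are full products of theta factors; they do \emph{not} pair off via $\theta(\beta-\alpha)=-\theta(\alpha-\beta)$, which only governs the two poles attached to a single edge. What actually closes the argument is Lemma~\ref{lem:sep_zeros_poles}: for a minimal graph with $\mapalpha\in X_\Gs$, the poles and zeros of each $g_{\bs,\ws_2}$ are separated on $C_0$, and the complements of the sectors $s_{\bs,\ws_2}$, as $\bs$ ranges over the neighbours of $\ws_1$, have non-empty common intersection (containing a neighbourhood of the zero coming from the last step of a minimal path). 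Hence all contours can be deformed to a single common one \emph{without crossing any pole}, and the integral of the identically zero integrand vanishes — no spurious residues ever arise. That lemma is a non-trivial train-track argument (the ``generalized ribbon'' construction) and is precisely where minimality of $\Gs$ and monotonicity of the half-angle map are used; without it your residue bookkeeping has no cancellation mechanism, and a naive deformation would leave uncontrolled residue contributions. You correctly flag the contour bookkeeping as the hard part, but the tool that resolves it is this separation lemma, not antisymmetry.
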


\begin{rem}\label{rem:thm_intro_inverse}\leavevmode
\begin{enumerate}
 \item Locality of $\A^{u_0}_{\bs,\ws}$ stems from that of the function $g$
   which is defined as a product of terms associated to edges of a path from
   $\bs$ to $\ws$ in the
   associated quad-graph~$\GR$. 
   A key fact used in proving this theorem is that the
   functions $g_{\bs,\, \cdot \,}(u)$ and $g_{\, \cdot \,,\ws}(u)$ are in the
   kernel of the operator $\Ks$~\cite{Fock}, see also Proposition~\ref{prop:ker};
   this results from \emph{Fay's trisecant identity}, known as \emph{Weierstrass
   identity} in the genus 1 case, see Corollaries~\ref{cor:Faystrisecant}
   and~\ref{cor:FayFock}. 
 \item This theorem has the same flavour as the results
   of~\cite{Kenyon:crit,BeaCed:isogen} in the genus 0 case, and as those
   of~\cite{BdTR1,BdTR2} in the genus 1 case. It can be understood as the genus
   1 pendent (with an additional remarkable feature specified in the next point
   of this remark) of the dimer results of~\cite{Kenyon:crit},
   while~\cite{BdTR1} is the genus~1 version of the Laplacian results
   of~\cite{Kenyon:crit}, and~\cite{BdTR2} handles a specific elliptic dimer
   model arising from the Ising model. Going from genus~0 to genus~1 is a highly
   non-trivial step; indeed there is no straightforward way to identify the
   dimer weights and the function $g$ in the kernel of the Kasteleyn operator.
   It is much easier to recover the genus~0 results from the genus~1 ones by
   taking an appropriate limit for the elliptic parameter $\tau$; this is the
   subject of Section~\ref{sec:rational}. It is also not immediate how to
   recover the specific elliptic dimer models of~\cite{BdTR2,dT_Mass_Dirac} from
   this more general elliptic dimer model; this is explained in
   Section~\ref{sec:previous_genus1}.
 \item Another remarkable feature of Theorem~\ref{thm:intro_main_1} is that it
   provides a local expression for a \emph{two-parameter} family of inverses
   while in the references~\cite{Kenyon:crit,BeaCed:isogen,BdTR1,BdTR2}, 
   a \emph{single} inverse was considered.
   This allows us to prove a local formula for a
   two-parameter family of Gibbs measures, and not only for the maximal entropy
   Gibbs measure as was the case in the other references, see
   Theorem~\ref{thm:intro_main2} below.
 \item The explicit expression of Theorem~\ref{thm:intro_main_1} is very useful
   to perform asymptotic expansions
   of~$\A^{u_0}_{\bs,\ws}$ when the graph distance between~$\bs$ and~$\ws$ gets large;
   this is the subject of Propositions~\ref{prop:asympt_gas} and~\ref{prop:asymp_liq1} of Section~\ref{sec:asymptotics}.
   There are three different regimes depending on the position of the parameter~$u_0$ in $D$,
   pictured as Cases~$1, 2, 3$ in Figure~\ref{fig:domaineD_intro}. In Case 1
   (resp.\@ Case 2), one has exponential decay
   (resp.\@ linear decay up to gauge transformation) of~$\A^{u_0}_{\bs,\ws}$,
   while in Case~3, the state of edges is
   deterministic.
   These results allow us to derive the phase diagram of the corresponding dimer model,
   see Theorem~\ref{thm:intro_main2} below.
 \item
The local function~$g$
of Theorem~\ref{thm:intro_main_1}, in the kernel of the Kasteleyn operator,
gives an explicit {formula for}
of a {realization} of the dual graph $\Gs^*$~\cite{KLRR,CLR},
as described in Section~\ref{sec:circlepatterns}.
\end{enumerate}
\end{rem}

We now assume that the minimal graph $\Gs$ is $\ZZ^2$-periodic. A notable fact
is that periodicity of the graph and of half-angles associated to train-tracks
are not enough to ensure periodicity of the elliptic Kasteleyn operator $\Ks$.
In Proposition~\ref{prop:angles_perio}, we prove a necessary and sufficient
condition for that to be the case; intuitively it amounts to picking one of the
integer points of the geometric Newton polygon, see~\cite{GK} and
Section~\ref{sec:tt-per} for definitions.

To a $\ZZ^2$-periodic bipartite dimer model is naturally associated
a \emph{spectral curve} $\C$, and its \emph{amoeba} $\mathscr{A}$,
see Section~\ref{sec:charact_polynom} for definitions.
Our first result on this subject is Proposition~\ref{prop:param_curve}
proving an explicit birational parameterization of the spectral curve $\C$
by the torus $\TT(q)$
using the function $g$ of Theorem~\ref{thm:intro_main_1}:
we describe how the domain of definition of the function $g$,
\emph{i.e.}, the torus $\TT(q)$, is mapped to the spectral curve $\C$, thus establishing
that it is a Harnack curve of geometric genus~1.
As a byproduct we know how the domain $D$ of Figure~\ref{fig:domaineD_intro} is mapped
to the amoeba $\mathscr{A}$; this plays an important role in understanding the phase diagram
of the dimer model, also in the non-periodic case, see Point 3 of
Remark~\ref{rem:intro_Gibbs} below.
Our main result on this topic is that the converse also holds, 
see Theorem~\ref{thm:Harnack} for a precise statement. 

\begin{thm}\label{thm:intro_main3}
Every genus 1 Harnack curve with a marked point on the oval is the spectral
curve of an explicit dimer model on a minimal graph $\Gs$ with Fock's elliptic Kasteleyn operator,
for a unique parameter $t\in\RR/\pi\ZZ+\frac{\pi}{2}\tau$, and a half-angle map defining a minimal immersion of~$\Gs$. 
\end{thm}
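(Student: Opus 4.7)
The plan is to invert the correspondence established in Proposition~\ref{prop:param_curve}, in the spirit of the genus~$0$ argument of \cite{KO:Harnack}. From the given Harnack curve $C$ one first extracts the Newton polygon $N$; since $C$ has geometric genus~$1$, $N$ contains exactly one interior lattice point. Using the Goncharov--Kenyon correspondence \cite{GK}, one then chooses a $\ZZ^2$-periodic minimal graph $\Gs$ whose associated Newton polygon is $N$, each edge of $N$ of lattice length $k$ giving rise to $k$ train-tracks of $\Gs$ of a common homology class. This accounts for all the combinatorial data.

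The remaining continuous parameters are read off from the geometry of $C$. The smooth model of $C$ is a complex torus, whose modular parameter fixes $\tau$ and hence the torus $\TT(q)$ of Proposition~\ref{prop:param_curve}. For each train-track $T$, the asymptotic position of the corresponding tentacle of the amoeba $\mathscr{A}$ provides the half-angle $\alpha_T\in\RR/\pi\ZZ$; one checks that the resulting angle map yields a minimal immersion of $\Gs$ and satisfies the integer condition of Proposition~\ref{prop:angles_perio} that guarantees periodicity of $\Ks$. Finally, pulling the marked point on the compact oval of $C$ back through the parameterization of Proposition~\ref{prop:param_curve} yields a unique point $t$ on the central cycle $\RR/\pi\ZZ+\frac{\pi}{2}\tau$ of $\TT(q)$.

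With the data $(\Gs,\alpha,t)$ in hand, one forms Fock's elliptic Kasteleyn operator $\Ks^{(t)}$; by Proposition~\ref{prop:param_curve} its spectral curve $C'$ is a genus~$1$ Harnack curve sharing with $C$ the Newton polygon, the asymptote data of the amoeba, and the marked point on its oval. The main obstacle is then the rigidity statement needed to conclude $C=C'$: a genus~$1$ Harnack curve is determined by these data. This follows from Mikhalkin's description of Harnack curves with given Newton polygon as a cell in the space of real plane curves, together with a dimension count matching the $|\partial N\cap\ZZ^2|+1$ continuous parameters on the dimer side (one half-angle per train-track, plus $t$) to the dimension of the space of genus~$1$ Harnack curves with the prescribed Newton polygon and a marked point on the oval. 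The uniqueness of $t$ and of the half-angle map, given $\Gs$, then follows from the injectivity of the extraction procedures used in the second step.
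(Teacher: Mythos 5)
Your strategy is genuinely different from the paper's, and as written it has gaps that I do not see how to close.

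First, the premise that ``since $C$ has geometric genus~$1$, $N$ contains exactly one interior lattice point'' is false. The geometric genus of a Harnack curve equals the number of compact ovals; the remaining interior lattice points of $N$ correspond to isolated real nodes, which a genus~$1$ Harnack curve may have in any number. The paper in fact depends on this: Proposition~\ref{prop:angles_perio} shows that $\varphi(\mapalpha)$ can be \emph{any} interior lattice point of $N(\Gs)$, and the remark following Theorem~\ref{thm:Harnack} discusses genus~$1$ curves whose ovals sit over different interior points of the same polygon.

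Second, and more seriously, your concluding rigidity step is both unproven and false as stated. By the Kenyon--Okounkov description of the moduli of Harnack curves, a Harnack curve with Newton polygon $N$ is determined by its tentacle asymptotes \emph{together with} the areas of the holes of its amoeba, one nonnegative number for each interior lattice point of $N$. Two genus~$1$ Harnack curves can share the Newton polygon, all tentacle data, and a common point lying on both ovals, while the ovals have different areas (and the isolated nodes sit at different positions); so the invariants you match do not force $C'=C$. A dimension count cannot repair this: equality of dimensions of two parameter spaces does not make a map between them injective or surjective, and you would still need properness and local invertibility. Relatedly, ``the asymptotic position of the tentacle provides $\alpha_T$'' is not a well-defined extraction procedure, and the claim that the resulting angles satisfy the integrality condition of Proposition~\ref{prop:angles_perio} is asserted rather than derived.

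The paper avoids all of this by arguing in the opposite direction: since $C$ is maximal of genus~$1$, it admits a birational parameterization $\psi=(z(u),w(u))$ by a rectangular torus $\TT(q)$ (this fixes $\tau$); writing $z$ and $w$ as products of theta functions, the locations of their zeros and poles on $C_0$ \emph{are} the half-angles and the exponents \emph{are} the train-track homology classes; ellipticity of $z$ and $w$ yields exactly the relations $\sum a_j=\sum b_j=0$ and $\sum\alpha_ja_j\equiv\sum\alpha_jb_j\equiv 0\bmod\pi$ needed to build the minimal graph via~\cite{GK} and to make $\Ks$ periodic; the marked point on the oval pulls back to a unique $t\in C_1$; and the spectral curve of the resulting model is by construction the image of $\psi$, i.e.\ $C$ itself. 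No moduli-theoretic rigidity is needed. If you want to salvage your approach you would have to prove the rigidity statement with the hole areas included among the matched invariants, which essentially amounts to redoing the direct construction.
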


Let us describe the context of this theorem.
By~\cite{KOS,KO:Harnack,GK} we know that bipartite dimer models are in correspondence with Harnack curves.
This correspondence is made explicit in~\cite{KO:Harnack} in the case of  generic genus~0 Harnack curves and dimer models on isoradial graphs with Kenyon's critical weights~\cite{Kenyon:crit}.
A result of the same flavor is obtained in~\cite{BdTR1} where an explicit correspondence is established between genus~1 Harnack curves with central symmetry and rooted spanning forests with well chosen elliptic weights.
Also on this topic, Fock~\cite{Fock} assigns an explicit ``dimer model'' to every algebraic curve;
his construction is very general but does not focus on curves being Harnack and ``dimer models''
having \emph{positive} weights.
Theorem~\ref{thm:intro_main3} is thus the pendent of~\cite{KO:Harnack}
in the genus 1 case
with general (possibly non triangular) Newton polygons;
it extends the result of~\cite{BdTR1} by removing the symmetry assumption on the curve.
Our proof uses~\cite{GK}, see also~\cite{Gulotta}, for reconstructing a minimal
graph from
the Newton polygon of
the spectral curve.

In~\cite{Sheffield}, the author proves that $\ZZ^2$-periodic bipartite dimer models have a two-parameter family of ergodic Gibbs measures, then~\cite{KOS} provide an explicit expression for these measures using Fourier transforms and magnetic field coordinates.
They also identify the phase diagram as the amoeba $\mathscr{A}$ of the spectral curve $\C$.
In this article, we reverse this point of view by considering \emph{a priori}
  a compact Riemann surface (the torus $\TT(q)$) which, together with appropriate 
  half-angle maps, induce dimer models on minimally immersed graphs.
For any such dimer model, we then
  construct a two-parameter family of Gibbs measures $(\PP^{u_0})_{u_0\in D}$
  from the inverses $(\As^{u_0})_{u_0\in D}$, indexed by the
  domain~$D$, which plays the role of the phase diagram. What is noteworthy
  is that, assuming Condition $(*)$ below, this also holds for \emph{non-periodic} graphs
  even though the spectral curve and the amoeba do not exist.
 Here is our main statement, which is a combination of
  Theorem~\ref{prop:Gibbs_measures}, Corollary~\ref{thm:Gibbs_measures} and
  Theorem~\ref{thm:gibbs_non_perio}. It holds for any minimal graph~$\Gs$ satisfying
  the following assumption, which is trivially true for $\ZZ^2$-periodic graphs and is
  believed to hold for all minimal graphs:

$(*)$ Every finite, simply connected subgraph~$\Gs_0$ of
the minimal graph $\Gs$ can be embedded in a \emph{periodic} minimal
graph~$\Gs'$ so that parallel train-tracks in~$\Gs_0$ remain parallel in~$\Gs'$.

\begin{thm}\label{thm:intro_main2}
  Consider the dimer model with Fock's elliptic weights on an infinite, minimal graph~$\Gs$ satisfying Condition~$(*)$.
  Then, for every $u_0\in D$, the operator~$\A^{u_0}$ of Theorem~\ref{thm:intro_main_1}
  defines a Gibbs measure $\PP^{u_0}$
  on dimer configurations of $\Gs$, whose expression on cylinder sets is explicitly given by,
for every subset of
distinct edges $\{\es_1=\ws_1 \bs_1,\ldots,\es_k=\ws_k \bs_k\}$ of $\Gs$,
\begin{align}
\PP^{u_0}(\es_1,\ldots,\es_k)&=
\Bigl(\prod_{j=1}^k
\Ks_{\ws_j,\bs_j}\Bigr)\times 
\det_{1\leq i,j\leq k} \Bigl(\A^{u_0}_{\bs_i,\ws_j}\Bigr)\,.
\end{align}

The set $D$ gives the phase diagram of the model: when $u_0$ is on
the top boundary of $D$, the dimer model
is gaseous; when $u_0$ is in the interior of the set $D$, the model is liquid;
when $u_0$ is a
point corresponding to one of the connected components of the lower boundary of
$D$, the model is solid.

When $\Gs$ is $\ZZ^2$-periodic,
this gives an alternative description of the full set of
ergodic Gibbs measures~\cite{KOS}.
\end{thm}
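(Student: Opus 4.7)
The plan is structured around three tasks: first, show that the determinantal cylinder formula defines a probability measure; second, transfer this from the periodic setting to general minimal graphs under Condition~$(*)$ and verify the Gibbs property; and third, read off the phase from the location of~$u_0$ in Figure~\ref{fig:domaineD_intro} and identify the family with the ergodic Gibbs measures of~\cite{KOS}.

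For the first task, since $\Ks$ is a genuine Kasteleyn operator with positive weights (Proposition~\ref{prop:kastorient}) and $\A^{u_0}$ is its right inverse (Theorem~\ref{thm:intro_main_1}), the determinantal expression is non-negative (by comparison with Boltzmann probabilities on an exhausting sequence of finite subgraphs), and consistency of the cylinder probabilities follows from the elementary identity expressing the sum over the two states of an additional edge as a smaller determinant, which uses only $\Ks\A^{u_0}=\Id$. Kolmogorov's extension theorem then yields $\PP^{u_0}$. On a periodic graph, the DLR equations follow from the standard factorisation of the determinant along a cycle bounding the conditioned region, in the style of~\cite{KOS}.

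For the second task, any cylinder event is supported on a finite simply connected subgraph $\Gs_0\subseteq\Gs$ which, by $(*)$, embeds in a periodic minimal graph $\Gs'$ preserving parallel train-tracks. The locality of Theorem~\ref{thm:intro_main_1} — the function $g_{\bs,\ws}(u)$ is a product over the edges of a path from $\bs$ to $\ws$ in the quad-graph $\GR$, and the contour $\Cs_{\bs,\ws}^{u_0}$ depends only on the half-angles of the train-tracks crossed — ensures that $\A^{u_0}_{\bs,\ws}$ takes the same value whether computed in $\Gs$ or in $\Gs'$ for $\bs,\ws\in\Gs_0$. Thus on $\Gs_0$ the cylinder probabilities from $\Gs$ coincide with those of the Gibbs measure on $\Gs'$, and since the DLR condition is purely local, $\PP^{u_0}$ is Gibbs on $\Gs$ as well. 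This transfer is the main obstacle: the preservation of parallel train-track classes guaranteed by $(*)$ is precisely what keeps both $g_{\bs,\ws}$ and the contour $\Cs_{\bs,\ws}^{u_0}$ unchanged under the embedding.

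For the third task, the phase diagram is extracted from the asymptotics of $\A^{u_0}_{\bs,\ws}$ established in Propositions~\ref{prop:asympt_gas} and~\ref{prop:asymp_liq1}: exponential decay when $u_0$ is on the top boundary (Case~1) yields exponentially decaying edge correlations, the gaseous phase; the ``linear decay up to gauge transformation'' in the interior of $D$ (Case~2) yields polynomially decaying correlations, the liquid phase; and the degeneracy of the integral on connected components of the lower boundary (Case~3) forces a collection of edges to deterministic values, the solid phase. In the $\ZZ^2$-periodic case, the parameterisation of the spectral curve by $\TT(q)$ of Proposition~\ref{prop:param_curve} sends $D$ bijectively onto the closed amoeba~$\overline{\mathscr{A}}$; matching the edge densities of $\PP^{u_0}$ against the magnetic-field coordinates of~\cite{KOS} at the corresponding point, together with uniqueness of the ergodic Gibbs measure at a fixed slope~\cite{Sheffield}, identifies $(\PP^{u_0})_{u_0\in D}$ with the full family of ergodic Gibbs measures.
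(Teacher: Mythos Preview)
Your overall architecture is close to the paper's, but the two places where the argument carries real weight are handled differently, and in one of them there is a gap.

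\textbf{Identification with the KOS measures in the periodic case.} You propose to identify~$\PP^{u_0}$ with the KOS ergodic Gibbs measure by matching edge densities / slopes and invoking Sheffield's uniqueness. The paper does \emph{not} proceed this way: it shows directly that~$\A^B=\A^{u_0}$ by computing the KOS double integral~\eqref{eq:inverseKOS}, performing the~$w$-integration by residues, and then changing variable~$z\mapsto u$ via the parameterisation~$\psi$. The heart of that computation is Lemma~\ref{lem:forme_holom}, which shows that the adjugate factor, the Jacobian and the denominator combine to the constant~$-\theta'(0)$; this is a genuinely non-trivial cancellation. Your alternative route is incomplete: to apply Sheffield's uniqueness at a given slope you would need~$\PP^{u_0}$ to be ergodic \emph{a priori}, which you have not established (in the paper this is inherited from the identification with the KOS measures, not proved independently), and ``matching edge densities against magnetic-field coordinates'' is not a well-defined criterion without further work.

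\textbf{The transfer via Condition~$(*)$.} You correctly identify locality of~$\A^{u_0}$ as the key to transferring cylinder probabilities from~$\Gs$ to a periodic~$\Gs'$. However you miss a subtlety the paper flags explicitly: after embedding~$\Gs_0$ in a periodic minimal graph~$\Gs'$, one must extend the restricted half-angle map~$\mapalpha_0$ to some~$\mapalpha'\in X_{\Gs'}^{\mathit{per}}$ for which the associated Kasteleyn operator is itself periodic, i.e.\ with~$\varphi(\mapalpha')\in\ZZ^2$ (Proposition~\ref{prop:angles_perio}). This is not automatic, and the paper resolves it by enlarging the fundamental domain of~$\Gs'$ (first to make~$\mapalpha'$ periodic, then again to create enough free parameters so that~$\varphi(\mapalpha')$ hits an integer point). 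Without this, you cannot invoke Corollary~\ref{thm:Gibbs_measures} on~$\Gs'$.

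The remaining pieces---Kolmogorov extension, and reading off the phases from Propositions~\ref{prop:asympt_gas} and~\ref{prop:asymp_liq1}---are as in the paper.
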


\begin{rem}\label{rem:intro_Gibbs}
\leavevmode 
\begin{enumerate}
  \item
      One of the main features of these Gibbs measures $\PP^{u_0}$ is their
      \emph{locality property}, inherited from $\A^{u_0}$: the correlations
      between edges $\es_1,\ldots,\es_k$ only depend on the geometry of the
      graph in a ball containing those edges. This locality is a key ingredient
      to extend the proof of Theorem~\ref{thm:intro_main2} from periodic to
      general graphs satisfying Condition~$(*)$, by a now standard
      argument~\cite{BeaQuad,BeaCed:isogen,BdTR1}.
 \item
     When $\Gs$ is $\ZZ^2$-periodic, the correspondence between the set of
     ergodic Gibbs measures from~\cite{KOS} and the family $(\PP^{u_0})_{u_0\in D}$
     is proved by showing that the Fourier
     transform expressions of the inverses~\cite{CKP,KOS}
     as double integrals
     coincide with the
     inverses of Theorem~\ref{thm:intro_main_1}. The fundamental step is
     the explicit evaluation of one of the integrals via
     residues, and
     a change
     of variable in the remaining integral
     which uses the explicit parameterization of the spectral curve
     $\C$ together with the key Lemma~\ref{lem:forme_holom} establishing that the
     combination of the denominator in the integrand and of the
     Jacobian is in fact trivial.
     Note that the locality property of these ergodic Gibbs measures in the
     periodic case was not known before,
     except when the spectral
     curve had genus~0~\cite{Kenyon:crit,KO:Harnack}.
 \item As shown in~\cite{KOS}, ergodic Gibbs measures
   for $\ZZ^2$-periodic graphs
   can alternatively be parameterized
   by their \emph{slope}, \emph{i.e.}, by their expected horizontal and vertical
   height change. In Theorem~\ref{thm:slope}, we prove an explicit expression
   for the slope of the Gibbs measure $\PP^{u_0}$ involving the explicit
   parameterization of the spectral curve $\C$ and appropriate contours of
   integration. This is a refined version of Theorem 5.6. of~\cite{KOS}, where
   the slope was only identified up to a sign and modulo~$\pi$.
 \item Note also that such an explicit formula
   lends itself well to explicit computations using the residue theorem.
   As an example, single-edge probabilities are computed in the three different
   phases in Proposition~\ref{lem:single_edges}.
 \end{enumerate}
\end{rem}

From the point of view of statistical mechanics, local formulas for
probabilities are expected to exist for models that are invariant under
elementary transformations of the graph~$\Gs$. For example, in the case of the
Ising model, the latter are the well known \emph{star-triangle} transformations;
in the case of the dimer model, they are the \emph{spider move} and the
\emph{shrinking/expanding of a 2-valent vertex}~\cite{Kuperberg,Thurston,Postnikov,GK}.
In Proposition~\ref{prop:2-valent} and Theorem~\ref{thm:spider} of
Section~\ref{sec:spider}, we prove that invariance 
under these two moves is \emph{equivalent} to the Kasteleyn coefficients being
antisymmetric (as functions of the train-track half-angles) and satisfying Fay's identity in the form of
Corollary~\ref{cor:FayFock}.
As a consequence, we recover that this holds for the elliptic dimer model
considered in this paper, a fact already known to Fock~\cite{Fock}.

As a final remark to this introduction, let us mention our forthcoming
paper~\cite{BCdT:genusg}, where we handle Fock's adjacency operator and its
consequences for the dimer model in the case of arbitrary
positive genus.
It will in particular
take care of the additional difficulties related to more involved algebraic and complex geometry.

\subsection*{Outline of the paper}
\begin{itemize}
 \item[$\bullet$] In Section~\ref{sec:general}, we recall concepts and results needed for our work:
train-tracks, half-angle maps, minimal graphs, minimal immersions~\cite{BCdT:immersion}, basics on the dimer model, 
Fock's definition of the \emph{discrete Abel map}~\cite{Fock} and Jacobi theta functions.
\item[$\bullet$] In Section~\ref{sec:Kasteleyn}, we define Fock's elliptic adjacency 
operators~$\K[t]$~\cite{Fock} and determine under which conditions it is Kasteleyn. We introduce a family of functions in the kernel of~$\K[t]$ and study the relative positions of their poles and zeros.
Then, building on ideas of~\cite{KLRR} we show that these functions define explicit immersions of the dual graph. 
 \item[$\bullet$] In Section~\ref{sec:inv}, we fix $t\in\RR+\frac{\pi\tau}{2}$ and
introduce a family of \emph{local}
operators~$(\A^{(t),u_0})_{u_0\in D}$ parameterized by a subset~$D$ of the cylinder~$\mathbb{R}/\pi\ZZ + [0,\frac{\pi}{2}\tau]$. We then state and prove
Theorem~\ref{thm:intro_main_1}.

\item[$\bullet$] Section~\ref{sec:periodic} deals with the case of~$\ZZ^2$-periodic 
minimal graphs. We determine for which half-angle maps
the corresponding elliptic Kasteleyn operator itself is~$\ZZ^2$-periodic.
We use the functions of Section~\ref{sec:null_functions} to give an explicit parameterization of the spectral curve of the model. We then state and prove Theorem~\ref{thm:intro_main3}
and the periodic version of Theorem~\ref{thm:intro_main2}. Finally, we derive an
explicit expression for slopes of the Gibbs measures.
 \item[$\bullet$] In Section~\ref{sec:gibbs_non_perio}, we drop the periodicity assumption on the minimal graph. We then prove Theorem~\ref{thm:intro_main2} defining a two parameter family of
Gibbs measures with three phases as in the periodic case. We compute single-edge probabilities and asymptotics of the inverse
operators~$(\A^{(t),u_0})_{u_0\in D}$ in these three phases.
 \item[$\bullet$] In Section~\ref{sec:spider} we show that 
invariance of the dimer model under some natural
elementary transformations on bipartite graphs, is equivalent to the corresponding Kasteleyn coefficients being antisymmetric
and satisfying Fay's identity; in particular this holds for 
the dimer model with Fock's elliptic weights.
 \item[$\bullet$] Finally, in Section~\ref{sec:connection}, we present relations
 between the present work and previously studied models.
We first show how Kenyon's critical dimer
models~\cite{Kenyon:crit} can be obtained as rational limits of our elliptic models. Then, 
we explain how the models
of~\cite{BdTR2} and of~\cite{dT_Mass_Dirac} are special cases
of the constructions of this paper.
 \end{itemize}

\subsection*{Acknowledgments}
This project was started when the second-named author was visiting the first and third-named authors at the LPSM, Sorbonne Universit\'e, whose hospitality is thankfully acknowledged.  The first- and third-named authors are partially supported by the \emph{DIMERS} project~ANR-18-CE40-0033 funded by the French National Research Agency.
The second-named author is partially supported by the Swiss NSF  grant 200020-200400.
We would like to thank Vladimir Fock for helpful discussions
and inspiration, {and the anonymous referee for valuable comments.}

\subsection*{Declarations}
\begin{description}
\item[Data Availability.] Not applicable to this article since no data sets were generated or analysed
during the current study.
\item[Conflict of interest.] The authors have no relevant financial or non-financial interests to
disclose.
\end{description}

\section{Generalities}
\label{sec:general}

The aim of this first section is to introduce well-known concepts and results needed for the rest of the paper.
Section~\ref{sec:tt-def} deals with train-tracks associated to planar graphs,
minimal graphs, and a special class of half-angle maps associated with train-tracks of minimal graphs.
In Section~\ref{sec:dimer_model} , we briefly explain the basics of the dimer model on bipartite planar graphs.
Finally, in Section~\ref{sec:abel_map}, we recall the definition of Fock's discrete Abel map, and 
the definition and main features of Jacobi theta functions.

\subsection{Train-tracks, minimal graphs and monotone angle maps}
\label{sec:tt-def}

Consider a locally finite graph~$\Gs=(\Vs,\Es)$ embedded in the plane so that
its faces are bounded topological discs;
{in particular, the graph~$\Gs$ is infinite.}
Denote by~$\Gs^*=(\Vs^*,\Es^*)$ the dual embedded graph.
The associated \emph{quad-graph}~$\GR$ is obtained from the vertex set~$\Vs\sqcup\Vs^*$ by joining a primal vertex~$\vs\in\Vs$ and a dual vertex~$\fs\in\Vs^*$ each time~$\vs$ lies on the boundary of the face corresponding to~$\fs$. This quad-graph embeds in the plane with faces consisting of (possibly degenerate) quadrilaterals, whose
diagonals are pairs of dual edges of~$\Gs$ and~$\Gs^*$ (see Figure~\ref{fig:graph}).

A \emph{train-track} of~$\Gs$~\cite{Kenyon:crit,KeSchlenk} is a
maximal
chain of adjacent quadrilaterals of~$\GR$ such that when it enters a quadrilateral, it exits through the opposite edge. A train-track can also be thought of as a path in~$(\GR)^*$ crossing opposite edges of the quadrilaterals,
and we often make this slight abuse of terminology. Note that by construction, the graphs~$\Gs$ and~$\Gs^*$ have the same set of train-tracks.

\begin{figure}[tb]
    \centering
    \def\svgwidth{8cm}
\begingroup%
  \makeatletter%
  \providecommand\color[2][]{%
    \errmessage{(Inkscape) Color is used for the text in Inkscape, but the package 'color.sty' is not loaded}%
    \renewcommand\color[2][]{}%
  }%
  \providecommand\transparent[1]{%
    \errmessage{(Inkscape) Transparency is used (non-zero) for the text in Inkscape, but the package 'transparent.sty' is not loaded}%
    \renewcommand\transparent[1]{}%
  }%
  \providecommand\rotatebox[2]{#2}%
  \newcommand*\fsize{\dimexpr\f@size pt\relax}%
  \newcommand*\lineheight[1]{\fontsize{\fsize}{#1\fsize}\selectfont}%
  \ifx\svgwidth\undefined%
    \setlength{\unitlength}{585.26514519bp}%
    \ifx\svgscale\undefined%
      \relax%
    \else%
      \setlength{\unitlength}{\unitlength * \real{\svgscale}}%
    \fi%
  \else%
    \setlength{\unitlength}{\svgwidth}%
  \fi%
  \global\let\svgwidth\undefined%
  \global\let\svgscale\undefined%
  \makeatother%
  \begin{picture}(1,1.05186181)%
    \lineheight{1}%
    \setlength\tabcolsep{0pt}%
    \put(0,0){\includegraphics[width=\unitlength,page=1]{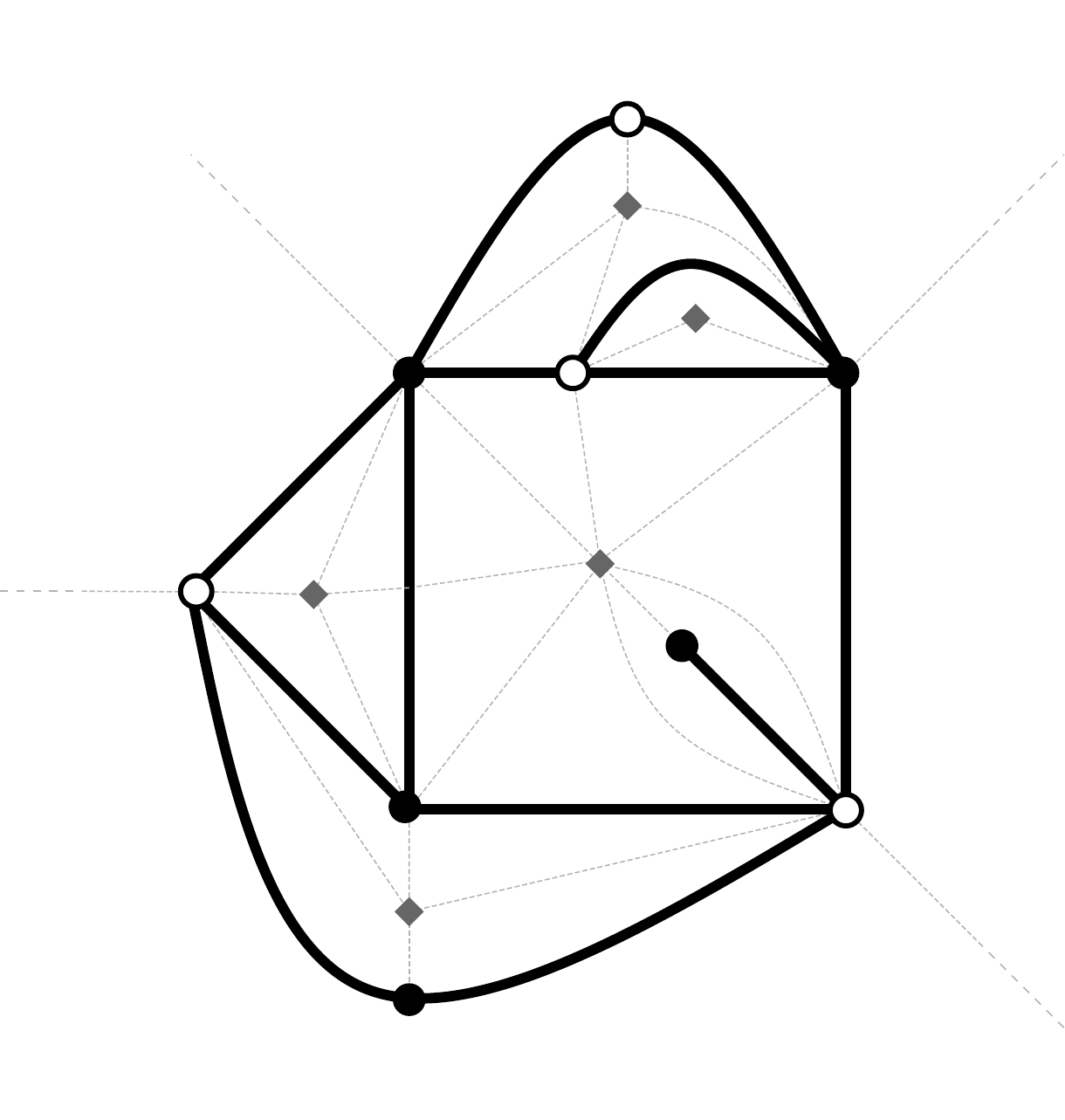}}%
    \put(0.74325287,1.01227651){\color[rgb]{0,0,0}\makebox(0,0)[lt]{\lineheight{1.25}\smash{\begin{tabular}[t]{l}$T_5$\end{tabular}}}}%
    \put(0.81964077,0.92196042){\color[rgb]{0,0,0}\makebox(0,0)[lt]{\lineheight{1.25}\smash{\begin{tabular}[t]{l}$T_4$\end{tabular}}}}%
    \put(0.2096608,0.74255456){\color[rgb]{0,0,0}\makebox(0,0)[lt]{\lineheight{1.25}\smash{\begin{tabular}[t]{l}$T_6$\end{tabular}}}}%
    \put(0.06613613,0.5887781){\color[rgb]{0,0,0}\makebox(0,0)[lt]{\lineheight{1.25}\smash{\begin{tabular}[t]{l}$T_1$\end{tabular}}}}%
    \put(0.8201411,0.0896178){\color[rgb]{0,0,0}\makebox(0,0)[lt]{\lineheight{1.25}\smash{\begin{tabular}[t]{l}$T_3$\end{tabular}}}}%
    \put(0.691994,0.01272957){\color[rgb]{0,0,0}\makebox(0,0)[lt]{\lineheight{1.25}\smash{\begin{tabular}[t]{l}$T_2$\end{tabular}}}}%
    \put(0,0){\includegraphics[width=\unitlength,page=2]{fig_graph_vec.pdf}}%
  \end{picture}%
\endgroup%

    \caption{A finite portion of a bipartite graph~$\Gs$, with black edges and black/white vertices.
    The associated dual graph~$\Gs^*$ has vertices marked with grey diamonds, while
    the associated quad-graph~$\GR$ is represented with {dashed} grey edges.
    The corresponding train-tracks form two self-intersections ($T_2$ and~$T_6$)
    and two parallel bigons (one between~$T_1$ and~$T_6$, the other between~$T_2$
    and~$T_5$). In particular,~$\Gs$ is not a minimal graph.}
    \label{fig:graph}
  \end{figure}

We now assume that~$\Gs$ is \emph{bipartite}, \emph{i.e.}, that
its vertex set can be partitioned into two sets~$\Vs=\Bs\sqcup\Ws$ of black and white vertices such that no edge of~$\Es$ connects two vertices of the same color.
In such a case, paths corresponding to train-tracks can be consistently oriented with, say, black vertices on the right and white vertices on the left of the path, as
illustrated in Figure~\ref{fig:graph}.
We let~$\T$ denote the set of consistently oriented train-tracks of the bipartite graph~$\Gs$.

For the definition of our model (see Section~\ref{sec:Kast_def} below), we need to assign a half-angle~$\alpha_T\in\RR/\pi\ZZ$ to each oriented train-track~$T$ of~$\T$.
Many of our results hold for arbitrary
half-angle maps~$\mapalpha\colon\T\to\RR/\pi\ZZ$ defined on arbitrary bipartite planar graphs.
However, several results only hold for a specific
class of such graphs, and for a restricted space of angle maps. We now define these classes
of graphs and maps.

Following~\cite{Thurston,GK}, we say that
a bipartite, planar graph~$\Gs$ is \emph{minimal}
if oriented train-tracks of~$\T$ do not self-intersect and if
there are no \emph{parallel bigons}, \emph{i.e.}, no pairs of paths intersecting twice and joining these two intersection points in the same direction;
we refer to Figure~\ref{fig:graph} for an example with such forbidden train-track
configurations. {Note that this implies that train-tracks cannot form loops. Indeed, if this were the case then, since faces of $\Gs$ are bounded topological disks, either the train-track would self intersect, which is forbidden, or it would cross another train-track at least twice, and thus form a parallel bigon, which is also forbidden.} {The minimality condition also }
implies that~$\Gs$ has neither multiple edges,
nor degree 1 vertices. In particular, a minimal graph
is a simple graph. 

More details on the next part of this section can be found in the paper~\cite{BCdT:immersion}.
The restriction on the half-angle maps can be motivated geometrically as follows.
Given a bipartite, planar graph~$\Gs$, a map~$\mapalpha\colon\T\to\RR/\pi\ZZ$
defines a {realization} of~$\GR$
in~$\RR^2$ by {drawing} every directed edge of~$\GR$ crossed by an oriented train-track~$T$ from left to right as the unit vector~$e^{2i\alpha_T}$, {see \cite[Section 3.1]{BCdT:immersion}} for details.
In this way, each face of~$\GR$ is mapped to a rhombus of unit edge length, with a rhombus angle in~$[0,2\pi)$ naturally defined from the value of~$\mapalpha$ on the two train-tracks crossing this face.
Adding up the corresponding rhombus angles {at the vertices of~$\GR$} we obtain angle {sums} that are,
in general, arbitrary {positive} integer multiples of~$2\pi$.
Following~\cite{BCdT:immersion},
we say that~$\mapalpha$ defines a \emph{minimal immersion}
of~$\Gs$ if the rhombus angles never vanish and add up to~$2\pi$ around each vertex of~$\GR$. {Intuitively, in a minimal immersion, around each vertex the rhombi do one turn, while in an arbitrary realization, they are allowed to do more than one turn.}
This notion is a natural generalization of the isoradial embeddings of Kenyon and Schlenker~\cite{KeSchlenk},
where rhombi with rhombus angle in~$(\pi,2\pi)$ are folded along
their dual edge {as illustrated in Figure~\ref{fig:folded_rhombus}.}

A map~$\mapalpha\colon\T\to\RR/\pi\ZZ$ defines a minimal immersion if it respects some natural cyclic order on~$\T$, whose definition we now recall, see also~\cite[Section 2.3]{BCdT:immersion}.
Let us assume that~$\Gs$ is a minimal graph. We say that two oriented train-tracks are \emph{parallel} (resp. \emph{anti-parallel}) if they are disjoint
and there exists a topological disc
that they cross in the same direction (resp.\@ in opposite directions).
Consider a triple of oriented train-tracks~$(T_1,T_2,T_3)$ of~$\Gs$, pairwise non-parallel. If two of these train-tracks intersect infinitely often, then they do so in opposite directions: replace this pair of train-tracks by anti-parallel disjoint oriented curves. We are now left with three
bi-infinite
oriented planar curves that intersect a finite number of times. Consider a compact disk~$B$ outside of which they do not meet, and order~$(T_1,T_2,T_3)$ cyclically according to the outgoing points of the corresponding oriented curves in the circle~$\partial B$. A choice was made when replacing anti-parallel train-tracks by disjoint curves, but the resulting (partial) cyclic order on~$\T$
is easily seen not  to depend on this choice. Note that when~$\Gs$ is~$\mathbb{Z}^2$-periodic, this cyclic
order is the same as the natural cyclic order defined via the homology classes of the projections of the train-tracks onto~$\Gs/\mathbb{Z}^2$, see Section~\ref{sec:periodic}.

Following~\cite{BCdT:immersion}, we denote by~$X_\Gs$ the set of half-angle maps~$\mapalpha\colon\T\to\RR/\pi\ZZ$ that are monotone with respect to the cyclic orders on~$\T$ and~$\RR/\pi\ZZ$, and that map pairs of intersecting or anti-parallel train-tracks to distinct angles. One of the main results of~\cite{BCdT:immersion} can now be stated as follows: a planar bipartite graph~$\Gs$ admits a minimal immersion if and only if~$\Gs$ is minimal; in such a case, the space of minimal immersions contains~$X_\Gs$, and coincides with it in the~$\mathbb{Z}^2$-periodic case.
{A piece of a minimal immersion is shown in Figure~\ref{fig:folded_rhombus}.}

\begin{figure}
  \centering
  \includegraphics[width=3cm]{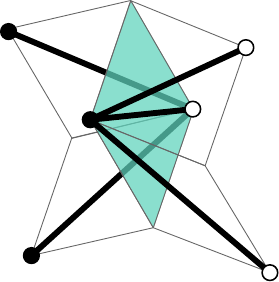}
  \caption{{A small piece of a minimal immersion of $\Gs$ where a rhombus
  (in light blue) has an angle in $(\pi,2\pi)$.}\label{fig:folded_rhombus}}
\end{figure}

\subsection{The dimer model}
\label{sec:dimer_model}

We here recall basic facts and definitions on the dimer model. More details can
be found for example in~\cite{KenyonIntro} and references therein.

In the whole of this section,~$\Gs$ is a planar, bipartite graph, finite or infinite.
A \emph{dimer configuration} of $\Gs$, also known as a \emph{perfect matching}, is
a collection~$\Ms$ of edges of $\Gs$ such that every vertex is incident to exactly
one edge of~$\Ms$; we denote by $\M(\Gs)$ the set of dimer configurations of the graph $\Gs$ and assume that this set is non-empty. 

Suppose that edges are assigned a positive weight function $\nu=(\nu_\es)_{\es\in\Es}$. When the graph is finite, the \emph{dimer Boltzmann measure} $\PP$ on $\M(\Gs)$ is defined by:
\[
\PP(\Ms)=\frac{\nu(\Ms)}{Z(\Gs,\nu)},
\]
where $\nu(\Ms)=\prod_{\es\in\Ms}\nu_\es$ is the weight of the dimer configuration $\Ms$, and $Z(\Gs,\nu)=\sum_{\Ms\in\M(\Gs)}\nu(\Ms)$ is the \emph{dimer partition function}.

When the graph $\Gs$ is infinite, the notion of Boltzmann measure is replaced by that of \emph{Gibbs measure}. By definition, a Gibbs measure $\PP$ needs to satisfy the \emph{DLR conditions}~\cite{Dobrushin,LanfordRuelle}: if one fixes a dimer configuration in an annular region of the graph~$\Gs$, dimer configurations inside and outside of the annulus are independent; moreover the
probability of any dimer configuration in the connected region inside the
annulus is proportional to the product of its edge weights.

Following~\cite{KOS}, two dimer models
given by two positive weight functions~$\nu$ and~$\nu'$ on~$\Gs$
are said to be \emph{gauge equivalent} if there exists a positive function~$\sigma$ on~$\Vs$ such that,
for each edge~$\ws\bs\in\Es$, we have~$\nu'_{\ws\bs}=\sigma_\ws\,\nu_{\ws\bs}\,\sigma_\bs$.
Suppose now that the graph~$\Gs$ is finite, then two gauge equivalent dimer models are easily seen
to yield the same Boltzmann measure.
Therefore, many of the edge weight parameters are non-essential as far as the associated
Boltzmann measure is concerned.
For a bipartite, planar, weighted graph~$(\Gs,\nu)$, a family of associated
essential parameters is given as follows.
The~\emph{face weight}~${\Wscr}_\fs$ of a degree~$2n$ face~$\fs$ is defined to be 
\begin{equation}
  {\Wscr}_\fs:=\prod_{j=1}^n
  \frac{\nu_{\ws_j\bs_j}}{\nu_{\ws_j\bs_{j-1}}},
  \label{eq:altproduct}
\end{equation}
where~$\ws_1,\bs_1,\ws_2,\ldots, \ws_n, \bs_n$ are the vertices on the boundary
of~$\fs$ oriented counterclockwise with cyclic notation for indices, see Figure~\ref{fig:faceweight}.
When $\Gs$ is planar, which is assumed to be the case here,
two dimer models on~$\Gs$ are gauge equivalent if and only if
the corresponding edge weights define equal face weights for all bounded faces.
Moreover, the associated Boltzmann measure can be described using these face weights.
This requires the concept of \emph{height function}, that we now recall.

Let us fix a reference dimer configuration~$\Ms_1$,
and take an arbitrary~$\Ms\in\M(\Gs)$.
Considering~$\Ms$ and~$\Ms_1$ as consistently oriented from white to black vertices,
their difference~$\Ms-\Ms_1$ is a {union of disjoint} oriented cycles in~$\Gs$. Since this graph is embedded in the plane,
{each} oriented cycle bounds a collection of faces. In other words,
we have~$\Ms-\Ms_1=\partial(\sum_{\fs\in\Fs} h_\Ms(\fs)\fs)$ for
some function~$h_\Ms\colon\Fs\to\ZZ$, uniquely defined up to a global additive constant.
This is called the \emph{height function} of~$\Ms$ (with respect to~$\Ms_1$).
As one easily checks, we then have
\[
\PP(\Ms)=\frac{{\Wscr}(h_\Ms)}{Z(\Gs,{\Wscr})}\,,
\]
where~${\Wscr}(h_\Ms)=\prod_{\fs\in\Fs} {\Wscr}_\fs^{h_\Ms(\fs)}$ and
\begin{equation}
\label{eq:Z-face}
Z(\Gs,{\Wscr})=\sum_{\Ms\in\M(\Gs)}{\Wscr}(h_\Ms)\,.
\end{equation}
In a nutshell, fixing a reference dimer configuration allows to reformulate the Boltzmann measure
on~$\M(\Gs)$ with (many non-essential) parameters~$(\nu_\es)_{\es\in\Es}$ as a measure
on the associated height functions with (only essential) parameters~$({\Wscr}_\fs)_{\fs\in\Fs}$.

One of the key tools for studying the dimer model is the \emph{Kasteleyn
matrix}~\cite{Kasteleyn1,TF,Percus}. Suppose that edges are oriented so that
around every bounded face of the graph $\Gs$,
there are an odd number of edges oriented clockwise. Define $\Ks$ to be the
corresponding oriented, weighted, adjacency matrix: rows of
$\Ks$ are indexed by white vertices, columns by black ones, non-zero
coefficients correspond to edges of $\Gs$, and when $\ws\sim\bs$,
$\Ks_{\ws,\bs}=\pm\nu_{\ws\bs}$, where the sign is~$+$, resp.~$-$, if the edge is
oriented from $\ws$ to $\bs$, resp.\@ from $\bs$ to $\ws$. When the graph $\Gs$ is
finite, the partition function of the dimer model is equal to $|\det
\Ks|$~\cite{Kasteleyn2,TF}. Kenyon~\cite{Kenyon1} derives an explicit expression
for the dimer Boltzmann measure $\PP$ in terms of~$\Ks$,
establishing that the dimer model is a determinantal process.

This was extended by Kuperberg~\cite{Kuperberg} as follows.
Consider a weighted adjacency matrix of $\Gs$ with possibly complex coefficients,
\emph{i.e.}, a matrix~$\Ks$ as above with~$\Ks_{\ws,\bs}=\omega_{\ws\bs}\nu_{\ws\bs}$,
this time allowing for~$\omega_{\ws\bs}$ to be any modulus~$1$ complex number
(as opposed to only~$\pm 1$ above).
Let us assume that for any bounded face~$\fs$ of~$\Gs$, the phase~$\omega$ satisfies the
following \emph{Kasteleyn condition}:
\[
\prod_{j=1}^n \frac{\omega_{\ws_j\bs_j}}{\omega_{\ws_j\bs_{j-1}}}=(-1)^{n+1}\,,
\]
assuming the notation of Figure~\ref{fig:faceweight}.
Then, the dimer partition function and Boltzmann measure can be computed
from~$\Ks$ and its inverse. When this is the case, $\Ks$ is said to be \emph{Kasteleyn};
we also refer to $\Ks$ as a \emph{Kasteleyn matrix}
for the dimer model on~$(\Gs,\nu)$.

The situation in the case of finite graphs embedded in the torus is different; the key facts are recalled when needed, that is at the beginning of Section~\ref{sec:Gibbs_measures}.

A Kasteleyn matrix~$\Ks$ can be seen as a linear operator from the complex valued functions on black vertices to those on white vertices of~$\Gs$, via the equality~$(\Ks f)_\ws=\sum_{\bs} \Ks_{\ws,\bs}f_b$ for~$f\in \CC^\Bs$. Therefore, we will refer to $\Ks$ as a Kasteleyn matrix or \emph{operator}, and similarly for weighted adjacency matrices/operators.

\begin{figure}[h]
    \centering
    \def\svgwidth{8cm}
    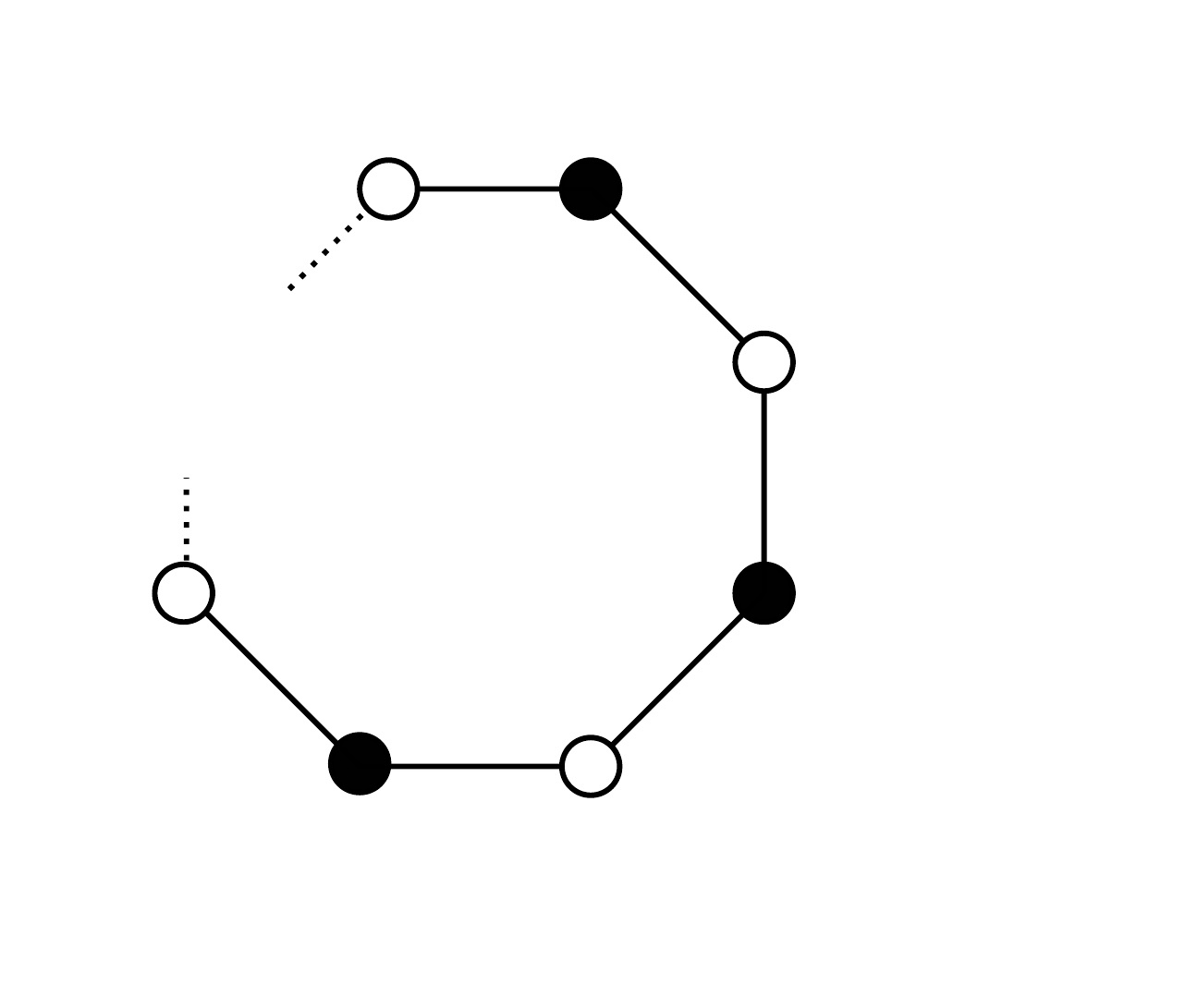
    \caption{Vertices and train-track half-angles around an arbitrary face.}
    \label{fig:faceweight}
  \end{figure}

\subsection{Discrete Abel map and Jacobi theta functions}\label{sec:abel_map}

In order to define Fock's elliptic adjacency operator in the next section, we need two preliminary definitions: the discrete Abel map, and elliptic theta functions.

\textbf{Discrete Abel map}. Following Fock~\cite{Fock}, we iteratively construct a function $\mapd$, denoted by \textbf{\textit{d}} in~\cite{Fock}, which assigns to every vertex of the quad-graph $\GR$ a linear combination of train-track half-angles $(\alpha_T)_{T\in\T}$ with integer coefficients.

The map $\mapd$ is constructed as follows.
Choose a vertex~$\vs_0$ of~$\GR$ and
set (arbitrarily) $\mapd(\vs_0)=0$.
Then along an edge of the
quad-graph $\GR$
crossed by a train-track $T$, the value of $\mapd$
increases, resp.\@ decreased by $\alpha_T$, if $T$ goes from right
to left, resp.\@ from left to right, when traversing the edge.
One easily checks that this gives a well defined map on
the vertices of~$\GR$.
This formal~$\ZZ$-linear combination of half-angles can be understood as an element of $\RR/\pi\ZZ$
by evaluating the combination modulo $\pi$.
An example of computation around a
face of $\GR$ is given in Figure~\ref{fig:around_rhombus} below.

\begin{figure}[ht]
  \centering
  \definecolor{electricblue}{RGB}{0,117,255}
  \begin{tikzpicture}[auto]
    \node  (w) at (-1.5,0) [shape=circle,draw,fill=white,label=left:$\ws$] {};
    \node (b) at (1.5,0) [shape=circle,draw,fill=black,label=right:$\bs$] {};
    \node (o) at (0,0) {};
    \coordinate [label=above:$\fs$](f)  at (0.3,1.5)  ;
    \coordinate [label=below:$\fs'$](g)  at (-0.2,-1)  ;
    \draw  (b) -- (f) -- (w) -- (g) --  (b);
    \draw[very thick] (w) -- (b);
    \coordinate [label=left:$\beta$] (d1) at (-1,1);
    \coordinate [label=right:$\alpha$] (d2) at (1,1);
    \path[<-<,draw=red,ultra thick]  (-1,1) to [bend left]   (o)  to [bend right] (1,-1) ;
    \path[<-<,draw=electricblue,ultra thick]   (1,1) to [bend right]   (o)  to [bend left] (-1,-1) ;
  \end{tikzpicture}
  \caption{%
    A face of $\GR$ has four vertices corresponding to a white vertex $\ws$, a black
    vertex $\bs$ and two faces $\fs$ and $\fs'$ of the minimal graph, and is at the intersection of two
    train-tracks. The train-tracks are oriented in such a way that they turn
    counterclockwise around $\ws$ and clockwise around $\bs$.
    We have
    $\mapd(\bs)=\mapd(\fs)+\alpha=\mapd(\fs')+\beta=\mapd(\ws)+\alpha+\beta$.
  }
  \label{fig:around_rhombus}
\end{figure}

\textbf{Jacobi  theta functions and Weierstrass/Fay's identity}.
Classically, there are four theta functions,
denoted either by $\theta_{i,j}$ with $i,j=0,1$, or by $\theta_\ell$ with $\ell\in\{1,\dots,4\}$, whose definition may
slightly vary depending on the sources. Among these four, only one is an odd
function. This function, $\theta_{1}$ in the second notation, is the
function we mainly use here, and we simply denote it by $\theta$ when
there is no ambiguity.
Let us recall its definition.
Let $\tau$ be a complex number with positive imaginary part and let $q=e^{i\pi\tau}$.
The \emph{(first) Jacobi theta function} $\theta$ is the entire holomorphic function
defined by the following series:
  \begin{equation}
    \theta(z)=\theta_1(z;q)=\theta_1(z|\tau)=2 \sum_{n=0}^{\infty}(-1)^n
    q^{(n+\frac{1}{2})^2} \sin[(2n+1)z].
    \label{eq:def_theta}
  \end{equation}
The function $\theta$ is antisymmetric and $2\pi$-periodic:
  \begin{equation*}
    \forall\ z\in\mathbb{C},\quad \theta(z+\pi)=\theta(-z)=-\theta(z),
  \end{equation*}
  and also satisfies the quasiperiodic relation
  \begin{equation}\label{equ:quasi_period_theta}
    \forall\ z\in\mathbb{C},\quad \theta(z+\pi\tau) = (-qe^{2iz})^{-1}
    \theta(z).
  \end{equation}

\begin{rem}
  \label{rem:lambda-ellipt}
  The zeros of $\theta$ in $\mathbb{C}$ form a two-dimensional lattice $\Lambda$,
  generated by $\pi$ and $\pi\tau$, and we let $\TT(q):=\CC/\Lambda$ denote the torus
  obtained from $\Lambda$.
  The function $\theta$ is an elementary brick
  to build $\Lambda$-periodic, meromorphic functions, \emph{i.e.}, \@ \emph{$\Lambda$-elliptic} functions.
  For
  example, the ratio
  $\frac{\theta(z-a_1)\theta(z-a_2)}{\theta(z-b_1)\theta(z-b_2)}$ is an elliptic
  function with two simple zeros
  (at~$a_1$ and~$a_2$) and two simple poles (at~$b_1$ and $b_2$) on~$\TT(q)$
  as soon as
  $a_1$, $a_2$, $b_1$ and $b_2$ are distinct, and satisfy
  $a_1+a_2\equiv b_1+b_2 \mod {\Lambda}$;
  and every $\Lambda$-elliptic function with two zeros and two poles is of that
  form (see \emph{e.g.}~\cite[Theorem~15(c)]{Baxter}).
  \end{rem}

A crucial role is played by the following functional identity satisfied by the
theta function. 

\begin{prop}[Fay's trisecant identity/Weierstrass identity]
  For all $(s,u)\in \CC^2$, for all $(a,b,c)\in\CC^3$,
  \begin{multline}
    \frac{\theta(b-a)}{\theta(s-a)\theta(s-b)}
    \frac{\theta(u+s-a-b)}{\theta(u-a)\theta(u-b)}
    +
    \frac{\theta(c-b)}{\theta(s-b)\theta(s-c)}
    \frac{\theta(u+s-b-c)}{\theta(u-b)\theta(u-c)}
    \\
    +
    \frac{\theta(a-c)}{\theta(s-c)\theta(s-a)}
    \frac{\theta(u+s-c-a)}{\theta(u-c)\theta(u-a)}
    =0.
    \label{eq:fay}
  \end{multline}
\end{prop}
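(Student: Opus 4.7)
The plan is to fix $s,u,a,b\in\CC$ generically and view the left-hand side as a meromorphic function $F(c)$ of the remaining variable $c$. I would work with the cleared function
\[
H(c):=\theta(s-c)\,\theta(u-c)\,F(c),
\]
which I expect to be entire and to satisfy the quasi-periodicity of a theta function of ``order two'' on the torus $\TT(q)$. Exhibiting three or more zeros of $H$ in a fundamental domain will then force $H\equiv 0$, yielding the identity for generic parameters and, by joint continuity of both sides, for all parameters.

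Entirety of $H$ comes from the observation that the only poles of $F$ in $c$ sit at $c\equiv s$ and $c\equiv u\pmod\Lambda$, appearing in the second and third terms, and are precisely cancelled by the prefactor; the first term, which is $c$-independent, is simply multiplied by the entire factor $\theta(s-c)\theta(u-c)$. To identify $H$ as an order-two theta function, I would use the antisymmetry $\theta(-z)=-\theta(z)$, the $\pi$-antiperiodicity $\theta(z+\pi)=-\theta(z)$ and the quasi-periodicity relation~\eqref{equ:quasi_period_theta} to compute how each of the three pieces of $H$ transforms under $c\mapsto c+\pi$ and $c\mapsto c+\pi\tau$. The key observation is that, thanks to the very specific arguments $u+s-a-b$, $u+s-b-c$, $u+s-c-a$ in the numerators, all three pieces are $\pi$-periodic in $c$ and are multiplied by the \emph{same} factor $q^{-2}e^{2i(u+s-2c)}$ under $c\mapsto c+\pi\tau$; the $e^{-4ic}$ dependence signals order two. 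By the standard Liouville/argument-principle count, such an $H$ has exactly two zeros modulo $\Lambda$.

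To obtain a contradiction (and conclude $H\equiv 0$), I would exhibit three zeros by direct substitution. At $c=a$, the third term of $H$ vanishes because of $\theta(a-c)$, and the first two terms combine, after using $\theta(a-b)=-\theta(b-a)$ and $u+s-a-b=u+s-b-a$, to zero. The case $c=b$ is symmetric. At $c=s$, the prefactor $\theta(s-c)$ kills the (multiplied) first term, while the second and third terms reduce to a single difference that cancels after an analogous manipulation. Three zeros of an order-two theta function force $H\equiv 0$, which is the desired identity.

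The main obstacle, though largely bookkeeping, will be verifying that the quasi-periodic multipliers of the three pieces under $c\mapsto c+\pi\tau$ genuinely coincide: the cancellation depends delicately on the balanced combinations of $a,b,c,s,u$ hard-wired into the statement, and is the algebraic fingerprint of the addition theorem on the elliptic curve $\TT(q)$. Once this alignment is established, the zero-count argument is automatic.
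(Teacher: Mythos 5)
Your proof is correct. Note that the paper does not actually prove this proposition: it derives it by citation from the classical Weierstrass identity (\cite{Weierstrass}, \cite[Ch.~1, ex.~4]{Lawden}). Your argument therefore supplies a self-contained proof by the standard Liouville-type method for elliptic identities, and all the steps check out. Concretely: $H(c)=\theta(s-c)\theta(u-c)F(c)$ is entire in $c$; it is $\pi$-periodic because each of the three pieces acquires two factors of $-1$ under $c\mapsto c+\pi$; and under $c\mapsto c+\pi\tau$, using $\theta(z-\pi\tau)=-q^{-1}e^{2iz}\theta(z)$, each of the three pieces is multiplied by the same factor $q^{-2}e^{2i(u+s-2c)}$ (for the second piece, $e^{-2i(c-b)}e^{2i(u+s-b-c)}=e^{2i(u+s-2c)}$, and similarly for the third), so $H$ is a theta function of order two with exactly two zeros mod $\Lambda$ unless it vanishes identically. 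The three zeros are as you say: at $c=a$ the third piece vanishes and the first two cancel by antisymmetry of $\theta$; at $c=b$ symmetrically; at $c=s$ the first piece is killed by the prefactor while the second and third pieces evaluate to $+1$ and $-1$ respectively (one can also use $c=u$ as a fourth zero). For generic $(a,b,s,u)$ with $a,b,s$ distinct mod $\Lambda$ this forces $H\equiv 0$, and the general case follows since both sides are meromorphic in all variables. The only point worth making explicit in a write-up is the genericity/continuity step at the end, which you do mention; otherwise the "delicate bookkeeping" you flag is exactly the computation above and it goes through.
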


This identity, which can be derived from the Weierstrass
identity~\cite{Weierstrass}, see
e.g.\@~\cite[Ch.~1, ex.~4]{Lawden},
can be seen as the genus~1 case
of the more general Fay identity~\cite{Fay} satisfied by the Riemann theta
functions and the associated prime forms on Riemann surfaces. Fay's identity is
a cornerstone of the work of Fock~\cite{Fock} on the inverse spectral problem for Goncharov-Kenyon integrable systems.
We refer the reader to~\cite{george2019spectra}
for an analogous of Fock's results for spectral
curves of Laplacians on minimal periodic planar graphs in connection with Fay's
\emph{quadrisecant identity}.

Translating $a,b,c$ by elements of $\Lambda$ leaves Equation~\eqref{eq:fay}
invariant. Translating $s$ or $u$ gives a global multiplicative factor which
does not change the fact that the sum is zero. Therefore, the parameters in this
identity can really been interpreted as elements of the torus $\TT(q)$.

 By letting $c$ tend to $u$ in Fay's trisecant identity, we immediately obtain the following
 telescopic identity which only depends on $a,b,u$ and $s$, giving
 the version we mostly use:

\begin{cor}\label{cor:Faystrisecant}
 For all $(s,u)\in \CC^2$, for all $(a,b)\in\CC^2$,
  \begin{equation}\label{eq:telefay}
    \frac{\theta(u-s)\theta(b-a)}{\theta(s-a)\theta(s-b)}
    \cdot
    \frac{\theta(u+s-a-b)}{\theta(u-a)\theta(u-b)}
    =
    F^{(s)}(u;b)-F^{(s)}(u;a)
  \end{equation}
  where
  \begin{equation*}
    F^{(s)}(u;a)= \frac{1}{\theta'(0)}
    \left(%
    \frac{\theta'}{\theta}(s-a)
    -
    \frac{\theta'}{\theta}(u-a)
    \right).
  \end{equation*}
\end{cor}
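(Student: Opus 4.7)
The plan is to obtain the identity by a direct limiting argument from Fay's trisecant identity~\eqref{eq:fay}, letting $c\to u$. Observe first that the first term of~\eqref{eq:fay} is independent of $c$ and equals the left-hand side of~\eqref{eq:telefay} divided by $\theta(u-s)$. The second and third terms both have $\theta(u-c)$ in the denominator and are therefore individually singular at $c=u$, but Fay's identity forces their sum to be regular (equal to minus the first term). The strategy is to extract the finite part of this sum and identify it with $F^{(s)}(u;b)-F^{(s)}(u;a)$ up to the factor $\theta(u-s)$.

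Concretely, I would set $c=u+\varepsilon$ and Taylor expand each theta factor to first order in $\varepsilon$, using $\theta(-\varepsilon)=-\varepsilon\theta'(0)+O(\varepsilon^3)$ (since $\theta$ is odd). For the third term, together with the identity $\theta(a-u-\varepsilon)=-\theta(u-a+\varepsilon)$, the expansion produces a leading pole $-\tfrac{1}{\varepsilon\theta'(0)\theta(s-u)}$ followed by a finite contribution of the form $-\tfrac{1}{\theta'(0)\theta(s-u)}\bigl(\tfrac{\theta'}{\theta}(u-a)-\tfrac{\theta'}{\theta}(s-a)+\tfrac{\theta'}{\theta}(s-u)\bigr)$. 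A parallel expansion of the second term yields the opposite leading pole $+\tfrac{1}{\varepsilon\theta'(0)\theta(s-u)}$ together with the analogous finite contribution in the variable $b$.

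The key step is then that the two $1/\varepsilon$ singularities cancel exactly, as required by regularity, while the $\tfrac{\theta'}{\theta}(s-u)$ terms from the expansion of $\theta(s-u-\varepsilon)^{-1}$ cancel between the $b$ and $a$ contributions. What remains is
\[
T_1=\frac{1}{\theta'(0)\,\theta(s-u)}\Bigl[\bigl(\tfrac{\theta'}{\theta}(s-a)-\tfrac{\theta'}{\theta}(u-a)\bigr)-\bigl(\tfrac{\theta'}{\theta}(s-b)-\tfrac{\theta'}{\theta}(u-b)\bigr)\Bigr],
\]
and multiplying both sides by $\theta(u-s)=-\theta(s-u)$ yields the claimed equality, after recognising the bracket as $\theta'(0)\bigl(F^{(s)}(u;b)-F^{(s)}(u;a)\bigr)$.

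The computation is entirely mechanical; the only mild subtlety is bookkeeping of signs, which I expect to be the main (and only) source of error. In particular, one must apply the oddness of $\theta$ consistently when replacing $\theta(a-c)$ by $-\theta(c-a)$ and $\theta(-\varepsilon)$ by $-\theta(\varepsilon)$, and remember the overall sign $\theta(u-s)=-\theta(s-u)$ when passing from $T_1$ to the form stated in Corollary~\ref{cor:Faystrisecant}.
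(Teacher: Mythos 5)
Your proposal is correct and is exactly the paper's approach: the paper's entire proof consists of the sentence "by letting $c$ tend to $u$ in Fay's trisecant identity, we immediately obtain...", and you have simply carried out the expansion that this sentence leaves implicit. One bookkeeping remark: in your intermediate prose the signs of the two simple poles are attached to the wrong terms --- the term containing $\theta(a-c)$ expands with leading pole $+\tfrac{1}{\varepsilon\theta'(0)\theta(s-u)}$ and finite part $+\tfrac{1}{\theta'(0)\theta(s-u)}\bigl(\tfrac{\theta'}{\theta}(u-a)-\tfrac{\theta'}{\theta}(s-a)+\tfrac{\theta'}{\theta}(s-u)\bigr)$, with the term containing $\theta(c-b)$ carrying the opposite signs --- but your displayed expression for $T_1$ and the final identity are the ones that follow from the correct sign assignment, so only the intermediate description needs fixing, not the conclusion.
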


\begin{rem}\label{rem:Fay}
{Corollary~\ref{cor:Faystrisecant} is central to this paper. It is at the heart of the construction of functions in the kernel of Fock's elliptic Kasteleyn operator, see the forthcoming~Proposition~\ref{prop:ker}. The latter is then one of the cornerstones of the proof of Theorem~\ref{thm:intro_main_1}.
This identity plays the role of
(and in fact becomes in a certain limit)
the identity $\frac{a-b}{(z-a)(z-b)}=\frac{1}{z-a}-\frac{1}{z-b}$ used in the genus~0 case by Kenyon~\cite[p. 420]{Kenyon:crit}. More details on how to recover the genus~0 case from the genus~1 case is given in Section~\ref{sec:rational}.}
\end{rem}

Note also that multiplying Equation~\eqref{eq:fay}
by~$\theta(t-a)\theta(t-b)\theta(t-c)\theta(u-a)\theta(u-b)\theta(u-c)$ and
writing~$t:=u+s$ and $d:=s$, we immediately obtain the following elegant version of Fay's
identity~\cite{Fock}:

\begin{cor}\label{cor:FayFock}
For all~$(a,b,c,d)\in\CC^4$, {$t\in\CC$},
\begin{equation}\label{eq:FayFock}
F_t(a,b)F_t(c,d)+F_t(a,c)F_t(d,b)+F_t(a,d)F_t(b,c)=0\,,
\end{equation}
where~$F_t(a,b):=\theta(a-b)\theta(a+b-t)$.
\end{cor}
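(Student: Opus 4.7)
The plan is to follow the recipe indicated in the paragraph preceding the statement and derive the identity directly from the ungraded form of Fay's trisecant identity~\eqref{eq:fay}. First, I would clear denominators in~\eqref{eq:fay} by multiplying both sides by $\theta(s-a)\theta(s-b)\theta(s-c)\theta(u-a)\theta(u-b)\theta(u-c)$, producing the polynomial identity
\[
\sum_{\mathrm{cyc}\,(a,b,c)} \theta(b-a)\,\theta(u+s-a-b)\,\theta(s-c)\,\theta(u-c) \;=\; 0,
\]
where the sum is over the three cyclic permutations of $(a,b,c)$.

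Next, I would introduce the change of variables $t := u+s$ and $d := s$ (so that $u = t-d$), and rewrite each of the four theta factors in a given cyclic term in the normal form used in the definition of $F_t$. For instance, in the $(a,b,c)$-term, using only the oddness of $\theta$, the factors become $\theta(b-a) = -\theta(a-b)$, $\theta(u+s-a-b) = -\theta(a+b-t)$, $\theta(s-c) = -\theta(c-d)$ and $\theta(u-c) = -\theta(c+d-t)$. The four sign flips contribute an overall factor $(-1)^4 = +1$, so this term collapses to exactly $F_t(a,b)\,F_t(c,d)$; the other two cyclic terms yield, by the same computation, the products $F_t(b,c)F_t(a,d)$ and $F_t(c,a)F_t(b,d)$. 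To match the precise form of the statement, I would finally use the antisymmetry $F_t(y,x) = -F_t(x,y)$, itself immediate from the oddness of $\theta$, to rewrite these as $F_t(a,d)F_t(b,c)$ and $F_t(a,c)F_t(d,b)$ respectively.

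The computation is entirely mechanical once the substitution is set up and requires no further analytic input beyond~\eqref{eq:fay}. The only point that requires care is sign bookkeeping: checking that in each of the three cyclic terms exactly four theta factors need to have their arguments negated, so that the three resulting products all appear with the same sign on the left-hand side of~\eqref{eq:FayFock}.
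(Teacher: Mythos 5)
Your proof is correct and is exactly the argument the paper intends: clear denominators in Fay's identity~\eqref{eq:fay}, substitute $t:=u+s$ and $d:=s$, and track the signs using only the oddness of $\theta$. (Your multiplier $\theta(s-a)\theta(s-b)\theta(s-c)\theta(u-a)\theta(u-b)\theta(u-c)$ is the one that actually clears the denominators; the paper's text writes $\theta(t-\cdot)$ in place of $\theta(s-\cdot)$, which appears to be a typo, so your sign bookkeeping is if anything the more careful version of the same computation.)
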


\section{Family of elliptic Kasteleyn operators}
\label{sec:Kasteleyn}

Let~$\Gs$ be an infinite planar, bipartite graph.
In Section~\ref{sec:Kast_def}, we introduce Fock's one-parameter family of adjacency operators~\cite{Fock} in the genus 1 case, denoted by~$(\K[t])_{t\in\CC}$,
which depend on a half-angle map~$\mapalpha\colon\T\to\RR/\pi\ZZ$ and on a modular parameter~$\tau$.
In Proposition~\ref{prop:kastorient}, we use the results of~\cite{BCdT:immersion} to prove 
that if~$\Gs$ is minimal, if~$\mapalpha$ belongs to~$X_\Gs$ (recall Section~\ref{sec:tt-def}), if the parameter~$\tau$ {belongs to $i\RR_{>0}$} and $t$ lies in~$\RR+\frac{\pi}{2}\tau$, then the operator~$\K[t]$ is actually a Kasteleyn operator (recall Section~\ref{sec:dimer_model}).
In Section~\ref{sec:null_functions}, coming back to the general setting of an arbitrary
graph~$\Gs$, half-angle map~$\mapalpha$ and complex parameter~$t$,
we introduce a
family of functions in the kernel of $\K[t]$.
In Section~\ref{sec:circlepatterns}, we show how these functions define explicit immersions of the dual graph~$\Gs^*$,
in the spirit of the recent paper~\cite{KLRR}.
Finally, in Section~\ref{sec:poles_domainD}, we assume once again the hypotheses of Proposition~\ref{prop:kastorient}
and study 
the relative positions of the poles and the zeros of these functions, a fact used in Section~\ref{sec:preliminaries_contours}.

\subsection{Kasteleyn elliptic operators}\label{sec:Kast_def}

Let~$\Gs$ be an infinite, planar, bipartite graph, and let us fix a half-angle map~$\mapalpha\colon\T\to\RR/\pi\ZZ$ and a modular parameter~$\tau$. Recall that, by Section~\ref{sec:abel_map}, this allows to define the discrete Abel map~$\mapd$ and the Jacobi theta function~$\theta$.

\begin{defi}
\emph{Fock's elliptic adjacency operator}~$\K[t]$ is the complex weighted, adjacency operator of the graph~$\Gs$, whose non-zero coefficients are given as follows:
for every edge~$\ws\bs$ of~$\Gs$ crossed by train-tracks with half-angles~$\alpha$ and~$\beta$ as in Figure~\ref{fig:around_rhombus}, we have
  \begin{equation}\label{def:Kast_elliptic}
    \K[t]_{\ws,\bs}=\frac{\theta(\beta-\alpha)}{\theta(t+\mapd(\bs)-\beta)\theta(t+\mapd(\bs)-\alpha)}\,.
  \end{equation}
\end{defi}

Several remarks are in order.

\begin{rem}\label{rem:half-angle}\leavevmode
\begin{enumerate}
  \item This operator  is the genus~1 case
  of a more general operator
introduced in~\cite{Fock} by Fock on periodic minimal graphs involving Riemann theta
functions of positive genus and their associated prime forms.
  \item By the equality~$\theta(z+\pi)=-\theta(z)$, the coefficient~$\K[t]_{\ws,\bs}$ is unchanged
when
adding a multiple of $\pi$ to $\alpha$, $\beta$ or
$\mapd(\bs)$.
Hence, the operator~$\K[t]$ only depends on the half-angle map~$\T\to\RR/\pi\ZZ$, as it should.
\item By definition of $\mapd$, we have $\mapd(\bs)=\mapd(\ws)+\alpha+\beta$, and
  the denominator can be rewritten differently depending on
  whether we wish to focus on the black vertex~$\bs$, the white vertex~$\ws$ or the
  neighboring faces~$\fs$ or $\fs'$:
  \begin{equation*}
    \theta(t+\mapd(\bs)-\beta)\theta(t+\mapd(\bs)-\alpha) =
    \theta(t+\mapd(\ws)+\alpha)\theta(t+\mapd(\ws)+\beta) =
    \theta(t+\mapd(\fs))\theta(t+\mapd(\fs')).
  \end{equation*}
\end{enumerate}
\end{rem}

We now show that, under some hypotheses on these parameters,
the operator~$\K[t]$ is Kasteleyn. As a consequence, the Boltzmann measure on dimer configurations of a finite connected subgraph of~$\Gs$ can be constructed as a determinantal processes via~$\K[t]$, as stated in Section~\ref{sec:dimer_model}.

Recall that~$X_\Gs$ denotes the space of half-angle maps~$\mapalpha\colon\T\to\RR/\pi\ZZ$ that are monotone with respect to the cyclic orders on~$\T$ and~$\RR/\pi\ZZ$, and that map pairs of intersecting or anti-parallel train-tracks to distinct half-angles.

\begin{prop}
  \label{prop:kastorient}
 Let~$\Gs$ be a minimal graph,~$\mapalpha$ belong to~$X_\Gs$,~$\tau$ {belong to $i\RR_{>0}$}, and~$t$ lie in~$\RR+\frac{\pi}{2}\tau$.
 Then, Fock's elliptic adjacency operator~$\K[t]$ is Kasteleyn. 
\end{prop}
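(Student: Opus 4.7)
The plan is to factorize Fock's coefficient $\K[t]_{\ws,\bs}$ as a positive magnitude times a modulus-one phase, absorb the phase into a gauge transformation, and then verify that the residual signs form a Kasteleyn orientation with the aid of the minimal immersion of $\Gs$.

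\textbf{Step 1 (phase extraction).} Write $t=t_0+\pi\tau/2$ with $t_0\in\RR$. Since the discrete Abel map $\mapd$ is a $\ZZ$-linear combination of (real) half-angles, the arguments $t+\mapd(\bs)-\alpha$ and $t+\mapd(\bs)-\beta$ both lie in $\RR+\pi\tau/2$. The shift identity
\[
\theta_1(z+\pi\tau/2)=iq^{-1/4}e^{-iz}\theta_4(z),
\]
which follows from the series~\eqref{eq:def_theta} by the reindexing $n\mapsto n+1$, then yields, together with the relation $\mapd(\bs)=\mapd(\ws)+\alpha+\beta$,
\[
\K[t]_{\ws,\bs}=-q^{1/2}\,e^{i(\mapd(\ws)+\mapd(\bs)+2t_0)}\,\frac{\theta_1(\beta-\alpha)}{\theta_4(t_0+\mapd(\bs)-\alpha)\,\theta_4(t_0+\mapd(\bs)-\beta)}.
\]
Since $\tau$ is pure imaginary, $q=e^{i\pi\tau}\in(0,1)$, and the infinite-product representation of $\theta_4$ shows that $\theta_4(x)>0$ for every $x\in\RR$. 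Consequently $|\K[t]_{\ws,\bs}|$ is a strictly positive edge weight~$\nu_{\ws\bs}$.

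\textbf{Step 2 (gauge reduction).} Applying the modulus-one gauge $\sigma_\ws=e^{-i(\mapd(\ws)+t_0)}$, $\sigma_\bs=e^{-i(\mapd(\bs)+t_0)}$ leaves the associated Boltzmann measure invariant and reduces $\K[t]$ to a real matrix with entries $-\varepsilon_{\ws,\bs}\,\nu_{\ws\bs}$, where $\varepsilon_{\ws,\bs}=\mathrm{sign}\bigl(\theta_1(\beta-\alpha)\bigr)\in\{\pm 1\}$ is computed with the representatives $\alpha_T\in[0,\pi)$ of the half-angle map.

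\textbf{Step 3 (Kasteleyn condition).} It remains to check that $(\varepsilon_e)_{e\in\Es}$ is a Kasteleyn orientation, \emph{i.e.}, that $\prod_{e\in\partial\fs}\varepsilon_e=(-1)^{n+1}$ around every bounded $2n$-face $\fs$. This is the step I expect to be the main obstacle. The hypothesis $\mapalpha\in X_\Gs$ ensures, through the main result of~\cite{BCdT:immersion}, that $\mapalpha$ defines a minimal immersion of $\Gs$, in which every face of $\GR$ realizes a unit-edge rhombus of angle $2(\beta-\alpha)$ at $\ws$, with value in $(0,2\pi)$ by monotonicity in the cyclic order on $\T$. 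The sign $\varepsilon_{\ws,\bs}$ then records whether this rhombus is unfolded ($2(\beta-\alpha)\in(0,\pi)$) or folded ($2(\beta-\alpha)\in(\pi,2\pi)$). The key input from~\cite{BCdT:immersion} is that rhombus angles sum to exactly $2\pi$ at every vertex of $\GR$; by a local Gauss--Bonnet type sum along $\partial\fs$, this forces the parity of the number of folded rhombi along $\partial\fs$ to equal $n+1\pmod 2$, which is exactly the Kasteleyn identity. This final step is the delicate combinatorial core of the proof, in the spirit of~\cite{Kenyon:crit} for the isoradial case.
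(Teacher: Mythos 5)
Your Steps 1 and 2 reproduce the first half of the paper's argument: the shift identity $\theta_1(z+\frac{\pi\tau}{2})=iq^{-1/4}e^{-iz}\theta_4(z)$, the positivity of $\theta_4$ on $\RR$ for $\tau$ pure imaginary, and the observation that the exponential prefactor is a gauge factor. Up to that point everything is correct.

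The gap is in Step 3, which is precisely the part you flag as ``the delicate combinatorial core'' and then do not prove. Two concrete problems. First, the identification of the residual sign is wrong: with representatives $\alpha,\beta\in[0,\pi)$ one has $\varepsilon_{\ws,\bs}=\mathrm{sign}\,\theta_1(\beta-\alpha)=+1$ on the whole interval $\beta-\alpha\in(0,\pi)$ and $-1$ on $(-\pi,0)$, whereas the rhombus folds when the lift of $\beta-\alpha$ crosses $\frac{\pi}{2}$, not $0$. So $\varepsilon$ does \emph{not} record folded versus unfolded, and the asserted identity ``number of folded rhombi along $\partial\fs\equiv n+1\pmod 2$'' is in any case false in simple examples (an honest isoradial embedding with no folded rhombi and a face of degree $4$ would require $0\equiv 3$). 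Second, by gauging all the way down to a real matrix you have discarded the factor $e^{-i(\alpha+\beta)}$, which is exactly the ingredient that makes the face-by-face computation work: the paper keeps it, observes that the argument of $\theta(\beta-\alpha)e^{-i(\alpha+\beta)}$ is $\frac{\pi}{2}+\arg(e^{2i\beta}-e^{2i\alpha})$, i.e.\ Kenyon's critical phase, and then invokes \cite[Theorem~31]{BCdT:immersion}, which is the statement that this phase satisfies Kasteleyn's condition on a minimally immersed graph (the winding of $e^{i(\alpha+\beta)}$ around a face, controlled by the rhombus angles summing to $2\pi$ at each vertex of $\GR$, combines with the signs of $\sin(\beta-\alpha)$; neither piece suffices alone). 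To repair your argument you would either have to reinstate that factor and reduce to the cited theorem, or give a genuine proof of the correct sign identity for $\prod_{\es\in\partial\fs}\varepsilon_\es$ using the cyclic monotonicity of $\mapalpha$ — neither of which is done here.
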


\begin{proof}
Let us compute the argument of the complex number~$\K[t]_{\ws,\bs}$ up to gauge
equivalence (recall Section~\ref{sec:dimer_model}). To do so, first observe that
the theta functions~$\theta=\theta_1$ and~$\theta_4$ are related by
\begin{equation}
\theta_1(u+\frac{\pi\tau}{2})=iq^{-1/4}e^{-iu}\theta_4(u)
\label{eq:theta14}
\end{equation}
for all~$u\in\CC$ (see e.g.~\cite[(1.3.6)]{Lawden}), with~$\theta_4(u)=\theta_4(u|\tau)$ strictly positive for~$u$ real
and~$\tau$ {in $i\RR_{>0}$}.
Since~$t=t'+\frac{\pi\tau}{2}$ with~$t'\in\RR$, we obtain
\[
\K[t]_{\ws,\bs}=\frac{-q^{1/2}\,e^{2i(t'+\mapd(b))}}{\theta_4(t'+\mapd(b)-\beta)\theta_4(t'+\mapd(b)-\alpha)}\theta(\beta-\alpha)e^{-i(\alpha+\beta)}\,.
\]
Note that in the fraction above, the numerator
can be discarded up to gauge equivalence, \emph{i.e.},\@ cancels out when computing the face weight,
while the denominator is strictly positive.
Therefore, up to gauge equivalence, the argument of~$\K[t]_{\ws,\bs}$ is simply
given by the argument of~$\theta(\beta-\alpha)e^{-i(\alpha+\beta)}$. Since~$\theta(u)$ is positive for~$u\in(0,\pi)$ and negative for~$u\in(\pi,2\pi)$, one easily checks that this
argument is equal to~${\phi_\mapalpha:=}\frac{\pi}{2}+\arg(e^{2i\beta}-e^{2i\alpha})$.

{The proposition is now a consequence of the main results of~\cite{BCdT:immersion}, as follows.}
Since~$\Gs$ is minimal and~$\mapalpha$ belongs to~$X_\Gs$,
Theorem~23 of~\cite{BCdT:immersion} can be applied, and the angle map~$\mapalpha$ defines a
minimal immersion of~$\Gs$ {as defined in} Section~\ref{sec:tt-def}.
{To be more precise, this theorem gives a full description of the space~$Y_\Gs$ of minimal immersions of~$\Gs$,
a space which in known to contain~$X_\Gs$.}
Now, by~\cite[Theorem~31]{BCdT:immersion}, {the spaces~$X_\Gs\subset Y_\Gs$ are included in the space~$K_\Gs$ of maps~$\mapalpha$ such that the phase~$\phi_\mapalpha$ satisfies Kasteleyn's condition,
concluding the proof.}
\end{proof}

\begin{rem}\label{rem:conj}\leavevmode
Let us briefly discuss the hypotheses of this proposition.
\begin{enumerate}
  \item
    As explained in detail in
    Section~\ref{sec:periodic}, when~$\Gs$ is~$\ZZ^2$-periodic and~$\mapalpha$ is
    chosen such that~$\K[t]$ is
    a periodic Kasteleyn operator, the associated spectral curve is a Harnack
    curve~\cite{KO:Harnack,KOS}, of geometric genus 1, parameterized by the torus~$\TT(q)$.
    By maximality of Harnack curves, the real locus of this spectral curve has
    two connected components, and hence, so should the real locus of~$\TT(q)$.
    This happens if and only if~$\TT(q)$ is a rectangular torus, \emph{i.e.},
    iff~$\tau$ {belongs to $i\RR_{>0}$}.
    Therefore, at least in the~$\ZZ^2$-periodic case,
    the proposition above does not hold unless~{$\tau\in i\RR_{>0}$}.
\item By the proof above, if~$\tau$ {belongs to $i\RR_{>0}$} and~$t$ lies in~$\RR+\frac{\pi}{2}\tau$, then the argument of~$\K[t]_{\ws,\bs}$ is given by~$\arg(e^{2i\beta}-e^{2i\alpha})$ up to gauge equivalence. Furthermore, if~$\Gs$ is minimal and~$\mapalpha$ belongs to~$X_\Gs$,
{then~$\mapalpha$ belongs to the space~$K_\Gs$ of maps such that}
this argument satisfies Kasteleyn's condition.
Actually~\cite[Theorem~31]{BCdT:immersion} proves that if~$\Gs$ is non-minimal, then~{$K_\Gs$ is empty.
In other words,} there is no half-angle map such that~$\arg(e^{2i\beta}-e^{2i\alpha})$ satisfies Kasteleyn's condition.
Therefore, minimal graphs form the largest class of bipartite planar graphs where
the above argument can be applied.
\end{enumerate}
\end{rem}

\subsection{Functions in the kernel of the elliptic Kasteleyn operator}\label{sec:null_functions}

Inspired by~\cite{Fock}, we introduce a complex valued function
$g^{(t)}$ defined on pairs of vertices of the quad-graph $\GR$ and depending on
a complex parameter $u$, which is in the kernel of the operator $\K[t]$.
This definition extends to the elliptic case that of the function~$f$
of~\cite{Kenyon:crit}. Note that in the
critical case of~\cite{Kenyon:crit}, there is no extra parameter $t$.

When both vertices are equal to a vertex $\xs$ of $\GR$, set $g^{(t)}_{\xs,\xs}(u)\equiv 1$. Next, let
us define $g^{(t)}$ for pairs of adjacent vertices $\vs,\fs$ of $\GR$, where
$\vs$ (resp. $\fs$) is a vertex of $\Gs$ (resp. $\Gs^*$);
let~$\alpha$ be the half-angle of the train-track crossing the edge~$\vs\fs$. Then,
depending on whether~$\vs$ is a white vertex~$\ws$ or a black vertex~$\bs$ of~$\Gs$, we set:
\begin{align*}
g^{(t)}_{\fs,\ws}(u)&=(g^{(t)}_{\ws,\fs}(u))^{-1}=
\frac{\theta(u+t+\mapd(\ws))}{\theta(u-\alpha)},\\
g^{(t)}_{\bs,\fs}(u)&=(g^{(t)}_{\fs,\bs}(u))^{-1}=
\frac{\theta(u-t-\mapd(\bs))}{\theta(u-\alpha)}.
\end{align*}
{These two functions are the extension to the genus 1 case of the functions defined in~\cite[Equations (4) and (5)]{Kenyon:crit}, see also Equation~\eqref{eq:g_crit} in Section~\ref{sec:rational} for more details on the connection.}

Now let $\xs,\ys$ be any two vertices of the quad-graph $\GR$ and consider a path $\xs=\xs_1,\ldots,\xs_n=\ys$
of $\GR$ from $\xs$ to $\ys$. Then, as in the critical case of~\cite{Kenyon:crit},~$\g[t]$ is taken to be the product of the contributions along edges of the path:
\[
g^{(t)}_{\xs,\ys}(u)=\prod_{i=1}^{n-1} g^{(t)}_{\xs_i,\xs_{i+1}}(u).
\]

\begin{lem} For every pair of vertices $\xs,\ys$ of $\Gs$,
the function $g^{(t)}_{\xs,\ys}$ is well-defined, \emph{i.e.}, independent of the choice of path in $\GR$ joining $\xs$ and $\ys$.
\end{lem}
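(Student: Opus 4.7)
The plan is a standard cocycle argument. By the multiplicative definition of $g^{(t)}_{\xs,\ys}(u)$ along a path, together with the relations $g^{(t)}_{\xs_{i+1},\xs_i}(u)=g^{(t)}_{\xs_i,\xs_{i+1}}(u)^{-1}$ built into the definition for adjacent vertices, path-independence is equivalent to requiring that for every closed cycle $\xs_1,\ldots,\xs_n,\xs_{n+1}=\xs_1$ in $\GR$ one has
\[
\prod_{i=1}^{n} g^{(t)}_{\xs_i,\xs_{i+1}}(u)=1.
\]
Since $\GR$ is embedded in the plane, any such closed cycle bounds a finite union of faces of $\GR$. By induction on the number of enclosed faces (at each step replacing a portion of the cycle by the complementary portion of the boundary of an adjacent face), the identity reduces to the case where the cycle is the boundary of a single face of $\GR$.

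Every face of $\GR$ has as vertices a white vertex $\ws$, a black vertex $\bs$ and two dual vertices $\fs,\fs'$, and is crossed by two train-tracks with half-angles $\alpha$ and $\beta$ as in Figure~\ref{fig:around_rhombus}. With those conventions, the train-track with half-angle $\alpha$ crosses the edges $\fs\bs$ and $\fs'\ws$, while the train-track with half-angle $\beta$ crosses $\ws\fs$ and $\bs\fs'$. Substituting the definitions from Section~\ref{sec:null_functions}, the product around the boundary oriented as $\ws\to\fs\to\bs\to\fs'\to\ws$ becomes
\[
\frac{\theta(u-\beta)}{\theta(u+t+\mapd(\ws))}\cdot\frac{\theta(u-\alpha)}{\theta(u-t-\mapd(\bs))}\cdot\frac{\theta(u-t-\mapd(\bs))}{\theta(u-\beta)}\cdot\frac{\theta(u+t+\mapd(\ws))}{\theta(u-\alpha)}=1,
\]
since each of the four theta factors appears exactly once in the numerator and once in the denominator.

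No genuine obstacle is expected: the computation is a direct telescoping once the assignment of train-track half-angles to the four edges of the face is fixed. The only step calling for minor care is precisely this assignment, namely identifying which pair of opposite edges each train-track crosses, and it is determined by Figure~\ref{fig:around_rhombus} together with the identities $\mapd(\bs)=\mapd(\fs)+\alpha=\mapd(\fs')+\beta=\mapd(\ws)+\alpha+\beta$ recalled there.
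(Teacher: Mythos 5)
Your proof is correct and is essentially the paper's own argument: reduce by planarity to the boundary of a single quadrilateral face of $\GR$, then verify the telescoping product of the four edge contributions equals $1$; your identification of which train-track crosses which pair of opposite edges and the resulting computation match the paper's exactly.
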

\begin{proof}
It suffices to check that $g^{(t)}$ is well defined around a rhombus $\ws,\fs',\bs,\fs$ of the quad-graph;
let $\alpha$, $\beta$ be the half-angles of the train-tracks defining the rhombus,
see Figure~\ref{fig:around_rhombus}. Then, by definition, the product
$g^{(t)}_{\fs',\ws}\ g^{(t)}_{\ws,\fs}\ g^{(t)}_{\fs,\bs}\ g^{(t)}_{\bs,\fs'}$ is equal to:
\begin{equation*}
\frac{\theta(u+t+\mapd(\ws))}{\theta(u-\alpha)}
\frac{\theta(u-\beta)}{\theta(u+t+\mapd(\ws))}
\frac{\theta(u-\alpha)}{\theta(u-t-\mapd(\bs))}
\frac{\theta(u-t-\mapd(\bs))}{\theta(u-\beta)}
=1.\qedhere
\end{equation*}
\end{proof}

\begin{rem}\label{rem:g_wb}
  In the particular case of a black vertex $\bs$ and a white vertex $\ws$
  along an edge of the graph $\Gs$, using the notation of
Figure~\ref{fig:around_rhombus} and the fact that $\mapd(\ws)=\mapd(\bs)-\alpha-\beta$,
we have
\begin{align*}
  \g[t]_{\bs,\ws}(u)&=\frac{%
    \theta(u+t+\mapd(\ws))\theta(u-t-\mapd(\bs))
  }{%
    \theta(u-\alpha)\theta(u-\beta)
  } \\
                &=\frac{%
    \theta(u+t+\mapd(\bs) -\alpha-\beta)\theta(u-t-\mapd(\bs))
  }{%
    \theta(u-\alpha)\theta(u-\beta)
  }\\
                &=\frac{%
    \theta(u+t+\mapd(\ws))\theta(u-t-\mapd(\ws)-\alpha-\beta)
  }{%
    \theta(u-\alpha)\theta(u-\beta)
  },
\end{align*}
which is
a $\Lambda$-elliptic function,
by Remark~\ref{rem:lambda-ellipt}.
Being a product of $\Lambda$-elliptic functions, 
$\g[t]_{\xs,\ys}(u)$ is itself $\Lambda$-elliptic whenever $\xs$
and $\ys$ are both vertices of $\Gs$. In this case, we consider the parameter
$u$ as living on the torus $\TT(q):=\CC/\Lambda$.
However, this property is not true in general when
$\xs$ or $\ys$ is a dual vertex of $\Gs^*$. Note that $\g[t]_{\bs,\ws}(u)$ is also well defined when the half-angles $\alpha,\beta$ of train-tracks separating $\bs$ and $\ws$
are considered in $\RR/\pi\ZZ$, and that the same holds for $\g[t]_{\xs,\ys}(u)$ when both vertices $\xs,\ys$ belong to $\Gs$.
\end{rem}

The next proposition states that for any
given $u$, the rows and columns of the matrix~$\g[t](u)$,
restricted to white and black vertices respectively, are in the
kernel of $\K[t]$. Although with a different vocabulary, this result is actually contained in Theorem 1 of~\cite{Fock}, hence the attribution. We provide a proof since it is not immediate how to translate Fock's algebraic geometry point of view into ours.

\begin{prop}[\cite{Fock}] \label{prop:ker}
Let $u\in\CC$, and let $\xs$ be a vertex of the quad-graph $\GR$, then:
\begin{enumerate}
  \item
$\g[t]_{\xs,\,\cdot\,}(u)$, seen as a row vector indexed
by white vertices of $\Gs$, is in the left kernel of~$\K[t]$;
equivalently, for every black vertex $\bs$ of $\Gs$, we have
$
\sum_{\ws} \g[t]_{\xs,\ws}(u)\ \K[t]_{\ws,\bs}  =0.
$
\item
$\g[t]_{\,\cdot\,,\xs}(u)$, seen as a column vector indexed
by black vertices of $\Gs$, is in the right kernel of $\K[t]$;
equivalently, for every white vertex $\ws$ of $\Gs$, we have
$
  \sum_{\bs} \K[t]_{\ws,\bs}\ \g[t]_{\bs,\xs}(u)=0.
$
\end{enumerate}
\end{prop}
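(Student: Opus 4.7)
The plan is to prove statement (1); statement (2) then follows by a symmetric argument around a white vertex. By multiplicativity of $g^{(t)}$ along paths in $\GR$, we can factor $g^{(t)}_{\xs,\ws}(u) = g^{(t)}_{\xs,\bs}(u)\,g^{(t)}_{\bs,\ws}(u)$ for each white neighbor $\ws$ of $\bs$ (choosing any path from $\xs$ to $\ws$ that passes through $\bs$). The prefactor $g^{(t)}_{\xs,\bs}(u)$ is common to every term, so it suffices to prove the \emph{local} identity
\[
\sum_{\ws \sim \bs} g^{(t)}_{\bs,\ws}(u)\,\K[t]_{\ws,\bs} = 0,
\]
where the sum runs over the white neighbors of $\bs$.

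The next step is to make this local sum explicit. Label the white neighbors of $\bs$ cyclically as $\ws_1,\ldots,\ws_n$, so that the edge $\bs\ws_i$ is crossed by two train-tracks with half-angles $\alpha_i$ and $\alpha_{i+1}$ (indices mod $n$), the train-track with half-angle $\alpha_{i+1}$ being shared with the next edge $\bs\ws_{i+1}$. Setting $s := t + \mapd(\bs)$, the definition of $\K[t]$ from Section~\ref{sec:Kast_def} and the expression for $g^{(t)}_{\bs,\ws_i}(u)$ given in Remark~\ref{rem:g_wb} combine to give, after using $\mapd(\ws_i) = \mapd(\bs) - \alpha_i - \alpha_{i+1}$,
\[
g^{(t)}_{\bs,\ws_i}(u)\,\K[t]_{\ws_i,\bs}
= \frac{\theta(u-s)\,\theta(\alpha_{i+1}-\alpha_i)}{\theta(s-\alpha_i)\,\theta(s-\alpha_{i+1})}\cdot\frac{\theta(u+s-\alpha_i-\alpha_{i+1})}{\theta(u-\alpha_i)\,\theta(u-\alpha_{i+1})}.
\]

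Now I would apply the telescopic form of Fay's identity (Corollary~\ref{cor:Faystrisecant}) with $a=\alpha_i$ and $b=\alpha_{i+1}$: each summand is exactly $F^{(s)}(u;\alpha_{i+1}) - F^{(s)}(u;\alpha_i)$. Summing cyclically over $i=1,\ldots,n$ with $\alpha_{n+1}=\alpha_1$, the sum telescopes to $0$. This proves statement (1). For statement (2), the same scheme works symmetrically: write $g^{(t)}_{\bs,\xs}(u) = g^{(t)}_{\bs,\ws}(u)\,g^{(t)}_{\ws,\xs}(u)$, factor out the common $g^{(t)}_{\ws,\xs}(u)$, and reduce to $\sum_{\bs\sim\ws}\K[t]_{\ws,\bs}\,g^{(t)}_{\bs,\ws}(u)=0$; labeling the train-track half-angles around $\ws$ as $\beta_1,\ldots,\beta_m$ and setting $s':=-t-\mapd(\ws)$, the terms take the analogous form amenable to Corollary~\ref{cor:Faystrisecant}, and the cyclic telescoping again gives zero.

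The only real subtlety is bookkeeping: one must make sure that the half-angles $\alpha_i,\alpha_{i+1}$ appearing in $\K[t]_{\ws_i,\bs}$ are the same pair as in $g^{(t)}_{\bs,\ws_i}(u)$, and that, going around $\bs$, each $\alpha_i$ is shared between two consecutive edges so that cyclic telescoping is valid. This is guaranteed by the combinatorics of train-tracks around a vertex of~$\Gs$ (each train-track incident to $\bs$ separates two consecutive edges), and does not require any genericity or minimality assumption on the graph or on $\mapalpha$.
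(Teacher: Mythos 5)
Your proposal is correct and follows essentially the same route as the paper's proof: factor out $g^{(t)}_{\xs,\bs}(u)$ using the product structure, compute $g^{(t)}_{\bs,\ws}(u)\K^{(t)}_{\ws,\bs}$ explicitly, recognize it as $F^{(s)}(u;\beta)-F^{(s)}(u;\alpha)$ via Corollary~\ref{cor:Faystrisecant} with $s=t+\mapd(\bs)$ (resp. $s=-t-\mapd(\ws)$ for the second identity), and telescope cyclically around the vertex. Your extra bookkeeping on the cyclic labeling of half-angles around $\bs$ is exactly the implicit content of the paper's telescoping step.
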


\begin{proof}
Let us prove the first identity. Using the product form of $\g[t]$, we write~$\g[t]_{\xs,\ws}(u)=\g[t]_{\xs,\bs}(u)\g[t]_{\bs,\ws}(u)$ and factor out
$\g[t]_{\xs,\bs}(u)$, so that we can assume without loss of generality that $\xs=\bs$.

Let $\alpha,\beta$ be the parameters of the train-tracks crossing the edge $\ws\bs$, see Figure~\ref{fig:around_rhombus}.
Then using the definition of the elliptic Kasteleyn operator~\eqref{def:Kast_elliptic} and Remark~\ref{rem:g_wb}, we have
\begin{equation*}
\g[t]_{\bs,\ws}(u)\ \K[t]_{\ws,\bs}  =
\theta(u-t-\mapd(\bs))
\frac{\theta(u+t+\mapd(\bs) -\alpha-\beta)}{\theta(u-\alpha)\theta(u-\beta)}
\frac{\theta(\beta-\alpha)}{\theta(t+\mapd(\bs)-\alpha)\theta(t+\mapd(\bs)-\beta)}.
\end{equation*}
Now using Corollary~\ref{cor:Faystrisecant}, with $s=t+\mapd(\bs),a=\alpha,b=\beta$,
we obtain
\begin{equation*}
\g[t]_{\bs,\ws}(u) \K[t]_{\ws,\bs}= F^{(t+\mapd(\bs))}(u;\beta) - F^{(t+\mapd(\bs))}(u;\alpha).
\end{equation*}

As a consequence, for $\bs$ fixed,
the right-hand side is the generic term of a telescopic sum, which gives zero when summing over the white neighbors of $\bs$.

The proof of the second identity follows the same lines, and it is enough to
check the case where $\xs=\ws$. With the same notation as above,
  rewriting the expression of $\K[t]_{\ws,\bs}$ using
  $\mapd(\ws)=\mapd(\bs)-\alpha-\beta$, we obtain:
  \begin{equation*}
    \K[t]_{\ws,\bs}\ \g[t]_{\bs,\ws}(u)  =
    \frac{\theta(\beta-\alpha)}{\theta(t+\mapd(\ws)+\alpha)\theta(t+\mapd(\ws)+\beta)}
    \frac{\theta(u-t-\mapd(\ws) -\alpha-\beta)}{\theta(u-\alpha)\theta(u-\beta)}
    \theta(u+t+\mapd(\ws)).
  \end{equation*}
  Applying Corollary~\ref{cor:Faystrisecant} again with $s=-t-\mapd(\ws)$
  implies that, for $\ws$ fixed, this is the generic term of a telescopic sum which gives
  zero when summing over the black neighbors of~$\ws$.
\end{proof}

\begin{rem}\label{rem:kernel_gen}
From the function $\g[t]$, it is possible to
  construct more functions in the kernel of $\K[t]$. For example, fix a black
  vertex $\bs$ and let $\Phi$ be
  a generalized function (\emph{e.g.}\@ a measure, or a linear combination of
  evaluations of derivatives) on $\mathbb{C}$ with, for
  definiteness, compact support
  avoiding poles of $g^{(t)}_{\bs,\ws}(u)$, for any white vertex $\ws$. Then the
  action $\langle \Phi, g^{(t)}_{\bs,\ws}\rangle$ of $\Phi$ on each of the entries
  of the vector $\bigl(g^{(t)}_{\bs,\ws}(u)\bigr)_{\ws\in \Ws}$ is a row vector in the
  left kernel of $\K[t]$ by linearity:
  \begin{equation*}
    \langle \Phi, g^{(t)}\rangle \cdot \K[t] = \langle \Phi, g^{(t)}\cdot
    \K[t]\rangle = \langle \Phi, 0\rangle = 0.
  \end{equation*}
  We wonder if all functions in the kernel of $\K[t]$ are of this form.
\end{rem}

\subsection{Graph {realizations} and circle patterns}
\label{sec:circlepatterns}

{In the recent paper~\cite{KLRR}, the authors establish a correspondence between the dimer model on a bipartite graph $\Gs$ and circle patterns with the combinatorics of that graph. More precisely, when the graph is finite and the outer face has a restriction on its degree, or when it is infinite, $\ZZ^2$-periodic and the dimer model is in the liquid phase, the authors assign a circle pattern to the graph $\Gs$ and a convex embedding to the dual graph $\Gs^*$,
see~\cite[Theorem~2 and Theorem 10]{KLRR}. The convex embedding of the dual is referred to as a t-embedding in~\cite{CLR}, see also~\cite[Section~4]{CLR21} for further developments and for the notion of perfect t-embedding.}
Note that the ``t'' in the name t-embedding has nothing
to do with our parameter $t$.

{Prior to tackling the question of the geometric properties of the dual graph $\Gs^*$, \emph{i.e.}, checking that edges are non-intersecting and that faces are convex, the authors define a realization of the dual graph $\Gs^*$}
using
functions in the kernel of the corresponding Kasteleyn operator $\Ks$ when they
exist. {In accordance with the literature, we refer to the latter as a \emph{t-realization}\footnote{{Note that in the terminology of this paper, the most natural term would be \emph{t-immersion}, but we chose the terminology suited to the papers cited in this section.}} of the dual graph.}
More precisely, if $F$, resp.\@ $G$ is in the right, resp.\@ left, kernel of
$\Ks$, then ~$\omega_{\ws\bs}:=G_\ws \Ks_{\ws,\bs}F_\bs$ defines a
divergence free flow $\omega$, so that it can be written as
an increment
\[
\Psi(\fs)-\Psi(\fs')=\omega_{\ws,\bs}
\]
where $\fs\fs'$ is the dual edge of $\ws\bs$, see Figure~\ref{fig:around_rhombus}. {The maps $F$ and $G$ are said to give a \emph{Coulomb gauge} for $\Gs$~\cite[Section 3.3]{KLRR}}.
The {\emph{t-realization}} of $\Gs^*$ is the mapping $\Psi$, defined up to an additive constant by the relation above.

{In our setting of Fock's elliptic adjacency operator, when the graph is infinite, we have explicit, local expressions for a family of Coulomb gauges for $\Gs$. Note that we do not need Fock's operator to be Kasteleyn for this construction to work, but of course, in general we have no control on geometric properties of the t-realizations of $\Gs^*$. } Proposition~\ref{prop:ker} gives explicit functions in the kernel of Fock's
elliptic adjacency operator~$\K[t]$, and thus
  by taking $F^{(t)}=\g^{(t)}_{\cdot,\xs}(u)$ and $G^{(t)}=\g^{(t)}_{\xs,\cdot}(u)$ for some fixed
  vertex~$\xs$ of $\GR$, 
one defines a family {of Coulomb gauges $F^{(t)},G^{(t)}$, and a family}
$(\Psi^{(t)}_{u})_{t\in\CC,u\in \TT(q)}$ of {t-realizations} of the graph $\Gs^*$ indexed by~$t\in\CC$ and~$u\in\TT(q)$. {The Coulomb gauges are \emph{local} in the sense that they are defined as the product of increments along edges; this property is inherited from the incremental definition of the function $g$.}

{As said, for arbitrary values of~$t\in\CC$ and~$u\in \TT(q)$, we have no control on geometric properties of the t-realizations $(\Psi^{(t)}_{u})_{t\in\CC,u\in \TT(q)}$. }
 However, when {the conditions of Proposition~\ref{prop:kastorient} are
 satisfied, then} $\Ks$ is Kasteleyn {and} it is known that $\Psi^{(t)}_{u}$
  defines a local embedding; if furthermore~$\Gs$ and~$\Ks$ are periodic,
  {then $F^{(t)}$ and $G^{(t)}$ are quasiperiodic and} $\Psi^{(t)}$ is
  a \emph{global} {periodic} convex embedding~\cite[{Remark 8} and Theorem 10]{KLRR}.

Using Corollary~\ref{cor:Faystrisecant} as in the proof of
Proposition~\ref{prop:ker} gives an explicit expression for
the increments of the map
$\Psi^{(t)}_{u}$:
\begin{equation}
\begin{split}
\Psi^{(t)}_{u}(\fs)-\Psi^{(t)}_{u}(\fs')= 
\hcancel{\theta'(0)}\g^{(t)}_{\bs,\ws}(u)\Ks^{(t)}_{\ws,\bs}&=
\hcancel{\theta'(0)}
  F^{(t+\mapd(\bs))}(u;\beta) - F^{(t+\mapd(\bs))}(u;\alpha)
\\
&=
\hcancel{\theta'(0)}
F^{(-t-\mapd(\ws))}(u;\beta) - F^{(-t-\mapd(\ws))}(u;\alpha).
\end{split}
\label{eq:gradpsiGdual}
\end{equation}

We refer to Figure~\ref{fig:t-embedding} (left) for an example of such a {t-realization} which
is actually a local embedding of~$\Gs^*$.

The {t-realization}~$\Psi^{(t)}_{u}$ of $\Gs^*$
can be extended into {a realization} of $\GR$ as
follows.
Fix an arbitrary function $\Xi\colon \Vs\rightarrow \RR^2$.
Let $\vs$ and $\fs$ be neighboring vertices in $\GR$ corresponding
respectively to a vertex and a face of $\Gs$, and separated by a train-track
with half-angle $\alpha$. Depending on whether $\vs$ is  a
black vertex $\bs$ or a white vertex $\ws$, the increment of~$\Psi^{(t)}_{u}$
between $\vs$ and $\fs$ is given by the following formulas:
\begin{equation}
  \begin{split}
  \Psi^{(t)}_{u}(\bs)-\Psi^{(t)}_{u}(\fs) &=
  \Xi(\bs)
  +{\frac{1}{\theta'(0)}}\Bigl(\frac{\theta'}{\theta}(t+\mapd(\fs))
  -\frac{\theta'}{\theta}(u-\alpha)\Bigr), \\
  \Psi^{(t)}_{u}(\ws)-\Psi^{(t)}_{u}(\fs) &=
  \Xi(\ws)
  +{\frac{1}{\theta'(0)}}\Bigl(\frac{\theta'}{\theta}(t+\mapd(\fs))
  +\frac{\theta'}{\theta}(u-\alpha)\Bigr).
  \end{split}
  \label{eq:incPsiGdiam}
\end{equation}
{%
Note that this realization of $\GR$ is not related in a simple way to its corresponding
minimal immersion. In particular, the quadrilaterals obtained as the image of
the boundary of a face of $\GR$ are generically not rhombi.
}

\begin{figure}
  \centering
  \begin{minipage}[c]{7cm}
  \includegraphics[width=7cm]{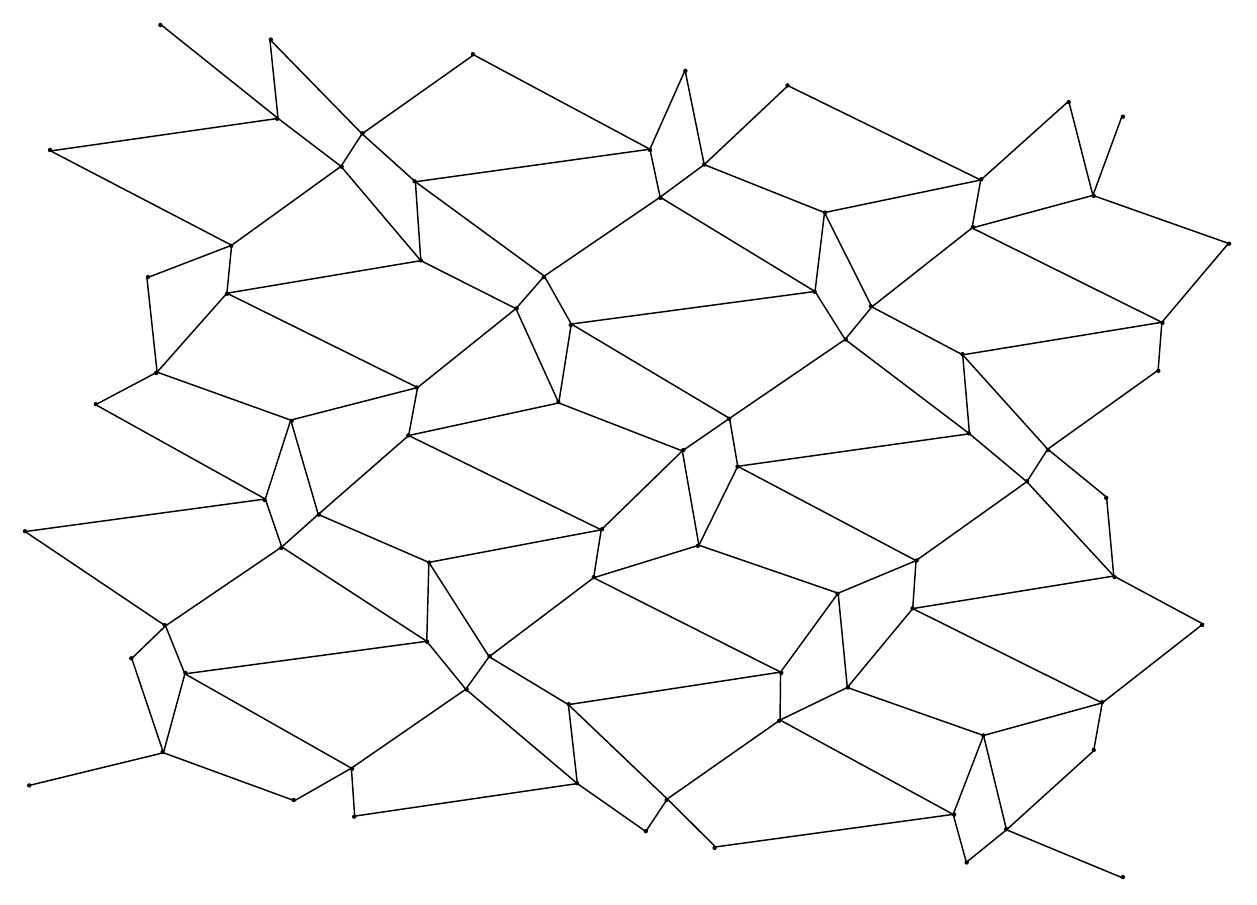}
  \end{minipage}
  \begin{minipage}[c]{7cm}
  \includegraphics[width=7cm]{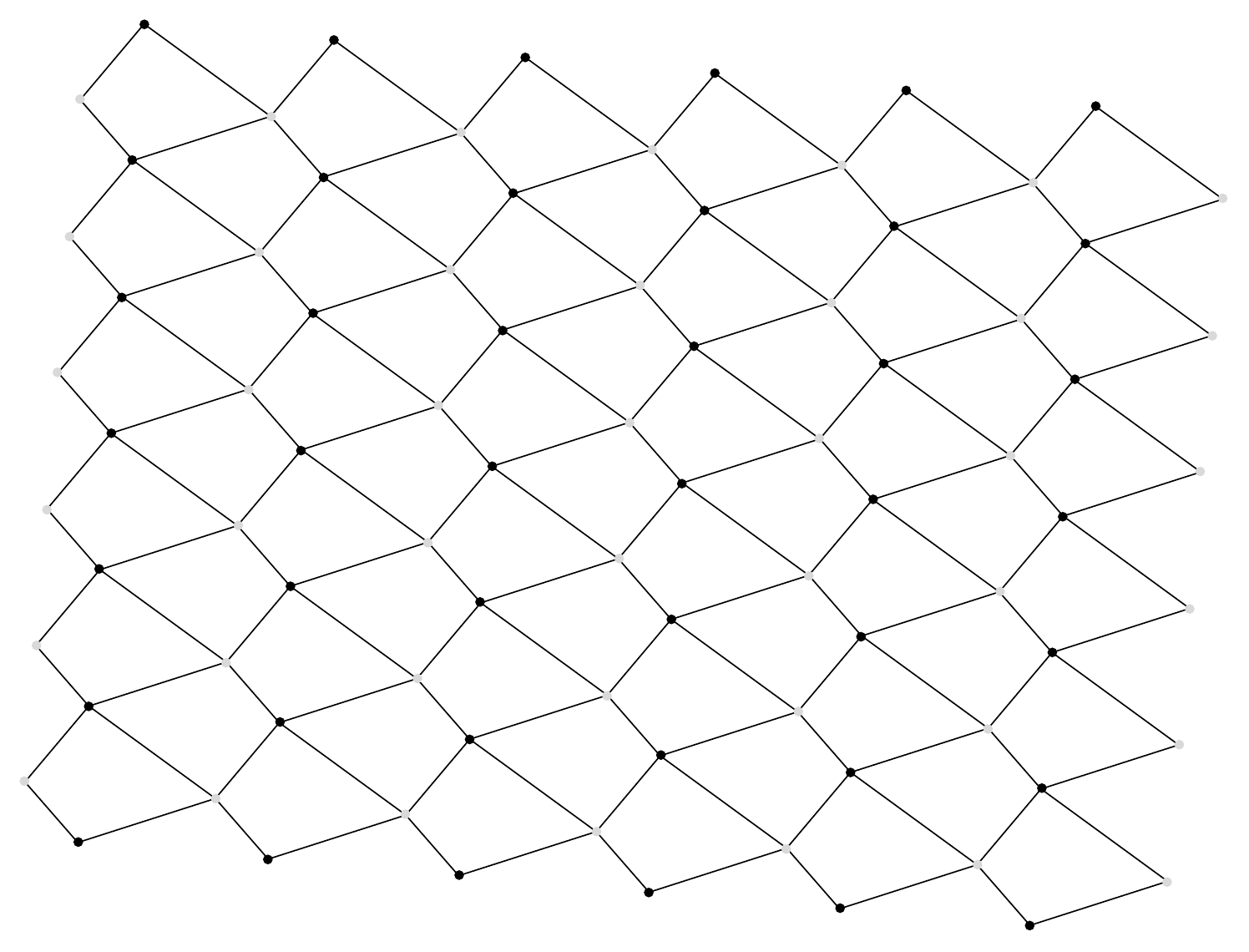}
  \end{minipage}
  \caption{Left: piece of the quasiperiodic 
    local t-embedding of the dual of~$\ZZ^2$ defined
  by~$\Psi_u^{(\pi\tau/2)}$.
    The half-angles assigned to the four train-tracks around every white vertex
    are~$\alpha=0$,~$\beta=\frac{\pi}{6}\simeq 0.52$,~$\gamma=\frac{e}{2}\simeq 1.36$
    and~$\delta=2.5$, while we have~$q=e^{i\pi\tau}=\frac{1}{10}$ and
    $u=0.62+0.70i$. Although~$\ZZ^2$ is periodic as a graph, the fact that the
    half-angles are pairwise incommensurable implies that~$\mapd$ is not
    periodic, but only quasiperiodic. Right: corresponding {realization} of~$\ZZ^2$
    given by the extension of
  $\Psi^{(\pi\tau/2)}_u$ obtained by choosing~$\Xi(\bs)=0$ for all black
  vertices~$\bs$ and $\Xi(\ws)=2i$ for all white vertices~$\ws$.
  {Note that this one is periodic, the increments of $\Psi^{(\pi\tau/2)}$
  between neighboring black and white vertices computed from
  Equation~\ref{eq:incPsiGdiam} do not depend on $\mapd$, which was the only
  remaining source for non-periodicity.}
  This particular immersion is an embedding, a fact which does not hold in general.
 }
\label{fig:t-embedding}
\end{figure}

\begin{lem}
Up to an arbitrary additive constant, the mapping~$\Psi^{(t)}_{u}$ is well-defined
on the vertices of~$\GR$, and
extends the definition of~$\Psi^{(t)}_{u}$ on~$\Gs^*$ given by
Equation~\eqref{eq:gradpsiGdual}.
\end{lem}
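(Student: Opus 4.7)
The plan is to verify two properties: (a) the two edge-increment formulas of the lemma are mutually consistent around every rhombus face of $\GR$, so that $\Psi^{(t)}_{u}$ is well-defined on $\Vs\cup\Vs^*$ up to one additive constant; (b) for any rhombus with primal diagonal $\bs\ws$ and dual diagonal $\fs\fs'$, the resulting increment $\Psi^{(t)}_{u}(\fs)-\Psi^{(t)}_{u}(\fs')$ matches the value $\theta'(0)\,g^{(t)}_{\bs,\ws}(u)\,\Ks^{(t)}_{\ws,\bs}$ prescribed by~\eqref{eq:gradpsiGdual}. Since every closed cycle in $\GR$ decomposes into a $\ZZ$-combination of such rhombus cycles, property (a) implies the well-definedness claim, while (b) yields the extension claim.

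For (a), I would fix a rhombus as in Figure~\ref{fig:around_rhombus}, with half-angle $\alpha$ crossing the edges $\bs\fs$ and $\ws\fs'$, and $\beta$ crossing $\ws\fs$ and $\bs\fs'$. Applying the two formulas of the lemma to each of the four edges and summing the signed increments around the cycle $\fs\to\ws\to\fs'\to\bs\to\fs$, the terms $\pm\Xi(\bs)$ and $\pm\Xi(\ws)$ cancel in pairs, and each of the four logarithmic-derivative contributions $\tfrac{\theta'}{\theta}(t+\mapd(\fs))$, $\tfrac{\theta'}{\theta}(t+\mapd(\fs'))$, $\tfrac{\theta'}{\theta}(u-\alpha)$, $\tfrac{\theta'}{\theta}(u-\beta)$ appears exactly once with each sign, so the sum vanishes.

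For (b), I would compute $\Psi^{(t)}_{u}(\fs)-\Psi^{(t)}_{u}(\fs')$ by routing through $\bs$: the two $\Xi(\bs)$ terms cancel, and using the relations $\mapd(\fs)=\mapd(\bs)-\alpha$ and $\mapd(\fs')=\mapd(\bs)-\beta$ from Figure~\ref{fig:around_rhombus} and setting $s=t+\mapd(\bs)$, the remaining expression rearranges into
\[
\theta'(0)\bigl[F^{(s)}(u;\beta)-F^{(s)}(u;\alpha)\bigr],
\]
with $F^{(s)}$ as in Corollary~\ref{cor:Faystrisecant}. That corollary, together with the computation of $g^{(t)}_{\bs,\ws}(u)\,\Ks^{(t)}_{\ws,\bs}$ already performed in the proof of Proposition~\ref{prop:ker}, identifies this quantity with the right-hand side of~\eqref{eq:gradpsiGdual}; routing through $\ws$ instead reproduces the second expression in~\eqref{eq:gradpsiGdual} by the symmetric application of Fay with $s=-t-\mapd(\ws)$, providing an independent cross-check. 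There is essentially no analytic obstacle here: the lemma is a bookkeeping consequence of the Weierstrass/Fay identity and the local Abel-map relation $\mapd(\bs)=\mapd(\fs)+\alpha=\mapd(\fs')+\beta$. The only care needed is to match the sign conventions of the two increment formulas with the train-track half-angle assignments on the four edges of the rhombus.
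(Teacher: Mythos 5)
Your proposal is correct and follows essentially the same route as the paper: the paper likewise dismisses well-definedness as the immediate cancellation of the four increments around each rhombus (which you simply spell out), and establishes the extension claim by routing $\Psi^{(t)}_u(\fs)-\Psi^{(t)}_u(\fs')$ through the black vertex $\bs$ and invoking the identity $\theta'(0)\bigl[F^{(t+\mapd(\bs))}(u;\beta)-F^{(t+\mapd(\bs))}(u;\alpha)\bigr]=\theta'(0)\,g^{(t)}_{\bs,\ws}(u)\Ks^{(t)}_{\ws,\bs}$ from the proof of Proposition~\ref{prop:ker}. The only additions on your side (the explicit rhombus-cycle decomposition and the cross-check through $\ws$) are harmless.
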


\begin{proof}
  The fact that~$\Psi^{(t)}_u$ is well defined on~$\GR$ is an immediate consequence of the fact
  that the four increments 
  around a quadrangular face of
  $\GR$ sum to~$0$.
  Now, let $\fs$ and $\fs'$ be neighbors in $\Gs^*$ and $\bs$ be the black vertex on
  the face of $\GR$ shared by $\fs$ and $\fs'$. Using the notation of
  Figure~\ref{fig:around_rhombus} and
  the equalities $\mapd(\bs)=\mapd(\fs')+\beta=\mapd(\fs)+\alpha$, we obtain:
    \begin{multline*}
    \Psi^{(t)}_u(\fs)-\Psi^{(t)}_u(\fs')=
    \left(\Psi^{(t)}_u(\bs)-\Psi^{(t)}_u(\fs')\right)-
    \left(\Psi^{(t)}_u(\bs)-\Psi^{(t)}_u(\fs)\right)\\
    =
    \Bigl(
      \Xi(\bs)
      +{\frac{1}{\theta'(0)}}\Bigl[\frac{\theta'}{\theta}(t+\mapd(\fs'))
      -\frac{\theta'}{\theta}(u-\beta)\Bigr]
    \Bigr) -
    \Bigl(
      \Xi(\bs)
      +{\frac{1}{\theta'(0)}}\Bigl[\frac{\theta'}{\theta}(t+\mapd(\fs))
      -\frac{\theta'}{\theta}(u-\alpha)\Bigr]
    \Bigr)  \\
    =\hcancel{\theta'(0)}
    F^{(t+\mapd(\bs))}(u;\beta)-
    F^{(t+\mapd(\bs))}(u;\alpha),
  \end{multline*}
  which indeed coincides with~\eqref{eq:gradpsiGdual}.
\end{proof}

We refer to Figure~\ref{fig:t-embedding} (right) for an example of such a {t-realization} of~$\Gs$ (actually an embedding).

Whereas changing $\Xi$ does not have an influence on the {realization} of $\Gs^*$, it obviously has consequences on the {realization} of $\Gs$. For example, adding a constant to
$\Xi(\vs)$ translates the image of $\vs$ without moving the rest.

Once the image by $\Psi^{(t)}_u$ (or $\Xi$) of a single vertex of $\Gs$ is
fixed, there is a unique way to extend $\Psi^{(t)}_u$ to a \emph{circle pattern},
where white and black vertices
around a face $\fs$ are sent to points on a circle centered at $\Psi_u^{(t)}(\fs)$,
as can be seen from~\cite{CLR} and the so-called \emph{origami map}.
However, we do not require this property here.

If the difference between $\Xi_1$ and $\Xi_2$ is bounded, then the two induced
{realization} of $\Gs$ are quasi-isometric.
A trivial choice for $\Xi$ is the constant 0. Another bounded interesting choice is
\begin{equation}
  \Xi(\bs)={\frac{1}{\theta'(0)}}\frac{\theta'}{\theta}(u-t-\mapd(\bs)),
  \quad
  \Xi(\ws)={\frac{1}{\theta'(0)}}\frac{\theta'}{\theta}(u+t+\mapd(\ws)),
  \label{eq:Xilogg}
\end{equation}
which satisfies
\begin{equation*}
  \Psi^{(t)}_u(\bs)-\Psi^{(t)}_u(\ws)={\frac{1}{\theta'(0)}}\frac{d}{du}\log g^{(t)}_{\bs,\ws}(u),
\end{equation*}
for any pair $(\bs,\ws)$ of black and white vertices. {Indeed, by Remark~\ref{rem:g_wb} this is true for adjacent black and white vertices $(\bs,\ws)$, and by the multiplicative nature of $g_{\bs,\ws}$, this extends to any pair $(\bs,\ws)$.}
It follows in particular that for any bounded choice of $\Xi$, the following
estimate is true as
soon as the graph distance between $\bs$ and $\ws$ is large:
\begin{equation}
  \Psi^{(t)}_{u}(\bs)-\Psi^{(t)}_{u}(\ws) = {\frac{1}{\theta'(0)}} \frac{d}{du}\log g^{(t)}_{\bs,\ws}(u)+O(1).
  \label{eq:psi_quasi_isom}
\end{equation}

In Section~\ref{sec:rational}, we give another choice of bounded $\Xi$, well
suited for the connection to the isoradial case~\cite{Kenyon:crit}.

\subsection{Poles and zeros of \texorpdfstring{$\g[t]_{\xs,\ys}$}{g}}\label{sec:poles_domainD}

In this section we consider two vertices $\xs,\ys$ of $\GR$ and study the poles and zeros of 
$u\mapsto \g[t]_{\xs,\ys}(u)$ on the real circle $C_0:=\mathbb{R}/\pi\mathbb{Z}$. More specifically, in Lemma~\ref{lem:sep_zeros_poles} we prove 
that they are well separated. This property is used to define \emph{angular sectors} for the purpose of 
Section~\ref{sec:preliminaries_contours}.

Consider {an oriented simple path} $\Pi$ from $\xs$ to $\ys$ in
the quad-graph $\GR$,
{%
The \emph{intersection number} of $T$ with $\Pi$, denoted $\Pi\wedge T$, is the number times it crosses
$\Pi$ from right to left, minus the number of times it crosses $\Pi$ from left
to right. Because a train-track $T$ cannot cross itself, this intersection number
takes only values in $\{-1,0,1\}$. It does not depend on $\Pi$, only on its
endpoints.
If it is not zero, we say that
$T$ \emph{separates $\xs$ from $\ys$}.
}

Zeros and poles of $g_{\xs,\ys}$ on $C_0$ arise from terms of the 
form $\theta(u-\alpha)^{\pm 1}$ in the product definition of $g_{\xs,\ys}$. More precisely, zeros (resp. poles) $(\alpha_T)$ 
are half-angles of train-tracks $(T)$
{with an intersection number of $+1$ (resp.\ $-1$) with~$\Pi$, \emph{i.e.}, intersecting $\Pi$
  from right to left (resp.\ from left to right).
}
This property implies 
the following result.  

\begin{lem}
  \label{lem:sep_zeros_poles}
 Suppose that the graph $\Gs$ is minimal and that the half-angle map~$\mapalpha$ belongs to~$X_\Gs$. Then,
  there exists a partition of $C_0$ into two intervals, such that one
  contains no poles of $\g[t]_{\xs,\ys}$,
  and the other no zeros.
\end{lem}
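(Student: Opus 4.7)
The plan is to exhibit two cut points on $C_0=\RR/\pi\ZZ$ that separate the zeros of $g^{(t)}_{\xs,\ys}$ from its poles, using the geometry of the minimal immersion. I would fix a minimal edge-path $\Pi$ in the quad-graph $\GR$ from $\xs$ to $\ys$ and let $T_1,\ldots,T_n$ be the train-tracks crossing its successive edges. Since $\Gs$ is minimal, no train-track crosses $\Pi$ more than once: otherwise, combined with the absence of self-intersecting tracks and parallel bigons in $\Gs$, the minimality of $\Pi$ would allow one to produce a strictly shorter path between $\xs$ and $\ys$. As noted just before the lemma, the zeros (resp.\ poles) of $g^{(t)}_{\xs,\ys}$ on $C_0$ are then exactly the half-angles $\alpha_{T_i}$ of those $T_i$ crossing $\Pi$ from left to right (resp.\ right to left).

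Next, I would translate this left/right dichotomy into a geometric condition in the minimal immersion defined by $\mapalpha$. By construction, every directed edge of $\GR$ crossed by an oriented train-track $T$ from left to right is realized as the unit vector $e^{2i\alpha_T}$. Denote the directed edges of $\Pi$ by $v_1,\ldots,v_n$; then
\[
v_i = \epsilon_i\,e^{2i\alpha_{T_i}}, \qquad \epsilon_i\in\{+1,-1\},
\]
and a direct orientation check (for instance in coordinates where $v_i$ points along the positive real axis) shows that $\epsilon_i=+1$ precisely when $T_i$ crosses $\Pi$ from left to right, so that $\alpha_{T_i}$ is a zero; otherwise $\alpha_{T_i}$ is a pole.

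The geometric core of the argument is the fact that $v_1,\ldots,v_n$ all lie in a common open half-plane of $\CC$. This is a classical feature of monotone (\emph{i.e.},\@ minimal) paths in isoradial rhombus tilings, and it extends to the minimal immersion setting thanks to $\mapalpha\in X_\Gs$: by~\cite[Theorem~23]{BCdT:immersion}, the rhombi of the immersion have positive angles summing to $2\pi$ around each vertex of $\GR$, so that the rhombus strip obtained by unfolding the faces of $\GR$ adjacent to $\Pi$ has total angular turning strictly less than $\pi$. This is the step I expect to be the main obstacle, as it rests on the global geometry of the immersion developed in~\cite{BCdT:immersion} rather than on purely local computations.

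Granting this half-plane property, I would write the half-plane as $\{z\in\CC^\ast:\arg(z)\in(2\psi,2\psi+\pi)\}$ for some $\psi\in\RR/\pi\ZZ$. The case $\epsilon_i=+1$ then forces $\alpha_{T_i}\in(\psi,\psi+\tfrac{\pi}{2})$, while the case $\epsilon_i=-1$ forces $\alpha_{T_i}\in(\psi+\tfrac{\pi}{2},\psi+\pi)$, both modulo~$\pi$. Since these are complementary open arcs of $C_0$, all zeros of $g^{(t)}_{\xs,\ys}$ lie in one of them and all poles in the other; the points $\psi$ and $\psi+\tfrac{\pi}{2}$ thus provide the desired partition of $C_0$ into two intervals, one free of poles and the other free of zeros.
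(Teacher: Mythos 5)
Your strategy is the natural one coming from isoradial graphs, but its key geometric step fails in the minimal-immersion setting, and this failure is precisely what makes the lemma nontrivial here. You claim that the directed edges $v_1,\dots,v_n$ of a minimal path $\Pi$ in $\GR$, realized in the minimal immersion, all lie in a common open half-plane, and you correctly observe that this would place all zeros of $g^{(t)}_{\xs,\ys}$ in an arc of $C_0$ of length $\pi/2$ and all poles in the complementary arc of length $\pi/2$. That conclusion is false in general: as stated in the remark immediately following Definition~\ref{def:sectors}, because of folded rhombi the sector of $C_0$ containing the poles may be \emph{longer} than half of $C_0$, whereas in the isoradial case (all rhombus angles in $(0,\pi)$) it is always shorter. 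A minimal immersion only requires the rhombus angles to be positive and to sum to $2\pi$ around each vertex of $\GR$; individual rhombi may have angle in $(\pi,2\pi)$, and two consecutive edges of $\Pi$ bounding such a folded face can turn by more than $\pi$. Consequently the "total angular turning less than $\pi$" estimate you import from the convexification arguments of~\cite{BeaQuad} and~\cite[Lemma~3.5]{KeSchlenk} does not carry over, the half-plane property breaks down, and with it the rest of your deduction. (Your preliminary claim that a train-track crosses a minimal path at most once is a side issue; the real obstruction is the half-plane step, which you yourself flagged.)

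The paper's proof therefore does not use the immersion at all, but argues combinatorially with train-tracks. It suffices to rule out the cyclic order $\alpha_{T_0}<\alpha_{S_0}<\alpha_{T_1}<\alpha_{S_1}$ on $C_0$ for $S_0,S_1$ contributing zeros and $T_0,T_1$ contributing poles. The two zero-train-tracks $S_0,S_1$ together with the segment of $\Pi$ between their crossing points bound a "generalized ribbon" inside a large ball $B$ containing $\Pi$; minimality of $\Gs$ (no self-intersections, no parallel bigons) prevents any pole-train-track from exiting $B$ through the sector of $\partial B$ cut out by this ribbon. The hypothesis $\mapalpha\in X_\Gs$ then transfers this constraint on the cyclic order of the train-tracks to the cyclic order of their half-angles, yielding a separation of zeros and poles into two complementary intervals of \emph{a priori arbitrary lengths} — not $\pi/2$ each. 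If you want to salvage your approach, you would have to replace the half-plane property by this weaker separation statement, which is exactly what the ribbon argument provides.
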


\begin{proof}
  {Consider a large ball $B$ of the graph~$\Gs$ containing $\Pi$ and on which one
    can read
    the cyclic order of all the train-tracks separating $\xs$ from $\ys$. 
    On the boundary of $B$,
  every such train-track has an \emph{entry point} where it enters the interior of the ball, and an \emph{exit point}.

  For a train-track $T$ crossing $\Pi$ (possibly several times), we call the \emph{tail} of $T$
  the part from its entry point to its first intersection with $\Pi$. We call its
  \emph{head} the part from the last intersection with $\Pi$ to its exit point.
  The rest of $T$ is called its \emph{body}.

  We assume that $\g[t]_{\xs,\ys}$ has at least two poles and two zeros in $C_0$,
  otherwise, the statement is trivial.
  It is sufficient to show that if $S_0$ and $S_1$ (resp. $T_0$ and $T_1$) are
  distinct train-tracks crossing~$\Pi$ and contributing to poles (resp.\@
  zeros) of $\g[t]_{\xs,\ys}$, then we cannot have the cyclic order
  $\alpha_{T_0} < \alpha_{S_0} < \alpha_{T_1} < \alpha_{S_1}$ on $C_0$.

  Let us fix $S_0$ and $S_1$.
  Since the statement only depends on~$\xs$ and~$\ys$ but not on the path
  between them, we can
  deform the path $\Pi$ so that the
  heads of $S_0$ and $S_1$ no longer intersect.
  Concatenating the heads of $S_0$ and $S_1$, the segment of $\Pi$ between their
  attachment points and one of the two arcs of $\partial B$ between the two exit
  points of~$B$, one obtains the boundary of a topological rectangle $R$ inside
  $B$. For definiteness, we
  suppose that the positively oriented arc of $\partial B$ contained in $R$ starts from
  the exit point of $S_0$ and ends at the exit point of $S_1$, see
  Figure~\ref{fig:sep_zeros_poles}.

  \begin{figure}[ht!]
  \centering
  \def\svgwidth{10cm}
  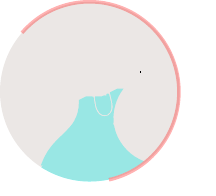
  \caption{Representation of the relative position of the entry point, exit
    point of $T$, and the attachment point $T^+$ of its head with respect to the
    train-track $S_1$ in the proof of Lemma~\ref{lem:sep_zeros_poles}. The
    train-track $S_0$ is slightly faded, as the argument focuses on $S_1$.
  }
  \label{fig:sep_zeros_poles}
\end{figure}

  Consider a train-track $T$ corresponding to a zero of $\g[t]_{\xs,\ys}$, \emph{i.e.},
  with an intersection number with $\Pi$ equal to $+1$. Let us
  show that its exit point never lies on the positively oriented arc of $\partial B$ from the exit point of $S_0$ to that of $S_1$
  the particular arc of $\partial B$
  delimited by the exit points of $S_1$ and $S_0$ (which is inside $R$).
  Indeed, assume the opposite, and consider the last entrance of $T$ into the
  region $R$ before exiting
  through that arc. It is not a point of $\Pi$, as the head of
  $T$ should leave $\Pi$ from its left, and $R$ is attached on its right.
  Assume that it is an intersection point~$\es$ with
  $S_1$
  (the argument with $S_0$ is the same).
  The exit point of $T$ from~$B$ is thus on the right
  of $S_1$, as in Figure~\ref{fig:sep_zeros_poles}. Since the part of $T$ before
  $\es$ cannot intersect the part of $S_1$ before~$\es$ (otherwise, this would create a parallel bigon),
  the attachment point $T^+$ of the head of $T$
  should be on the left of $S_1$.
  For similar reasons, the entry point of $T$ in~$B$ should be on the positively oriented arc of $\partial B$
  starting from the exit point of $T$ and ending at the entry point of $S_1$ (represented
  in light red on Figure~\ref{fig:sep_zeros_poles}).
  But now, the continuous path made of
  the tail of $S_1$ attached to $\Pi$ on its left at point $S_1^-$, the
  segment of $\Pi$ from $S_1^-$ to $T^+$, and the head of
  $T$, is blocking the tail of $T$ from connecting to the right side of $\Pi$.
  This is in contradiction with the fact
  that the intersection number of~$T$ with~$\Pi$ is~$+1$.
  Therefore, the end point of $T$ has to lie on the
  positively oriented arc of $\partial B$ delimited by the exit points of $S_1$ and $S_0$ which is disjoint from $R$.

  Since~$\mapalpha$ belongs to~$X_\Gs$, the cyclic order of the angles is the
  same as that of the train-tracks, implying that one cannot have zeros simultaneously inside both connected components of $C_0\setminus\{\alpha_{S_0},\alpha_{S_1}\}$.
}  
\end{proof}

Let us now restrict to the case where $\xs$ is a black vertex $\bs$ of $\Gs$ and $\ys$ is a white 
one $\ws$. When computing the product for  
$\g[t]_{\bs,\ws}$, all the terms of the form $\theta(u+t+\mapd(\ws'))$ and 
$\theta(u-t-\mapd(\bs'))$
cancel out except the
two terms $\theta(u-t-\mapd(\bs))\theta(u+t+\mapd(\ws))$ in the numerator. 
As a consequence, all the poles of 
$\g[t]_{\bs,\ws}$ are on $C_0$ and, from the above, correspond to half-angles of 
train-tracks
{separating $\bs$ from $\ws$ and leaving $\bs$ on their right.}

The following definition is used in Section~\ref{sec:preliminaries_contours} for defining the contours of integration of our explicit local expressions for inverse Kasteleyn operators.

\begin{defi}
  \label{def:sectors}
  If $\g[t]_{\bs,\ws}$ has at least one zero and one pole on $C_0$, we define the
  \emph{angular sector} (or simply \emph{sector}) associated to $\g[t]_{\bs,\ws}$, denoted by $s_{\bs,\ws}$, to be the part of the
  partition of~$C_0$ containing the poles. If $\g[t]_{\bs,\ws}$ has no zeros on $C_0$ (which
  happens when $\bs$ and $\ws$ are neighbors), then the sector $s_{\bs,\ws}$ is defined
  to be the geometric arc from $\alpha$ to $\beta$ in the positive direction,
  with the convention of Figure~\ref{fig:around_rhombus}.
\end{defi}

\begin{rem}
  In previous works~\cite{BeaCed:isogen, BdTR1}, we had a similar result for
 isoradially embedded graphs,
  where all the rhombus angles are in
  $(0,\frac{\pi}{2})$, using a convexification algorithm~\cite{BeaQuad}.
  Equivalently, this is described in~\cite[Lemma~3.5]{KeSchlenk}.
  The
  resulting sectors were shorter than half of $C_0$. Here, because of the
  possible presence of folded rhombi with angles greater than $\frac{\pi}{2}$,
  the length of this sector may be larger than
  half of~$C_0$.
\end{rem}

{%
  The geometric property described in Lemma~\ref{lem:sep_zeros_poles} has the
following consequence.
\begin{lem}
  \label{lem:inters_sectors}
    Suppose that the graph $\Gs$ is minimal and that the half-angle map $\mapalpha$ belongs to $X_{\Gs}$.
    Let $\bs$ and $\bs'$ be two black vertices of $\Gs$. If $\bs$ and $\bs'$
    are distinct, then the union of sectors~$s_{\bs',\ws}$ with~$\ws$
    adjacent to~$\bs'$
    is strictly smaller than $C_0$. In other words, there is at least a point of
    $C_0$ which belongs to none of the sectors $s_{\bs',\ws}$.
\end{lem}
\begin{proof}
  Let $d$ be the degree of $\bs$. Let $T_1,\ldots, T_d$ be the train-tracks
  crossing the edges of $\GR$ attached to~$\bs$, labeled
  counterclockwise according to their tips, and let
  $\alpha_1,\ldots,\alpha_d$ be their respective half-angles; using cyclic notation whenever appropriate.
  By \cite[Lemma~8]{BCdT:immersion} and the definition of
  the space $X_\Gs$, these parameters satisfy the cyclic order~$\alpha_1<\cdots<\alpha_d$ in~$C_0$.

  Since $\bs$ and $\bs'$ are distinct and~$\Gs$ is minimal, one of the train-tracks $T_i$
  separates $\bs$ and $\bs'$. Let us choose such a train-track, denote it
  by $T$ and its half-angle by $\alpha$.
  Also, let us denote by~$\fs$ the other endpoint of the edge of~$\GR$ attached to~$\bs$

  Since $\bs$ and $\bs'$ are distinct and~$\Gs$ is minimal, at least one of the train-tracks $T_i$
  separates $\bs$ and $\bs'$. For simplicity, let us denote by $T$ such a train-track
  and by $\alpha$ its half-angle (we keep in mind that label $i$ is attached to
  $T$, so that $T=T_i$ and $\alpha=\alpha_i$).
  Once $T$ is fixed, let us denote by~$\fs$ the other endpoint of the edge of~$\GR$ attached to~$\bs$
  and crossed by~$T$,
  see Figure~\ref{fig:intersec_sect}.

\begin{figure}[ht]
\centering
\begin{overpic}[width=10cm]{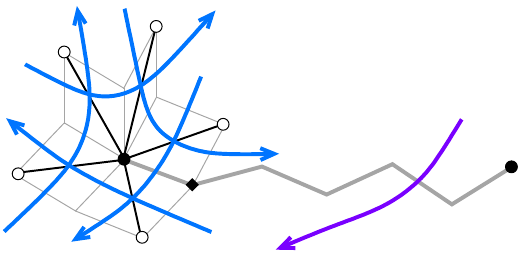}
   \put(27,0){\scriptsize $\ws_0$}
   \put(45,25){\scriptsize $\ws_1$}
   \put(9,40){\scriptsize $\ws$}
   \put(37,10){\scriptsize $\fs$}
   \put(22,15){\scriptsize $\bs$}
   \put(100.5,17){\scriptsize $\bs'$}
   \put(92,10){\scriptsize $\Pi$}
   \put(42.5,46){\scriptsize $T_1=T_j$}
   \put(14,49.5){\scriptsize $T_2=T_{j+1}$}
   \put(-4,29){\scriptsize $T_3=T_{i-1}$}
   \put(8,-2){\scriptsize $T_4=T$}
   \put(55,19){\scriptsize $T_5=T_{i+1}$}
   \put(50,0){\scriptsize $S$}
\end{overpic}
\caption{Notation for the proof of Lemma~\ref{lem:inters_sectors}.}
\label{fig:intersec_sect}
\end{figure}

  Take a simple path $\Pi$ in $\GR$ from $\bs'$ to
  $\bs$, such that all the steps except the last one are on the left of $T$,
  and
  that the last one is the edge between $\fs$ and $\bs$ crossed by $T$ as in Figure~\ref{fig:intersec_sect}.
  We have $\Pi\wedge T=1$, and $\alpha$ is a zero of $\g[t]_{\bs',\bs}$.

  We first deal with the case when $\bs$ has degree 2. In this situation, the
  two white neighbors $\ws_0$ and $\ws_1$ of $\bs$ can be reached from $\bs'$ by
  crossing the same train-tracks in the same direction: first follow $\Pi$ until
  its penultimate step, then bifurcate from $\fs$ to reach $\ws_0$ or $\ws_1$
  by crossing the other train-track around $\bs$. Therefore, the two sectors
  $s_{\bs',\ws_0}$ and $s_{\bs',\ws_1}$ are equal, and the statement follows trivially.

  Suppose now that $\bs$ has degree at least 3.
  We claim that for any white vertex~$\ws$ adjacent to~$\bs$, the
  sector $s_{\bs',\ws}$ does not contain $\alpha$,
  thus implying the statement of the lemma.
  The argument depends on the relative position of $\ws$ with respect to $T$.

  Let us first assume that~$\ws$ is on the right of $T$, as $\bs$ (which is the case for all white
  neighbors of $\bs$ except two vertices, that we call $\ws_0$ and $\ws_1$).
  Then a simple path $\Pi'$ from $\bs'$ to $\ws$ can be obtained by adding two
  steps to $\Pi$, crossing train-tracks $T_j$ and $T_{j+1}$ around $\bs$ that are different from
  $T$ (the fact that the train-tracks $T,T_j,T_{j+1}$ are distinct is a consequence of minimality, see~\cite[Lemma 8]{BCdT:immersion}). These steps correspond to extra factors which can create additional
  poles or possibly remove zeros in $\g[t]_{\bs',\ws}$, when compared to
  $\g[t]_{\bs',\bs}$, at the angle parameters~$\alpha_j,\alpha_{j+1}$ corresponding to these train-tracks.
  Since the graph
  $\Gs$ is minimal and $\mapalpha$ belongs to~$X_\Gs$, all these train-tracks have
  parameters distinct from $\alpha$. As a consequence, $\alpha$ remains a zero of
  $\g[t]_{\bs',\bs}$ and is thus in the complement of
  $s_{\bs',\ws}$ in $C_0$.

  Let us now assume that~$\ws$ is either $\ws_0$ or $\ws_1$. Then, when compared to
  $\g[t]_{\bs',\bs}$, the set of zeros of
  $\g[t]_{\bs',\ws}$ is obtained by possibly removing the zero at $\alpha$ while the
  set of poles is changed by adding a pole at $\alpha_{i-1}$ or at $\alpha_{i+1}$,
  respectively. If one of these three events does not occur, then the complement of both sectors
  $s_{\bs',\ws_0}$ and $s_{\bs',\ws_1}$ contains a small neighborhood around
  $\alpha$ and the statement holds.
  If these three events occur, we claim that it is also the case. Indeed, let us
  look in
  detail at $\ws_0$. We have two cases depending on how close $\bs'$ and
  $\ws_0$ are. The first situation is when
  $\g[t]_{\bs',\ws_0}$ has no zero on $C_0$, which means that $\bs'$ and $\ws_0$
  are neighbors. Then they are separated by the train-tracks $T_{i-1}$ (with
  parameter $\alpha_{i-1}$) and $T'$ (with parameter $\alpha'$).
  By \cite[Lemma~8]{BCdT:immersion}, the corresponding parameters satisfy the cyclic order
  $\alpha'<\alpha_{i-1}<\alpha$ around $C_0$. With our
  convention to define the sector for this particular situation, the complement
  of the sector $s_{\bs',\ws_0}$ contains $\alpha$.

  The second situation occurs when $\g[t]_{\bs',\ws_0}$ has at least a zero $\beta\neq\alpha$. This zero had
  to be present in $\g[t]_{\bs',\bs}$ and comes from a train-track $S$
  crossing $\Pi$, from right to left, see Figure~\ref{fig:intersec_sect}. The same kind of planarity arguments used
  in the proof of the previous lemma show that since the cyclic order~$\alpha_{i-1}<\alpha<\alpha_{i+1}$ holds in~$C_0$,
  then so should the cyclic order~$\alpha_{i-1}<\beta<\alpha_{i+1}$ (without knowing the relative
  position of $\alpha$ and $\beta$ on the oriented arc of $C_0$ from
  $\alpha_{i-1}$ to
  $\alpha_{i+1}$). This is enough to conclude that both complements of the sectors
  $s_{\bs',\ws_0}$ and
  $s_{\bs',\ws_1}$ contain at least the intersection of the component of $C_0$
  containing the zeros of $\g[t]_{\bs',\bs}$ and the interior of the positive
  arc from~$\alpha_{i-1}$ to $\alpha_{i+1}$, and this intersection contains~$\alpha$.
\end{proof}
}

\section{Inverses of the Kasteleyn operator}
\label{sec:inv}

We place ourselves in the context where Fock's elliptic adjacency operator is
Kasteleyn, \emph{i.e.}, we suppose that {$\tau\in i\RR_{>0}$}, that the fixed
parameter $t$ belongs to $\RR+\frac{\pi}{2}\tau$ and that the graph $\Gs$ is
minimal with half-angle map~$\mapalpha\in X_\Gs$. 

In this section, we introduce a family of operators $(\A^{(t),u_0})_{u_0\in D}$ acting as inverses of the Kasteleyn
operator $\K[t]$, parameterized by a subset $D$ of the cylinder $\mathbb{R}/\pi\ZZ + [0,\frac{\pi}{2}\tau]$.
This is one of the main results of this paper. These inverses have the remarkable property of
being \emph{local}, meaning that the coefficient $\A^{(t),u_0}_{\bs,\ws}$ is computed using the information of a path in
 the quad-graph $\GR$ from $\bs$ to $\ws$.

The general idea of the argument to define a local formula for an inverse follows~\cite{Kenyon:crit}:
find functions in the kernel of $\K[t]$ depending on a complex
parameter, \emph{i.e.}, the functions $\g[t]$ introduced in Section~\ref{sec:null_functions} in the elliptic setting of this paper; then
define coefficients of the inverse as contour integrals of these functions, with
appropriately defined paths of integration.
On top of handling the elliptic setting, the novelty of this paper is to introduce an additional parameter $u_0\in D$, leading to
three different asymptotic behaviors for the inverses, morally corresponding
to the three phases of the dimer model: liquid, gaseous, and solid. The three cases are determined by the position of $u_0$ in $D$.

In Section~\ref{sec:preliminaries_contours}, we define the domain $D$ for the parameter $u_0$,
and the paths of integration.
Relying on this, in Section~\ref{sec:inverses}
we introduce the family of inverses $(\A^{(t),u_0})_{u_0\in D}$.
Finally, in Section~\ref{sec:def_Hu0}, we give the explicit form of the function~$H^{u_0}$ involved in
an alternative expression of~$\A^{(t),u_0}$. 

From now on, we omit the superscript $(t)$ in the notation of $\K[t],\A[t]$ and $\g[t]$.

\subsection{Domain \texorpdfstring{$D$}{D} and paths of integration}\label{sec:preliminaries_contours}

Let $\bs,\ws$ be a black and a white vertex of $\Gs$ respectively. Recall that the function $g_{\bs,\ws}$ of Section~\ref{sec:null_functions} is defined on the torus
$\TT(q)=\CC/\Lambda$, {where $q=e^{i\pi\tau}$}, and also recall the angular sector $s_{\bs,\ws}$ of Definition~\ref{def:sectors}.
Since the parameter $\tau$ {belongs to $i\RR_{>0}$},
the real locus of the torus $\TT(q)$ has two connected components,
$C_0=\mathbb{R}/\pi\ZZ$ and $C_1=(\mathbb{R}+\frac{\pi}{2}\tau)/\pi\ZZ$. 

We define the domain $D$ of the parameter $u_0$ indexing the family of inverses
$(\A^{u_0})_{u_0\in D}$ as follows.
Consider the set of angles $\{\alpha_T\ ;\ T\in\T\}$ assigned to the train-tracks
of $\GR$, then the domain $D$ is, see also Figure~\ref{fig:domaineD},
\begin{equation*}
  D=\left(\mathbb{R}/\pi\mathbb{Z} + \Bigl[0,\frac{\pi}{2}\tau\Bigr]\right) \setminus\{\alpha_T\ ;\ T\in\T\}.
\end{equation*}

\begin{figure}[ht]
\centering
\def\svgwidth{7cm}
\begingroup%
  \makeatletter%
  \providecommand\color[2][]{%
    \errmessage{(Inkscape) Color is used for the text in Inkscape, but the package 'color.sty' is not loaded}%
    \renewcommand\color[2][]{}%
  }%
  \providecommand\transparent[1]{%
    \errmessage{(Inkscape) Transparency is used (non-zero) for the text in Inkscape, but the package 'transparent.sty' is not loaded}%
    \renewcommand\transparent[1]{}%
  }%
  \providecommand\rotatebox[2]{#2}%
  \newcommand*\fsize{\dimexpr\f@size pt\relax}%
  \newcommand*\lineheight[1]{\fontsize{\fsize}{#1\fsize}\selectfont}%
  \ifx\svgwidth\undefined%
    \setlength{\unitlength}{264.24496465bp}%
    \ifx\svgscale\undefined%
      \relax%
    \else%
      \setlength{\unitlength}{\unitlength * \real{\svgscale}}%
    \fi%
  \else%
    \setlength{\unitlength}{\svgwidth}%
  \fi%
  \global\let\svgwidth\undefined%
  \global\let\svgscale\undefined%
  \makeatother%
  \begin{picture}(1,0.61073191)%
    \lineheight{1}%
    \setlength\tabcolsep{0pt}%
    \put(0,0){\includegraphics[width=\unitlength,page=1]{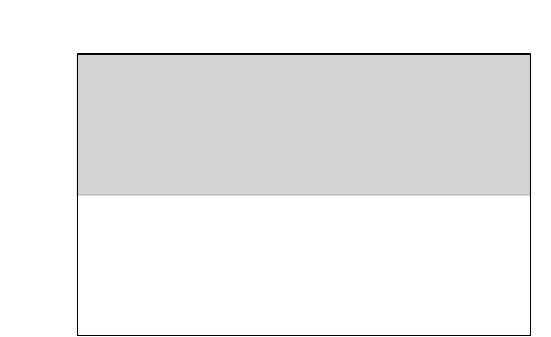}}%
    \put(0.83865649,0.55949003){\color[rgb]{0,0,0}\makebox(0,0)[lt]{\lineheight{1.25}\smash{\begin{tabular}[t]{l}\scriptsize{Case 1}\end{tabular}}}}%
    \put(0.27667757,0.40622317){\color[rgb]{0,0,0}\makebox(0,0)[lt]{\lineheight{1.25}\smash{\begin{tabular}[t]{l}\scriptsize{Case 2}\end{tabular}}}}%
    \put(0.76486103,0.14111949){\color[rgb]{0,0,0}\makebox(0,0)[lt]{\lineheight{1.25}\smash{\begin{tabular}[t]{l}\scriptsize{Case 3}\end{tabular}}}}%
    \put(0.75918455,0.35336748){\color[rgb]{0,0,0}\makebox(0,0)[lt]{\lineheight{1.25}\smash{\begin{tabular}[t]{l}$D$\end{tabular}}}}%
    \put(0,0){\includegraphics[width=\unitlength,page=2]{fig_domaineD.pdf}}%
    \put(-0.00288456,0.24847582){\color[rgb]{0,0,0}\makebox(0,0)[lt]{\lineheight{1.25}\smash{\begin{tabular}[t]{l}$C_0$\end{tabular}}}}%
    \put(0,0){\includegraphics[width=\unitlength,page=3]{fig_domaineD.pdf}}%
    \put(-0.00288456,0.50108239){\color[rgb]{0,0,0}\makebox(0,0)[lt]{\lineheight{1.25}\smash{\begin{tabular}[t]{l}$C_1$\end{tabular}}}}%
  \end{picture}%
\endgroup%

\caption{The domain $D$ as a shaded area of the torus $\TT(q)$ and the different cases corresponding to the possible locations of the parameter $u_0$.
The horizontal contours $C_0$ and $C_1$ winding around the torus are the two
connected components of the real locus of~$\TT(q)$, {and the crosses on $C_0$ represent the set of angles $\{\alpha_T\ ;\ T\in\T\}$}.}
\label{fig:domaineD}
\end{figure}

We now introduce paths/contours of integration for $\A_{\bs,\ws}^{u_0}$, denoted by~$\Cs_{\bs,\ws}^{u_0}$.
We distinguish three cases depending on the position of $u_0$ in $D$. Note that in order to keep notation as light as possible,
we do not add indices specifying the cases, hoping that this creates no confusion.

\underline{\textbf{Case 1}}: $u_0$ is
on the top boundary $C_1=\RR/\pi\ZZ + \frac{\pi}{2}\tau$
of the domain $D$. Then, see also Figure~\ref{fig:domains_of_integration} (left),
$\Cs_{\bs,\ws}^{u_0}$ is a simple contour in $\TT(q)$ winding around the torus once from bottom to top,
 such that its intersection with $C_0$ avoids the angular sector $s_{\bs,\ws}$.

\underline{\textbf{Case 2}}: $u_0$ belongs to the interior of $D$. Then, see also Figure~\ref{fig:domains_of_integration} (center),
$\Cs_{\bs,\ws}^{u_0}$ is a simple path in $\TT(q)$ connecting $\bar{u}_0$ to $u_0$, crossing $C_0$ once but not $C_1$, and
  avoiding the sector $s_{\bs,\ws}$. 

\underline{\textbf{Case 3}}: $u_0$ belongs to the lower boundary of $D$,
\emph{i.e.}, it is a point corresponding
to one of the connected components of $C_0 \setminus \{\alpha_T\ ;\ T\in \T\}$.
Then, see also Figure~\ref{fig:domains_of_integration} (right),
$\Cs_{\bs,\ws}^{u_0}$ is a simple, homologically trivial contour in $\TT(q)$, oriented counterclockwise, crossing
$C_0$ twice: once in the complement of the angular sector $s_{\bs,\ws}$, 
from bottom to
top, and once in the open interval
of $C_0\setminus\{\alpha_T\ ;\ T\in \T\}$
containing the point $u_0$, 
from top to bottom.
Note that this contour may well
not contain all poles of the integrand $g_{\bs,\ws}$.

In each of the three cases, we consider a meromorphic function $H^{u_0}$ on 
$\TT(q)\setminus\Cs_{\bs,\ws}^{u_0}$ with a discontinuity jump of $+1$ when
crossing $\Cs_{\bs,\ws}^{u_0}$ from right to left,
and a collection of
homologically trivial contours 
$\gamma_{\bs,\ws}^{u_0}$ surrounding all the poles of $g_{\bs,\ws}$ and of
$H^{u_0}$ counterclockwise. In Cases 1 and 2,
the collection $\gamma_{\bs,\ws}^{u_0}$ consists of a single contour, while in
Case 3, it consists of two contours; see Figure~\ref{fig:domains_of_integration}.
We refer to Section~\ref{sec:def_Hu0} for explicit candidates for
$H^{u_0}$.

\begin{figure}[ht]
\centering
\begin{overpic}[width=\linewidth]{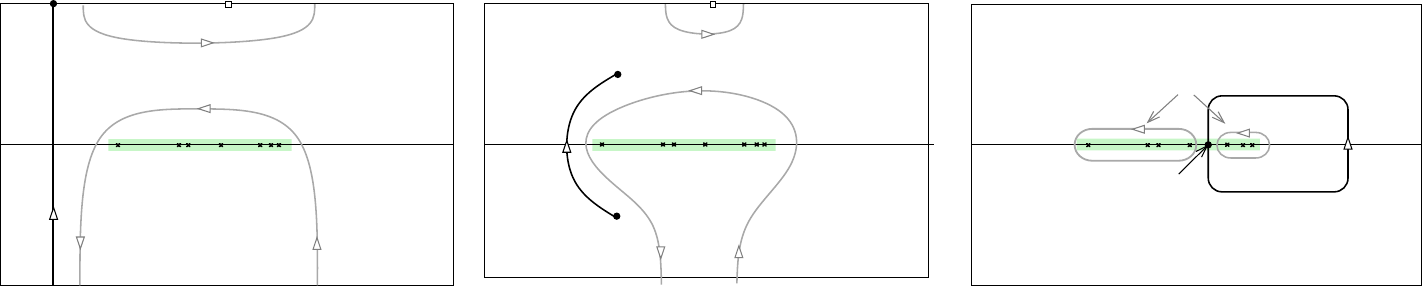}
   \put(4,13){\scriptsize $\Cs_{\bs,\ws}^{u_0}$}
   \put(22,8){\scriptsize $\gamma_{\bs,\ws}^{u_0}$}
   \put(2.8,21){\scriptsize $u_0$}
   \put(36,13){\scriptsize $\Cs_{\bs,\ws}^{u_0}$}
   \put(56,8){\scriptsize $\gamma_{b,w}^{u_0}$}
   \put(43,16){\scriptsize $u_0$}
   \put(43,3){\scriptsize $\bar{u}_0$}
 \put(95,13){\scriptsize $\Cs_{\bs,\ws}^{u_0}$}
 \put(82,15){\scriptsize$\gamma_{\bs,\ws}^{u_0}$}
   \put(82,6.8){\scriptsize $u_0$}
\end{overpic}
\caption{The contours/paths $\Cs^{u_0}_{\bs,\ws}$ and $\gamma^{u_0}_{\bs,\ws}$
  in: Case 1 (left), Case 2 (center), Case 3 (right). The angular sector $s_{\bs,\ws}\subset C_0$
containing the poles of $g_{\bs,\ws}$ is represented in light green. 
{In Cases 1 and 2, the white square represents the pole $\frac{\pi}{2}\tau$ of the choice of function $H^{u_0}$ made in Section~\ref{sec:def_Hu0}; in Case 3, the function $H^{u_0}$ has no pole.}
}
\label{fig:domains_of_integration}
\end{figure}

\subsection{Family of inverses}\label{sec:inverses}

We now define the family of operators $(\A^{u_0})_{u_0\in D}$ in two equivalent ways and then, in
Theorem~\ref{thm:K_inverse_family}, prove that they are indeed inverses of the Kasteleyn operator~$\K$.

\begin{defi}\label{def:operator_A}
For every $u_0$ in $D$, we define the linear operator $\A^{u_0}$
mapping functions on white vertices (with finite support for
definiteness) to functions on black vertices
by its entries: for every pair $(\bs,\ws)$ of black and white vertices of $\Gs$, let
\begin{equation}\label{eq:coeff_Kinv_u0}
    \A^{u_0}_{\bs,\ws} = \frac{i\theta'(0)}{2\pi} \int_{\Cs_{\bs,\ws}^{u_0}} g_{\bs,\ws}(u)du,
\end{equation}
where the path of integration $\Cs_{\bs,\ws}^{u_0}$ is defined in Section~\ref{sec:preliminaries_contours} -- recall that there are three different
definitions depending on whether $u_0$ is
on the top boundary of the domain $D$, a point in the interior, or a point
in a connected component of the lower boundary of $D$. 
\end{defi}

\begin{rem}\label{rem:Au_0} \leavevmode
\begin{enumerate}
\item The operator $\A^{u_0}$ is \emph{local} in the sense that its coefficient $\A^{u_0}_{\bs,\ws}$ is computed using the function $\g_{\bs,\ws}$ which only depends on a path from $\bs$ to $\ws$ in the quad-graph $\GR$ and actually does not depend on the choice of path; $\A^{u_0}_{\bs,\ws}$ only uses \emph{local} information of the graph $\GR$ while one would a priori expect it to use the combinatorics of the whole of the graph $\GR$.
\item  The integrand $\g_{\bs,\ws}$ is meromorphic on the torus $\TT(q)$, so continuously deforming
  the contour of integration $\Cs_{\bs,\ws}^{u_0}$ (while keeping the
  extremities fixed in Case~2) without crossing any poles does not change the
  value of the integral. In particular, in Case~1, all the values of $u_0$ on
  the top boundary of the cylinder $D$ give the same operator. Similarly, in Case~3,
  all the values of $u_0$ in the same connected component of
  $C_0\setminus\{\alpha_T\ ;\ T\in\T\}$ yield the same operator. We can thus
  identify in $D$ all the points on the top boundary, and points in each of the
  connected component of $C_0\setminus\{\alpha_T\ ;\ T\in\T\}$.
\end{enumerate}
\end{rem}

The following lemma gives an alternative, useful way of expressing the coefficients of~$\A^{u_0}$.

\begin{lem}\label{lem:Kinv_alternative}
For every $u_0$ in $D$ and every pair $(\bs,\ws)$ of black and white vertices of $\Gs$, the coefficient $\A^{u_0}_{\bs,\ws}$ of~\eqref{eq:coeff_Kinv_u0}
can be expressed as:
\begin{equation}\label{eq:coeff_Kinv_u0_alt}
\A^{u_0}_{\bs,\ws} = \frac{i\theta'(0)}{2\pi} \oint_{\gamma_{\bs,\ws}^{u_0}} g_{\bs,\ws}(u) H^{u_0}(u) du\,,
\end{equation}
where the function $H^{u_0}$ and the contour
$\gamma_{\bs,\ws}^{u_0}$ are described at the end of Section~\ref{sec:preliminaries_contours}.
\end{lem}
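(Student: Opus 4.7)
\smallskip

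\noindent\textbf{Proof plan.}
The plan is to establish the identity by a contour deformation argument on the torus $\TT(q)$, exploiting the prescribed $+1$ jump of $H^{u_0}$ across $\Cs_{\bs,\ws}^{u_0}$ in the spirit of the Plemelj--Sokhotski formula. The central observation is that the integrand $g_{\bs,\ws}\cdot H^{u_0}$ is meromorphic on the complement of $\Cs_{\bs,\ws}^{u_0}$ inside $\TT(q)$, with its poles exactly coming from the poles of $g_{\bs,\ws}$ on $C_0$ and the poles of $H^{u_0}$ (to be specified in Section~\ref{sec:def_Hu0}); all of these are enclosed by $\gamma_{\bs,\ws}^{u_0}$ by construction.

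First, I would introduce a thin tubular neighborhood $N$ of $\Cs_{\bs,\ws}^{u_0}$, thin enough not to meet any pole of $g_{\bs,\ws}$ or of $H^{u_0}$. Orienting $\partial N$ so that $N$ lies on its left, the jump hypothesis gives
\[
\oint_{\partial N} g_{\bs,\ws}(u)\,H^{u_0}(u)\,du \;=\; \int_{\Cs_{\bs,\ws}^{u_0}} g_{\bs,\ws}(u)\bigl(H^{u_0}_+(u)-H^{u_0}_-(u)\bigr)\,du \;=\; \int_{\Cs_{\bs,\ws}^{u_0}} g_{\bs,\ws}(u)\,du,
\]
in the limit where $N$ collapses onto $\Cs_{\bs,\ws}^{u_0}$. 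Next, consider the region $R\subset\TT(q)$ bounded by $\gamma_{\bs,\ws}^{u_0}$ on one side and $\partial N$ on the other. By construction of $\gamma_{\bs,\ws}^{u_0}$, all poles of $g_{\bs,\ws}\cdot H^{u_0}$ lie in the complement of $R$, so Cauchy's theorem applied in $R$ yields
\[
\oint_{\gamma_{\bs,\ws}^{u_0}} g_{\bs,\ws}(u)\,H^{u_0}(u)\,du \;=\; \oint_{\partial N} g_{\bs,\ws}(u)\,H^{u_0}(u)\,du,
\]
provided orientations are matched, which combined with the previous identity gives the desired equality~\eqref{eq:coeff_Kinv_u0_alt}.

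The bookkeeping, however, differs in the three cases and this is where the main technical work lies. In Case~1, $\Cs_{\bs,\ws}^{u_0}$ is a non-contractible cycle, so $N$ is an annulus and $\TT(q)\setminus N$ has two connected components, only one of which contains the poles of $g_{\bs,\ws}\cdot H^{u_0}$; one chooses $R$ to be the other component. In Case~3, $\Cs_{\bs,\ws}^{u_0}$ is a contractible loop and $\gamma_{\bs,\ws}^{u_0}$ consists of two contours enclosing respectively the poles of $g_{\bs,\ws}$ outside the sector $s_{\bs,\ws}$ and the pole of $H^{u_0}$ near $u_0$, so one obtains two residue contributions that match the two crossings of $C_0$ by $\Cs_{\bs,\ws}^{u_0}$. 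Case~2 is the most delicate: here $\Cs_{\bs,\ws}^{u_0}$ is an arc with endpoints $\bar{u}_0$ and $u_0$, so $H^{u_0}$ cannot be truly meromorphic near the endpoints. The natural candidate is logarithmic, of the form $H^{u_0}(u)=\frac{1}{2\pi i}\log\bigl(\theta(u-u_0)/\theta(u-\bar u_0)\bigr)$ with the branch cut along $\Cs_{\bs,\ws}^{u_0}$; its only singularities are logarithmic at $u_0$ and $\bar u_0$, which are integrable against $g_{\bs,\ws}$, so the contributions from the small half-circles around these endpoints in $\partial N$ vanish in the limit.

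The main obstacle will be handling the endpoints in Case~2 and verifying that the pole/jump structure of the explicit $H^{u_0}$ provided in Section~\ref{sec:def_Hu0} matches the combinatorial data of $\gamma_{\bs,\ws}^{u_0}$ case by case; this reduces to straightforward checks once the explicit form of $H^{u_0}$ is in hand.
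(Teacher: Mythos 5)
Your argument is correct and is essentially the paper's own proof: both deform $\gamma_{\bs,\ws}^{u_0}$ onto the boundary of a thin (bicollar) neighborhood of $\Cs_{\bs,\ws}^{u_0}$ within the pole-free region and then use the $+1$ jump of $H^{u_0}$ across the cut to collapse the two boundary contributions into a single integral along $\Cs_{\bs,\ws}^{u_0}$. Your additional care with the logarithmic endpoints in Case~2 and the case-by-case bookkeeping is a welcome refinement of details the paper leaves implicit, but the underlying mechanism is the same.
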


\begin{rem}
The explicit definition of the function $H^{u_0}$ is postponed to Section~\ref{sec:def_Hu0}. Indeed,
at this point, only its qualitative behavior is needed. The explicit form of $H^{u_0}$ is
used when computing edge-probabilities for the corresponding Gibbs measures, see
Section~\ref{sec:Gibbsnonperio}. 
\end{rem}

\begin{proof}[Proof of Lemma~\ref{lem:Kinv_alternative}]
  In each of the three cases, the family of
  contours $\gamma_{\bs,\ws}^{u_0}$ is
  homologous,
  inside the complement of 
  the poles of~$g_{\bs,\ws}H^{u_0}$ in~$\TT(q)\setminus\Cs_{\bs,\ws}^{u_0}$,
  to the family of contours given by the (clockwise oriented) boundary of a small
  bicollar neighborhood of $\Cs_{\bs,\ws}^{u_0}$.
  The contribution of the integrand on both sides of
  $\Cs_{\bs,\ws}^{u_0}$ are on different sides of the cut for $H^{u_0}$ and thus
  differ by $-1$. Recombining these two contributions as a single integral along
  $\Cs_{\bs,\ws}^{u_0}$ yields Equation~\eqref{eq:coeff_Kinv_u0}.
\end{proof}

We now state the main theorem of this section.

\begin{thm}\label{thm:K_inverse_family}
For every $u_0$ in $D$, $\A^{u_0}$ is an inverse of the Kasteleyn operator $\K$.
\end{thm}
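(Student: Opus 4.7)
The plan is to verify $(\A^{u_0}\K)_{\bs,\bs'}=\delta_{\bs,\bs'}$ for every pair of black vertices $\bs,\bs'$; the dual identity $(\K\A^{u_0})_{\ws,\ws'}=\delta_{\ws,\ws'}$ follows by a symmetric argument using that $g_{\cdot,\xs}(u)$ lies in the right kernel of $\K$ (Proposition \ref{prop:ker}). Fix $\bs,\bs'$, and let $\ws_1,\dots,\ws_n$ denote the neighbors of $\bs'$ in cyclic order, with $\alpha_j,\beta_j$ the two train-track half-angles crossing edge $\bs'\ws_j$ (so that $\beta_j=\alpha_{j+1}$ in the cyclic order around $\bs'$). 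Using the multiplicativity $g_{\bs,\ws_j}(u)=g_{\bs,\bs'}(u)\,g_{\bs',\ws_j}(u)$ together with the telescoping identity from Corollary \ref{cor:Faystrisecant} with $s=t+\mapd(\bs')$, exactly as in the proof of Proposition \ref{prop:ker}, one obtains
\[
g_{\bs',\ws_j}(u)\,\K_{\ws_j,\bs'}=F^{(s)}(u;\beta_j)-F^{(s)}(u;\alpha_j).
\]
This rewrites $(\A^{u_0}\K)_{\bs,\bs'}$ as the finite sum
\[
\sum_{j=1}^{n}\frac{i\theta'(0)}{2\pi}\int_{\Cs^{u_0}_{\bs,\ws_j}} g_{\bs,\bs'}(u)\bigl[F^{(s)}(u;\beta_j)-F^{(s)}(u;\alpha_j)\bigr]\,du.
\]

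Next I would deform all contours $\Cs^{u_0}_{\bs,\ws_j}$ into a common reference contour $\Cs^{*}$ in the same homology class on $\TT(q)$, keeping track of residues of the integrand at the poles encountered during the deformation. The integral along $\Cs^{*}$, once summed over $j$, vanishes because $\sum_{j}\bigl[F^{(s)}(\cdot;\beta_j)-F^{(s)}(\cdot;\alpha_j)\bigr]=0$ (by $\beta_j=\alpha_{j+1}$), so $g_{\bs,\bs'}(u)\cdot 0=0$. The problem therefore reduces to a sum of residues on $C_0$ coming from two sources: (i) the simple poles of $F^{(s)}(\cdot;a)$ at $u=a$ with residue $-1/\theta'(0)$, and (ii) the poles of $g_{\bs,\bs'}$ at train-track half-angles crossing a minimal quad-graph path from $\bs$ to $\bs'$. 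Lemma \ref{lem:sep_zeros_poles} guarantees that zeros and poles of $g_{\bs,\cdot}$ are separated on $C_0$, so the sectors $s_{\bs,\ws_j}$ are well-defined intervals and a coherent choice of contours and of $\Cs^{*}$ is possible.

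In the diagonal case $\bs=\bs'$, $g_{\bs,\bs'}\equiv 1$ and only the residues of $F^{(s)}(\cdot;\alpha_j)$ at $u=\alpha_j$ survive; a judicious choice of crossings on $C_0$, using that the sectors $s_{\bs,\ws_j}$ partition $C_0$ into the arcs $[\alpha_j,\alpha_{j+1}]$ (Definition \ref{def:sectors}), makes these residues telescope across $j$ with a single net contribution of $-2\pi i/\theta'(0)$, yielding $(\A^{u_0}\K)_{\bs,\bs}=\frac{i\theta'(0)}{2\pi}\cdot\bigl(-\tfrac{2\pi i}{\theta'(0)}\bigr)=1$. In the off-diagonal case $\bs\neq\bs'$, $g_{\bs,\bs'}$ contributes additional residues at half-angles along a minimal path from $\bs$ to $\bs'$; these must cancel pairwise against corresponding $F^{(s)}$ residues via the explicit product form of $g_{\bs,\bs'}$ and the local combinatorics of train-tracks near $\bs'$. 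The three cases for the position of $u_0$ in $D$ are handled by the same template, Case~3 (closed null-homologous contour) reducing directly to a residue count once one checks that $u_0$ itself is not enclosed. The main obstacle is the off-diagonal residue bookkeeping: one must explicitly track the train-tracks both along the minimal path from $\bs$ to $\bs'$ and incident to $\bs'$ to witness the cancellation of all residues, which is where the minimality of $\Gs$ and the sector structure of Lemma \ref{lem:sep_zeros_poles} play their essential role.
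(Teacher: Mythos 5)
Your overall architecture (multiplicativity of $g$, the telescoping form of $g_{\bs',\ws_j}\K_{\ws_j,\bs'}$ from Corollary~\ref{cor:Faystrisecant}, deformation to a common contour, residues) is the same as the paper's, and your diagonal case is a workable variant: instead of invoking the function $H^{u_0}$, you track the single residue of $F^{(s)}(\cdot;\alpha_j)$ picked up when the crossing point enters the one sector $[\alpha_{j_0},\alpha_{j_0+1}]$ containing it — this is legitimate since the sectors around a vertex tile $C_0$ exactly once, though the orientation of the sweep (and hence the sign of the net $\pm 2\pi i/\theta'(0)$) needs to be pinned down, and in Case~2 the deformation must be rel the endpoints $\bar u_0,u_0$.

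The genuine gap is the off-diagonal case, which you yourself label ``the main obstacle'' and do not carry out. The mechanism you propose — pick up residues during the deformation and then show they ``cancel pairwise against corresponding $F^{(s)}$ residues'' — is not how the cancellation works and would be very painful to execute: the poles of $g_{\bs,\bs'}$ sit at half-angles of train-tracks along a minimal path from $\bs$ to $\bs'$, which have no pairing with the angles $\alpha_j,\beta_j$ around $\bs'$, and the set of poles swept differs from one $j$ to the next. The point you are missing is that for $\bs\neq\bs'$ \emph{no residues need to be tracked at all}: Lemma~\ref{lem:sep_zeros_poles} shows that $\bigcap_{j}\bigl(C_0\setminus s_{\bs,\ws_j}\bigr)$ is non-empty (it contains a neighborhood of the zero of $g_{\bs,\ws_j}$ contributed by the last step of a minimal path between the two black vertices), so all the contours $\Cs^{u_0}_{\bs,\ws_j}$ can be deformed to a \emph{single} contour crossing $C_0$ in that common arc without crossing any pole of any integrand. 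One then exchanges the finite sum and the integral and applies Proposition~\ref{prop:ker} to conclude that the integrand vanishes identically. This is precisely where the separation lemma earns its keep; as written, your proposal replaces this one-line conclusion with an uncompleted and misdirected bookkeeping problem, so the proof is not complete.
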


\begin{proof}
  We need to check that we have~$\sum_{\bs} \K_{\ws,\bs} \A^{u_0}_{\bs,\ws'} = \delta_{\ws,\ws'}$ for every pair of white vertices $\ws,\ws'$,
  and~$\sum_{\ws} \A^{u_0}_{\bs',\ws} \K_{\ws,\bs} = \delta_{\bs,\bs'}$
  for any pair of black vertices $\bs,\bs'$.
  We only give the proof of the second identity, the other being proved in a
  similar way. The idea of the argument follows~\cite{Kenyon:crit}, see also~\cite{BeaCed:isogen,BdTR1}. 
  If $\bs\neq \bs'$, we use the main definition~\eqref{eq:coeff_Kinv_u0} of
  the coefficients of $\A^{u_0}$.
 {By Lemma~\ref{lem:inters_sectors},}  the intersection of the complements of the sectors $(s_{\bs',\ws})_{\ws\sim \bs}$
  is non-empty.
  It
  is therefore possible to continuously deform all
  the contours~$(\Cs^{u_0}_{\bs',\ws})_{\ws\sim \bs}$ into a
  common contour $\Cs^{u_0}$. By Proposition~\ref{prop:ker} and Remark~\ref{rem:kernel_gen}, we then have:
  \begin{equation*}
    \sum_{\ws:\,\ws\sim \bs} \A^{u_0}_{\bs',\ws} \K_{\ws,\bs} = \frac{i\theta'(0)}{2\pi}
    \int_{\Cs^{u_0}} \underbrace{\sum_{\ws:\,\ws\sim \bs} g_{\bs',\ws}(u) \K_{\ws,\bs}}_{=0} du  =0.
  \end{equation*}
  If $\bs=\bs'$, the points of intersection of the paths/contours $\Cs_{\bs,\ws}^{u_0}$ with the real locus~$C_0$ of the torus
  wind around $C_0$ as $\ws$ runs through the neighbors of $\bs$. We cannot apply
  Proposition~\ref{prop:ker} anymore, but can resort to explicit residue
  computations using the alternative expression~\eqref{eq:coeff_Kinv_u0_alt} for the coefficients of $\A^{u_0}$. We need to compute:
  \begin{equation*}
    \frac{i\theta'(0)}{2\pi} \sum_{\ws:\,\ws\sim \bs}
    \K_{\ws,\bs}\oint_{\gamma^{u_0}_{\bs,\ws}} g_{\bs,\ws}(u)H^{u_0}(u)  du.
  \end{equation*}
  By the residue theorem, each of these integrals is equal to the sum of the
  residues at the poles of
  $g_{\bs,\ws}(u)H^{u_0}(u)$ inside the contour. The poles are of two kinds: first,
  the possible pole(s)
  of $H^{u_0}$, which do not depend on $\bs$, $\ws$, yielding the evaluation of $g_{\bs,\ws}$
  (or its derivatives in case of higher order poles) at some value of $u$, which are in the kernel of $\K$ and
  thus will contribute zero when summing over $\ws$. Second,
  the poles at~$\alpha,\beta$ of~$\g_{\bs,\ws}$, see Remark~\ref{rem:g_wb}, where~$\alpha,\beta$ are the
  parameters of the train-tracks crossing the edge~$\ws\bs$. An explicit evaluation gives
  \begin{align*}
    \res_{\alpha} g_{\bs,\ws}(u)H^{u_0}(u) &=
    \frac{%
      \theta(\alpha-t-\mapd(\bs))\theta(\alpha+t+\mapd(\ws))
    }{%
      \theta'(0) \theta(\alpha-\beta)
    } H^{u_0}(\alpha), \\
    \res_{\beta} g_{\bs,\ws}(u) H^{u_0}(u) &=
    \frac{%
      \theta(\beta-t-\mapd(\bs))\theta(\beta+t+\mapd(\ws))
    }{%
      \theta'(0) \theta(\beta-\alpha)
    } H^{u_0}(\beta).
  \end{align*}
  Using the fact that
  $\mapd(\ws)+\alpha=\mapd(\bs)-\beta=\mapd(\fs')$ and
  $\mapd(\ws)+\beta=\mapd(\bs)-\alpha=\mapd(\fs)$, and recalling the definition of
  $\K_{\ws,\bs}$, we obtain that for every edge $\ws\bs$ of $\Gs$,
  \begin{equation*}
    (\res_\alpha g_{\bs,\ws}(u)H^{u_0}(u) +\res_\beta g_{\bs,\ws}(u)H^{u_0}(u))\K_{\ws,\bs} =
    \frac{1}{\theta'(0)}(H^{u_0}(\alpha)-H^{u_0}(\beta)).
  \end{equation*}
  When summing over white vertices incident to the vertex $\bs$ of degree $d$, surrounded by train-tracks
  with half-angles $\alpha_1=\alpha,\alpha_2,\ldots,\alpha_d,\alpha_{d+1}=\alpha+\pi$, the
  increments of $H^{u_0}$ sum to
  $H^{u_0}(\alpha_1)-H^{u_0}(\alpha_{d+1})=H^{u_0}(\alpha)-H^{u_0}(\alpha+\pi)=-1$
  by construction.
  Therefore, we get:
  \begin{equation*}
    \sum_{\ws:\,\ws\sim \bs} \A^{u_0}_{\bs,\ws} \K_{\ws,\bs} = 2i\pi
    \frac{i\theta'(0)}{2\pi}\frac{H^{u_0}(\alpha)-H^{u_0}(\alpha+\pi)}{\theta'(0)}=1.
  \qedhere
  \end{equation*}
\end{proof}

\begin{rem}
  \label{rem:frozen}
Let us note that in Case 3,
we can work directly with residues on the expression~\eqref{eq:coeff_Kinv_u0}
since $\Cs_{\bs,\ws}^{u_0}$ in this case is a trivial contour.
Indeed, label by~$\alpha_1,\dots,\alpha_d$ the
half-angles of the train-tracks surrounding the vertex $\bs$ so that $\alpha_1$ is the first half-angle on the
right of $u_0$ and $\alpha_d$ is the last angle on the left; denote by $\bs\ws_j$ the edge with train-track angles
$\alpha_j,\alpha_{j+1}$.
We refer to Figure~\ref{fig:case3} for a representation of
  the neighborhood of $\bs$ in the minimal
  immersion~\cite{BCdT:immersion} of $\Gs$ defined by the map $\mapalpha$.
\begin{figure}
\centering
\begin{overpic}[width=8cm]{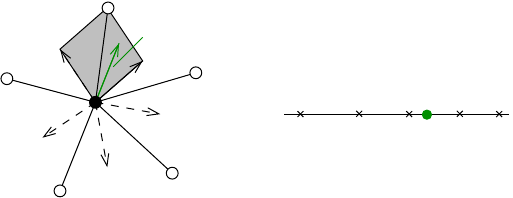}
   \put(20,13.5){\scriptsize $\bs$}
   \put(23,36){\scriptsize $\ws_d$}
   \put(-5,22){\scriptsize $\ws_1$}
   \put(10,-3){\scriptsize $\ws_2$}
   \put(3,28){\scriptsize $e^{2i\alpha_1}$}
   \put(29,26){\scriptsize $e^{2i\alpha_d}$}
   \put(3,8){\scriptsize $e^{2i\alpha_2}$}
   \put(27,32){\scriptsize $e^{2iu_0}$}
   \put(82,12){\scriptsize $u_0$}
   \put(88,19){\scriptsize $\alpha_1$}
   \put(78,19){\scriptsize $\alpha_d$}
   \put(95,19){\scriptsize $\alpha_2$}
\end{overpic}
\vspace{0.4cm}
\caption{Proof of Case 3: the coefficient $\A_{\bs,\ws_d}$ is non-zero if and
  only if the rhombus corresponding to the edge $\bs\ws_d$
  in the minimal immersion of $\Gs$ given by $\mapalpha$
  contains the vector $e^{2iu_0}$. 
}
\label{fig:case3}
\end{figure}
Then by definition, for every $j\neq d$, the contour $\Cs_{\bs,\ws_j}^{u_0}$ either contains both poles of $g_{\bs,\ws_j}$
or none of them.
In both cases this gives $\A_{\bs,\ws_j}^{u_0}=0$. When $j=d$, the contour $\Cs_{\bs,\ws_d}^{u_0}$ contains the pole $\alpha_1$ of
$g_{\bs,\ws_d}$ but not $\alpha_d$; therefore
\[
\A_{\bs,\ws_d}\!=-2i\pi\frac{i\theta'(0)}{2\pi} \frac{\theta(t+\mapd(\ws)+\alpha_1)\theta(t+\mapd(\ws)+\alpha_d)}{\theta(\alpha_1-\alpha_d)\theta'(0)}=
\frac{\theta(t+\mapd(b)-\alpha_d)\theta(t+\mapd(b)-\alpha_1)}{\theta(\alpha_1-\alpha_d)}
\]
and we conclude in particular that
\begin{equation*}
\sum_{\ws:\,\ws\sim \bs}\A_{\bs,\ws}\Ks_{\ws,\bs}=\A_{\bs,\ws_d}\Ks_{\ws_d,\bs}=1,
\end{equation*}
providing an alternative proof in Case 3. We refer to Section~\ref{sec:gibbs_non_perio}
for a probabilistic interpretation of this computation.
\end{rem}

\begin{rem}
  \label{rem:more_gen_inverses}
  Although the computations above rely in an essential way on the
  graph being minimal and the half-angle map belonging to~$X_{\Gs}$,
  they do not use the fact that~$\tau$ {belongs to $i\RR_{>0}$} and that the endpoints of $\Cs_{\bs,\ws}$ are conjugate of each
  other. Hence, this recipe to construct inverses of $\Ks$ is slightly more general
  than what is described here, and two such inverses differ by a function
  of the form described in Remark~\ref{rem:kernel_gen}. However, for
  probabilistic aspects described in the sequel, we restrict ourselves
  to this setting.
\end{rem}

\subsection{Definition of the function \texorpdfstring{$H^{u_0}$}{Hu₀}}\label{sec:def_Hu0}

Recall that $H^{u_0}$
is a meromorphic function on
$\TT(q)\setminus\Cs_{\bs,\ws}^{u_0}$ with a discontinuity jump of~$+1$ when crossing
$\Cs_{\bs,\ws}^{u_0}$ from right to left.
We now give expressions of functions satisfying this property, depending on the
location of $u_0$. Adding any $\Lambda$-elliptic function to these expressions
gives new candidates for $H^{u_0}$ with different poles and residues, but this
has no effect on the resulting value of $\As^{u_0}$.

{In Case 1 (resp. in Case~2), the function $H^{u_0}$ for given
$\bs$ and $\ws$ should be thought of as a
particular determination (depending on $\bs$ and $\ws$) of a multivalued meromorphic
function on~$\TT(q)$ (resp.~$\TT(q)\setminus{\{u_0,\overline{u}_0\}}$) given by the projection of a meromorphic function on an the infinite cyclic
cover of this surface determined by~$\Cs_{\bs,\ws}^{u_0}$.
In any case, even though the function~$H^{u_0}$ depends on~$\bs$ and~$\ws$, it can be chosen so that its poles
and residues do not depend on these vertices, hence their absence in the notation.}
In particular, this multivalued meromorphic function, which is a periodic analogue of the
complex logarithm~\cite{Kenyon:crit},  satisfies~$H^{u_0}(\alpha+\pi)-H^{u_0}(\alpha)=+1$
since the contour $\Cs_{\bs,\ws}^{u_0}$ intersects $C_0$ only
once positively in these cases.

\underline{\textbf{Case 1}}. The function $H^{u_0}$ has a period +1 when winding horizontally around the torus.
It has been explicitly constructed
with a slightly different normalization
in~\cite{BdTR1}, and is given by:
\begin{equation}\label{equ:H0gas}
  H^{u_0}(u):=\frac{K'}{\pi}\tilde{Z}(u)+\frac{u}{\pi}, 
\end{equation}
where $\tilde{Z}(u)=Z(\frac{2K}{\pi}u|k)=\frac{\pi}{2K}\frac{\theta_4'(u)}{\theta_4(u)}$,
$Z$ is 
the \emph{Jacobi zeta function}, see for example~\cite[(3.6.1)]{Lawden}; $k$ is related to $q$ by the relation
$k=\frac{\theta_2^2(0|q)}{\theta_3^2(0|q)}$, $K=\frac{\pi}{2}\theta_3^2(0)$, and $iK'=\tau K$. The function $H^{u_0}$ has a single pole at $\frac{\pi}{2}\tau$ on the torus $\TT(q)$. The function $\tilde{Z}$ has no horizontal period, but it has a vertical period, see~\cite[(3.6.22)]{Lawden}:
\[\textstyle 
\tilde{Z}(u+\pi\tau)=Z(\frac{2K}{\pi}u+2K\tau)
=Z(\frac{2K}{\pi}u+2iK')=-i\frac{\pi}{K}+\tilde{Z}(u),
\]
implying that~$H^{u_0}(u+\pi)=H^{u_0}(u)+1$ and $H^{u_0}(u+\pi\tau)=H^{u_0}(u)$.

Note that~$H^{u_0}(u)=H(\frac{4K}{\pi}u|k)$, where the function $H$ is defined in~\cite[Equation (9)]{BdTR1}.
The above properties are proved in more detail in~\cite[Lemma 45]{BdTR1}, see also~\cite[Appendix A.2]{BdTR2}.

\underline{\textbf{Case 2}}. 
The function $H^{u_0}$ has a period +1 when winding horizontally around the torus at a
 height between $\bar{u}_0$ and $u_0$. It
is given by the following explicit expression:
\begin{equation}\label{equ:H0liq}
  H^{u_0}(u):=
  \frac{1}{2\pi i}\log\frac{\theta(u-u_0)}{\theta(u-\bar{u}_0)}
  -\frac{iK}{\pi^2}(u_0-\bar{u}_0)\tilde{Z}(u),
\end{equation}
and has a single pole at $\frac{\pi}{2}\tau$ on $\TT(q)$. 

Indeed, the function $\log\frac{\theta(u-u_0)}{\theta(u-\bar{u}_0)}$ has the
correct horizontal period as the
function~$\frac{\theta(u-u_0)}{\theta(u-\bar{u}_0)}$ has a zero at $u_0$ and a pole at $\bar{u}_0$. It has vertical period:
\[
\log \frac{\theta(u+\pi\tau-u_0)}{\theta(u+\pi\tau-\bar{u}_0)}=
2i(u_0-\bar{u}_0)+
\log \frac{\theta(u-u_0)}{\theta(u-\bar{u}_0)},
\]
using that $\theta(u+\pi\tau)=(-qe^{2iu})^{-1}\theta(u)$. Then, as $\tilde{Z}$ has no horizontal period and vertical period $-i\frac{\pi}{K}$,
the two vertical periods cancel out and the horizontal period between levels $\bar{u}_0$ and $u_0$ remains.

\underline{\textbf{Case 3}}. The function~$H^{u_0}$ can be chosen to be constant equal to 1 inside $\Cs_{\bs,\ws}^{u_0}$,
and 0 outside. 
Note that for this particular choice of $H^{u_0}$, one sees immediately
that contributions of pieces of $\gamma_{\bs,\ws}^{u_0}$ outside of
$\Cs_{\bs,\ws}^{u_0}$ is zero, and that the
part of $\gamma_{\bs,\ws}^{u_0}$ which is inside can be deformed to become very
close to $\Cs_{\bs,\ws}^{u_0}$. Therefore, the
expressions~\eqref{eq:coeff_Kinv_u0} and \eqref{eq:coeff_Kinv_u0_alt} in this
case are trivially identical.

\medskip

Let us conclude this section with one last remark.
The short proof of Theorem~\ref{thm:K_inverse_family} given in the previous
section, as in the original work of Kenyon~\cite{Kenyon:crit}, does not
explain where this integral formula for $\A^{u_0}$ comes from. The connection to the usual expression obtained by Fourier
transform in the periodic case is explained in
Section~\ref{sec:Gibbs_measures}.

\section{The periodic case}
\label{sec:periodic}

This section deals with the special case where the bipartite planar graph~$\Gs$ is~$\ZZ^2$-periodic.
We start in Section~\ref{sec:tt-per} by explaining the additional features of train-tracks and
half-angle maps in the periodic case. In Section~\ref{sec:Kast-per}, we determine for which half-angle maps
the corresponding elliptic Kasteleyn operator~$\K[t]$ is~$\ZZ^2$-periodic. In Section~\ref{sec:KOS},
we recall standard tools used in the study of the periodic bipartite dimer model, in particular the spectral curve.
In Section~\ref{sec:parametrization}, we use the functions~$g^{(t)}_{\xs,\ys}$ defined in Section~\ref{sec:null_functions} to give an explicit parameterization of the spectral curve for the periodic dimer model corresponding to
the operator~$\K[t]$. Finally,  we describe the set of ergodic Gibbs measures of this model in Section~\ref{sec:Gibbs_measures}, and give an explicit expression for the corresponding slopes in Section~\ref{sec:slope}.

Throughout this section, we fix the parameter~$t$ in~$\RR+\frac{\pi}{2}\tau$, {where $\tau\in i\RR_{>0}$,} and once again omit
the superscript~$(t)$ in the notation of $\K[t],\A[t]$ and $\g[t]$.

\subsection{Train-tracks and monotone angle maps in the periodic case}
\label{sec:tt-per}

In the whole of this section, we assume that the bipartite planar graph~$\Gs$
is~$\ZZ^2$-periodic, \emph{i.e.}, that~$\ZZ^2$ acts freely on colored vertices,
edges and faces by translation.
A basis of~$\ZZ^2$ has been chosen, allowing to identify a \emph{horizontal}
direction (along the first vector~$(1,0)$ of the basis) and a \emph{vertical}
direction (along the second vector~$(0,1)$).
The action of~$\ZZ^2$ is denoted additively: for example, if~$\xs$ is a vertex
and~$(m,n)$ belongs to~$\ZZ^2$, then $\xs+(m,n)$ is the copy of~$\xs$ obtained by
translating it~$m$ times along the horizontal direction and~$n$ times along the vertical
one.

The graph~$\Gs$ has a natural toroidal exhaustion~$(\Gs_n)_{n\geq 1}$, where~$\Gs_n:=\Gs/n\ZZ^2$.
The graph~$\Gs_1$ is a bipartite graph on the torus known as the \emph{fundamental domain}.
We use similar notation for the toroidal exhaustions of the dual graph~$\Gs^*$, of the quad-graph~$\GR$, and of the train-tracks~$\T$.

Fix a face~$\fs$ of~$\Gs$ and draw two simple dual paths in the plane,
denoted by~$\gamma_x$ and~$\gamma_y$, joining~$\fs$ to~$\fs+(1,0)$ and~$\fs+(0,1)$ respectively,
intersecting only at $\fs$.
They project
onto the torus to two simple closed loops on~$\Gs^*_1$, also denoted by~$\gamma_x$ and~$\gamma_y$, winding around the torus and intersecting only at~$\fs$.
Their homology classes~$[\gamma_x]$ and~$[\gamma_y]$ form a basis of the first
homology group of the torus~$H_1(\TT;\ZZ)$ and allow its identification with~$\ZZ^2$.

Every train-track~$T\in\T$ projects to an oriented closed curve on the torus.
Therefore, the corresponding
homology class~$[T]\in H_1(\TT,\ZZ)$ can be written as~$[T]=h_T[\gamma_x]+v_T[\gamma_y]$,
with~$h_T$ and~$v_T$ coprime integers.
This allows to define a partial cyclic order on~$\T$ by using the natural cyclic order
of coprime elements of~$\ZZ^2$ around the origin. As one easily checks,
this coincides with the partial cyclic order on~$\T$ defined in Section~\ref{sec:tt-def}.
By construction, this cyclic order induces a cyclic order on~$\T_1=\T/\ZZ^2$. 
Note also that two oriented train-tracks~$T,T'\in\T$ are parallel (resp.\@ anti-parallel) as defined
in Section~\ref{sec:tt-def} if and only if~$[T]=[T']$ (resp.\@~$[T]=-[T']$).

Recall that~$X_\Gs$ denotes the set of maps~$\mapalpha\colon\T\to\RR/\pi\ZZ$ that are monotone with respect to the cyclic orders on~$\T$ and~$\RR/\pi\ZZ$, and that map pairs of intersecting or anti-parallel train-tracks to distinct half-angles.
We shall denote by~$X^{\mathit{per}}_\Gs$ the set of~$\ZZ^2$-periodic elements of~$X_\Gs$, \emph{i.e.},
\[
X^\mathit{per}_\Gs=\{\mapalpha\in X_\Gs\,|\,\alpha_{T+(m,n)}=\alpha_T \text{ for all }T\in\T\text{ and }
(m,n)\in\ZZ^2\}\,.
\]

Since disjoint curves on the torus have either identical or opposite homology classes,
this space can be described more concretely as
\[
X^\mathit{per}_\Gs=\{\mapalpha\colon\T_1\to\RR/\pi\ZZ\,|\,\mapalpha\text{ is monotone and } \alpha_T\neq\alpha_{T'}\text{ for }[T]\neq[T']\}\,.
\]

By the results of~\cite{BCdT:immersion}, if~$\Gs$ is minimal, then
any~$\mapalpha\in X^\mathit{per}_\Gs$ defines a~$\ZZ^2$-periodic minimal immersion of~$\Gs$
(see Section~\ref{sec:tt-def} for definition),
and every such immersion is obtained in this way.

Recall that since~$\Gs$ is bipartite, the train-tracks in~$\T$ are consistently
oriented, clockwise around black vertices and counterclockwise around white ones.
Therefore, the sum of all oriented closed curves~$T\in\T_1$ bounds a~$2$-chain in the torus.
In particular, its homology class vanishes, so we have~$\sum_{T\in\T_1}[T]=0$.
As a consequence, the collection of vectors~$([T])_{T\in\T_1}$ in~$\ZZ^2$, ordered cyclically,
and drawn so that the initial point of a vector~$[T]$ is the
end point of the previous vector, defines a convex polygon (up to translations).
Note that since its coordinates are coprime integers, the vector~$[T]$ only meets~$\ZZ^2$ at
its end points. 
This polygon is referred to as the
\emph{geometric Newton polygon} of~$\Gs$~\cite{GK} and denoted by~$N(\Gs)$,
{see Figure~\ref{fig:amoeba} for an example.}
The space~$X^\mathit{per}_\Gs$ can now be described combinatorially as the set of order-preserving
maps from oriented boundary edges of~$N(\Gs)$ to~$\RR/\pi\ZZ$
mapping distinct vectors to distinct angles. 

In~\cite[Theorem~2.5]{GK}, see also~\cite{Gulotta,Postnikov}, Goncharov and Kenyon build on earlier work of Thurston~\cite{Thurston}
(the article appeared in 2017, but the original preprint dates back to 2004) 
 to show that for
any convex envelop~$N$ of a finite set of points in~$\ZZ^2$,
there exists a minimal~$\ZZ^2$-periodic graph~$\Gs$ such that~$N(\Gs)=N$.
Moreover, if~$\Gs$ and~$\Gs'$ are
two minimal graphs such that~$N(\Gs)=N(\Gs')$, then they are related by
elementary local moves called \emph{spider moves} and \emph{shrinking/expanding~$2$-valent vertices},
see Figure~\ref{fig:elementary}. We study the effect of these moves on the operator~$\K$
in Section~\ref{sec:spider}.

\subsection{Periodicity of the Kasteleyn operator}
\label{sec:Kast-per}

From now on, we assume that the graph~$\Gs$ is minimal (and~$\ZZ^2$-periodic).
We further suppose that~$\Gs$ is \emph{non-degenerate}, in the sense that its geometric Newton polygon~$N(\Gs)$ has positive area.
The aim of this section is to understand for which half-angle maps~$\mapalpha\in X^\mathit{per}_\Gs$ the corresponding elliptic Kasteleyn operator~$\K$ defined in Equation~\eqref{def:Kast_elliptic} is periodic.

Note that the periodicity of~$\Gs$ and of~$\mapalpha$ is not
sufficient to ensure the periodicity of the operator~$\K$.
Indeed, this operator makes use of the~$\RR/\pi\ZZ$-valued discrete Abel map~$\mapd$ defined in Section~\ref{sec:abel_map}
which might have horizontal and vertical \emph{periods}.
More precisely, and using the notation of Section~\ref{sec:tt-per}, we have that for every vertex~$\xs$ of~$\GR$
and~$(m,n)\in\ZZ^2$, the equality
\begin{equation}
\label{eq:per-eta}
\mapd(\xs+(m,n))=\mapd(\xs)+m\sum_{T\in\T_1} \alpha_T v_T-n\sum_{T\in\T_1} \alpha_T h_T
\end{equation}
holds in~$\RR/\pi\ZZ$. {Note that we have $(v_T,-h_T)$ because when moving in the horizontal (resp. vertical) direction, a cycle with homology class $(h_T,v_T)$ is intersected algebraically~$v_T$ (resp.~$-h_T$) times.}

Motivated by this observation, consider the map
\[
\varphi\colon X_\Gs^\mathit{per}\longrightarrow\RR^2
\]
defined as follows.
Let us enumerate by~$T_1,\dots,T_r$ the elements of~$\T_1$ respecting the
cyclic order, and let~$P_1,\dots,P_r$ denote the integer points on the boundary of~$N(\Gs)$
numbered so that~$P_{j+1}-P_j=[T_j]$ (where~$P_{r+1}$ stands for~$P_1$).
Given a half-angle map~$\mapalpha\in X^\mathit{per}_\Gs$, let us write~$\alpha_j:=\alpha_{T_j}$ and
denote by~$\widetilde{\alpha_j-\alpha_{j-1}}$ the unique lift in~$[0,\pi)$ of~$\alpha_j-\alpha_{j-1}\in\RR/\pi\ZZ$ (where~$\alpha_0$ stands for~$\alpha_r$).
For~$\mapalpha\in X_\Gs^\mathit{per}$, set
\begin{equation}
  \varphi(\mapalpha)=\sum_{j=1}^r \frac{\widetilde{\alpha_j-\alpha_{j-1}}}{\pi}\cdot P_j\in\RR^2\,.
  \label{equ:map_vphi}
\end{equation}
Recall that the geometric Newton polygon is defined up to translation of an element of $\ZZ^2$. When defining $\vphi$ above, we are fixing the integer boundary points $P_1,\dots,P_r$ of $N(\Gs)$, thus an anchoring. The following proposition nevertheless holds for all choices of anchoring, and answers the problem raised at the beginning of the section.
{We refer the reader to Figure~\ref{fig:amoeba} for an illustrated example.}

\begin{prop}
\label{prop:angles_perio}
The image of the map~$\varphi\colon X_\Gs^\mathit{per}\to\RR^2$ is equal to the interior of
the geometric Newton polygon~$N(\Gs)$ of~$\Gs$. Moreover, a periodic half-angle
map~$\mapalpha\in X_\Gs^\mathit{per}$ induces a periodic 
elliptic Kasteleyn operator~$\K$ if and only if~$\varphi(\mapalpha)$ lies in~$\ZZ^2$.
\end{prop}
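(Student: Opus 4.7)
The plan is to establish the two assertions in sequence, using as a common thread the observation that $\varphi(\mapalpha)$ is a convex combination of the integer boundary points $P_1,\dots,P_r$. Writing $\delta_j:=\widetilde{\alpha_j-\alpha_{j-1}}\in[0,\pi)$, monotonicity and distinctness of the $\alpha_j$'s in $X_\Gs^\mathit{per}$ give $\delta_j\in(0,\pi)$, and going once around the cyclic circle $\RR/\pi\ZZ$ forces $\sum_{j=1}^r \delta_j=\pi$. Hence $\varphi(\mapalpha)=\sum_j \lambda_j P_j$ with strictly positive weights $\lambda_j=\delta_j/\pi$ summing to~$1$.

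For the first assertion, I first observe that since $N(\Gs)$ is a positive-area convex polygon equal to the convex hull of $\{P_1,\dots,P_r\}$, any strictly positive convex combination of the $P_j$'s lies in $\mathrm{int}(N(\Gs))$; so $\varphi(X_\Gs^\mathit{per})\subseteq\mathrm{int}(N(\Gs))$. For the reverse inclusion, given $p\in\mathrm{int}(N(\Gs))$, I would first write $p$ as a convex combination of vertices of $N(\Gs)$ (a subset of the $P_j$'s) and then average with the centroid $\tfrac{1}{r}\sum_j P_j$ to obtain a representation with all weights strictly positive. These weights determine, up to the free choice of the lift of $\alpha_1$, a valid half-angle map $\mapalpha\in X_\Gs^\mathit{per}$ with $\varphi(\mapalpha)=p$.

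For the second assertion, I would start from Equation~\eqref{eq:per-eta}, which says that the horizontal and vertical periods of a lift of $\mapd$ are $D_x=\sum_T \alpha_T v_T$ and $D_y=-\sum_T \alpha_T h_T$. Because $\theta(z+\pi)=-\theta(z)$, the denominator $\theta(t+\mapd(\bs)-\beta)\theta(t+\mapd(\bs)-\alpha)$ in the formula~\eqref{def:Kast_elliptic} is unchanged whenever $\mapd$ is shifted by an integer multiple of $\pi$, but picks up a nontrivial $\bs$-dependent factor under any other shift. Hence $\K$ is periodic if and only if $D_x,D_y\in\pi\ZZ$, equivalently
\[
  \sum_{T\in\T_1} \alpha_T\,[T]\ \in\ \pi\ZZ^2 \quad \text{in } \RR^2/\pi\ZZ^2.
\]

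To match this with $\varphi(\mapalpha)$, I would choose canonical lifts $\tilde\alpha_j$ with $\tilde\alpha_j-\tilde\alpha_{j-1}=\delta_j$ and $\tilde\alpha_r=\tilde\alpha_0+\pi$, and apply Abel summation using $[T_j]=P_{j+1}-P_j$ with $P_{r+1}:=P_1$:
\[
\sum_{j=1}^r \tilde\alpha_j(P_{j+1}-P_j)
=(\tilde\alpha_r-\tilde\alpha_1)P_1-\sum_{j=2}^r\delta_j P_j
=\pi P_1-\sum_{j=1}^r\delta_j P_j
=\pi\bigl(P_1-\varphi(\mapalpha)\bigr).
\]
Since $P_1\in\ZZ^2$, the condition $\sum_T\alpha_T[T]\in\pi\ZZ^2$ becomes exactly $\varphi(\mapalpha)\in\ZZ^2$. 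The main obstacle I anticipate is bookkeeping between $\RR$, $\RR/\pi\ZZ$, and $\RR^2/\pi\ZZ^2$: one must check that the apparently lift-dependent periodicity condition on $\K$ really captures an intrinsic element, and that the Abel-summation identity above holds in $\RR^2$ (not merely modulo $\pi\ZZ^2$) so as to deliver the claimed equivalence rather than just an implication.
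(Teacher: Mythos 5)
Your overall route is the same as the paper's: the convex-combination description of $\varphi(\mapalpha)$ for the first claim, and an Abel summation of $\sum_j\widetilde{\alpha}_j[T_j]$ for the second. The second half is correct: your specific choice of lifts ($\widetilde{\alpha}_j-\widetilde{\alpha}_{j-1}=\delta_j$, hence $\widetilde{\alpha}_r=\widetilde{\alpha}_0+\pi$) makes the paper's integer vector $\sum_j n_jP_j$ collapse to $P_1$, and since each $[T_j]$ lies in $\ZZ^2$, the quantity $\frac1\pi\sum_j\widetilde{\alpha}_j[T_j]$ changes by an element of $\ZZ^2$ under any change of lifts, which disposes of the bookkeeping worry you raise at the end.

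The gap is in the first half. The claim that monotonicity and distinctness give $\delta_j\in(0,\pi)$ for \emph{all} $j$ is false: $X_\Gs^\mathit{per}$ only requires $\alpha_T\neq\alpha_{T'}$ when $[T]\neq[T']$, so consecutive \emph{parallel} train-tracks ($[T_{j-1}]=[T_j]$, i.e., $P_j$ a non-vertex lattice point on an edge of $N(\Gs)$) may carry equal half-angles, giving $\delta_j=0$. What is true is that $\delta_j>0$ whenever $P_j$ is a \emph{vertex} of $N(\Gs)$; since $N(\Gs)$ has positive area, positive weight on every vertex already forces the combination into the interior (a supporting functional maximized on a proper face cannot be maximal at every vertex), so your conclusion survives, but the justification needs this repair. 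This is precisely the point the paper handles by factoring $\varphi$ through the simplex and noting that a boundary point of $N(\Gs)$ forces a zero coordinate at some vertex $P_j$, hence $\alpha_j=\alpha_{j-1}$ with $[T_j]\neq[T_{j-1}]$, contradicting membership in $X_\Gs^\mathit{per}$. Separately, averaging $p$ with the centroid $c$ represents a \emph{different} point, not $p$; to get an all-positive representation of $p$ itself, first push $p$ slightly away from $c$ (take $q=(1+\epsilon)p-\epsilon c\in N(\Gs)$ for small $\epsilon>0$, expand $q$ convexly, then recombine $p=\frac{1}{1+\epsilon}q+\frac{\epsilon}{1+\epsilon}c$). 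After that, your reconstruction of $\mapalpha$ from the weights is fine.
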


\begin{proof}
Let us fix~$\mapalpha\in X_\Gs^\mathit{per}$ and consider its image by~$\varphi$.
First observe that since~$\mapalpha$ is monotone, we have~$\sum_{j=1}^r\widetilde{\alpha_j-\alpha_{j-1}}=\pi$. Therefore,
$\varphi(\mapalpha)$ is a convex combination of the vertices~$P_1,\dots,P_r$,
and hence an element of the convex hull~$N(\Gs)$ of these vertices.

To analyse~$\varphi(X_\Gs^\mathit{per})$ more precisely, let us write~$\overline{X}_\Gs^\mathit{per}$ for
the set of monotone half-angle maps~$\mapalpha\colon\T_1\to\RR/\pi\ZZ$ ($\overline{X}_\Gs^\mathit{per}$ is the set $X_\Gs^{\mathit{per}}$ without the condition that train-tracks with different homology classes need to have distinct half-angles), and denote by~$\Delta=\{\mapbeta=(\beta_j)_j\in[0,1]^r\,|\,\sum_{j=1}^r\beta_j=1\}$ the standard simplex of dimension~$r-1$. Observe that~$\varphi$ can be
described as the restriction to~$X_\Gs^\mathit{per}$ of the composition
\[
\overline{X}_\Gs^\mathit{per}\stackrel{\delta}{\longrightarrow}\Delta\stackrel{p}{\longrightarrow}N(\Gs)\,,
\]
with~$\delta(\mapalpha)=(\frac{\widetilde{\alpha_j-\alpha_{j-1}}}{\pi})_j$ and~$p(\mapbeta)=\sum_j\beta_j P_j$.
This composition~$p\circ\delta\colon\overline{X}_\Gs^\mathit{per}\to N(\Gs)$ is clearly surjective, but we now need to
understand how the condition~$\alpha_T\neq\alpha_{T'}$ for~$[T]\neq[T']$ defining the space~$X_\Gs^\mathit{per}\subset\overline{X}_\Gs^\mathit{per}$ affects the image of~$p\circ\delta$ inside~$N(\Gs)$.

Since~$p$ is an affine surjective map, any point in the interior of~$N(\Gs)$
is the image under~$p$ of an element of the interior of~$\Delta$, \emph{i.e.},\@ an element~$\mapbeta\in\Delta$ with no
vanishing coordinate. Therefore, we have
\[
\delta^{-1}(p^{-1}(\mathrm{int}\,N(\Gs)))\subset \delta^{-1}(\mathrm{int}\,\Delta)=\{\mapalpha\in \overline{X}_\Gs^\mathit{per}\,|\,\mapalpha\text{ injective}\}\subset X_\Gs^\mathit{per}\,,
\]
thus checking the inclusion of the interior of~$N(\Gs)$ into~$\varphi(X_\Gs^\mathit{per})$.

To prove the opposite inclusion, consider an arbitrary element~$x$ of~$N(\Gs)\setminus\mathrm{int}\,N(\Gs)$,
and let us write~$F$ for the biggest face of~$N(\Gs)$ containing~$x$ in its interior. (Concretely,~$F=x$ if~$x$ is
a vertex of~$N(\Gs)$, and~$F$ is the boundary edge of~$N(\Gs)$ containing~$x$ otherwise.)
By definition, we have~$p^{-1}(x)=\{\mapbeta\in\Delta\,|\,\sum_j\beta_jP_j=x\}$.
Fix a reference frame for~$\RR^2$ with origin at~$x$ and first coordinate axis orthogonal to~$F$.
Then, the first coordinate of the equation~$\sum_j\beta_jP_j=x$ leads to~$\beta_j=0$ for all~$j$ such that~$P_j$ does
not belong to~$F$. Since~$N(\Gs)$ has positive area, we have~$\beta_j=0$ for some vertex~$P_j$ of~$N(\Gs)$. Such an element of~$\Delta$
can only be realized as~$\delta(\mapalpha)$ with~$\alpha_j=\alpha_{j-1}$. Since~$P_j$ is a vertex of~$N(\Gs)$,
we have~$[T_j]\neq[T_{j-1}]$, so~$\mapalpha$ does not belong to~$X_\Gs^\mathit{per}$.
This shows the inclusion of~$\varphi(X_\Gs^\mathit{per})$ into the interior of~$N(\Gs)$, and thus the equality of these
two sets.

Finally, by~$\pi$-(anti)periodicity of the theta function~$\theta$, the operator~$\K$ is periodic
if and only if the~$\RR/\pi\ZZ$-valued discrete Abel map~$\mapd$ is
{periodic}. By Equation~\eqref{eq:per-eta}, this
is the case if and only if
\[
\sum_{T\in\T_1} \alpha_T [T]=
\sum_{T\in\T_1} \alpha_T\left(\begin{smallmatrix} h_T \\ v_T \end{smallmatrix}\right)= \left(\begin{smallmatrix} 0 \\ 0 \end{smallmatrix}\right)\in(\RR/\pi\ZZ)^2\,.
\]

Fixing arbitrary lifts~$\widetilde{\alpha}_j\in\RR$ of~$\alpha_j\in\RR/\pi\ZZ$, this is equivalent to requiring that
the following element of~$\RR^2$ belongs to~$\ZZ^2$:
\begin{equation}
\label{equ:diff}
\frac{1}{\pi}\sum_{j=1}^r \widetilde{\alpha}_j [T_j]=\frac{1}{\pi}\sum_{j=1}^r \widetilde{\alpha}_j (P_{j+1}-P_j)
=-\sum_{j=1}^r\frac{\widetilde{\alpha}_j-\widetilde{\alpha}_{j-1}}{\pi}P_j=\sum_{j=1}^rn_j P_j - \varphi(\mapalpha)\,,
\end{equation}
with~$n_j=\frac{\widetilde{\alpha_j-\alpha_{j-1}}}{\pi}-\frac{\widetilde{\alpha}_j-\widetilde{\alpha}_{j-1}}{\pi}$. Since~$n_j$ is an integer and~$P_j$ an element of~$\ZZ^2$ for all~$j$, this is equivalent to requiring that~$\varphi(\mapalpha)$ belongs to~$\ZZ^2$. This concludes the proof.
\end{proof}

In the remainder of this section, we suppose that the minimal graph~$\Gs$ is
endowed with~$\mapalpha\in X_\Gs^\mathit{per}$ so that
the corresponding elliptic Kasteleyn operator~$\K$ is periodic, \emph{i.e.},
so that~$\varphi(\mapalpha)$ is an interior lattice point of~$N(\Gs)$.

\begin{rem}\label{rem:NG} \leavevmode
\begin{enumerate}
\item Some minimal periodic graphs have a too small geometric Newton polygon to admit
  such an integer point~$\varphi(\mapalpha)$ in their interior. This is the case for
  the square and hexagonal lattices with their smallest
  fundamental domain composed of one vertex of each color.
    For these graphs, the rest of the discussion
  in this section is void.
\item 
In Section~\ref{sec:slope} we use the following version of
Equation~\eqref{equ:diff}.
Fix a lift
$(\widetilde{\alpha}_j)$
of the half-angles $(\alpha_j)$ in $\RR/\pi\ZZ$ such that,
$\widetilde{\alpha}_1\le\widetilde{\alpha}_2\le\cdots\le \widetilde{\alpha}_r
<\widetilde{\alpha}_1+\pi$,
and consider the element
$-\frac{1}{\pi}\sum_{T\in\T_1}\widetilde{\alpha}_T\binom{h_T}{v_T}$.
Then it is equal to the difference of~$\varphi(\mapalpha)$ with $P_1$,
as in this case, all $n_j$'s are equal to 0
except $n_1$ which is equal to 1.
\end{enumerate}
\end{rem}

\subsection{Characteristic polynomial, spectral curve, amoeba}\label{sec:charact_polynom}
\label{sec:KOS}

We now recall some key tools used in studying the dimer model
on a~$\mathbb{Z}^2$-periodic, planar, bipartite graph~$\Gs$
  with periodic weights, see for example~\cite{KOS}.
For $(z,w)\in(\CC^*)^2$, a function $f$ is said to be \emph{$(z,w)$-quasiperiodic}, if
\[
\forall \xs\in \Vs,\, \forall\,(m,n)\in\ZZ^2,\quad f(\xs+(m,n))=z^{m}w^{n}f(\xs).
\]
We let $\CC^\Vs_{(z,w)}$ denote the space of $(z,w)$-quasiperiodic
functions. Such functions are completely determined by their value in $\Vs_1$.
By a slight abuse of notation, we will identify
  a $(z,w)$-quasiperiodic function with
its restriction to $\Vs_1$, where 
this identification depends on the choice of $\gamma_x$ and $\gamma_y$.
With this convention in mind, a natural basis for
$\CC^\Vs_{(z,w)}$ is given by $(\delta_\xs(z,w))_{\xs\in \Vs_1}$. Similarly,
we let $\CC^\Bs_{(z,w)}$ and $\CC^\Ws_{(z,w)}$ be the set of 
$(z,w)$-quasiperiodic functions defined on black vertices, and on white
vertices respectively.

The periodic operator $\Ks$ maps the vector space
$\CC^\Bs_{(z,w)}$ into $\CC^\Ws_{(z,w)}$ and we let $\Ks(z,w)$ be the matrix
of the restriction of $\Ks$ to these spaces written in their natural respective
bases. Alternatively,
the matrix $\Ks(z,w)$ is the matrix of the Kasteleyn operator of the fundamental domain $\Gs_1$ where edge weights
are multiplied
by $w^{-1}$ or $w$, resp. $z$ or
$z^{-1}$ each time the corresponding edge oriented from the white to the black vertex
crosses the curve $\gamma_x$, resp. $\gamma_y$, left-to-right or right-to-left.
The \emph{characteristic polynomial} $P(z,w)$ is the determinant of
the matrix $\Ks(z,w)$.
The \emph{Newton polygon} of $P$, denoted by $N(P)$ is the convex hull of
lattice points $(i,j)$ such that $z^i w^j$ arises as a non-zero monomial in $P$.
By \cite[Theorem~3.12]{GK}, $N(P)$ is a lattice translate of $N(\Gs)$.

The \emph{spectral curve} $\C$
is the zero locus of the characteristic polynomial:
\[
  \C=\{(z,w)\in(\CC^*)^2:\, P(z,w)=0\}.
\]
In other words, it corresponds to the values of $z$ and $w$ for which we can
find a non-zero $(z,w)$-quasiperiodic function $f$ on black vertices such that
$\Ks f=0$.

The \emph{amoeba} $\Ascr$ of the curve $\C$ is the image
of~$\C$ through the map $(z,w)\!\mapsto\! (-\!\log\!|w|,\log\!|z|)$,
see Figure~\ref{fig:amoeba} for an example.
Note that the present definition of the amoeba differs from that
of~\cite{Gelfand-Kapranov-Zelevinsky,KOS} by a rotation by 90 degrees. This is handy when describing the phase diagram, see Section~\ref{sec:Gibbs_measures}.
By~\cite{KO:Harnack,KOS}, we know that the spectral curve $\C$ is a \emph{Harnack
curve}, which is
equivalent to saying that the map from the curve to its amoeba is at most
2-to-1~\cite{Mikhalkin1}.
(To be precise, such curves are referred to as \emph{simple
Harnack curves} in real algebraic geometry.)
The \emph{real locus} of the curve $\C$ consists of the set of points that are invariant under complex conjugation:
\[
  \{(z,w)\in\C:\,(z,w)=(\bar{z},\bar{w})\}=\{(x,y)\in(\RR^*)^2:\,P(x,y)=0\}
\]
deprived from its isolated singularities, which are the only singularities a
Harnack curve can admit.

\begin{figure}
  \centering
  \def\svgwidth{55mm}
\begingroup%
  \makeatletter%
  \providecommand\color[2][]{%
    \errmessage{(Inkscape) Color is used for the text in Inkscape, but the package 'color.sty' is not loaded}%
    \renewcommand\color[2][]{}%
  }%
  \providecommand\transparent[1]{%
    \errmessage{(Inkscape) Transparency is used (non-zero) for the text in Inkscape, but the package 'transparent.sty' is not loaded}%
    \renewcommand\transparent[1]{}%
  }%
  \providecommand\rotatebox[2]{#2}%
  \newcommand*\fsize{\dimexpr\f@size pt\relax}%
  \newcommand*\lineheight[1]{\fontsize{\fsize}{#1\fsize}\selectfont}%
  \ifx\svgwidth\undefined%
    \setlength{\unitlength}{802.60857538bp}%
    \ifx\svgscale\undefined%
      \relax%
    \else%
      \setlength{\unitlength}{\unitlength * \real{\svgscale}}%
    \fi%
  \else%
    \setlength{\unitlength}{\svgwidth}%
  \fi%
  \global\let\svgwidth\undefined%
  \global\let\svgscale\undefined%
  \makeatother%
  \begin{picture}(1,0.74941856)%
    \lineheight{1}%
    \setlength\tabcolsep{0pt}%
    \put(0,0){\includegraphics[width=\unitlength,page=1]{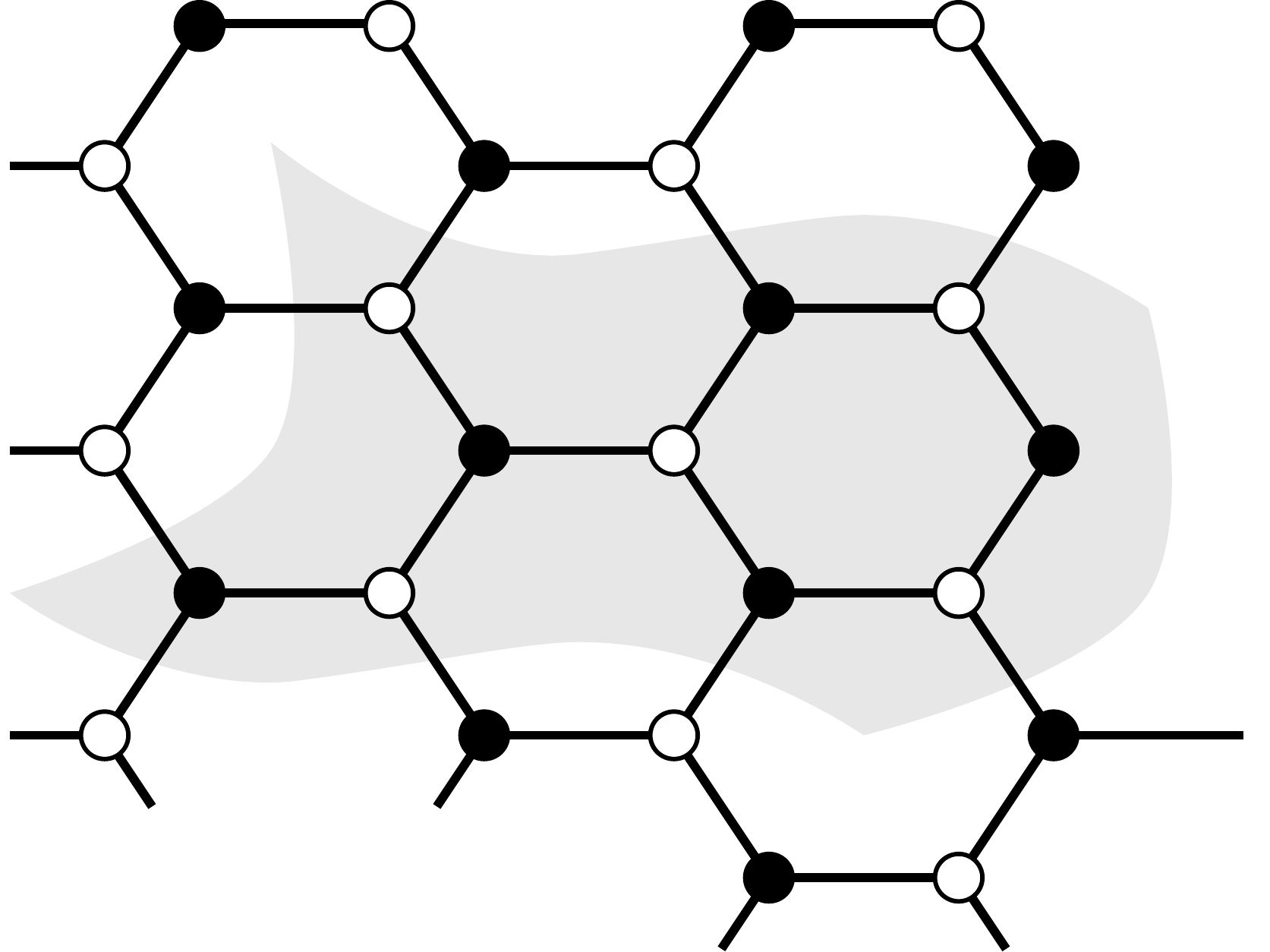}}%
    \put(0.70096207,0.13246308){\color[rgb]{0,0,0}\makebox(0,0)[lt]{\lineheight{1.25}\smash{\begin{tabular}[t]{l}$\gamma_x$\end{tabular}}}}%
    \put(0,0){\includegraphics[width=\unitlength,page=2]{graph_simple.pdf}}%
    \put(0.18535449,0.66862141){\color[rgb]{0,0,0}\makebox(0,0)[lt]{\lineheight{1.25}\smash{\begin{tabular}[t]{l}$\gamma_y$\end{tabular}}}}%
    \put(0,0){\includegraphics[width=\unitlength,page=3]{graph_simple.pdf}}%
  \end{picture}%
\endgroup%

  \hspace{2mm}
  \def\svgwidth{35mm}
\begingroup%
  \makeatletter%
  \providecommand\color[2][]{%
    \errmessage{(Inkscape) Color is used for the text in Inkscape, but the package 'color.sty' is not loaded}%
    \renewcommand\color[2][]{}%
  }%
  \providecommand\transparent[1]{%
    \errmessage{(Inkscape) Transparency is used (non-zero) for the text in Inkscape, but the package 'transparent.sty' is not loaded}%
    \renewcommand\transparent[1]{}%
  }%
  \providecommand\rotatebox[2]{#2}%
  \newcommand*\fsize{\dimexpr\f@size pt\relax}%
  \newcommand*\lineheight[1]{\fontsize{\fsize}{#1\fsize}\selectfont}%
  \ifx\svgwidth\undefined%
    \setlength{\unitlength}{360.58802472bp}%
    \ifx\svgscale\undefined%
      \relax%
    \else%
      \setlength{\unitlength}{\unitlength * \real{\svgscale}}%
    \fi%
  \else%
    \setlength{\unitlength}{\svgwidth}%
  \fi%
  \global\let\svgwidth\undefined%
  \global\let\svgscale\undefined%
  \makeatother%
  \begin{picture}(1,0.9584846)%
    \lineheight{1}%
    \setlength\tabcolsep{0pt}%
    \put(0,0){\includegraphics[width=\unitlength,page=1]{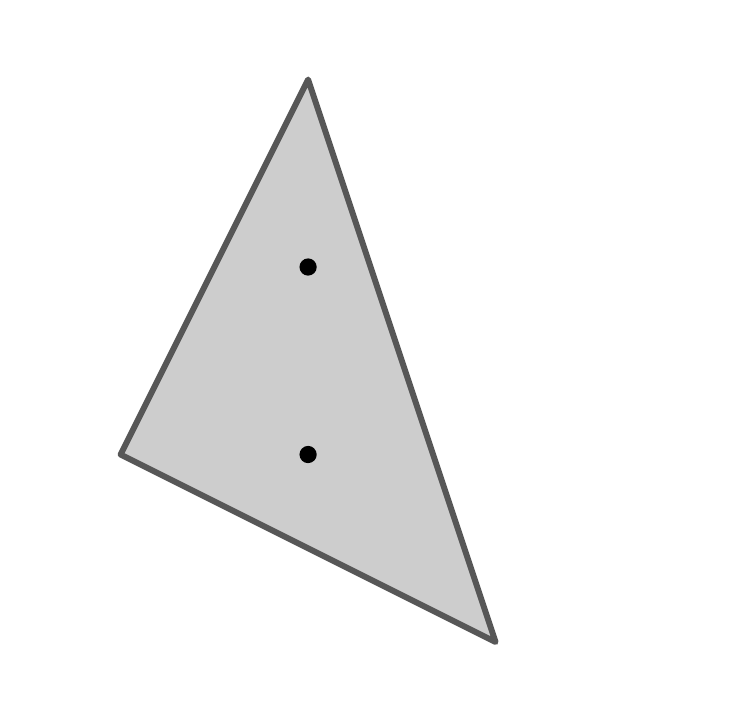}}%
    \put(0.70131223,0.02066118){\color[rgb]{0,0,0}\makebox(0,0)[lt]{\lineheight{1.25}\smash{\begin{tabular}[t]{l}$P_1$\end{tabular}}}}%
    \put(0.3269237,0.89423428){\color[rgb]{0,0,0}\makebox(0,0)[lt]{\lineheight{1.25}\smash{\begin{tabular}[t]{l}$P_2$\end{tabular}}}}%
    \put(0,0){\includegraphics[width=\unitlength,page=2]{newton_simple.pdf}}%
    \put(0.28532494,0.51152653){\color[rgb]{0,0,0}\makebox(0,0)[lt]{\lineheight{1.25}\smash{\begin{tabular}[t]{l}$\varphi(\alpha)$\end{tabular}}}}%
    \put(0,0){\includegraphics[width=\unitlength,page=3]{newton_simple.pdf}}%
    \put(-0.00586605,0.31185225){\color[rgb]{0,0,0}\makebox(0,0)[lt]{\lineheight{1.25}\smash{\begin{tabular}[t]{l}$P_3$\end{tabular}}}}%
    \put(0,0){\includegraphics[width=\unitlength,page=4]{newton_simple.pdf}}%
  \end{picture}%
\endgroup%

  \def\svgwidth{45mm}
\begingroup%
  \makeatletter%
  \providecommand\color[2][]{%
    \errmessage{(Inkscape) Color is used for the text in Inkscape, but the package 'color.sty' is not loaded}%
    \renewcommand\color[2][]{}%
  }%
  \providecommand\transparent[1]{%
    \errmessage{(Inkscape) Transparency is used (non-zero) for the text in Inkscape, but the package 'transparent.sty' is not loaded}%
    \renewcommand\transparent[1]{}%
  }%
  \providecommand\rotatebox[2]{#2}%
  \newcommand*\fsize{\dimexpr\f@size pt\relax}%
  \newcommand*\lineheight[1]{\fontsize{\fsize}{#1\fsize}\selectfont}%
  \ifx\svgwidth\undefined%
    \setlength{\unitlength}{365.65297205bp}%
    \ifx\svgscale\undefined%
      \relax%
    \else%
      \setlength{\unitlength}{\unitlength * \real{\svgscale}}%
    \fi%
  \else%
    \setlength{\unitlength}{\svgwidth}%
  \fi%
  \global\let\svgwidth\undefined%
  \global\let\svgscale\undefined%
  \makeatother%
  \begin{picture}(1,0.88898087)%
    \lineheight{1}%
    \setlength\tabcolsep{0pt}%
    \put(0,0){\includegraphics[width=\unitlength,page=1]{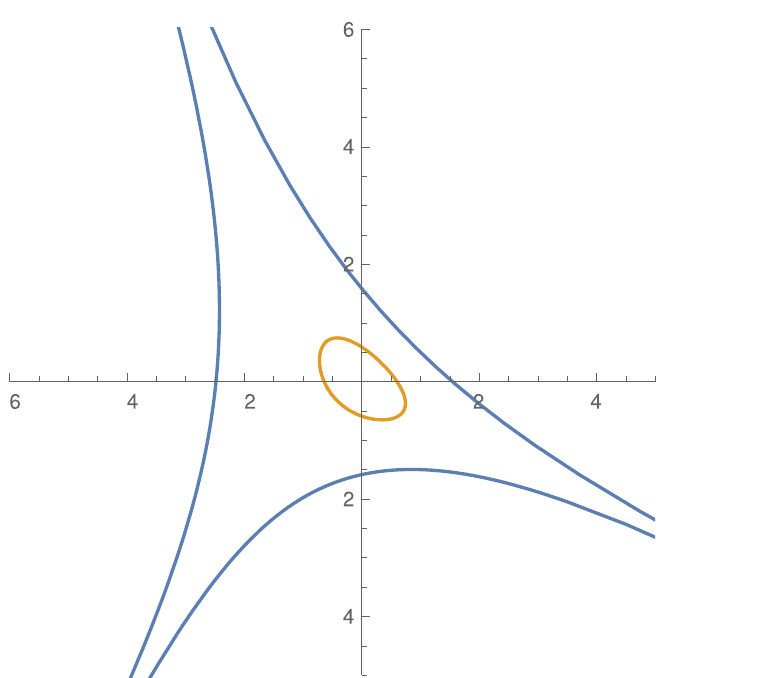}}%
    \put(0.85372767,0.35214386){\color[rgb]{0,0,0}\makebox(0,0)[lt]{\lineheight{1.25}\smash{\begin{tabular}[t]{l}$-\log|w|$\end{tabular}}}}%
    \put(0.4461823,0.86510046){\color[rgb]{0,0,0}\makebox(0,0)[lt]{\lineheight{1.25}\smash{\begin{tabular}[t]{l}$\log|z|$\end{tabular}}}}%
    \put(0,0){\includegraphics[width=\unitlength,page=2]{amoeba_simple.pdf}}%
  \end{picture}%
\endgroup%

  \caption{Left: the hexagonal lattice with a shaded fundamental domain
    as an example of periodic minimal graph $\Gs$, with
    the trace of the three train-tracks of $\Gs_1$ (green, blue, red) of
    respective homology classes $(-1,3)$, $(-1,-2)$ and $(2,-1)$.
    Middle: the corresponding Newton polygon, whose boundary vectors are given
    by the homology classes of the train-tracks;
    {by Proposition~\ref{prop:angles_perio}, the interior of~$N(\Gs)$ is the image of the map~$\varphi\colon X_\Gs^\mathit{per}\to\RR^2$.}
    Right: amoeba of the associated
    spectral curve when the
half-angles are chosen to be $-\frac{\pi}{5}$, $0$, $\frac{2\pi}{5}$ respectively, so
that the point $\varphi(\mapalpha)$ is the topmost inner point of $N(\Gs)$.
Because of our convention, the asymptotes of the tentacles are parallel (instead of normal classically)
to the corresponding sides of~$N(\Gs)$.
}
\label{fig:amoeba}
\end{figure}

Note that the matrix~$\Ks(z,w)$ and the characteristic polynomial~$P(z,w)$ both
depend on the
particular gauge choice for the edge weights
and on the explicit choices of $\gamma_x$ and $\gamma_y$.
Also, the Newton polygon~$N(P)$ undergoes a translation when these curves
are deformed, \emph{i.e.}, modified within fixed homology classes.
On the other hand,
the spectral curve (and its associated amoeba) only depend on the face weights of the periodic dimer model
(with fixed homology classes for~$\gamma_x$ and~$\gamma_y$).
Replacing this positive basis of~$H_1(\TT;\ZZ)$ by another one amounts to transforming the amoeba by the linear action
of the element of~$\mathit{SL}_2(\ZZ)$ corresponding to this base change.

\subsection{Explicit parameterization of the spectral curve}\label{sec:parametrization}

Throughout this section, we assume that the~$\ZZ^2$-periodic minimal graph~$\Gs$ is
endowed with a half-angle map~$\mapalpha\in X^\mathit{per}_\Gs$ such that the Kasteleyn operator $\Ks$ is periodic. This ensures that the function $g$
{defined in Section~\ref{sec:null_functions}}
satisfies the
equality
\begin{equation}
\label{equ:g-per}
g_{\xs+(m,n),\ys+(m,n)}(u)=g_{\xs,\ys}(u),
\end{equation}
for all~$u\in\CC$, vertices~$\xs,\ys\in\Vs$ and integers~$m,n\in\ZZ$.

Fix a complex number~$u$, and a base vertex~$\xs_0\in \Vs$.
Because of its product structure, the function~$\xs\mapsto g_{\xs,\xs_0}(u)$
is~$(z,w)$-quasiperiodic, with~$(z,w)=(z(u),w(u))$ given by
\begin{equation*}
  z(u)=g_{\xs_0+(1,0),\xs_0}(u),\quad w(u)=g_{\xs_0+(0,1),\xs_0}(u).
\end{equation*}
These quantities are easily seen not to depend on the choice of~$\xs_0$.
Note also that, since~$\Ks g_{\cdot,\xs_0}(u)$ vanishes by Proposition~\ref{prop:ker},
the complex pair~$(z(u),w(u))$ belongs to the spectral curve~$\C$ for all~$u$, a fact already noted in~\cite{Fock}.

The quantities $z(u)$ and $w(u)$ can be expressed explicitly in terms of the 
half-angles and homology
classes of train-tracks of~$\Gs_1$, as follows.
Using its definition, 
compute~$z(u)=g_{\xs_0+(1,0),\xs_0}(u)$
as a product of local
contributions along an edge path of $\GR_1$
winding once horizontally from right to left.
Terms of the form~$\theta(u+t+\mapd(\ws))$ or~$\theta(u-t-\mapd(\bs))$
coming from contributions of edges arriving to or leaving from white vertices~$\ws$
or black vertices~$\bs$ along this path cancel out,
leaving only a factor
of the form
\[
\frac{\theta(u+t+\mapd(\xs_0))}{\theta(u+t+\mapd(\xs_0+(1,0)))}=
\frac{\theta(u+t+\mapd(\xs_0))}{\theta(u+t+\mapd(\xs_0)+\sum_T\alpha_T v_T)}=(-1)^{\frac{1}{\pi}\sum_T\alpha_T v_T}\,,
\]

where we have used Equation~\eqref{eq:per-eta}, the fact that $\sum_{T\in\T_1}v_T
\alpha_T\equiv 0 \mod \pi$, and assumed without loss of generality that~$\xs_0$ is a white vertex.
The remaining factors
$\theta(u-\alpha)$ can be grouped together according to the train-tracks in
$\T_1$ they are associated to. For a train-track $T\in\T_1$, the exponent of
$\theta(u-\alpha_T)$ is the algebraic number of times a copy of $T$ crosses
the path, which is, with our convention, exactly
minus the vertical component of its homology class $[T]$.
One can reason similarly for $w(u)$, giving the following
expressions:
{\small \begin{equation}
  z(u)=(-1)^{\frac{1}{\pi}\sum_T\alpha_T v_T}\prod_{T\in \T_1} \theta(u-\alpha_T)^{-v_T},\quad
  w(u)=(-1)^{\frac{1}{\pi}\sum_T\alpha_T h_T}\prod_{T\in\T_1} \theta(u-\alpha_T)^{h_T}\,.
  \label{eq:expr_zu_wu}
\end{equation}}

By Remark~\ref{rem:g_wb}, the functions $z(u)$ and $w(u)$ are meromorphic functions of $u$ on 
$\TT(q)$ and they are well defined when the half-angle map $\mapalpha$ takes values in $\RR/\pi\ZZ$. Moreover, since 
the $\alpha_T$'s are real, they commute with complex conjugation:
\begin{equation*}
  \forall\ u\in\TT(q),\quad z(\bar{u})=\overline{z(u)}
  \quad \text{and}\quad
  w(\bar{u})=\overline{w(u)}.
\end{equation*}

Define the following map $\psi$:
\begin{align*}
  \psi\colon \TT(q)&\rightarrow \C\\
  u&\mapsto \psi(u)=(z(u),w(u)).
\end{align*}

Then we have the following result, which is illustrated in Figure~\ref{fig:psi}.

\begin{prop}~\label{prop:param_curve}
The map~$\psi$ is an explicit birational parameterization of the spectral curve~$\C$, implying that this Harnack curve has geometric genus~$1$.
Moreover, the real locus of~$\C$ is the image under~$\psi$ of the set~$\RR/\pi\ZZ \times \{0,\frac{\pi}{2}\tau\}$, and the
connected component with ordinate~$\frac{\pi}{2}\tau$ is bounded
(\emph{i.e.},\@ an \emph{oval}) while the other one is not.
\end{prop}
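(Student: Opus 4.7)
The plan has four parts: birationality of $\psi$, the genus~$1$ assertion, identification of the real locus of $\C$ with $\psi(C_0 \sqcup C_1)$, and the oval versus tentacle dichotomy. For birationality I would compare degrees. From Equation~\eqref{eq:expr_zu_wu}, the meromorphic function $z(u)$ on $\TT(q)$ has poles only at the $\alpha_T$ with $v_T>0$, each of multiplicity $v_T$, so its degree as a meromorphic function on $\TT(q)$ is $\sum_{v_T>0}v_T$. On the other hand, by the Goncharov--Kenyon identification~\cite[Thm.~3.12]{GK} of $N(P)$ with a translate of $N(\Gs)$, the degree of the first-coordinate projection $\C\to\CC\cup\{\infty\}$ equals the $w$-extent of $N(\Gs)$; since the oriented boundary of $N(\Gs)$ is made of the vectors $[T_j]=P_{j+1}-P_j$ in cyclic order with $\sum_j[T_j]=0$, this $w$-extent is also $\sum_{v_T>0}v_T$. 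The factorization of $z\colon\TT(q)\to\CC\cup\{\infty\}$ through $\psi$ followed by the first-coordinate projection then forces $\deg(\psi)=1$, so $\psi$ is a birational parameterization. In particular, $\C$ is irreducible, and, being birational to the smooth genus-$1$ Riemann surface $\TT(q)$, has geometric genus~$1$.

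For the real locus, $\tau\in i\RR_+^*$ implies that the lattice $\Lambda$ is stable under complex conjugation and that the conjugation-fixed locus on $\TT(q)$ is exactly $C_0 \sqcup C_1$. Since the $\alpha_T$ are real, Equation~\eqref{eq:expr_zu_wu} yields $\psi(\bar u)=\overline{\psi(u)}$, so $\psi(C_0 \sqcup C_1)$ lies in the real locus of $\C$. Conversely, for a smooth real point $(z_0,w_0)\in\C$, its unique preimage $u_0$ (by birationality) satisfies $\psi(\bar u_0)=\overline{\psi(u_0)}=(z_0,w_0)=\psi(u_0)$; generic injectivity of $\psi$ then forces $u_0=\bar u_0$, \emph{i.e.}, $u_0 \in C_0 \sqcup C_1$, and continuity handles the finitely many non-generic points.

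To distinguish the two real components in the amoeba, observe that on $C_0$ the function $z(u)$ has genuine poles at the real angles $\alpha_T$ with $v_T>0$, so $\log|z|$ attains $+\infty$ and $\psi(C_0)$ is unbounded in amoeba coordinates. On $C_1$, parameterized as $u=v+\frac{\pi\tau}{2}$ with $v\in\RR/\pi\ZZ$, the identity $\theta(v+\frac{\pi\tau}{2})=iq^{-1/4}e^{-iv}\theta_4(v)$ from Equation~\eqref{eq:theta14}, together with the strict positivity of $\theta_4(\,\cdot\,|\tau)$ on $\RR$ for $\tau\in i\RR_+^*$, shows that $|z|$ and $|w|$ are continuous and strictly positive on $C_1$; hence $\psi(C_1)$ is a bounded component of the real locus, \emph{i.e.}, an oval. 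The main technical obstacle is the degree calculation underlying birationality: one must match the $w$-extent of $N(\Gs)$ with $\sum_{v_T>0}v_T$ via the homology-class description of its boundary, and correctly invoke the Goncharov--Kenyon equality $N(P)\cong N(\Gs)$ up to translation from~\cite{GK}. The remaining steps are rather direct consequences of the explicit product formulas for $z$ and $w$ combined with the reality of the $\alpha_T$.
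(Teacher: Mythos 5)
Your route to the birationality claim is genuinely different from the paper's: you match the degree of $z(u)$ as an elliptic function on $\TT(q)$ (namely $\sum_{v_T>0}v_T$, and indeed no cancellation between coinciding angles can occur, since $\mapalpha\in X_\Gs^\mathit{per}$ forces train-tracks with equal half-angles to have equal homology classes) against the degree of the projection $(z,w)\mapsto z$ read off the Newton polygon via~\cite{GK}. The paper instead argues qualitatively: $\psi$ parameterizes an irreducible component of $\C$, and $\C$ is irreducible because it is Harnack by~\cite{KOS}. Your treatment of the real locus (both inclusions, using generic injectivity for the converse) and of the bounded/unbounded dichotomy (poles of $z$ on $C_0$ versus positivity of $\theta_4$ on $C_1$) is correct and in fact more detailed than the paper's.

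There is, however, a circularity in your birationality step. The identity $\deg z=\deg(\psi)\cdot\deg(\pi_1)$, with $\deg(\pi_1)$ taken to be the full $w$-extent of $N(P)$, presupposes that $\psi$ dominates \emph{all} of $\C$. If $\C$ were reducible, $\psi$ would hit only one component $\C_0$, the relevant degree would be $\deg(\pi_1|_{\C_0})$ — possibly a proper divisor of $\sum_{v_T>0}v_T$ — and $\deg(\psi)>1$ would not be excluded. Yet you present irreducibility of $\C$ as a \emph{consequence} of birationality, so as written the argument uses what it purports to prove. The repair is exactly the input the paper uses: $\C$ is a (simple) Harnack curve by~\cite{KOS,KO:Harnack}, hence irreducible; with that granted, your degree count does force $\deg(\psi)=1$ (using also the standing non-degeneracy assumption $\sum_{v_T>0}v_T>0$), and it then gives a more quantitative justification of the step than the paper's one-line assertion.
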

\begin{proof}
  The map $\psi$ is meromorphic so it parameterizes (an open set of) an
  irreducible component of $\C$. But since $\C$ is Harnack~\cite{KOS}, it is in
  particular irreducible. Therefore, it is a parameterization of the
  whole spectral curve. Commutation of $\psi$ with complex conjugation implies that $\psi$ maps 
  the real components $C_0$ and $C_1$ of $\TT(q)$ to those of $\C$. Since $z(u)$ and $w(u)$
  have zeros and poles on $C_0=\RR/\pi\ZZ$, this real component of $\TT(q)$ is mapped to the
  one corresponding to the unbounded component of the real locus of $\C$,
  {and the remaining component~$C_1$ to the oval(s) of~$\C$ or to its isolated real nodes.
  This latter case is impossible, since it would imply that the holomorphic maps~$z$ and~$w$ 
  are constant along~$C_1$, and hence constant.
  Finally, since~$\mapalpha$ belongs to~${X}^\mathit{per}_\Gs$,
  the cyclic ordering of~$\mapalpha(\T)\subset C_0$ coincides with the cyclic ordering of the tentacles of~$\C$.
  We are now in the setting of~\cite[Theorem~10]{Erwan}, which implies that~$\psi\colon\TT(q)\to\C$ is a
  birational parameterization of the spectral curve~$\C$.
 In particular, the geometric genus of $\C$ is equal to 1.}
\qedhere
\end{proof}

\begin{figure}[htb]
    \centering
    \hspace{0.1cm}
\begin{overpic}[width=0.65\textwidth]{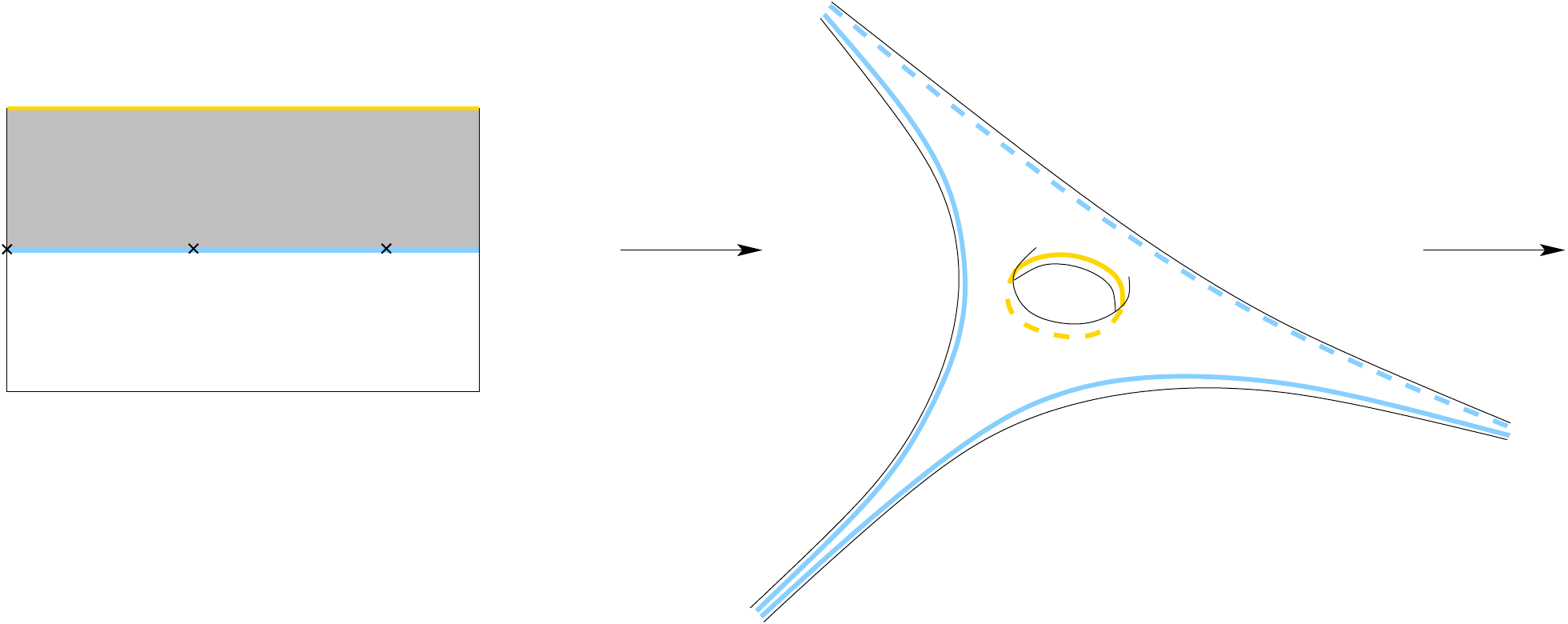}
    \put(-7,14.5){\scriptsize $-\frac{\pi}{2}\tau$}
    \put(-5,32){\scriptsize $\frac{\pi}{2}\tau$}
    \put(5,27){\scriptsize $D$}
    \put(43,26){$\psi$}
    \put(32,23){\textcolor{cyan}{\scriptsize $C_0$}}
    \put(32,32){\textcolor{orange}{\scriptsize $C_1$}}
    \put(13,6){$\TT(q)$}
    \put(72,5){$\C$}
\end{overpic}
    \hspace{0.2cm}
\begin{overpic}[width=0.3\textwidth]{amoeba_simple.pdf}
\put(55,5){$\Ascr$}
\end{overpic}
    \caption{The parameterization~$\psi\colon\TT(q)\to\C$ of the spectral curve, followed by projection~$\C\to\Ascr$ onto the amoeba. {Topologically, the curve~$\C$ is a punctured torus, and the dashed lines represent the parts of~$\psi(C_0\cup C_1)$ that lie behind the
      visible portion of~$\C$.}}
    \label{fig:psi}
  \end{figure}

In the framework of the correspondence between algebraic curves with marked
points and minimal dimer models modulo gauge transformations studied by
Goncharov and Kenyon~\cite{GK}, the converse question has been answered by
Fock~\cite{Fock} for arbitrary smooth, complex algebraic curves without hypothesis on
positivity: given a smooth curve~$\C$, he uses Riemann theta functions to construct a dimer model
whose spectral curve is~$\C$.
We can now give a modified version of his proof in the situation where the
algebraic curve has geometric genus 1 and is Harnack. 

\begin{thm}\label{thm:Harnack}
  Let $\C$ be a Harnack curve in $(\mathbb{C}^*)^2$ having geometric genus 1. Up
  to a scale change $(z,w)\mapsto (\lambda z, \mu w)$, with
  $\lambda,\mu\in\RR^*$, which in particular
  brings the origin inside the hole of the amoeba of the
  rescaled curve, there exists a $\ZZ^2$-periodic minimal graph $\Gs$ and a half-angle map $\mapalpha\in{X}^\mathit{per}_\Gs$ such that, for every $t$ in $\RR+\frac{\pi}{2}\tau$, Fock's elliptic Kasteleyn operator is periodic, and such that the spectral curve of the corresponding dimer model is $\C$.
  This family of Kasteleyn operators, indexed by $t\in \RR/\pi\ZZ+\frac{\pi}{2}\tau$, is in bijection with the
  bounded real component of $\C$.
\end{thm}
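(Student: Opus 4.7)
The plan is to invert the construction of Section~\ref{sec:parametrization}: given a genus~$1$ Harnack curve $\C$, I will extract data $(\tau,\Gs,\mapalpha,t,\lambda,\mu)$ such that the parameterization $\psi$ of Proposition~\ref{prop:param_curve} built from $(\Gs,\mapalpha,t)$ coincides, after the rescaling $(z,w)\mapsto(\lambda z,\mu w)$, with a fixed uniformization of $\C$. Since $\C$ has geometric genus~$1$, its normalization is an elliptic curve; since $\C$ is Harnack its real locus has two connected components, which forces this elliptic curve to be a rectangular torus $\TT(q)=\CC/(\pi\ZZ+\pi\tau\ZZ)$ with $\tau\in i\RR_+^*$ (cf.\@ Remark~\ref{rem:conj}). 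Fix a uniformization $\tilde\psi=(\tilde z,\tilde w)\colon\TT(q)\to\C$ sending $C_0=\RR/\pi\ZZ$ onto the unbounded real component and $C_1=C_0+\tfrac{\pi}{2}\tau$ onto the oval.

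Next I recover the combinatorial data. The Newton polygon $N(\C)$ has $r$ oriented boundary edges whose primitive integer vectors $[T_1],\ldots,[T_r]$, listed in cyclic order, are the prescribed homology classes of train-tracks in~$\T_1$. Theorem~2.5 of~\cite{GK} (see also~\cite{Thurston,Gulotta}) then produces a $\ZZ^2$-periodic minimal graph $\Gs$ with $N(\Gs)=N(\C)$ up to translation and with train-tracks realizing these homology classes. The half-angles are read off the tentacles of the amoeba: for each $j$, the $[T_j]$-tentacle is the image under $\tilde\psi$ of a unique arc pinching to a point $\alpha_{T_j}\in C_0$ at which the appropriate monomial in $\tilde z,\tilde w$ diverges. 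Harnackness ensures that these points appear on $C_0$ in the same cyclic order as the $[T_j]$'s, so $\mapalpha$ is monotone and, since distinct $[T_j]$ give distinct tentacles, lies in $X^\mathit{per}_\Gs$.

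It remains to match the two parameterizations. By construction, the explicit functions $z(u),w(u)$ of~\eqref{eq:expr_zu_wu} associated with $(\Gs,\mapalpha)$ have on $\TT(q)$ the same zeros and poles (with multiplicities) as $\tilde z,\tilde w$ respectively, so $\tilde z/z$ and $\tilde w/w$ are non-vanishing elliptic functions, hence constant; commutation of $\tilde\psi$ and $\psi$ with complex conjugation on the real loci forces these constants to be real, say $\lambda^{-1},\mu^{-1}\in\RR^*$. The scale change $(z,w)\mapsto(\lambda z,\mu w)$ then identifies $\C$ with $\psi(\TT(q))$, which by Proposition~\ref{prop:param_curve} is the spectral curve of the dimer model associated with $(\Gs,\mapalpha)$. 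For the Kasteleyn operator $\K^{(t)}$ to be genuinely periodic, Proposition~\ref{prop:angles_perio} requires $\varphi(\mapalpha)\in\ZZ^2\cap\mathrm{int}\,N(\Gs)$; this condition is exactly that the origin, after the rescaling, lies in a bounded component of the complement of the amoeba (such bounded components being in bijection with interior lattice points of $N(\C)$), which can always be arranged by a further real scaling absorbed into $(\lambda,\mu)$. Finally, the parameter $t$ enters~\eqref{def:Kast_elliptic} only through~$\mapd$, and translating $t$ along $C_1$ corresponds via~$\psi$ bijectively to traversing the oval $\psi(C_1)$, yielding the claimed bijection.

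The main obstacle is the divisor-matching step: one must verify that $\tilde z$ and $\tilde w$ have on $\TT(q)$ the precise divisors prescribed by~\eqref{eq:expr_zu_wu}, namely $\mathrm{div}(\tilde z)=-\sum_T v_T\,[\alpha_T]$ and $\mathrm{div}(\tilde w)=\sum_T h_T\,[\alpha_T]$. This rests on the standard description of tentacles of Harnack amoebas via boundary lattice points of the Newton polygon and their asymptotic directions (see e.g.\@~\cite{Mikhalkin1,KO:Harnack}), together with the computation of the orders of vanishing of $\tilde z,\tilde w$ along $C_0$ at the distinguished points $\alpha_T$, which uses that $\tilde\psi$ is at most $2$-to-$1$ on each real component of~$\TT(q)$.
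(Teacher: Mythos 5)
Your overall strategy is the same as the paper's: uniformize $\C$ by a rectangular torus $\TT(q)$ (forced by maximality of the Harnack curve), extract the train-track homology classes and the half-angles from the divisors of the coordinate functions, build $\Gs$ via~\cite[Theorem~2.5]{GK}, and match the result with the parameterization $\psi$ of Proposition~\ref{prop:param_curve}. However, the step where you guarantee periodicity of $\K^{(t)}$ is wrong as stated. You claim that the condition $\varphi(\mapalpha)\in\ZZ^2\cap\mathrm{int}\,N(\Gs)$ of Proposition~\ref{prop:angles_perio} ``is exactly that the origin, after the rescaling, lies in a bounded component of the complement of the amoeba'' and ``can always be arranged by a further real scaling absorbed into $(\lambda,\mu)$''. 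Neither assertion holds: a real rescaling $(z,w)\mapsto(\lambda z,\mu w)$ translates the amoeba but leaves the zeros and poles of $\tilde z,\tilde w$ on $\TT(q)$ --- hence the half-angles $\alpha_j$ and the point $\varphi(\mapalpha)$ --- completely unchanged. So if $\varphi(\mapalpha)$ failed to be a lattice point, no rescaling could repair it; and the position of the origin relative to the amoeba is a genuinely different datum from $\varphi(\mapalpha)$, so the claimed equivalence is not one.

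The condition is in fact automatic, and the mechanism you are missing is the one the paper uses. Once you have written $\tilde z(u)=\lambda\prod_j\theta(u-\alpha_j)^{-b_j}$ and $\tilde w(u)=\mu\prod_j\theta(u-\alpha_j)^{a_j}$ (your divisor-matching step), the fact that $\tilde z$ and $\tilde w$ are genuinely elliptic, \emph{i.e.}\ invariant under $u\mapsto u+\pi\tau$, combined with the quasiperiodicity relation~\eqref{equ:quasi_period_theta}, forces not only $\sum_j a_j=\sum_j b_j=0$ but also $\sum_j\alpha_j a_j\equiv\sum_j\alpha_j b_j\equiv 0\pmod\pi$. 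By Equation~\eqref{eq:per-eta}, these last two congruences are precisely the periodicity of the discrete Abel map $\mapd$, hence of $\K^{(t)}$ for every $t$ (equivalently, $\varphi(\mapalpha)\in\ZZ^2$). The rescaling only serves to normalize the multiplicative constants $\lambda,\mu$ to $\pm1$ so that $(\tilde z,\tilde w)$ matches~\eqref{eq:expr_zu_wu} exactly; the assertion about the origin lying in the hole of the amoeba is a byproduct of that normalization, not the source of periodicity. With this correction, the rest of your argument (the cyclic order of the tentacles giving $\mapalpha\in X^{\mathit{per}}_\Gs$ and hence, via Proposition~\ref{prop:kastorient}, the Kasteleyn property; the identification of $t\in C_1$ with marked points of the oval) goes through as in the paper.
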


\begin{proof}
  The curve $\C$ has geometric genus 1, with two real components, otherwise stated it is \emph{maximal}. As a consequence, there exists
 a birational map $\psi$ from a rectangular torus to $\C$. Up to a global
  scaling, we can assume that this torus is one of the tori~$\TT(q)$, for some
  value of $q\in(0,1)$, thus fixing the parameter of the theta function.
  Let $\TT(q)\ni u\mapsto\psi(u)=(z(u),w(u))\in \C$ be such a map. The projections
  on the first and second coordinates are elliptic functions on $\TT(q)$. They
  thus have the form:
  \begin{equation*}
    z(u)=\lambda \prod_{j=1}^r \theta(u-\alpha_j)^{-b_j},
    \quad
    w(u)=\mu \prod_{j=1}^r \theta(u-\alpha_j)^{a_j},\quad
  \end{equation*}
  where, for all $j\in\{1,\dots,r\}$, $a_j,b_j$ are coprime integers, and $\alpha_j$ belongs to~$\mathbb{R}/\pi\mathbb{Z}$.
  Since the curve $\C$ is Harnack, it is real algebraic, implying that $\lambda,\mu\in \RR^*$. 
  After a possible scaling, we can assume that $\lambda$ and $\mu$ are equal to $\pm 1$. 
The fact that both projections are elliptic implies that they are periodic functions on the torus $\TT(q)$. Quasiperiodicity of the function $\theta$, see Equation~\eqref{equ:quasi_period_theta}, thus implies that $(\alpha_j), (a_j), (b_j)$ satisfy the relations
  \begin{align*}
 &\sum_{j=1}^r a_j=0,\quad \sum_{j=1}^r b_j=0,\quad\\   
 &\sum_{j=1}^r \alpha_j a_j=0\ \mathrm{mod}\ \pi,\quad   \sum_{j=1}^r \alpha_j b_j=0 \ \mathrm{mod}\ \pi.
  \end{align*}
 By~\cite[Theorem~2.5]{GK} (see also~\cite{Gulotta}), the first two equalities allow to construct a bipartite minimal graph $\Gs_1\subset\TT$ 
 with train-tracks $T_1,\ldots,T_r$ satisfying~$[T_j]=(a_j,b_j)\in H_1(\TT;\ZZ)$ for all~$j$.
Let $(z_0,w_0)$ be a marked point on the bounded real component of $\C$. It
corresponds to a unique value $t\in C_1$ such that $z_0=z(t)$ and $w_0=w(t)$.
By Equation~\eqref{eq:per-eta}, the last two equalities displayed above ensure that Fock's elliptic adjacency operator~$\K$ corresponding
 to~$\Gs_1$, $(\alpha_j)$ and~$t$ is indeed periodic.

Since $\C$ is a Harnack curve, the cyclic order of the
  tentacles of the amoeba (which is related to the cyclic order of the homology
  classes of the train-tracks, forming the boundary of the Newton polygon) coincides with
  the cyclic order of the half-angles~$\alpha_T\in\mathbb{R}/\pi\mathbb{Z}$. 
By Proposition~\ref{prop:kastorient}, this implies that Fock's elliptic adjacency operator $\K$ is Kasteleyn.
By the construction above, the curve~$\C$ exactly consists of the points~$(z,w)$ of~$(\mathbb{C}^2)^*$
where the kernel of~$\Ks(z,w)$ is non-trivial. It is therefore the spectral curve.
\end{proof}

\begin{rem}\leavevmode
  \begin{enumerate}
    \item The construction of $\Gs_1$ from the curve $\C$ is not unique. However, if $\Gs'_1$
      is another minimal graph on the torus satisfying the same constraints, then
      $\Gs_1$ and $\Gs'_1$ are related by a sequence of spider moves and
      shrinking/expanding of 2-valent vertices~\cite[Theorem~2.5]{GK}.
    \item A consequence 
      of~\cite[Theorem~7.3]{GK} combined with Theorem~\ref{thm:Harnack} is the following:
      every periodic Kasteleyn operator on $\Gs$ 
      {with spectral curve~$\C$ is gauge-equivalent to Fock's elliptic
    Kasteleyn operators for some $\mapalpha\in X^\mathit{per}_\Gs$ and some $t\in\RR/\pi\ZZ+\frac{\pi}{2}\tau$.
    Furthermore, two periodic dimer models on the same minimal graph~$\Gs$ coming from 
    the same Harnack curve~$\C$, the same angle map~$\mapalpha$ and
    parameters~$t,t'\in\RR/\pi\ZZ+\frac{\pi}{2}\tau$ are gauge equivalent if and only if~$t$ and~$t'$ coincide.}
    \item
  Consider two genus 1 Harnack curves with the same Newton polygon~$N$.
  If the two 
  ovals correspond to the same point~$P\in\ZZ^2$ in the interior of~$N$, then the dimer model
  provides continuous families of genus 1 Harnack curves interpolating between these
  two given curves. For example, one can first continuously deform the half-angle maps within the
  connected space~$\varphi^{-1}(P)\subset X^\mathit{per}_\Gs$ (recall Section~\ref{sec:Kast-per}), and
  then the parameter~$\tau$. One the other hand, if the two ovals correspond to different interior
  points of~$N$, then such a continuous deformation cannot be performed within the realm of elliptic curves.
  One first needs to shrink the oval of one curve into an isolated
  singular double point, so that the curve becomes rational, then take another singular point and transform it into
  an oval. In the dimer picture, this first step corresponds to taking a
  limit as~$|\tau|$ tends to infinity,
  so that the weights become trigonometric (see Section~\ref{sec:rational}).
  At that particular point, there is no constraint imposed for the periodicity of
  weights, and there is more freedom to continuously deform the corresponding spectral curves.
  \end{enumerate}
\end{rem}

\subsection{Gibbs measures}\label{sec:Gibbs_measures}

The main result of this section proves explicit expressions, in our setting, for the two parameter family of ergodic Gibbs measures of~\cite{KOS}, which have the remarkable property of only depending on the local geometry of the graph. Before stating our results we recall required facts from~\cite{KOS}.

\paragraph{Classification of ergodic Gibbs measures~\cite{KOS}.}
Consider a~$\ZZ^2$-periodic, bipartite graph~$\Gs$ (not necessarily minimal)
with periodic weights.
Gibbs measures on dimer configurations of $\Gs$
which are invariant and ergodic under the action of~$\ZZ^2$
are characterized by their
\emph{slope} $(s,t)$~\cite{Sheffield},
which is (up to a fixed arbitrary additive
constant), the expected algebraic number of dimers
crossing $\gamma_x$ and $\gamma_y$. Its precise definition is recalled in
Section~\ref{sec:slope}.

These measures are constructed explicitly in~\cite{KOS} as
limits of (unconditioned) Boltzmann measures on $\Gs_n$ with \emph{magnetically
modified weights}. The magnetic field $B=(B_x, B_y)$ appearing in the weight
modification is the Legendre dual 
of the slope $(s,t)$. It is used
to parameterize the set of possible ergodic Gibbs
measures, whose phase diagram in the plane $(B_x, B_y)$
is given by the amoeba
$\Ascr$ of the characteristic polynomial $P$. 
More precisely, the Boltzmann measure $\PP_n^{B}$ on $\Gs_n$ with magnetic field $B$ is obtained by multiplying
the weight of an edge each time it crosses a copy of $\gamma_x$, resp. $\gamma_y$, by $e^{\pm B_x}$, resp. $e^{\pm B_y}$, if the edge has a white vertex on the left of $\gamma_x$, resp. $\gamma_y$. Note that the convention differs from that used in defining $\Ks(z,w)$ with weights $w^{\mp 1},z^{\pm 1}$, by what can be understood as the action of the intersection form.
Note also that the corresponding Kasteleyn matrix has the same face weights (recall definition~\eqref{eq:altproduct}) for all values of $B$, \emph{i.e.}, the corresponding dimer models are gauge equivalent, but the Boltzmann measures on the torus differ, thus exhibiting a different behavior than in the case of finite graphs embedded in the plane. One way of seeing this is that closed 1-forms on the torus are not necessarily
exact, see \emph{e.g.}~\cite[Section 2.3]{KOS}. Then in~\cite[Theorem~4.3]{KOS}, see also~\cite{CKP}, the authors prove that for every value of $B$, taking the weak limit of the Boltzmann measures~$\PP_n^{B}$ gives an ergodic Gibbs measure $\PP^{B}$, such that the probability of occurrence of a
subset of $k$ distinct edges $\{\es_1=\ws_1\bs_1,\ldots,\es_k=\ws_k\bs_k\}$ is explicitly given by
\begin{equation}\label{eq:Gibbs_1}
  \PP^{B} (\es_1,\ldots,\es_k) =
  \Bigl(\prod_{j=1}^k \Ks_{\ws_j,\bs_j}\Bigr)\times
  \det_{1\leq i,j\leq k} \left(\As^B_{\bs_i,\ws_j}\right)\,,
\end{equation}
where $\As^B$ is defined as follows: if~$\ws$ and $\bs$ are in
the same fundamental domain and~$(m,n)$ belongs to~$\ZZ^2$, then
{\small \begin{equation}\label{eq:inverseKOS}
  \As^B_{\bs+(m,n),\ws}
  = \iint_{\TT_B} \Ks(z,w)^{-1}_{\bs,\ws} z^m w^n 
  \frac{dw}{2i\pi w}
  \frac{dz}{2i\pi z}
  = \iint_{\TT_B} \frac{Q(z,w)_{\bs,\ws}}{P(z,w)} z^m w^n 
  \frac{dw}{2i\pi w}
  \frac{dz}{2i\pi z}\,,
\end{equation}}
where $Q(z,w)$ is the adjugate matrix of $\Ks(z,w)$, $P(z,w)$ is the characteristic polynomial, and $\TT_B=\{(z,w)\in(\CC^2)^*\ ;\
|z|=e^{B_y}, |w|=e^{-B_x}\}$.

\paragraph{Local expressions for ergodic Gibbs measures in the elliptic minimal case.} We consider the case where the graph $\Gs$ is minimal, where Fock's elliptic adjacency operator $\Ks$ is Kasteleyn, and where the half-angle map $\mapalpha\in X_\Gs^{per}$ is such that $\Ks$ is periodic. 
Recall from Section~\ref{sec:inverses} that we have a parameter $u_0\in D$
indexing a family of inverses $(\A^{u_0})_{u_0\in D}$ of the elliptic
Kasteleyn operator $\Ks$. We prove that the set of ergodic Gibbs measures are explicitly written using the operators $(\A^{u_0})_{u_0\in D}$, and that~$\{u_0\in D\}$ gives an alternative parameterization of the set of ergodic Gibbs measures.

\begin{thm}\label{prop:Gibbs_measures}
  For any $B=(B_x,B_y)$ in $\mathbb{R}^2$, there exists a 
   value of the parameter
  $u_0\in D$ such that $\As^B=\As^{u_0}$. When $B$ is inside the amoeba $\Ascr$
  (resp.\@ in the bounded, resp.\@ in an unbounded connected component of its
  complement), $u_0$ is in the interior (resp.\@ on the top boundary, resp.\@ in an
  interval of the bottom boundary) of the domain $D$.
\end{thm}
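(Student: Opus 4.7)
The plan is to identify the double-integral expression for $\As^B$ in~\eqref{eq:inverseKOS} with the contour-integral expression for $\A^{u_0}$ in~\eqref{eq:coeff_Kinv_u0} by first evaluating the inner integration via residues, and then changing variables via the explicit birational parameterization $\psi\colon\TT(q)\to\C$, $u\mapsto(z(u),w(u))$, of Proposition~\ref{prop:param_curve}. Concretely, for fixed $w$ on the circle $\{|w|=e^{-B_x}\}$ I would evaluate the inner $z$-integral over $\{|z|=e^{B_y}\}$ by picking up the residues of $Q(z,w)_{\bs,\ws}/P(z,w)$ at the points $(z,w)\in\C$ inside the contour. Then, pulling the remaining $w$-circle back to the torus $\TT(q)$ through $\psi$ (which is at most $2$-to-$1$ over the amoeba by Harnackness) produces a $1$-cycle $\Gamma_B\subset\TT(q)$ whose homology class is dictated by the position of $B$ relative to $\Ascr$.

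The three cases in the definition of $\Cs_{\bs,\ws}^{u_0}$ correspond exactly to the three possible positions of $B$. If $B$ lies in the interior of $\Ascr$, the line $\{|w|=e^{-B_x}\}$ meets $\C$ at a pair of complex conjugate points, which lift under $\psi$ to a single $u_0$ in the interior of $D$ together with its conjugate $\bar u_0$; the resulting $\Gamma_B$ can be taken to be a simple path from $\bar u_0$ to $u_0$ crossing $C_0$ outside the angular sector $s_{\bs,\ws}$, matching the Case~$2$ contour. If $B$ lies in the bounded (oval) component of $\RR^2\setminus\Ascr$, the line $\{|w|=e^{-B_x}\}$ is disjoint from $\C$ and $\TT_B$ can be deformed in $(\CC^*)^2$ so that $\Gamma_B$ becomes the full horizontal loop $C_1$, giving the Case~$1$ contour for $u_0\in C_1$. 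If $B$ lies in an unbounded component of $\RR^2\setminus\Ascr$, the cycle $\Gamma_B$ is homologically trivial on $\TT(q)$ and localizes around a single train-track angle $\alpha_T$ on $C_0$, corresponding to Case~$3$ with $u_0$ in the associated arc of $C_0\setminus\{\alpha_T\}_{T\in\T}$.

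The main technical obstacle is to check that, after this change of variable, the meromorphic $1$-form $\frac{Q(z,w)_{\bs,\ws}}{P(z,w)}\,z^{m}w^{n}\frac{dw}{w}$ on $\Gamma_B$ (where $(m,n)$ is the shift between fundamental domains of $\bs$ and $\ws$) coincides with $\frac{i\theta'(0)}{2\pi}g_{\bs+(m,n),\ws}(u)\,du$. The factor $z(u)^m w(u)^n$ is absorbed into $g$ through the multiplicativity of $g$ along paths, combined with the definitions of $z(u)$ and $w(u)$; what remains is to match $\frac{Q(z(u),w(u))_{\bs,\ws}}{P(z(u),w(u))}\,\frac{1}{w(u)}\,\frac{dw(u)}{du}\,du$ with a constant multiple of $g_{\bs,\ws}(u)\,du$. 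Both sides are meromorphic $1$-forms on $\TT(q)$ with poles only at the train-track angles $\{\alpha_T\}_{T\in\T_1}$, and by Proposition~\ref{prop:ker} combined with Theorem~\ref{thm:K_inverse_family} the integrals of either produce valid inverses of $\Ks$. A residue comparison at each $\alpha_T$, which is exactly the content of Lemma~\ref{lem:forme_holom}, then forces the Jacobian and the denominator $P$ to combine trivially, so that the two forms coincide on the nose. This yields $\As^B=\As^{u_0}$ for the $u_0$ identified in the previous paragraph, and the correspondence $B\mapsto u_0$ maps each of the three phases of the amoeba bijectively onto the three strata of $D$.
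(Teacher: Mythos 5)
Your overall strategy --- evaluate one of the two Fourier integrals by residues, pull the resulting cycle on $\C$ back to $\TT(q)$ via the parameterization $\psi$, and identify the integrand with $\frac{i\theta'(0)}{2\pi}\,g_{\bs,\ws}(u)\,du$ through Lemma~\ref{lem:forme_holom} --- is the paper's proof (the paper fixes $z$ and computes the $w$-integral first, but that choice is immaterial). There is, however, a genuine error in your treatment of the gaseous case. When $B$ lies in the bounded component of $\RR^2\setminus\Ascr$, the residue cycle is \emph{not} the horizontal loop $C_1$: the relevant segment of the line through $B$ joins the two components of $\partial\Ascr$, hence lifts to a loop on $\C$ whose preimage under $\psi$ winds \emph{vertically} around $\TT(q)$, i.e.\ in the $\tau$-direction, crossing $C_0$ once outside $s_{\bs,\ws}$. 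That is precisely the Case~1 contour $\Cs^{u_0}_{\bs,\ws}$; only the \emph{parameter} $u_0$ sits on $C_1$, the contour itself is transverse to it. Since $[C_1]$ and the vertical generator are independent in $H_1(\TT(q);\ZZ)$ and $g_{\bs,\ws}$ has its poles on $C_0$, integrating over $C_1$ produces a different number, so this step as written fails. Moreover the argument you offer (``$\{|w|=e^{-B_x}\}$ is disjoint from $\C$, hence deform $\TT_B$'') is not valid: the $2$-torus $\TT_B$ is disjoint from $\C$, but the residue cycle it determines is a nontrivial class fixed by which roots $w_j(z)$ lie inside the disk, and disjointness alone says nothing about that class.

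A secondary point: your justification for the form-matching is circular. That both $1$-forms integrate to inverses of $\Ks$ cannot force them to coincide --- there is a two-parameter family of inverses, and two distinct meromorphic forms can agree after integration over one family of contours and disagree over another. The identity you need is Point~2 of Lemma~\ref{lem:forme_holom}, namely $\frac{f(u)}{z(u)w(u)\,\partial_w P(z(u),w(u))}\,z'(u)=-\theta'(0)$, which the paper proves by a direct computation (rank-one structure of the adjugate matrix plus the telescoping Fay identity summed along $\gamma_x$). Note also that the denominator after the residue step is a partial derivative of $P$, not $P$ itself, which vanishes identically on $\C$.
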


This theorem combined with the phase diagram of~\cite[Theorem~4.1]{KOS}
yields the following immediate corollary.

\begin{cor}\label{thm:Gibbs_measures}
Consider the dimer model on a~$\ZZ^2$-periodic, bipartite, minimal graph $\Gs$, with a periodic elliptic Kasteleyn operator $\Ks$. Then, the set of ergodic Gibbs measures is the set of measures $(\PP^{u_0})_{u_0\in D}$ whose expression on cylinder sets is explicitly given by, for every $u_0\in D$ and every subset of distinct edges $\{\es_1=\ws_1 \bs_1,\ldots,\es_k=\ws_k \bs_k\}$ of $\Gs$,
\begin{align}\label{equ:mesures_Gibbs}
\PP^{u_0}(\es_1,\ldots,\es_k)&=
\Bigl(\prod_{j=1}^k
\Ks_{\ws_j,\bs_j}\Bigr)\times 
\det_{1\leq i,j\leq k} \Bigl(\A^{u_0}_{\bs_i,\ws_j}\Bigr),
\end{align}
where 
$\As^{u_0}$ is the inverse operator of
Definition~\ref{def:operator_A}.

The domain $D$ gives an alternative phase diagram of the model: when $u_0$ is on the top boundary of $D$, the dimer model
is gaseous; when $u_0$ is in the interior of the set $D$, the model is liquid; when $u_0$ is a
point corresponding to one of the connected components of the lower boundary of $D$, the model is solid.
\end{cor}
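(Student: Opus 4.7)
The plan is to combine Theorem~\ref{prop:Gibbs_measures} with the classification of ergodic Gibbs measures from~\cite[Theorem~4.3]{KOS}. The latter states that, for any periodic weighted bipartite dimer model, every ergodic Gibbs measure is of the form $\PP^B$ for some magnetic field $B\in\RR^2$, with cylinder probabilities given by the determinantal formula~\eqref{eq:Gibbs_1} in terms of the operator $\As^B$ of~\eqref{eq:inverseKOS}. Substituting the identity $\As^B=\As^{u_0}$ supplied by Theorem~\ref{prop:Gibbs_measures} into~\eqref{eq:Gibbs_1} immediately produces the formula~\eqref{equ:mesures_Gibbs} defining $\PP^{u_0}$, so that $\PP^B=\PP^{u_0}$. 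This already proves that every ergodic Gibbs measure belongs to the family $(\PP^{u_0})_{u_0\in D}$.

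For the reverse inclusion, I would argue that the assignment $B\mapsto u_0$ obtained from Theorem~\ref{prop:Gibbs_measures} is surjective onto $D$, modulo the natural identifications recorded in Remark~\ref{rem:Au_0}. By the three-case part of that theorem, this map sends the interior of the amoeba $\Ascr$ into the interior of $D$, the bounded component of the complement of $\Ascr$ onto the top boundary $C_1$, and the unbounded components of the complement of $\Ascr$ into intervals of the lower boundary $C_0\setminus\{\alpha_T\ ;\ T\in\T\}$; the proof of Theorem~\ref{prop:Gibbs_measures} in fact exhibits each of these restrictions as surjective onto the corresponding piece of $D$. Hence every $u_0\in D$ is attained by some $B$, and correspondingly $\PP^{u_0}=\PP^B$ is an ergodic Gibbs measure.

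The phase diagram description follows by transporting the classification of~\cite[Theorem~4.1]{KOS} along this same correspondence: the measure $\PP^B$ is liquid, gaseous, or solid according to whether $B$ lies in the interior of $\Ascr$, in its bounded hole, or in an unbounded component of its complement, and the three cases match under the correspondence of Theorem~\ref{prop:Gibbs_measures} with $u_0$ in the interior of $D$, on the top boundary $C_1$, or on an interval of $C_0\setminus\{\alpha_T\ ;\ T\in\T\}$, respectively.

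The substance of the argument is carried entirely by Theorem~\ref{prop:Gibbs_measures}; the only delicate point left for the corollary is ensuring that the three phase-by-phase correspondences established there combine into full surjectivity of $B\mapsto u_0$ (up to the equivalences of Remark~\ref{rem:Au_0}), and I expect this to be immediate once the proof of Theorem~\ref{prop:Gibbs_measures} is in hand.
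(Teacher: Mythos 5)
Your argument is correct and follows exactly the route the paper takes: the paper derives this corollary as an immediate consequence of Theorem~\ref{prop:Gibbs_measures} combined with the classification and phase diagram of~\cite{KOS}. The surjectivity point you flag is indeed the only detail worth checking, and it is settled by the observation at the end of the proof of Theorem~\ref{prop:Gibbs_measures} that the connected components of the complement of the amoeba correspond bijectively to the components of~$C_1\cup C_0\setminus\{\alpha_T\ ;\ T\in\T\}$, together with the parameterization of the liquid region by~$\psi$.
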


\begin{rem}\leavevmode
\begin{enumerate}
\item
The Gibbs measures $(\PP^{u_0})_{u_0\in D}$ are \emph{local}, a property
inherited from that of the inverse operators $(A^{u_0})_{u_0\in D}$, see Point 1 of Remark~\ref{rem:Au_0}. For example, this means that the probability of occurrence of a subset of edges can be computed using only the geometry of paths in $\GR$ joining vertices of these edges, and that it is actually independent of the choice of paths. This remarkable property cannot be seen from the Fourier type expression~\eqref{eq:inverseKOS}. As an illustration, single-edge probabilities are computed in Section~\ref{sec:Gibbsnonperio}.
\item Such type of expressions were already know in the trigonometric
  case~\cite{Kenyon:crit}, corresponding to genus 0 Harnack curves
  (see Section~\ref{sec:rational}),
  and in two
  specific genus~1 cases~\cite{BdTR2,dT_Mass_Dirac} (see
  Section~\ref{sec:previous_genus1}).
  However, let us emphasize that these papers only
  considered the maximal entropy Gibbs measure, corresponding to the weak limit of the toroidal Boltzmann measures with $(0,0)$ magnetic field, and did not
  handle the \emph{two parameter} family of ergodic Gibbs measures.
\item Such local expressions give the right framework to obtain Gibbs measures in the case of (possibly) non-periodic graphs, see Section~\ref{sec:Gibbsnonperio}. Although periodicity is lost, meaning that there is no associated amoeba $\Ascr$, the phase diagram can still be described by the domain $D$. 
Such expressions are also very handy to derive asymptotics, see Section~\ref{sec:asymptotics}. 
\end{enumerate} 
\end{rem}

\begin{proof}[Proof of Theorem~\ref{prop:Gibbs_measures}]
One way of proving equality between $\A^B$ and $\A^{u_0}$ is to use a uniqueness argument for 
the inverse with given asymptotic growth based on Fourier analysis as in~\cite{Boutillier_deTiliere:iso_perio}.
Instead we here choose to do an explicit computation in the spirit of~\cite[Section 5.5.1]{BdTR1} because there are surprising and interesting simplifications which deserve to be made explicit. Note that there are additional difficulties due to the following facts:
we integrate over tori $\TT_B$ of different sizes; in Lemma~\ref{lem:forme_holom}, we explicitly compute the Jacobian of a change of variable from the spectral curve to $\TT(q)$ instead of the abstract argument 
of~\cite{BdTR1}, which required
a combinatorial control of the Newton polygon, gave the result only up to an
unknown multiplicative constant, and did not generalize to higher genus.

Consider $\bs\in\Bs_1$, $\ws\in\Ws_1$, $(m,n)\in\ZZ^2$, and the coefficient  $\As^B_{\bs+(m,n),\ws}$ of~\eqref{eq:inverseKOS}. Up to a change of basis~$([\gamma_x],[\gamma_y])$ of~$H_1(\TT;\ZZ)$,
and possibly deforming $\gamma_x$ and $\gamma_y$ around vertices,
we can assume without loss of generality that~$n\ge 1$
  and that the lowest degree in $z$ (resp. in $w$) of monomials in $P$ is 0.
Recall however that such a base change has the effect of transforming the amoeba
by a linear transformation in~$\mathit{SL}_2(\ZZ)$.
This has to be kept in mind when defining the path of integration $\Cs^{u_0}$
below. Indeed, it might seem that $\Cs^{u_0}$ does not depend on $\bs+(m,n),\ws$
as it should according to the definition of $\As^B_{\bs+(m,n),\ws}$. It actually
does, since
transforming
the amoeba as above has the effect of moving the path of
integration.

For a fixed $z$ such that $|z|=e^{B_y}$, let us compute the integral over $w$ by residues.
Let us denote by~$\{w_j(z)\}_{j=1}^{d_B}$ the
zeros of $P(z,\cdot)$ in the disk of radius $e^{-B_x}$,
which are simple for almost all $z$ on the circle of radius
$e^{B_y}$.
Then, by the residue theorem,
\[
\int_{|w|=e^{-B_x}} \frac{Q(z,w)_{\bs,\ws}}{P(z,w)}w^{n-1}\frac{dw}{2i\pi}=
\sum_{j=1}^{d_B} \frac{Q(z,w_j(z))_{\bs,\ws}}{\partial_w P(z,w_j(z))}w_j(z)^{n-1},
\]
where $\partial_w$ denotes the partial derivative with respect to the second variable.
Indeed, the possibility of a pole of the integrand at $w=0$ is excluded
by the assumptions on $\gamma_x$ and~$\gamma_y$.

To compute the remaining integral over $z$, we perform the change of variable
from $z$ to $u\in\TT(q)$. The set on which we integrate is
\[
  \bigcup_{j=1}^{d_B}\{(z,w_j(z))\in\C:\, |z|=e^{B_y},\, |w_j(z)|\leq e^{-B_x}\}.
\]
In order to identify the preimage of this set under $\psi$,
it is useful to first look at its projection onto the amoeba $\Ascr$, then to lift it
to the curve $\C$ and to take its preimage by~$\psi$.
On the amoeba $\Ascr$, we are looking at its
intersection with the half-line at ordinate $B_y$, extending to the right of
$B_x$.
This intersection consists of a finite number (possibly
zero) of intervals. All these intervals
have their two extremities on the (compactified) boundary of the amoeba, except maybe one,
denoted by $I$. This happens when $B$ is in
the interior of the amoeba, and $B$ is the extremity of $I$ not on the
boundary.
Using the property that the map from the spectral curve to its amoeba is 2-to-1
on the interior of the amoeba, and that the boundary of the amoeba is the image
of the real locus of the spectral curve, the union of intervals can be lifted
to the spectral curve as a collection of paths: intervals joining two points of
the unbounded component of the amoeba are lifted to trivial loops surrounding
points at infinity of the spectral curve. To complete the picture, we
distinguish three cases depending on the position of~$B$ with respect to the
amoeba:

\underline{\textbf{Case 1: gaseous phase}}. $B$ is in the closure of the bounded connected
component of the complement of the amoeba. Then one of the intervals connects
the two components of the boundary of the amoeba, which lifts in $\C$ to a non-trivial
loop winding ``vertically".

\underline{\textbf{Case 2: liquid phase}}. $B$ is in the interior of the amoeba. The
interval $I$ lifts in $\C$ to a curve joining
$(z_B,w_B)$ to its complex conjugate $(\overline{z_B},\overline{w_B})$, where
$|z_B|=e^{B_y}$ and $|w_B|=e^{-B_x}$.

\underline{\textbf{Case 3: solid phases}}. $B$ is in the closure of one of the unbounded
connected components of the complement of the amoeba, and the
(possibly empty) corresponding collection of paths in $\C$ consists of trivial loops
surrounding points at infinity.

In the three cases, the collection of paths on the spectral curves are lifted
back on $\TT(q)$ by $\psi$, and can be deformed to one single path $\Cs^{u_0}$,
which depends on $\bs+(m,n)$ and~$\ws$,
as
described in Section~\ref{sec:preliminaries_contours}, with $u_0$ in $C_1$,
in the interior of $D$, or in~$C_0$, for Cases 1, 2, 3 respectively.

Performing the change of variable from $z=z(u)$ on the collection of paths
on the spectral curve $\C$ to $u\in\Cs^{u_0}$, we can therefore write
\[
\As^B_{\bs+(m,n),\ws}=\frac{1}{2\pi i}\int_{\Cs^{u_0}}
\frac{Q(z(u),w(u))_{\bs,\ws}}{z(u)w(u)\partial_w P(z(u),w(u))}z(u)^{m}w(u)^{n} z'(u)du,
\]
where $u_0$ is such that $|z(u_0)|=|z_B|=e^{B_y}$ and $|w(u_0)|=|w_B|=e^{-B_x}$.

We now need the following lemma, whose proof is deferred until the end of this
one.

\begin{lem}\label{lem:forme_holom}
  There exists a meromorphic function $f$ on $\TT(q)$ such that:
\begin{enumerate}
 \item 
$\forall\, u\in \TT(q), \forall\,\bs\in \Bs_1,\forall\,\ws\in \Ws_1, \quad
Q(z(u),w(u))_{\bs,\ws}=f(u) g_{\bs,\ws}(u),$
 \item
   $\forall\, u\in\TT(q),\quad \displaystyle\frac{f(u)}{z(u)w(u)\partial_w P(z(u),w(u))}z'(u)=-\theta'(0)$.
\end{enumerate}
\end{lem}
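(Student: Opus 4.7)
For Part 1, my plan is to use Proposition~\ref{prop:ker} together with the parameterization $\psi$. Fix a base vertex $\xs_0$; for every $u \in \TT(q)$, the fundamental-domain vectors $F(u) := (g_{\bs, \xs_0}(u))_{\bs \in \Bs_1}$ and $G(u) := (g_{\xs_0, \ws}(u))_{\ws \in \Ws_1}$ represent, via the $(z(u), w(u))$-quasiperiodicity~\eqref{equ:g-per}, elements in the right and left kernel of $\Ks(z(u), w(u))$ respectively. At a smooth point of $\C$ this matrix has corank exactly one, so its adjugate $Q(z(u), w(u))$ has rank one and must factor as $f(u)\,F_\bs(u)\,G_\ws(u)$ for some scalar $f(u)$. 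The multiplicative structure of $g$ along paths of $\GR$ then gives $F_\bs(u)\, G_\ws(u) = g_{\bs, \ws}(u)$, and meromorphicity of $f$ on $\TT(q)$ follows from that of the numerator and denominator in $f(u) = Q_{\bs, \ws}(z(u), w(u))/g_{\bs, \ws}(u)$.

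For Part 2, the key idea is to recognize $\omega := \frac{dz}{zw\, \partial_w P}$ as the canonical holomorphic differential on the smooth model of $\C$. In the interior, the relation $\partial_z P\, dz + \partial_w P\, dw = 0$ on $\C$ shows $\omega$ has no pole at zeros of $\partial_w P$ (it equals $-\frac{dw}{zw\, \partial_z P}$ there); at the points at infinity of the compactification of $\C$ (the endpoints of the tentacles of the amoeba $\Ascr$), the prefactor $(zw)^{-1}$ combined with a standard Newton-polygon analysis (\emph{cf.}~\cite{KO:Harnack,GK}) yields holomorphicity. Since $\C$ has geometric genus~$1$, the space of holomorphic differentials is one-dimensional and $\psi^*\omega = c\, du$ for a unique $c \in \CC^*$. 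Combined with Part 1, the identity to prove reduces to $c \cdot f(u) = -\theta'(0)$.

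To complete the proof I would first establish that $f$ is constant. Since $f$ is independent of $(\bs, \ws)$ by Part 1, a pole of $f$ at some $u_* \in \TT(q)$ at which neither $z$ nor $w$ diverges would force $g_{\bs, \ws}(u_*) = 0$ for every pair $(\bs, \ws)$, which is immediately excluded by inspection of the explicit formula of Remark~\ref{rem:g_wb} applied to any single edge. At the finitely many points $u_* = \alpha_T$ at which $z$ or $w$ does diverge, a local divisor computation using the product formulas~\eqref{eq:expr_zu_wu} shows the potential poles of $f$ cancel. Hence $f$ is an entire function on $\TT(q)$, and therefore constant. The value $c \cdot f = -\theta'(0)$ is then pinned down by a direct residue comparison at a convenient edge $\ws\bs$ with train-track half-angles $\alpha, \beta$: match the simple-pole residue of $g_{\bs, \ws}$ at $u = \alpha$ (computed from Remark~\ref{rem:g_wb}) with the local behaviour of $Q_{\bs, \ws}(z(u), w(u))\, z'(u)/(z(u)\,w(u)\,\partial_w P(z(u), w(u)))$ obtained from~\eqref{eq:expr_zu_wu} and the definition~\eqref{def:Kast_elliptic} of $\Ks$.

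The main obstacle is this last residue comparison: a careful bookkeeping of signs and of the orders of vanishing of $z(u), w(u)$ at the points $\alpha_T$ (which depend on the homology classes $(h_T, v_T)$ of the train-tracks) is required to extract the precise constant $-\theta'(0)$ rather than some unknown scalar. The conceptual steps---rank-one factorization and uniqueness of the holomorphic differential on a genus-$1$ curve---are clean, but the identification of this constant is where the explicit elliptic structure enters most essentially.
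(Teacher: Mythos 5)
Your Part~1 follows the same route as the paper (rank-one factorization of the adjugate via the kernel vectors of Proposition~\ref{prop:ker}), so that part is fine. Part~2, however, contains a genuine gap: the function $f$ is \emph{not} constant in general, and $\omega=\frac{dz}{zw\,\partial_wP}$ is \emph{not} the holomorphic differential on the normalization of~$\C$. Whenever $N(\Gs)$ has more than one interior lattice point (which is already the case for the paper's running example of Figure~\ref{fig:amoeba}), the genus-$1$ Harnack curve $\C$ has isolated real nodes, one for each interior point other than $\varphi(\mapalpha)$. A node has two distinct preimages $u_1\neq u_2$ under $\psi$, carrying two independent kernel vectors $g_{\cdot,\xs_0}(u_1)$, $g_{\cdot,\xs_0}(u_2)$ of $\Ks(z_0,w_0)$; the corank is therefore at least $2$ and the adjugate $Q(z_0,w_0)$ vanishes identically, so $f(u_i)=Q_{\bs,\ws}(z(u_i),w(u_i))/g_{\bs,\ws}(u_i)=0$. (Equivalently: on $\C$ one has $\partial_wP=0$ exactly where $dz=0$ except at the nodes, so $zw\,\partial_wP/z'$ vanishes precisely at the node preimages.) Consequently $\psi^*\omega=-\theta'(0)f(u)^{-1}\,du$ has simple poles at these points; the section of the dualizing sheaf that descends to a holomorphic differential on $\TT(q)$ is the combination $\sum_{(i,j)}c_{ij}\,z^iw^j\frac{dz}{zw\,\partial_wP}$ vanishing at all nodes, not the single monomial you take. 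Your constancy argument only excludes \emph{poles} of $f$; since $f$ genuinely has zeros, it must also have poles at some of the $\alpha_T$, so the claimed cancellation there cannot hold.

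This is precisely why the paper explicitly declines the holomorphic-differential route of~\cite{BdTR1} (it ``does not transfer to our setting in a straightforward way'' and in any case only determines the answer up to a multiplicative constant). Instead, the paper proves Point~2 by direct computation: it first writes $f(u)=\partial_wP\big/\sum_{\ws,\bs}\partial_w\Ks(z(u),w(u))_{\ws,\bs}\,g_{\bs,\ws}(u)$ using the trace identity $\operatorname{tr}(Q\,\partial_w\Ks)=\partial_wP$ on $\C$, and then evaluates the remaining sum by telescoping Fay's identity (Corollary~\ref{cor:Faystrisecant}) along the edges crossing $\gamma_x$, obtaining exactly $-\frac{1}{\theta'(0)}\frac{z'}{z}(u)$ with no undetermined constant. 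Note finally that even within your framework the identification of the constant $-\theta'(0)$ --- which you yourself flag as the main obstacle --- is not carried out, whereas in the paper's argument it comes out automatically from the telescoping.
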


Using Lemma~\ref{lem:forme_holom}, we obtain:
\begin{align*}
  \As^B_{\bs+(m,n),\ws}&=\frac{-\theta'(0)}{2\pi i}\int_{\Cs^{u_0}}
z(u)^{m}w(u)^{n} g_{\bs,\ws}(u)du\\
&=\frac{i\theta'(0)}{2\pi}\int_{\Cs^{u_0}} g_{\bs+(m,n),\ws}(u)
du=\A^{u_0}_{\bs+(m,n),\ws},
\end{align*}
where in the second equality, we used the fact that $g_{\cdot,\ws}(u)$
is~$(z(u),w(u))$-quasiperiodic.
Let us emphasize that connected components of the complement of
$\Ascr$ correspond bijectively to connected components of $C_1\cup
C_0\setminus\{\alpha_T\ ;\ T\in\T\}$ on $\TT(q)$, and coefficients of $\As^B$ (resp. 
$\As^{u_0}$) do not change when $B$ (resp. $u_0$) varies while staying in the same
connected component, due to the nature of path integration of meromorphic
functions.
\end{proof}

We now prove Lemma~\ref{lem:forme_holom}.

\begin{proof}[Proof of Lemma~\ref{lem:forme_holom}]
The existence of a meromorphic function~$f$ on~$\TT(q)$ satisfying
Point~1 follows from a straightforward adaptation of Lemmas 29 and
30 of~\cite{BdTR1}, and is based on the fact that on the spectral curve, the
adjugate matrix $Q(z,w)$ is of rank (at most) 1.
To prove Point 2, we now perform a direct computation,
thus using
a different argument than the one of~\cite{BdTR1}: indeed, the proof of $~\cite{BdTR1}$
uses general facts about
holomorphic differential forms on genus 1 surfaces which do not transfer to our setting in a straightforward way,
and which give the result up to a multiplicative constant only.

Let us first show the following:
\begin{equation}\label{equ:expression_f(u)}
f(u)=\frac{\partial_w P(z(u),w(u))}{\sum_{\ws,\bs}
\partial_w \Ks(z(u),w(u))_{\ws,\bs}g_{\bs,\ws}(u)},
\end{equation}
where $\ws$ and $\bs$ run through all black and white vertices of the fundamental
domain $\Gs_1$, respectively.
Starting from the relation satisfied by the adjugate matrix:
\[
Q(z,w)\Ks(z,w)=P(z,w)\Id,
\]
and differentiating with respect to $w$, one can show, as in the proof
of~\cite[Theorem~4.5]{KOS} (see also \cite[Lemma~1]{Bout:patterns} for more details),
that
\[
\operatorname{tr}\left(Q(z,w) \partial_w \Ks(z,w)\right)=\partial_w P(z,w)
\]
for $(z,w)$ on the spectral curve. Evaluating this for $(z(u),w(u))$ and replacing $Q(z(u),w(u))$ by its expression obtained in Point~1 yields:
\[
f(u)\sum_{\bs,\ws} g_{\bs,\ws}(u) \partial_w \Ks(z(u),w(u))_{\ws,\bs} = \partial_w P(z(u),w(u))\,,
\]
thus showing~\eqref{equ:expression_f(u)}.
In order to establish Point 2, we are thus left with proving that:
\begin{equation}\label{equ:lemme30}
\theta'(0) w(u)\sum_{\ws,\bs} \partial_w\Ks(z(u),w(u))_{\ws,\bs}g_{\bs,\ws}(u)=-\frac{z'}{z}(u).
\end{equation}

Let us start from the left-hand side of\eqref{equ:lemme30} and consider a pair
$(\ws,\bs)$ such that $\ws$ and $\bs$ are not connected in $\Gs_1$ by an edge
crossing $\gamma_x$. Then $\partial_w(\Ks(z(u),w(u))_{\ws,\bs}$ is zero and the
pair does not contribute to the sum. If on the contrary there is an edge $\ws\bs$ crossing~$\gamma_x$. Then,
\begin{equation}
w(u)\partial_w\Ks(z(u),w(u))_{\ws,\bs}g_{\bs,\ws}(u)=
\begin{cases}
\Ks_{\ws,\bs}g_{\bs,\ws}(u) & \text{if $\ws\bs$ has a black vertex on the left}\\
-\Ks_{\ws,\bs}g_{\bs,\ws}(u) & \text{if $\ws\bs$ has a white vertex on the left}.
\end{cases}
\label{equ:lemme30b}
\end{equation}
Now, using the telescopic version of Fay's identity~\eqref{eq:telefay}
as in the proof of Proposition~\ref{prop:ker} and the notation of Figure~\ref{fig:around_rhombus},
we obtain:
\begin{align*}
\theta'(0)\Ks_{\ws,\bs}g_{\bs,\ws}(u)&=\theta'(0)[F^{(t+\mapd(\bs))}(u;\beta)-F^{(t+\mapd(\bs))}(u;\alpha)]\\
&=\Bigl(\frac{\theta'}{\theta}(t+\mapd(\bs)-\beta)-\frac{\theta'}{\theta}(u-\beta)\Bigr)-
\Bigl(\frac{\theta'}{\theta}(t+\mapd(\bs)-\alpha)-\frac{\theta'}{\theta}(u-\alpha)\Bigr)\\
&= \Bigl(\frac{\theta'}{\theta}(t+\mapd(\fs))-\frac{\theta'}{\theta}(t+\mapd(\fs'))\Bigr)+
\Bigl(\frac{\theta'}{\theta}(u-\alpha)-\frac{\theta'}{\theta}(u-\beta)\Bigr),
\end{align*}
When summing these contributions over edges crossing the path $\gamma_x$ with a minus
sign for edges having a white vertex on the left of $\gamma_x$,
the terms~$\frac{\theta'}{\theta}(t+\mapd(\fs))$
and~$\frac{\theta'}{\theta}(t+\mapd(\fs'))$ cancel out in a telescopic way, and we therefore obtain:
{\small \begin{align*}
\theta'(0)w(u)\sum_{\ws,\bs}
\partial_w\Ks(z(u),w(u))_{\ws,\bs}g_{\bs,\ws}(u)&=
\Bigl(\sum_{\substack{\ws\bs\cap \gamma_x\neq\emptyset \\ \text{$\bs$ left of $\gamma_x$}}}
-\sum_{\substack{\ws\bs\cap \gamma_x\neq\emptyset \\ \text{$\ws$ left of $\gamma_x$}}}\Bigr)
\Bigl(\frac{\theta'}{\theta}(u-\alpha)-\frac{\theta'}{\theta}(u-\beta)\Bigr)\\
&=-\frac{d}{du}
\log \Bigl(\prod_{\substack{\ws\bs\cap \gamma_x\neq\emptyset \\ \text{$\bs$ left of $\gamma_x$}}}
\frac{\theta(u-\beta)}{\theta(u-\alpha)}
\prod_{\substack{\ws\bs\cap \gamma_x\neq\emptyset \\ \text{$\ws$ left of $\gamma_x$}}}
\frac{\theta(u-\alpha)}{\theta(u-\beta)}
\Bigr).
\end{align*}}

We then notice that if $\ws$ is on the left of $\gamma_x$,
as illustrated on Figure~\ref{fig:intersec_tt_edge_gammax},
the train-track with
half-angle $\beta$ crosses $\gamma_x$ from bottom to top, whereas the
train-track with half-angle $\alpha$ crosses it in the other direction.
We now group the factors $\theta(u-\cdot)$ according to their corresponding
train-track in $\T_1$. For a fixed $T\in\T_1$, the factor $\theta(u-\alpha_T)$
will thus appear in the product with an exponent equal to $-v_T$. Comparing with
Equation~\eqref{eq:expr_zu_wu}, we obtain that the
product on the right-hand side is $z(u)$, up to a factor~$\pm 1$ which
plays no role when differentiating,
thus ending the proof of Point 2.\qedhere
\end{proof}

\begin{figure}[htb]
  \centering
  \def\svgwidth{6cm}
\begingroup%
  \makeatletter%
  \providecommand\color[2][]{%
    \errmessage{(Inkscape) Color is used for the text in Inkscape, but the package 'color.sty' is not loaded}%
    \renewcommand\color[2][]{}%
  }%
  \providecommand\transparent[1]{%
    \errmessage{(Inkscape) Transparency is used (non-zero) for the text in Inkscape, but the package 'transparent.sty' is not loaded}%
    \renewcommand\transparent[1]{}%
  }%
  \providecommand\rotatebox[2]{#2}%
  \newcommand*\fsize{\dimexpr\f@size pt\relax}%
  \newcommand*\lineheight[1]{\fontsize{\fsize}{#1\fsize}\selectfont}%
  \ifx\svgwidth\undefined%
    \setlength{\unitlength}{582.2603951bp}%
    \ifx\svgscale\undefined%
      \relax%
    \else%
      \setlength{\unitlength}{\unitlength * \real{\svgscale}}%
    \fi%
  \else%
    \setlength{\unitlength}{\svgwidth}%
  \fi%
  \global\let\svgwidth\undefined%
  \global\let\svgscale\undefined%
  \makeatother%
  \begin{picture}(1,0.50221329)%
    \lineheight{1}%
    \setlength\tabcolsep{0pt}%
    \put(0,0){\includegraphics[width=\unitlength,page=1]{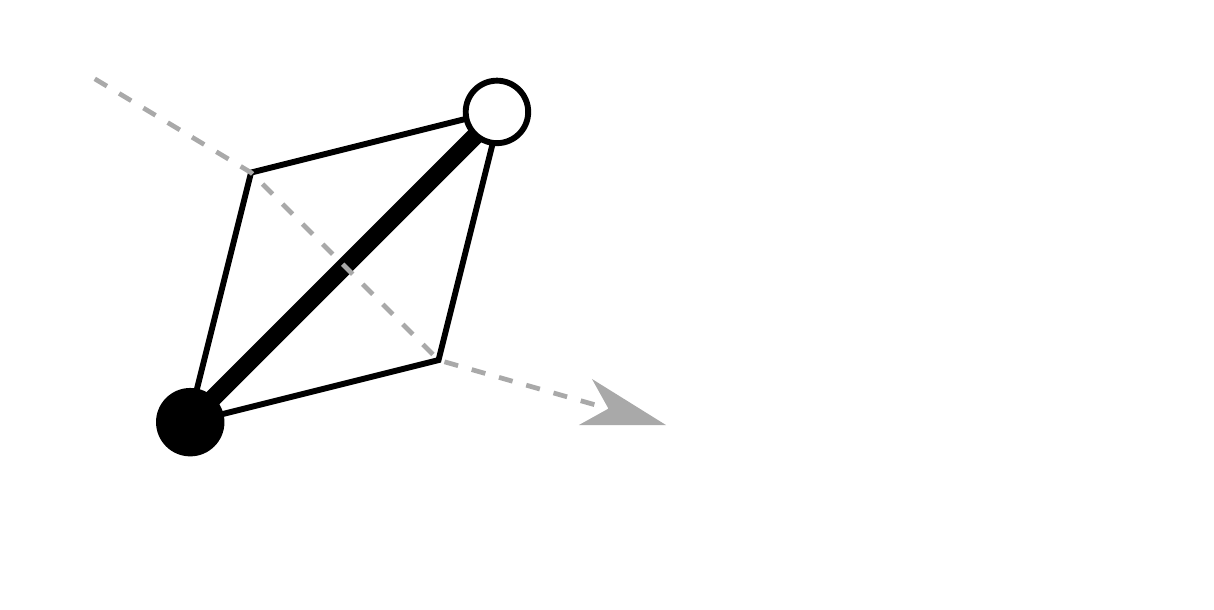}}%
    \put(0.59337397,0.15384039){\color[rgb]{0,0,0}\makebox(0,0)[lt]{\lineheight{1.25}\smash{\begin{tabular}[t]{l}$\gamma_x$\end{tabular}}}}%
    \put(0.4130423,0.46298042){\color[rgb]{0,0,0}\makebox(0,0)[lt]{\lineheight{1.25}\smash{\begin{tabular}[t]{l}$\ws$\end{tabular}}}}%
    \put(0.07814057,0.10231704){\color[rgb]{0,0,0}\makebox(0,0)[lt]{\lineheight{1.25}\smash{\begin{tabular}[t]{l}$\bs$\end{tabular}}}}%
    \put(0,0){\includegraphics[width=\unitlength,page=2]{fig_intersec_tt_edge_gammax.pdf}}%
    \put(0.56761228,0.01215122){\color[rgb]{0,0,0}\makebox(0,0)[lt]{\lineheight{1.25}\smash{\begin{tabular}[t]{l}$T_\alpha$\end{tabular}}}}%
    \put(0.74794399,0.34705289){\color[rgb]{0,0,0}\makebox(0,0)[lt]{\lineheight{1.25}\smash{\begin{tabular}[t]{l}$T_\beta$\end{tabular}}}}%
  \end{picture}%
\endgroup%

  \caption{Intersection of the two train-tracks corresponding to an edge and
  $\gamma_x$ when the white vertex is on the left of $\gamma_x$.}
  \label{fig:intersec_tt_edge_gammax}
\end{figure}

\subsection{Slope of the Gibbs measures \texorpdfstring{$\mathbb{P}^{u_0}$}{Pu₀}}
\label{sec:slope}

Defining the height function of a dimer configuration $\Ms$ requires to fix a reference dimer configuration
$\Ms_1$.
A natural choice is to fix $u_1\in C_0\setminus\{\alpha_T\ ;\ T\in\T\}$ and consider $\Ms_1$ to be the dimer configuration on which the solid
Gibbs measure $\PP^{u_1}$ is concentrated. Note also that by construction of
$N(\Gs)$, the intervals of $C_0\setminus\{\alpha_T\ ;\ T\in\T\}$  are in bijection
with integer points of $\partial N(\Gs)$.
Let us denote by $P_1$ the one
corresponding to the interval containing $u_1$.

Let $\fs$ and $\fs'$ be two faces of $\Gs$, and $\gamma$ a dual path connecting $\fs'$ and
$\fs$.
Then the height difference (relative to $\Ms_1$) between these two faces
in a dimer configuration $\Ms$ is 
\begin{equation}
  h(\fs)-h(\fs') = \sum_{\es\cap\gamma\neq \emptyset}
  \pm\bigl(\II_{\{\es\in\Ms\}}-\II_{\{\es\in\Ms_1\}}\bigr)
  \label{eq:height_change}
\end{equation}
where the sign $\pm$ is $+$ (resp.\@ $-$) when the white end of $\es$ is on the
left (resp.\@ right) of $\gamma$.

The \emph{slope} $(s^{u_0}, t^{u_0})$  
of the Gibbs measure $\PP^{u_0}$ is the expected horizontal and vertical height
change~\cite{KOS}.
The main result of this section is an explicit expression for the slope of the
ergodic Gibbs
measures $\mathbb{P}^{u_0}$. The content is 
essentially that of Theorem 5.6. of~\cite{KOS} with the additional feature that,
using the explicit parameterization of the spectral curve, we are able to
identify the explicit value of the slope, not only up to a sign and modulo~$\pi$.

Define $\Cs^{u_0}_{u_1}$ to be a contour on $\TT(q)$ from $\bar{u}_0$ to $u_0$,
crossing $C_0$ once at $u_1$, with a positive derivative for the imaginary part
of $u$ at this point.
When $u_0$ is not on $C_0$, this contour can be chosen in such a way that the
imaginary part of $u$ is increasing (and winds once around the circle),
while the real part does not wind around the circle.
When $u_0$ is on~$C_0$, then the contour $\Cs_{u_1}^{u_0}$ can be chosen to be
  a homotopically trivial
  simple closed curve crossing $C_0$ at $u_1$ and $u_0$, oriented so that the
  connected component of~$\TT(q)\setminus\Cs^{u_0}_{u_1}$ to its left (resp.\@ right)
  contains the oriented segment of $C_0$
  from $u_0$ to $u_1$ (resp.\@ from $u_1$ to $u_0$).

\begin{thm}
  \label{thm:slope}
The slope of the Gibbs measure $\PP^{u_0}$ is equal to:
\begin{equation*}
  s^{u_0}=-\frac{1}{2\pi i} \int_{\Cs^{u_0}_{u_1}} \frac{d}{du}(\log  z(u)) du
  ,\quad t^{u_0}=-\frac{1}{2\pi i} \int_{\Cs^{u_0}_{u_1}} \frac{d}{du}(\log w(u)) du,
\end{equation*}
where the path $\Cs^{u_0}_{u_1}$ is defined above.
\end{thm}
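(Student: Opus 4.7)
The plan is to express the slope as an expected height change relative to the reference solid configuration~$\Ms_1$, turn the resulting single-edge probabilities into contour integrals via Corollary~\ref{thm:Gibbs_measures}, and then reuse the telescopic computation underlying Lemma~\ref{lem:forme_holom} to reduce the integrand to the logarithmic derivative of~$z$ (respectively~$w$).

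First, from the height-change formula~\eqref{eq:height_change} applied to a dual path~$\gamma_x$ joining~$\fs_0$ to~$\fs_0+(1,0)$, linearity of expectation gives
\[
s^{u_0}=\sum_{\es\cap\gamma_x\neq\emptyset}\epsilon_\es\bigl(\PP^{u_0}(\es)-\II_{\{\es\in\Ms_1\}}\bigr),
\]
where $\epsilon_\es=+1$ (resp.\@ $-1$) when the white vertex of $\es$ lies on the left (resp.\@ right) of~$\gamma_x$. Since $\Ms_1$ is the deterministic configuration of $\PP^{u_1}$, Remark~\ref{rem:frozen} gives $\II_{\{\es\in\Ms_1\}}=\PP^{u_1}(\es)=\Ks_{\ws,\bs}\A^{u_1}_{\bs,\ws}$, so each summand becomes $\epsilon_\es\Ks_{\ws,\bs}(\A^{u_0}_{\bs,\ws}-\A^{u_1}_{\bs,\ws})$.

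Next, using Definition~\ref{def:operator_A},
\[
\A^{u_0}_{\bs,\ws}-\A^{u_1}_{\bs,\ws}=\frac{i\theta'(0)}{2\pi}\int_{\Cs^{u_0}_{\bs,\ws}-\Cs^{u_1}_{\bs,\ws}}g_{\bs,\ws}(u)\,du.
\]
The topological point is that $\Cs^{u_1}_{\bs,\ws}$ is a small trivial loop crossing~$C_0$ at~$u_1$ and at a point in $C_0\setminus s_{\bs,\ws}$ where $\Cs^{u_0}_{\bs,\ws}$ also crosses~$C_0$; subtracting it from $\Cs^{u_0}_{\bs,\ws}$ and deforming inside the complement of the poles of~$g_{\bs,\ws}$ produces a representative of the contour $\Cs^{u_0}_{u_1}$ of the statement. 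By the separation-of-sectors argument already used in the proof of Theorem~\ref{thm:K_inverse_family}, a single such representative can be chosen that works simultaneously for the finitely many edges of $\Gs_1$ crossing~$\gamma_x$. Swapping the finite sum with the integral then yields
\[
s^{u_0}=\frac{i\theta'(0)}{2\pi}\int_{\Cs^{u_0}_{u_1}}\sum_{\es\cap\gamma_x\neq\emptyset}\epsilon_\es\, g_{\bs,\ws}(u)\Ks_{\ws,\bs}\,du.
\]

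Finally, applying Corollary~\ref{cor:Faystrisecant} to each term, exactly as in the proofs of Proposition~\ref{prop:ker} and Lemma~\ref{lem:forme_holom}, gives
\[
\theta'(0)g_{\bs,\ws}(u)\Ks_{\ws,\bs}=\Bigl(\frac{\theta'}{\theta}(t+\mapd(\fs))-\frac{\theta'}{\theta}(t+\mapd(\fs'))\Bigr)+\Bigl(\frac{\theta'}{\theta}(u-\alpha)-\frac{\theta'}{\theta}(u-\beta)\Bigr).
\]
Summing with signs~$\epsilon_\es$ along~$\gamma_x$, the face-terms telescope and vanish thanks to Equation~\eqref{eq:per-eta}, the periodicity condition $\sum_{T\in\T_1}\alpha_T v_T\equiv 0\mod\pi$ guaranteed by Proposition~\ref{prop:angles_perio}, and the $\pi$-periodicity of~$\theta'/\theta$; the angular contributions reproduce the computation of Lemma~\ref{lem:forme_holom} (with the sign discrepancy $\epsilon_\es=-\sigma_\es$ relative to the Jacobian convention in~\eqref{equ:lemme30b}) and collapse to $\frac{d}{du}\log z(u)$. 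Using $\frac{i}{2\pi}=-\frac{1}{2\pi i}$ gives the formula for~$s^{u_0}$; the argument for~$t^{u_0}$ is identical with~$\gamma_y$ and~$w$ in place of~$\gamma_x$ and~$z$. The main obstacle is the contour-identification step, particularly in Case~3 where $u_0\in C_0$: there $\Cs^{u_0}_{u_1}$ becomes a homotopically trivial closed loop on~$\TT(q)$ and one must check that residues of $\frac{d}{du}\log z(u)$ along that loop recover the integer-valued slope corresponding to the appropriate corner of~$N(\Gs)$, consistently with the identification between boundary components of~$D$ and integer boundary points of~$N(\Gs)$.
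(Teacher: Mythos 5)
Your proposal is correct and follows essentially the same route as the paper's proof: express the slope as $\sum_{\es\cap\gamma_x}\pm(\PP^{u_0}(\es)-\PP^{u_1}(\es))$, combine the two contours $\Cs^{u_0}_{\bs,\ws}$ and $\Cs^{u_1}_{\bs,\ws}$ into $\Cs^{u_0}_{u_1}$, and then invoke the telescoping identity~\eqref{equ:lemme30}--\eqref{equ:lemme30b} from Lemma~\ref{lem:forme_holom} to reduce the integrand to $\frac{z'}{z}(u)$. Your extra remarks (the explicit contour cancellation and the Case~3 residue check, which the paper defers to Corollary~\ref{cor:slope_solid}) are consistent with, but not needed beyond, the published argument.
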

\begin{proof}

The quantity $s^{u_0}$ is the expectation under $\PP^{u_0}$
of~\eqref{eq:height_change} with $\gamma=\gamma_x$.
By Corollary~\ref{thm:Gibbs_measures}, 
the expectation under $\PP^{u_0}$ of the term in~\eqref{eq:height_change}
corresponding to an edge $\es=\ws\bs$ is given by:
\begin{multline*}
  \EE^{u_0}[\II_{\{\es\in \Ms\}}-\II_{\{\es\in
  \Ms_1\}}] =\PP^{u_0}(\es) - \II_{\{\es\in\Ms_1\}}=
\PP^{u_0}(\es)-\PP^{u_1}(\es) = \\
  \frac{i\theta'(0)}{2\pi}\int_{\Cs^{u_0}_{\bs,\ws}} \Ks_{\ws\bs}g_{\bs\ws}(u)du
  -  \frac{i\theta'(0)}{2\pi}\int_{\Cs^{u_1}_{\bs,\ws}}
  \Ks_{\ws\bs}g_{\bs\ws}(u)du
   =  \frac{i\theta'(0)}{2\pi}\int_{\Cs^{u_0}_{u_1}}     \Ks_{\ws\bs}g_{\bs\ws}(u)du.
\end{multline*}

Summing over the edges crossing $\gamma_x$, and using
relations~\eqref{equ:lemme30} and~\eqref{equ:lemme30b}, we get
\begin{equation*}
  \sum_{\ws\bs\cap
  \gamma_x}\pm\theta'(0)\Ks_{\ws\bs}g_{\bs\ws}(u)=-\frac{z'}{z}(u)
  =-\frac{d}{du} \log z(u)\,,
\end{equation*}
establishing the first identity. The proof of the second is almost identical.
\end{proof}

The polygon of allowed slopes, after a rotation of
$-90$ degrees and a translation, can be identified with $N(\Gs)$ by \cite{KOS},
see also~\cite{Passare_2016}.
From the computation in the proof of Theorem~\ref{thm:slope},
or simply from Equation~\eqref{eq:height_change}, 
one observes that the slope of the measure~$\mathbb{P}^{u_1}$ is trivially $0$ with our choice
of reference configuration.
Therefore, the exact translation is obtained by anchoring
$N(\Gs)$ in such a way that $P_1$ is at the origin.
This also follows from Corollary~\ref{cor:slope_solid} below which computes the slope for any solid Gibbs measure.

\begin{cor}[Solid phases]\label{cor:slope_solid}
Suppose that $u_0$ belongs to one of the connected components of $C_0\setminus
\{\alpha_T\ ;\ T\in\T_1\}$. Then, 
\begin{equation*}
(s^{u_0},t^{u_0})=
\sum_{T\in\T_1\,:\, u_0<\alpha_T<u_1} \bigl(v_T,-h_T\bigr)\,.
\end{equation*}
In particular, the points~$(-t^{u_0},s^{u_0})$ for $u_0$ in the connected components of
$C_0\setminus\{\alpha_T\ ;\ T\in\T_1\}$ are the integer boundary vertices of the
polygon $N(\Gs)$.
\end{cor}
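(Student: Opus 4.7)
The strategy is to apply Theorem~\ref{thm:slope} and evaluate the resulting contour integrals by residues. When $u_0$ belongs to a component of $C_0 \setminus \{\alpha_T : T \in \T_1\}$, the path $\Cs^{u_0}_{u_1}$ is, by construction, a homotopically trivial simple closed curve on $\TT(q)$ bounding a disk whose intersection with $C_0$ is the arc between $u_0$ and $u_1$ prescribed by the orientation convention of Section~\ref{sec:slope}.

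The next step is to logarithmically differentiate the explicit parameterization from Equation~\eqref{eq:expr_zu_wu}, yielding
\[
\frac{d}{du}\log z(u) = -\sum_{T \in \T_1} v_T\,\frac{\theta'}{\theta}(u - \alpha_T), \qquad \frac{d}{du}\log w(u) = \sum_{T \in \T_1} h_T\,\frac{\theta'}{\theta}(u - \alpha_T).
\]
Since $\theta(z) \sim \theta'(0)\, z$ near $z = 0$, each summand $\frac{\theta'}{\theta}(u - \alpha_T)$ is meromorphic on $\TT(q)$ with a single simple pole at $u = \alpha_T$ of residue $1$, and no other singularities. The residue theorem applied to the integrals in Theorem~\ref{thm:slope} then isolates the contribution of those half-angles $\alpha_T$ enclosed by $\Cs^{u_0}_{u_1}$, producing $s^{u_0} = \sum_{T : u_0 < \alpha_T < u_1} v_T$ and $t^{u_0} = -\sum_{T : u_0 < \alpha_T < u_1} h_T$, which is the first assertion.

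For the second assertion, one rewrites $(-t^{u_0}, s^{u_0})$ as the partial sum $\sum_{T : u_0 < \alpha_T < u_1}[T]$ of oriented edge vectors $[T_j] = P_{j+1} - P_j$ of $\partial N(\Gs)$. As $u_0$ sweeps through the $r$ connected components of $C_0 \setminus \{\alpha_T\}$, crossing a single $\alpha_{T_j}$ at each transition, this partial sum evolves by $\pm[T_j]$. Using $\sum_{T \in \T_1}[T] = 0$, the resulting $r$ values form a closed polygonal walk whose consecutive edge vectors are exactly those of $\partial N(\Gs)$; with the convention anchoring the interval containing $u_1$ to $P_1$, these values recover precisely the integer boundary vertices $P_1,\ldots,P_r$ of the Newton polygon.

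The main technical obstacle will be the careful bookkeeping of orientations and cyclic orders: identifying which side of $\Cs^{u_0}_{u_1}$ is the enclosed disk (so that the residue theorem produces the correct sign), and matching the cyclic order of train-tracks in $\T_1$ to the orientation along $\partial N(\Gs)$ so that the partial sums reconstruct the integer boundary vertices of $N(\Gs)$ rather than those of a translate of its central reflection.
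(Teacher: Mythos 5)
Your proposal is correct and follows essentially the same route as the paper: apply Theorem~\ref{thm:slope}, note that $\Cs^{u_0}_{u_1}$ is homologically trivial and bounds the region containing the arc of $C_0$ from $u_0$ to $u_1$, and compute the integrals by residues using that $\frac{z'}{z}$ (resp.\ $\frac{w'}{w}$) has simple poles at the $\alpha_T$ with residue $-v_T$ (resp.\ $h_T$). Your additional remarks on reconstructing the boundary vertices of $N(\Gs)$ from the partial sums of the $[T_j]$ match the paper's construction of the geometric Newton polygon and the anchoring of $P_1$ at the origin.
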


\begin{proof}
When $u_0$ belongs to~$C_0\setminus\{\alpha_T\ ;\ T\in\T_1\}$, the oriented path of integration
$\Cs^{u_0}_{u_1}$ is the boundary of a domain of the torus $\TT(q)$ containing
the arc of $C_0$ from $u_0$ to $u_1$. Therefore the result is a direct
consequence of the residue theorem, noticing that
$u\mapsto\theta(u-\alpha)$ has a simple zero at $u=\alpha$,
so that the potentially nonzero residues of $\frac{z'}{z}(u)$
(resp.\@ $\frac{w'}{w}(u)$) are at $\alpha_T$ for $T\in\T_1$, with
value $-v_T$ (resp.\@ $h_T$).
\end{proof}

\begin{cor}[Gaseous phase]\label{cor:slope_gas2}
Suppose that $u_0$ belongs to $C_1$. Then,
\begin{equation*}
(t^{u_0},-s^{u_0})=-\frac{1}{\pi}\Bigl(\sum_{T\in\T_1}h_T\tilde{\alpha}_T,\sum_{T\in\T_1}v_T \tilde{\alpha}_T\Bigr)=\vphi(\mapalpha),
\end{equation*}
where the lifts~$(\tilde{\alpha}_T)$ of~$({\alpha}_T)$
are in an interval of length (smaller than) $\pi$ obtained by
cutting $C_0$
at $u_1$.
In the second equality, $\vphi(\mapalpha)$ is given by Equation~\eqref{equ:map_vphi} with
the geometric Newton polygon~$N(\Gs)$ anchored so that~$P_1$ is at the origin.
\end{cor}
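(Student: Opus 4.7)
The plan is to start from the integral expressions in Theorem~\ref{thm:slope} and reduce the computation of the slope to a single elementary integral involving $\theta'/\theta$. Substituting the explicit formulas~\eqref{eq:expr_zu_wu} for $z(u)$ and $w(u)$, we have
\[
\frac{z'(u)}{z(u)}=-\sum_{T\in\T_1} v_T\,\frac{\theta'(u-\alpha_T)}{\theta(u-\alpha_T)}\,,\qquad
\frac{w'(u)}{w(u)}=\sum_{T\in\T_1} h_T\,\frac{\theta'(u-\alpha_T)}{\theta(u-\alpha_T)}\,,
\]
so Theorem~\ref{thm:slope} yields
\[
s^{u_0}=\frac{1}{2\pi i}\sum_{T\in\T_1}v_T\,I_T\,,\qquad t^{u_0}=-\frac{1}{2\pi i}\sum_{T\in\T_1}h_T\,I_T\,,
\]
where $I_T:=\int_{\Cs^{u_0}_{u_1}}\frac{\theta'(u-\alpha_T)}{\theta(u-\alpha_T)}\,du$. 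Since $\sum_{T}h_T=\sum_{T}v_T=0$ (the $[T]$'s sum to zero as the boundary of $N(\Gs)$), it will suffice to compute each $I_T$ modulo a term that does not depend on $T$.

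Next, using the invariance of $I_T$ under deformations of $\Cs^{u_0}_{u_1}$ within its homotopy class on $\TT(q)$ minus the poles, I would take the vertical segment on $\TT(q)$ from $u_1$ to $u_1+\pi\tau$. Choosing $\tilde\alpha_T\in(u_1,u_1+\pi)$ as in the statement, the pole of $\frac{\theta'}{\theta}(\,\cdot -\tilde\alpha_T)$ lies to the right of this segment in $\CC$ and the segment itself avoids all poles. The integral is then
\[
I_T=\log\theta(u_1+\pi\tau-\tilde\alpha_T)-\log\theta(u_1-\tilde\alpha_T)\,,
\]
understood as the continuous increment of $\log\theta$ along the path. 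Applying the quasi-periodic relation $\theta(v+\pi\tau)=-q^{-1}e^{-2iv}\theta(v)$ determines this increment up to an integer multiple of $2\pi i$.

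The main obstacle is pinning down this integer precisely. I would do so by tracking the argument of $\theta(u-\tilde\alpha_T)$ continuously along the segment, evaluating it at three key points: the start $u=u_1$ (where $\theta(u_1-\tilde\alpha_T)$ is a negative real, since $u_1-\tilde\alpha_T\in(-\pi,0)$), the midpoint $u=u_1+\pi\tau/2$ (where the identity $\theta_1(z+\pi\tau/2)=iq^{-1/4}e^{-iz}\theta_4(z)$ combined with positivity of $\theta_4$ on $\RR$ yields argument $\pi/2+(\tilde\alpha_T-u_1)$), and the endpoint $u=u_1+\pi\tau$. A short case analysis comparing these three arguments pins down the integer, giving
\[
I_T=-i\pi\tau-i\pi-2iu_1+2i\tilde\alpha_T\,.
\]

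Summing over $T$ weighted by $h_T$ or $v_T$, the terms independent of $T$ cancel by $\sum_{T}h_T=\sum_{T}v_T=0$, producing
\[
t^{u_0}=-\frac{1}{\pi}\sum_{T\in\T_1}h_T\tilde\alpha_T\,,\qquad -s^{u_0}=-\frac{1}{\pi}\sum_{T\in\T_1}v_T\tilde\alpha_T\,,
\]
which is the first equality of the corollary. For the second equality, since the lifts $\tilde\alpha_T$ all lie in the interval $(u_1,u_1+\pi)$ obtained by cutting $C_0$ at $u_1$, they satisfy the monotone arrangement of Remark~\ref{rem:NG}(2); anchoring $N(\Gs)$ so that $P_1$ (the integer point associated to the interval containing $u_1$) is at the origin, that remark identifies $-\frac{1}{\pi}\sum_T\tilde\alpha_T\binom{h_T}{v_T}$ with $\vphi(\mapalpha)$.
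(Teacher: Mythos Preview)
Your proof follows the same route as the paper's: reduce via Theorem~\ref{thm:slope} and~\eqref{eq:expr_zu_wu} to the integrals $I_T$, show $I_T=c+2i\tilde\alpha_T$ with $c$ independent of $T$ and $\tilde\alpha_T$ the lift in $(u_1,u_1+\pi)$, then invoke Remark~\ref{rem:NG}(2); you go further than the paper by actually pinning down the integer ambiguity via the midpoint argument, whereas the paper simply asserts the form $c+2i\tilde\alpha$.

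One caveat: your phrase ``invariance of $I_T$ under deformations within its homotopy class on $\TT(q)$'' is not quite right, since $\frac{\theta'}{\theta}$ is not $\Lambda$-periodic (it shifts by $-2i$ under $u\mapsto u+\pi\tau$), so individual $I_T$'s are not well-defined functions of the loop on the torus. What \emph{is} invariant is the full integral $\int\frac{z'}{z}\,du=-\sum_T v_T I_T$; equivalently, changing the lift from $\Cs^{u_0}_{u_1}$ to $[u_1,u_1+\pi\tau]$ alters each $I_T$ only by a $T$-independent amount. Since you already declare that you work modulo such terms, this is a presentational issue rather than a gap.
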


\begin{proof}
Let us prove the first equality for the horizontal slope. The point $u_0$ being
on the real connected component $C_1$,
the contour $\Cs^{u_0}_{u_1}$ winds once vertically
  in the positive direction on the torus and passes through
   $u_1\in C_0$. 
  The quasiperiodic property of the theta function implies that for any
  $\alpha\in C_0$,
  \begin{equation*}
    \int_{\Cs^{u_0}_{u_1}}
    \frac{\theta'}{\theta}(u-\alpha) du =
    c + 2i\tilde{\alpha},
  \end{equation*}
  where
  $e^c=-q^{-2}e^{-2i\Re{u_0}}$
  is independent of $\alpha$,
  and the determination
  $\tilde{\alpha}$
  of $\alpha$ lives in an interval of size
  $\pi$ obtained by cutting $C_0$ at
  $u_1$.
  As a consequence, integrating $\frac{z'}{z}(u)$ gives the following expression for the horizontal slope:
  \begin{equation*}
    s^{u_0}=\frac{1}{2\pi i} \sum_{T\in\T_1}  v_T
    \int_{\Cs^{u_0}_{u_1}}
    \frac{\theta'}{\theta}(u-\alpha_T) \, du=
    \frac{1}{\pi}\sum_{T\in\T_1} v_T
    \tilde{\alpha}_T,
  \end{equation*}
where we have also used the fact that $\sum_{T\in\T_1}v_T=0$.
The argument for the vertical slope is similar.

By the second point of Remark~\ref{rem:NG},
$(t^{u_0},-s^{u_0})$ is equal to $\vphi(\mapalpha)$
minus $P_1$, which gives the second equality and concludes the proof.
\end{proof}

\section{Beyond the periodic case}
\label{sec:gibbs_non_perio}

In this section we consider the general case where the graph $\Gs$ has bounded faces, is minimal,
but \emph{not necessarily periodic}. We let $\mapalpha\in X_\Gs$ be a half-angle map as defined in Section~\ref{sec:tt-def}; the domain $D$ is that of Section~\ref{sec:preliminaries_contours}. Again, we place ourselves in the context where Fock's elliptic adjacency operator is Kasteleyn,
meaning that we suppose that $\tau$ lies {in~$i\RR_{>0}$}, and that the parameter $t$ belongs to $\RR+\frac{\pi}{2}\tau$. We omit the superscript~$(t)$ from the notation.

In Section~\ref{sec:Gibbsnonperio},
using the construction of the operators $(\A^{u_0})_{u_0\in D}$ of Section~\ref{sec:inverses}, we define a two parameter family of Gibbs measures $(\PP^{u_0})_{u_0\in D}$,
and see that the three phases occurring in the
periodic situation arise here too, depending on the position of $u_0$ in $D$.
The domain $D$ in this context
plays the role of the amoeba
in the periodic case, and describes the
phase diagram of the model. As an illustration of the locality property of the Gibbs measures, we explicitly compute single-edge probabilities in the gaseous, liquid and solid phases. In Section~\ref{sec:asymptotics}, we compute asymptotics of the inverse operators $(\A^{u_0})_{u_0\in D}$, depending on the position of $u_0$. 

\subsection{Construction of Gibbs measures}
\label{sec:Gibbsnonperio}

In order to state our result for Gibbs measures, we need the 
following technical assumption {on the minimal graph~$\Gs$ and angle map~$\mapalpha\in X_\Gs$}:

$(*)$ Every finite, simply connected subgraph~$\Gs_0$
of $\Gs$ can be embedded in a \emph{periodic} minimal graph $\Gs'$,
{with the restriction of~$\mapalpha$ to the train-tracks of~$\Gs_0$ extending to an element~$\mapalpha'$ of~$X_{\Gs'}$.}

\begin{rem} 
By~\cite[Proposition~4.1]{BeaQuad} this condition is true for isoradial embeddings, if we do not impose
bipartitedness. The proof consists in enlarging the finite, simply connected subgraph 
of the diamond graph to a finite rhombus graph 
that can tile the plane in a periodic fashion. This proof does not
extend trivially when we require bipartitedness and relax constraints on the
train-tracks (to go from isoradial graphs to minimal ones).
We nevertheless believe this
condition to hold for all minimal graphs {$\Gs$ and angle maps~$\mapalpha\in X_\Gs$}.
\end{rem}

\begin{thm}\label{thm:gibbs_non_perio}
Assume hypothesis $(*)$ and
consider the dimer model on $\Gs$ with Fock's elliptic weights, and corresponding Kasteleyn operator $\Ks$. 
Then for every $u_0\in D$, the
operator $\As^{u_0}$ defines a Gibbs measure $\mathbb{P}^{u_0}$ on dimer configurations of $\Gs$, whose expression on cylinder sets is explicitly given by, for every subset of distinct edges $\{\es_1=\ws_1 \bs_1,\ldots,\es_k=\ws_k \bs_k\}$ of $\Gs$,
  \begin{equation*}
    \mathbb{P}^{u_0}(\es_1,\ldots,\es_k) = \Bigl(\prod_{j=1}^k \Ks_{\ws_j,\bs_j}\Bigr)\times
    \det_{1\leq i,j\leq k} (\As^{u_0}_{\bs_i,\ws_j}).
  \end{equation*}
The set $D$ gives the phase diagram of the model: when $u_0$ is on the top boundary of $D$, the dimer model
is gaseous; when $u_0$ is in the interior of the set $D$, the model is liquid; when $u_0$ lies on the lower boundary of $D$, the model is solid.
\end{thm}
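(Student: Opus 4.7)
The plan is to bootstrap from the periodic result (Corollary~\ref{thm:Gibbs_measures}) using the locality of~$\As^{u_0}$ and condition~$(*)$, exactly along the lines of~\cite{BeaQuad,BeaCed:isogen,BdTR1}. More precisely, the strategy is first to define a candidate finitely-additive measure on cylinder sets via the determinantal formula, then to identify it on any finite simply connected region with the corresponding periodic Gibbs measure, and finally to invoke Kolmogorov's extension theorem and the DLR conditions.

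Step~1: \emph{Reduction to a periodic model on any finite simply connected piece.} Given a finite collection of distinct edges~$\es_1,\ldots,\es_k$, let~$\Gs_0$ be a finite, simply connected subgraph of~$\Gs$ containing all their endpoints and a minimal path in~$\GR$ between each pair~$(\bs_i,\ws_j)$. By condition~$(*)$, embed~$\Gs_0$ in a periodic minimal graph~$\Gs'$ so that parallel train-tracks in~$\Gs_0$ remain parallel in~$\Gs'$. Next, extend the half-angle map~$\mapalpha\restriction_{\Gs_0}$ to a map~$\mapalpha'\in X_{\Gs'}^{per}$; since the restriction is already strictly monotone when restricted to distinct homology classes appearing in~$\Gs_0$ (parallelism being preserved), such an extension exists. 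By Proposition~\ref{prop:angles_perio} the values of~$\mapalpha'$ on the remaining train-tracks can be freely tuned so that~$\varphi(\mapalpha')$ lies in~$\mathrm{int}\,N(\Gs')\cap\ZZ^2$, which ensures that Fock's elliptic Kasteleyn operator~$\K'$ on~$\Gs'$ is periodic.

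Step~2: \emph{Locality yields identification of the operators on~$\Gs_0$.} For any pair of vertices~$\bs,\ws\in\Gs_0$, the function~$g_{\bs,\ws}(u)$ depends only on the half-angles of the train-tracks crossed by a minimal path in~$\GR$ between~$\bs$ and~$\ws$, all of which lie in~$\Gs_0$; similarly, the angular sector~$s_{\bs,\ws}$ (Definition~\ref{def:sectors}) and the contour~$\Cs^{u_0}_{\bs,\ws}$ (Section~\ref{sec:preliminaries_contours}) only involve half-angles from those train-tracks. Since these are common to~$\Gs$ and~$\Gs'$, one has
\[
\As^{u_0}_{\bs,\ws}\;=\;\As'^{u_0}_{\bs,\ws},\qquad \K_{\ws,\bs}\;=\;\K'_{\ws,\bs}
\]
(up to a global gauge on~$\Gs_0$ coming from a possible shift in the base point of~$\mapd$, which cancels in both sides of the determinantal formula). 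Applying Corollary~\ref{thm:Gibbs_measures} on~$\Gs'$, the expression~$\prod_j\K_{\ws_j,\bs_j}\cdot\det(\As^{u_0}_{\bs_i,\ws_j})$ equals the probability~$\mathbb{P}'^{u_0}(\es_1,\ldots,\es_k)$ of an actual Gibbs measure on~$\Gs'$; in particular, it is non-negative and at most~$1$.

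Step~3: \emph{Consistency and Kolmogorov extension.} Non-negativity of the cylinder formula, established in Step~2, together with the standard determinantal identity
\[
\mathbb{P}^{u_0}(\es_1,\ldots,\es_{k-1})\;=\;\mathbb{P}^{u_0}(\es_1,\ldots,\es_k,\es_k\text{ in})+\mathbb{P}^{u_0}(\es_1,\ldots,\es_{k-1},\es_k\text{ out})
\]
(which holds formally for any matrix inverse of~$\K$, regardless of periodicity), shows that the proposed numbers form a consistent family on cylinder sets of growing simply connected regions. Kolmogorov's extension theorem then produces a probability measure~$\mathbb{P}^{u_0}$ on dimer configurations of~$\Gs$ satisfying the stated formula. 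The DLR conditions on an annular region are now immediate: conditioning on a dimer configuration on an annulus, the inside and outside of the annulus become independent finite systems whose laws are prescribed by the determinantal formula inside that region; applying Step~2 with~$\Gs_0$ enclosing the annulus identifies this conditional law with the Boltzmann measure in the inside with the correct edge weights.

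Step~4: \emph{Phase diagram.} The qualitative behaviour of~$\mathbb{P}^{u_0}$ depending on the location of~$u_0$ in~$D$ follows from the asymptotics of~$\As^{u_0}_{\bs,\ws}$ stated in Propositions~\ref{prop:asympt_gas} and~\ref{prop:asymp_liq1}: when~$u_0\in C_1$ the coefficient decays exponentially with the graph distance, yielding exponential decay of correlations characteristic of the gaseous phase; when~$u_0\in\mathrm{int}\,D$ the decay is polynomial up to gauge, giving the liquid phase; and when~$u_0$ lies in a connected component of~$C_0\setminus\{\alpha_T\ ;\ T\in\T\}$ the residue computation of Remark~\ref{rem:frozen} shows that each black vertex has exactly one white neighbour contributing a nonzero coefficient, so that configurations are deterministic (solid phase).

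The main obstacle in carrying out this plan is Step~1: verifying that the extension of~$\mapalpha\restriction_{\Gs_0}$ to a map in~$X_{\Gs'}^{per}$ with~$\varphi(\mapalpha')\in\ZZ^2$ can be achieved simultaneously. The preservation of parallelism in~$(*)$ is exactly what is needed to keep the required monotonicity on the train-tracks of~$\Gs_0$, while the flexibility on train-tracks added in~$\Gs'\setminus\Gs_0$ provides enough degrees of freedom to hit an integer point inside~$N(\Gs')$, by the surjectivity onto~$\mathrm{int}\,N(\Gs')$ established in Proposition~\ref{prop:angles_perio}. Once this combinatorial step is secured, the rest of the argument is driven by the locality of~$g$ and~$\Cs^{u_0}_{\bs,\ws}$.
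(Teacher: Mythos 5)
Your proposal is correct and follows essentially the same route as the paper: locality plus Condition~$(*)$ to reduce each cylinder probability to a periodic model, Corollary~\ref{thm:Gibbs_measures} to identify it as a genuine probability, Kolmogorov extension, and the asymptotics of Section~\ref{sec:asymptotics} for the phase diagram. The only point where the paper is more careful than your Step~1 is the extension of~$\mapalpha\restriction_{\Gs_0}$: the extension to~$X_{\Gs'}$ is a priori only~$(n\ZZ)^2$-periodic, so one must first enlarge the fundamental domain of~$\Gs'$ to land in~$X_{\Gs'}^{\mathit{per}}$, and then enlarge it once more to create the free parameters needed to make~$\varphi(\mapalpha')$ an integer point.
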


\begin{proof}
The argument is similar to that of~\cite{BeaQuad}, see also~\cite{BeaCed:isogen,BdTR2}. The idea is to use the
  determinantal structure to show that the expressions on the right-hand side of the equality displayed above form a compatible
  family of marginals for a measure, and to conclude with Kolmogorov's extension
  theorem. A technical but crucial point is to show that these expressions are
  indeed probabilities. This can be checked using hypothesis~$(*)$ and locality of
  $\As^{u_0}$, as follows. Locality implies that the formula on the cylinder set~$\{\es_1,\ldots,\es_k\}$
  only depends on some finite simply connected subgraph~$\Gs_0$ of~$\Gs$ containing these edges,
  and on the associated half-angles.
  Using $(*)$, we can change the graph outside of~$\Gs_0$ to obtain a periodic minimal graph $\Gs'$.
  By Corollary~\ref{thm:Gibbs_measures}, the formula for cylinder sets coincides with that obtained as the weak limit of the Boltzmann measures on the toroidal exhaustion of the periodic minimal graph $\Gs'$. We can then use this to show that it indeed defines a probability measure,
  following the argument outlined above
  (see~\cite[Section 4.4]{BeaQuad} for details).

  Note however that the present setting brings an additional subtlety.
  Indeed, we need to take the arbitrary half-angle map~$\mapalpha\in X_\Gs$,
  consider its restriction~$\mapalpha_0$ to the train-tracks of~$\Gs_0$,
   and be able to extend it to a map~$\mapalpha'\in X_{\Gs'}^{per}$ such that the associated elliptic Kasteleyn operator is periodic.
   {However, the condition~$(*)$ only ensures that~$\mapalpha_0$ extends to
an element~$\mapalpha'$ of~$X_{\Gs'}$.}
This issue can be solved as follows.
The map~$\mapalpha'$ does not
belong to~$X_{\Gs'}^{per}$ \emph{a priori}, but it is~$(n\ZZ)^2$-periodic
for~$n$ big enough. Therefore, one can enlarge the fundamental domain of~$\Gs'$
by a factor~$n^2$ (an operation under which the set~$X_{\Gs'}$ is easily seen to behave well), and obtain an element~$\mapalpha'$ lying in~$X_{\Gs'}^{per}$. Finally, the associated
Kasteleyn operator~$\Ks$ is not periodic \emph{a priori}. However, enlarging
the fundamental domain of~$\Gs'$ once again (by a factor~$2$)
produces additional free parameters for~$\mapalpha'$ to ensure
that~$\vphi(\mapalpha')$ is an integer point. By Proposition~\ref{prop:angles_perio}, this implies that~$\Ks$  is periodic.

The fact that the set $D$ gives the phase diagram of the model relies on the forthcoming asymptotic computations of Section~\ref{sec:asymptotics}.
\end{proof}

\paragraph{Single-edge probabilities.}

As an illustration of the locality property of the Gibbs measures, we explicitly compute the probability of occurrence of a single edge $\es=\ws\bs$ in a dimer configuration of the graph $\Gs$. Recall that $\alpha,\beta$ denote the half-angles of the train-tracks of the edge $\ws\bs$, see Figure~\ref{fig:around_rhombus}. Recall also the definition of Jabobi's zeta function
$\tilde{Z}(u)=Z(\frac{2K}{\pi}u)=\frac{\pi}{2K}\frac{\theta_4'(u)}{\theta_4(u)}$
of Section~\ref{sec:def_Hu0}, see also~\cite[3.6.1]{Lawden},
and the constants~$K=\frac{\pi}{2}\theta_3^2(0)$ and~$K'=-i\tau K$.
In order to state
our result, let us introduce the
notation~$t_\bs:=t-\frac{\pi}{2}\tau+\mapd(\bs)=\Re(t+\mapd(\bs))$.

\begin{prop}\label{lem:single_edges}
Consider an edge $\ws\bs$ of $\Gs$ with train-track half-angles $\alpha,\beta$. Then, the probability that this edge occurs in a dimer configuration of $\Gs$ chosen with respect to the Gibbs measure $\PP^{u_0}$ is explicitly given by the following.
\begin{enumerate}
 \item \textbf{Gaseous phase:} if $u_0$ belongs to the component $C_1$ of the domain~$D$, then
 \begin{align*}
 \PP^{u_0}(\ws\bs)&=\frac{\beta-\alpha}{\pi}+\frac{K'}{\pi}[\tilde{Z}(\beta-t_\bs)-\tilde{Z}(\alpha-t_\bs)]=H^{u_0}(\beta-t_\bs)-H^{u_0}(\alpha-t_\bs),
 \end{align*}
where $H^{u_0}$ is given by~\eqref{equ:H0gas}.
\item \textbf{Liquid phase:} if $u_0$ belongs to the interior of $D$, then
\begin{equation*}
\PP^{u_0}(\ws\bs)=
\frac{1}{2\pi i}\log \frac{\theta(\beta-u_0)\theta(\alpha-\bar{u}_0)}{\theta(\beta-\bar{u}_0)\theta(\alpha-u_0)}-\frac{iK}{\pi^2}(u_0-\bar{u}_0)[\tilde{Z}(\beta-t_\bs)-\tilde{Z}(\alpha-t_\bs)].
\end{equation*}
\item \textbf{Solid phases:} if $u_0$ belongs to the component $C_0\setminus\{\alpha_T\ ;\ T\in\T\}$ of $D$, then
 \[
 \PP^{u_0}(\ws\bs)=\II_{\{\alpha<u_0<\beta\ \text{on $C_0$}\}}.
 \]
 Moreover, around every black vertex $\bs$, there is exactly one edge $\ws\bs$ incident to $\bs$ such that $\PP^{u_0}(\ws\bs)=1$, so that the dimer model is deterministic.
\end{enumerate}
\end{prop}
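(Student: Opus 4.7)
The plan in all three cases rests on the identity $\PP^{u_0}(\ws\bs) = \Ks_{\ws,\bs}\A^{u_0}_{\bs,\ws}$ from Theorem~\ref{thm:gibbs_non_perio}, combined with the explicit integral expression~\eqref{eq:coeff_Kinv_u0} for $\A^{u_0}_{\bs,\ws}$. The key preliminary step is to apply the telescopic Fay identity, exactly as in the proof of Proposition~\ref{prop:ker}, to obtain
\begin{equation*}
\Ks_{\ws,\bs}\, g_{\bs,\ws}(u) = \frac{1}{\theta'(0)}\left[\frac{\theta'}{\theta}(u-\alpha) - \frac{\theta'}{\theta}(u-\beta) + c\right],
\end{equation*}
with $c := \frac{\theta'}{\theta}(t+\mapd(\bs)-\beta) - \frac{\theta'}{\theta}(t+\mapd(\bs)-\alpha)$. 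The first summand is exact, the second is constant. The identity $\theta(u+\frac{\pi\tau}{2}) = iq^{-1/4}e^{-iu}\theta_4(u)$, together with $\tilde{Z}(u) = \frac{\pi}{2K}\frac{\theta_4'(u)}{\theta_4(u)}$ and the oddness of $\tilde{Z}$, yields $\frac{\theta'}{\theta}(u) = \frac{2K}{\pi}\tilde{Z}(u-\frac{\pi\tau}{2}) - i$, and hence, using $t+\mapd(\bs) = t_\bs + \frac{\pi\tau}{2}$,
\begin{equation*}
c = -\frac{2K}{\pi}\big[\tilde{Z}(\beta-t_\bs) - \tilde{Z}(\alpha-t_\bs)\big].
\end{equation*}

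In the liquid case, $\Cs^{u_0}_{\bs,\ws}$ is a path from $\bar{u}_0$ to $u_0$ that crosses $C_0$ outside $s_{\bs,\ws}$, hence avoids $\alpha$ and $\beta$. Integrating the exact form directly gives the logarithmic ratio $\log\frac{\theta(u_0-\alpha)\theta(\bar u_0-\beta)}{\theta(u_0-\beta)\theta(\bar u_0-\alpha)}$ and the constant integrates to $c(u_0-\bar u_0)$. Multiplying by $\frac{i}{2\pi}$ and applying the antisymmetry $\theta(-z)=-\theta(z)$ rewrites this ratio in the form of the statement, producing the liquid formula. The gaseous case is analogous: the contour lifts in the universal cover to a path from $u_1\in\RR$ to $u_1+\pi\tau$, and the quasiperiodicity $\theta(u+\pi\tau) = -e^{-i\pi\tau-2iu}\theta(u)$ together with the fact that both $\theta(u-\alpha)$ and $\theta(u-\beta)$ wind the same number of times around $0$ along this vertical loop (none being enclosed) gives
\begin{equation*}
\int_{\Cs^{u_0}_{\bs,\ws}}\!\left[\frac{\theta'}{\theta}(u-\alpha)-\frac{\theta'}{\theta}(u-\beta)\right]du = 2i(\alpha-\beta),\qquad \int_{\Cs^{u_0}_{\bs,\ws}}du = \pi\tau.
\end{equation*}
Using $iK\tau = -K'$ (from $K\tau=iK'$), one obtains $\PP^{u_0}(\ws\bs) = \frac{\beta-\alpha}{\pi} + \frac{K'}{\pi}[\tilde{Z}(\beta-t_\bs) - \tilde{Z}(\alpha-t_\bs)]$, which equals $H^{u_0}(\beta-t_\bs) - H^{u_0}(\alpha-t_\bs)$ by the definition~\eqref{equ:H0gas} of $H^{u_0}$ in Case~1.

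For the solid case I would invoke Remark~\ref{rem:frozen} directly. Ordering the half-angles around $\bs$ as $\alpha_1,\ldots,\alpha_d$ with $\alpha_1$ the first one on the right of $u_0$ on $C_0$ and $\alpha_d$ the last one on the left, the remark shows that only the edge $\bs\ws_d$ whose sector $s_{\bs,\ws_d}$ contains $u_0$ satisfies $\A^{u_0}_{\bs,\ws_d}\Ks_{\ws_d,\bs} = 1$, the other incident edges contributing $0$. Since $s_{\bs,\ws}$ is by definition the arc from $\alpha$ to $\beta$ in the positive direction, this reads $\PP^{u_0}(\ws\bs) = \II_{\{\alpha<u_0<\beta\text{ on }C_0\}}$, and the identity $\sum_{\ws\sim\bs}\PP^{u_0}(\ws\bs)=1$ from the same remark proves the deterministic nature of the model. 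The main technical point of the whole argument is the branch bookkeeping in the gaseous case: the $\log(-1)$ and $-i\pi\tau$ terms coming from the quasiperiodic prefactor $-e^{-i\pi\tau-2iu}$ are \emph{identical} for the $\alpha$ and $\beta$ contributions and thus cancel in the difference, so that the vertical monodromy reduces to the clean value $2i(\alpha-\beta)$; this is ensured precisely by the fact that $\Cs^{u_0}_{\bs,\ws}$ crosses $C_0$ outside $s_{\bs,\ws}$, so that neither zero of $\theta$ is enclosed.
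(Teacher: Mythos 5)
Your proof is correct, and in the liquid and gaseous cases it follows a genuinely different route from the paper's. The paper works with the alternative expression~\eqref{eq:coeff_Kinv_u0_alt} and computes residues of $g_{\bs,\ws}H^{u_0}$: the poles of $g_{\bs,\ws}$ at $\alpha,\beta$ produce the boundary term $H^{u_0}(\beta)-H^{u_0}(\alpha)$, and the pole of $H^{u_0}$ at $\frac{\pi}{2}\tau$ produces a correction $-\theta'(0)\Ks_{\ws,\bs}g_{\bs,\ws}(\frac{\pi}{2}\tau)\operatorname{Res}_{\frac{\pi}{2}\tau}H^{u_0}$, which is then evaluated using Fay's identity at the single point $\frac{\pi}{2}\tau$ together with the explicit residue $\frac{K'}{2K}$ (resp.\@ $\frac{u_0-\bar u_0}{2i\pi}$) of $H^{u_0}$; some cancellations between the two contributions are needed at the end. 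You instead stay with the primary formula~\eqref{eq:coeff_Kinv_u0}, use the telescopic Fay identity to write $\theta'(0)\Ks_{\ws,\bs}g_{\bs,\ws}(u)$ as an exact form $\frac{d}{du}\log\frac{\theta(u-\alpha)}{\theta(u-\beta)}$ plus the constant $c=-\frac{2K}{\pi}[\tilde Z(\beta-t_\bs)-\tilde Z(\alpha-t_\bs)]$, and integrate directly along $\Cs^{u_0}_{\bs,\ws}$; this bypasses the explicit construction and residue of $H^{u_0}$ entirely (it is only invoked at the end of Case~1 to rewrite the answer), at the cost of the monodromy bookkeeping for $\log\theta$ along the contour. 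Both approaches buy the same thing; yours is slightly more self-contained. The solid case is identical to the paper's (Remark~\ref{rem:frozen}). The one point you should make fully explicit is the determination of $\beta-\alpha$ in the gaseous monodromy: the total variation of $\arg\frac{\theta(u-\alpha)}{\theta(u-\beta)}$ along the vertical loop equals $-2(\beta-\alpha)$ precisely when $\beta-\alpha$ is taken to be the length in $(0,\pi)$ of the sector $s_{\bs,\ws}$, which is guaranteed by the contour crossing $C_0$ in the complementary arc; this determination is also what makes the single-edge probabilities sum to $1$ around each black vertex.
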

\begin{proof}
By Theorem~\ref{thm:gibbs_non_perio}, we have $\PP^{u_0}(\ws\bs)=\Ks_{\ws,\bs}A^{u_0}_{\bs,\ws}$.
The proof in the solid phases is a direct consequence of Remark~\ref{rem:frozen}. In the liquid and gaseous phases, part of the computations we need are
already dealt with in the proof of Theorem~\ref{thm:K_inverse_family} when showing that $\sum_{\bs}\Ks_{\ws,\bs}\A^{u_0}_{\bs,\ws}=1$. Since we were summing over black vertices incident to $\ws$, we did not need to handle the residue of $H^{u_0}$ at $\frac{\pi}{2}\tau$ because these contributions were cancelling; we need to consider this residue now. Returning to the proof, we immediately obtain
\begin{equation}\label{eq:proba_1}\textstyle
\PP^{u_0}(\ws\bs)=H^{u_0}(\beta)-H^{u_0}(\alpha)-\theta'(0)\Ks_{\ws,\bs}g_{\bs,\ws}(\frac{\pi}{2}\tau)\mathrm{Res}_{\frac{\pi}{2}\tau}H^{u_0}(u).
\end{equation}
Using Corollary~\ref{cor:Faystrisecant} as in the proof of Proposition~\ref{prop:ker}, we have
\begin{equation}\label{eq:proba_2}\textstyle
\theta'(0)\Ks_{\ws,\bs}g_{\bs,\ws}(\frac{\pi}{2}\tau)=
\theta'(0)[F^{(t+\mapd(\bs))}(\frac{\pi}{2}\tau;\beta)-F^{(t+\mapd(\bs))}(\frac{\pi}{2}\tau;\alpha)].
\end{equation}
Using Equation \eqref{eq:theta14} to express $F^{(s)}(u;a)$ using the function $\theta_4$, we obtain
\begin{align*}
\theta'(0)F^{(s)}(u;a)&=\frac{\theta_4'(s-\frac{\pi}{2}\tau-a)}{\theta_4(s-\frac{\pi}{2}\tau-a)}-
\frac{\theta_4'(u-\frac{\pi}{2}\tau-a)}{\theta_4(u-\frac{\pi}{2}\tau-a)}\\
&\textstyle=\frac{2K}{\pi}[\tilde{Z}(s-\frac{\pi}{2}\tau-a)-\tilde{Z}(u-\frac{\pi}{2}\tau-a)],
\end{align*}
where in the last line we used the definition of Jacobi's zeta function.
Plugging this into~\eqref{eq:proba_1} and~\eqref{eq:proba_2}, using the notation~$t_\bs=
t+\mapd(\bs)-\frac{\pi}{2}\tau$ and the fact that~$\tilde{Z}$ is odd, yields
\begin{equation*}\textstyle
\PP^{u_0}(\ws\bs)=H^{u_0}(\beta)-H^{u_0}(\alpha)-\frac{2K}{\pi}
\mathrm{Res}_{\frac{\pi}{2}\tau}H^{u_0}(u)
[\tilde{Z}(t_\bs-\beta)+\tilde{Z}(\beta)
-\tilde{Z}(t_\bs-\alpha)-\tilde{Z}(\alpha)].
\end{equation*}
From~\cite[Lemma 45]{BdTR1}, we know that the residue of the function $\tilde{Z}$
at the pole~$\frac{\pi}{2}\tau$ is equal to $\frac{\pi}{2K}$.
Returning to the explicit definition of the function $H^{u_0}$ in
the gaseous and liquid phases, see Equations~\eqref{equ:H0gas} and~\eqref{equ:H0liq},
we deduce that the residue of the function~$H^{u_0}$ at the pole $\frac{\pi}{2}\tau$
is equal to $\frac{K'}{2K}=\frac{\tau}{2i}$ in the gaseous phase,
and to $\frac{u_0-\bar{u}_0}{2i\pi}$ in the liquid phase.
Using again the explicit form of the functions $H^{u_0}$ gives that in both cases, the terms involving $\tilde{Z}(\beta),\tilde{Z}(\alpha)$ cancel out in the above equation.
Using once more that $\tilde{Z}$ is odd proves the result.
\end{proof}

\subsection{Asymptotics}\label{sec:asymptotics}

As in the periodic case, the measures constructed in Theorem~\ref{thm:gibbs_non_perio} have different
behaviors depending on the position of $u_0$ in the domain $D$. We now compute
precise asymptotics for $\As^{u_0}_{\bs,\ws}$ as the distance between $\bs$ and
$\ws$ gets large, depending on the position of $u_0$ in~$D$.
As a consequence,
we obtain that the phase diagram described in Corollary~\ref{thm:Gibbs_measures}
in terms of $u_0$ is still valid in the non-periodic case.

The universal behaviors we now describe require some regularity
for the map $\mapalpha$.
The following technical condition
{using the intersection number introduced in Section~\ref{sec:poles_domainD}.}
is therefore assumed to hold in this section:

$(\diamondsuit)$ There exists $\delta>0$ such that for every infinite
{simple}
path $\Pi$ {in $\GR$},
the distance between the sets
$\{ \alpha_{T}\ ;\ \Pi\wedge T=+1\}$
and
$\{ \alpha_{T'}\ ;\ \Pi\wedge T'=-1\}$
is larger than $\delta$.

It is meant to forbid {geodesics in $\GR$}
with too many steps
resembling ``back-and-forth movements" in the corresponding minimal
immersion.

\begin{rem}\leavevmode
  \begin{enumerate}
\item
Condition $(\diamondsuit)$ can be equivalently reformulated in terms
of the zeros and poles of the functions $g$ as follows: for every sequence of
vertices $(\xs_n)$ and $(\ys_n)$ of $\GR$ such that the graph distance between
$\xs_n$ and $\ys_n$ goes to infinity with $n$, the distance between the zeros
and poles of $g_{\xs_n,\ys_n}$ stays bounded from below by this $\delta>0$.
This condition prevents accumulation of zeros and poles of the functions $g$
appearing in the definition of $\As^{u_0}$.
\item
Condition~$(\diamondsuit)$ is automatically satisfied in the \emph{quasicrystalline}
setting, where only a finite number of values for the half-angles $\alpha_T$ are
allowed. In particular it holds in the periodic case. Note that
Condition $(\diamondsuit)$ is strictly stronger than forcing the half-angles of the
rhombi in the corresponding minimal immersion to be in $[\delta,\pi-\delta]$,
unless all train-tracks with distinct half-angles intersect.
\end{enumerate}
\end{rem}

We thus assume that $\Gs$ satisfies Condition~$(\diamondsuit)$, and investigate the
behavior of $\As^{u_0}_{\bs,\ws}$ when $\bs$ and $\ws$ are far apart in the
three cases.

\paragraph{Case 1: gaseous phase.} Let $u_0$ be on the component $C_1$ of $D$. The integration contour $\Cs_{\bs,\ws}^{u_0}$ is then a closed
  loop. As noted in the proof of Theorem~\ref{prop:Gibbs_measures},
  moving $u_0$ in~$C_1$ corresponds to a continuous deformation of the
  initial contour, avoiding all poles of the integrand, implying that the integral $A^{u_0}_{\bs,\ws}$ does not change.

  Let $\bs$ and $\ws$ be a black and white vertex of $\Gs$ respectively. Denote by
  $N=\operatorname{dist}_{\diamond}(\bs,\ws)$ the graph distance between $\bs$ and $\ws$
  in $\GR$, which we assume to be large.
  Consider the functions
  \begin{align*}
    G_{\bs,\ws}(v)&=\theta_4(v+t+\mapd(\ws))\theta_4(v-t-\mapd(\bs)),\\
    F_{\bs,\ws}(v)&
= \frac{1}{N}\log \frac{g_{\bs,\ws}(v+\frac{\pi
\tau}{2})}{G_{\bs,\ws}(v)}=
    \sum_{\substack{\text{$T$ train-tracks}\\\text{separating $\bs$ and $\ws$}}}
    \frac{\epsilon_T}{N} \log \theta_4 (v-\alpha_T),
  \end{align*}
  where $\epsilon_T$ is
  {the intersection number of $T$ with a simple path}
  from $\bs$ to $\ws$.
  Note that all the exponential factors appearing when expressing $g_{\bs\ws}(v+\frac{\pi\tau}{2})$ in terms of $\theta_4$ (see Relation~\eqref{eq:theta14}) cancel out, thus explaining the second equality of $F_{\bs,\ws}$.
  Therefore, we have the equality
  \begin{equation*}
    G_{\bs,\ws}(v)e^{NF_{\bs,\ws}(v)} = g_{\bs,\ws}(v+\frac{\pi\tau}{2}).
  \end{equation*}

  The function $F_{\bs,\ws}$ is not an
  elliptic function, but it is meromorphic on the cylinder~$\TT(q)\setminus C_1$.
  Furthermore, it is real on $C_0$.

  Condition~$(\diamondsuit)$ implies that:
  \begin{enumerate}
    \item the global minimum of $F_{\bs,\ws}$ on $C_1$ is bounded from above by
      a strictly negative constant, uniformly in $\bs$ and $\ws$;
    \item at the point $v_0$ where it is reached, the second
  derivative is bounded from below by a positive constant, uniformly in $\bs$
  and $\ws$.
\end{enumerate}
This analytic control on $F$ allows to obtain the following:
  
\begin{prop}[Gaseous phase]\label{prop:asympt_gas}
  When $u_0$ belongs to the component $C_1$ of $D$, and when the distance $N$ between $\bs$ and $\ws$ is large, the following asymptotics for $A^{u_0}_{\bs,\ws}$ hold:
  \begin{align*}
    A^{u_0}_{\bs,\ws} &=
    -\frac{\theta'(0)}{\sqrt{2\pi N F_{\bs,\ws}''(v_0)}}
    G_{\bs,\ws}(v_0) e^{N F_{\bs,\ws}(v_0)}(1+o(1)) \\
              &= -\frac{\theta'(0)}{\sqrt{2\pi N F_{\bs,\ws}''(v_0)}} g_{\bs,\ws}(v_0+\frac{\pi\tau}{2}) (1+o(1)).
  \end{align*}
\end{prop}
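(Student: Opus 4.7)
The plan is to apply the classical Laplace / saddle-point method to the contour integral defining $A^{u_0}_{\bs,\ws}$. The key insight is that the shift $u = v + \pi\tau/2$ rewrites
\[
A^{u_0}_{\bs,\ws} = \frac{i\theta'(0)}{2\pi}\int_{\Cs^{u_0}_{\bs,\ws}-\pi\tau/2} G_{\bs,\ws}(v)\, e^{N F_{\bs,\ws}(v)}\, dv,
\]
a form perfectly suited for Laplace analysis with large parameter $N$, since $F_{\bs,\ws}$ is real on $C_0$ and attains a strict minimum at $v_0\in C_0$.

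First I would use the homological invariance noted in Remark~\ref{rem:Au_0} to deform the (still vertically winding) contour to a vertical line through $v_0$, parameterized by $v=v_0+is$ with $dv=i\,ds$. This deformation stays in the complement of the poles of $g_{\bs,\ws}(\cdot+\pi\tau/2)$, which lie on a horizontal translate of $C_0$; condition $(\diamondsuit)$ and the smoothness of $\theta_4$ on the real axis guarantee that $v_0$ can be taken uniformly away from these poles, possibly after a negligible local perturbation of the contour. Since $v_0$ is a minimum of the real-analytic function $F_{\bs,\ws}$ on $C_0$, the Cauchy--Riemann equations convert this real-axis minimum into a \emph{maximum} of $\Re F_{\bs,\ws}$ along the vertical direction, with quadratic expansion
\[
F_{\bs,\ws}(v_0+is)=F_{\bs,\ws}(v_0)-\tfrac12 F_{\bs,\ws}''(v_0)\,s^2+O(s^3).
\]

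Plugging this expansion into the integrand and carrying out the Gaussian evaluation
\[
\int G_{\bs,\ws}(v_0+is)\,e^{N F_{\bs,\ws}(v_0+is)}\,i\,ds\ \sim\ i\,G_{\bs,\ws}(v_0)\,e^{N F_{\bs,\ws}(v_0)}\sqrt{\tfrac{2\pi}{N F_{\bs,\ws}''(v_0)}},
\]
and multiplying by the prefactor $\frac{i\theta'(0)}{2\pi}$ produces, via $i\cdot i=-1$, the first claimed formula. The second equality then follows at once from $G_{\bs,\ws}(v_0)\,e^{N F_{\bs,\ws}(v_0)}=g_{\bs,\ws}(v_0+\pi\tau/2)$.

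The main obstacle will be uniform control: since $F_{\bs,\ws}$ itself depends on the pair $(\bs,\ws)$ and $N$, the Laplace asymptotic must be uniform in this pair. This is where the two properties stated just before the proposition — themselves consequences of $(\diamondsuit)$ — enter in an essential way. The uniform strict negativity of $F_{\bs,\ws}(v_0)$ and the uniform lower bound on $F_{\bs,\ws}''(v_0)$ together yield uniform Gaussian concentration of the integrand at $v_0$, so that the contribution of the portion of the vertical contour outside a small neighborhood of $v_0$ is exponentially smaller than the main term and absorbed in the $(1+o(1))$ factor. The uniform separation of zeros and poles of $g_{\bs,\ws}$ provided by $(\diamondsuit)$ further ensures that the required local deformation of the vertical contour to avoid poles can be chosen uniformly, completing the argument.
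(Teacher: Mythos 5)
Your overall strategy (steepest descent at the saddle $v_0+\frac{\pi\tau}{2}\in C_1$, Gaussian evaluation on a ball around it, tails absorbed into $o(1)$) is the same as the paper's, and your local computation at the saddle is correct. But the contour deformation step contains a genuine gap. In Case 1 the contour $\Cs^{u_0}_{\bs,\ws}$ must cross $C_0$ \emph{outside} the sector $s_{\bs,\ws}$ containing all the poles of $g_{\bs,\ws}$. A straight vertical line through $v_0+\frac{\pi\tau}{2}$ crosses $C_0$ precisely at $v_0$, and nothing forces $v_0$ to lie outside $s_{\bs,\ws}$: on the contrary, since $F_{\bs,\ws}(v)=\frac1N\sum_T\epsilon_T\log\theta_4(v-\alpha_T)$ and $\theta_4$ is minimized at $0$ on the real axis, the minimizer $v_0$ is pulled \emph{towards} the half-angles with $\epsilon_T=+1$, i.e.\ towards the poles (in the extreme case of one pole at $\alpha_p$ and one zero at $\alpha_p+\frac{\pi}{2}$ one gets $v_0=\alpha_p$ exactly). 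Condition~$(\diamondsuit)$ separates zeros from poles; it says nothing about the position of $v_0$ relative to the poles, so your claim that "$v_0$ can be taken uniformly away from these poles" is unjustified, and "a negligible local perturbation of the contour" cannot repair a homological obstruction: sweeping the $C_0$-crossing point across a pole changes the integral by a residue of order $|g_{\bs,\ws}(\alpha_T)|$, which is not negligible.

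A second, related problem is the tail estimate. Along a straight vertical line, $\Re F_{\bs,\ws}(v_0+is)$ is only controlled near $s=0$ by the quadratic expansion; as the line approaches the circle carrying the singularities of $\log\theta_4(\cdot-\alpha_T)$, $\Re F_{\bs,\ws}$ blows up to $+\infty$ whenever $v_0$ is close to a pole half-angle, so the "exponentially smaller" claim for the rest of the contour fails. The paper's proof fixes both issues at once: the contour is taken vertical only in a neighborhood of the saddle on $C_1$, and elsewhere it is routed, using the maximum principle for the harmonic function $\log|g_{\bs,\ws}|$ together with the existence (Lemma~\ref{lem:sep_zeros_poles}) of a zero of $g_{\bs,\ws}$ in the allowed interval of $C_0$, so that $\log|g_{\bs,\ws}|$ stays strictly below its value at the saddle along the whole remaining contour. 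You need this global contour construction, not just the local Gaussian analysis, for the argument to close.
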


\begin{proof}
  The proof is very similar to the asymptotics of the Green function of the
  $Z$-invariant massive Laplacian~\cite{BdTR1}, and is based on the steepest descent
  method. First, continuously deform the contour $\Cs_{\bs,\ws}^{u_0}$ so that it
  crosses $C_1$ vertically at $v_0+\frac{\pi\tau}{2}$, and so that
 $\log|g_{\bs,\ws}|$ is smaller on the rest of $\Cs_{\bs,\ws}^{u_0}$ than on a neighborhood of $v_0+\frac{\pi\tau}{2}$;
  the existence of such a deformation is
    guaranteed by the maximum principle for the harmonic function
  $\log|g_{\bs,\ws}|$,
  and the existence of zeros of $g_{\bs,\ws}$ in the interval of $C_0$
where the integration contour is allowed to pass.
 Therefore, the integral along the contour
$\Cs_{\bs,\ws}^{u_0}$  in a ball $B$ of radius $N^{-r}$ with
$\frac{1}{3}<r<\frac{1}{2}$ centered at
    $v_0+\frac{\pi\tau}{2}$ can be approximated by a Gaussian integral: writing
    $u=v_0+\frac{\pi\tau}{2}+is$, with $s$ in a small interval $I$ around 0,
    we have
    \begin{align*}
      \int_{\Cs_{\bs,\ws}^{u_0}\cap B} g_{\bs,\ws}(u)du
      &= \int_I(G_{\bs,\ws}(v_0)+o(1)) e^{N(F_{\bs,\ws}(v_0)-
        \frac{s^2}{2}
      F_{\bs,\ws}''(v_0) + O(s^3))}i\, ds \\
      &=i\sqrt{\frac{2\pi}{NF_{\bs,\ws}''(v_0)}}G_{\bs,\ws}(v_0)(1+o(1)) e^{NF_{\bs,\ws}(v_0)}.
    \end{align*}
The integral over the rest of $\Cs_{\bs,\ws}^{u_0}$ is negligible when compared to
the contribution above. Multiplying the quantity above by $\frac{i\theta'(0)}{2\pi}$
yields the result.
\end{proof}

\begin{rem} Here are a few remarks about this statement:
  \begin{enumerate}
    \item As $q$ tends to 0 (\textit{i.e.}, as $\Im\tau$ goes to $\infty$)
      while $qN$ goes to $\infty$, we have
      \[
	NF_{\bs,\ws}(v)=-2q\sum_T \epsilon_T\left(\cos(v-\alpha_T)
	+O(q^4)\right)
	= 2q\langle e^{2iv},\vec{\bs\ws}\rangle + O(q^4 N)\,,
      \]
      where $\vec{\bs\ws}=-\sum_T
      \varepsilon_T e^{2i\alpha_T}$ is the vector from $\bs$ to $\ws$
    in the minimal immersion of $\Gs$ defined by $\mapalpha$, and
    $\langle\cdot,\cdot\rangle$ is the usual Euclidean scalar product in
    $\mathbb{R}^2$. The unit vector
    $e^{2iv_0}$ associated to the location $v_0$ of the minimum of $F_{\bs,\ws}$ gets
      closer and closer to the direction of $-\vec{\bs\ws}$.
           Hence, in this regime, the exponential decay rate becomes isotropic.
    \item 
	By the first consequence of Condition~($\diamondsuit$) described above,
	the coefficients of this inverse tend exponentially fast to
      0 as $N$ goes to infinity. This allows to apply the operator $\As^{u_0}$
      not only to functions with finite support, as indicated in
      Definition~\ref{def:operator_A}, but to any bounded function (or with
      subexponential growth).
\end{enumerate}
\end{rem}

The last point in the remark above is the key for the following analogue of a
maximum principle for the Kasteleyn operator $\Ks$:
\begin{prop}
  The only function of subexponential growth in the right (resp.\@ left) kernel of
  $\Ks$ is the function identically equal to 0. As a consequence, $\A^{u_0}$ is
  the only inverse of~$\Ks$ with bounded coefficients (or even with
  subexponential growth in module).
\end{prop}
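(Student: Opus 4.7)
The plan is to exploit the explicit exponential decay of the gaseous-phase inverse $\A^{u_0}$ provided by Proposition~\ref{prop:asympt_gas}. Choosing $u_0 \in C_1$ and invoking the two uniform consequences of Condition~$(\diamondsuit)$ recalled just before that proposition, one obtains constants $C, c > 0$, independent of $\bs$ and $\ws$, such that
\[
  |\A^{u_0}_{\bs, \ws}| \leq C\, e^{-c\, \operatorname{dist}_\diamond(\bs, \ws)}.
\]
At the same time, the denominators $\theta(t+\mapd(\bs)-\alpha)\theta(t+\mapd(\bs)-\beta)$ appearing in the definition of $\K_{\ws,\bs}$ are evaluations of $\theta$ on the real component $C_1$ of $\TT(q)$, where $|\theta|$ is bounded below by a positive constant; the numerator $\theta(\beta-\alpha)$ is similarly bounded above. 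Hence the Kasteleyn coefficients $\K_{\ws,\bs}$ are uniformly bounded.

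Next, take $f \in \CC^\Bs$ with $\K f = 0$ and subexponential growth, fix a black vertex $\bs$, and pick $\epsilon \in (0, c)$. Combining the triangle inequality $\operatorname{dist}_\diamond(\bs_0, \bs') \leq \operatorname{dist}_\diamond(\bs_0, \bs) + \operatorname{dist}_\diamond(\bs, \ws) + O(1)$ for $\bs' \sim \ws$ with the (at most polynomial) growth of balls in $\GR$, which follows from the existence of a minimal immersion with unit-edge rhombi, the exponential decay of $\A^{u_0}$ dominates, so
\[
  \sum_\ws \sum_{\bs' \sim \ws} |\A^{u_0}_{\bs, \ws}| \cdot |\K_{\ws, \bs'}| \cdot |f(\bs')| \;<\; \infty.
\]
Fubini's theorem and Theorem~\ref{thm:K_inverse_family} then yield
\[
  0 \;=\; \sum_\ws \A^{u_0}_{\bs, \ws}\,(\K f)(\ws) \;=\; \sum_{\bs'}\Bigl(\sum_\ws \A^{u_0}_{\bs, \ws} \K_{\ws, \bs'}\Bigr) f(\bs') \;=\; f(\bs),
\]
so $f \equiv 0$. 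The argument for the left kernel is symmetric, using $\K \A^{u_0} = I$ in place of $\A^{u_0} \K = I$.

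For the uniqueness consequence, let $B$ be any inverse of $\K$ with coefficients of subexponential growth. For each fixed white vertex $\ws'$, the function $\bs \mapsto B_{\bs, \ws'} - \A^{u_0}_{\bs, \ws'}$ has subexponential growth (both terms do) and lies in the right kernel of $\K$; by the first part it vanishes identically, so $B = \A^{u_0}$. The main difficulty lies in justifying the absolute convergence needed to apply Fubini: this rests on the \emph{uniformity} in $(\bs,\ws)$ of the exponential decay rate of $\A^{u_0}$, granted by Condition~$(\diamondsuit)$, and on the polynomial growth of balls in~$\GR$, neither of which requires any new ingredient beyond the gaseous-phase asymptotics already established.
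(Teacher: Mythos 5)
Your proof is correct and follows essentially the same route as the paper: the paper's argument is the one-line identity $0=\A^{u_0}(\Ks f)=(\A^{u_0}\Ks)f=f$, justified by the absolute convergence coming from the uniform exponential decay of the gaseous-phase inverse (Proposition~\ref{prop:asympt_gas} under Condition~$(\diamondsuit)$), with uniqueness obtained by applying this to the columns of $\A^{u_0}-\mathsf{B}$. You have merely made explicit the convergence estimates (bounded Kasteleyn coefficients, polynomial ball growth) that the paper leaves implicit.
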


\begin{proof}
  Let $f$ be a function of subexponential growth in the right kernel of $\Ks$.
  Then we can write
  \begin{equation*}
    0=\A^{u_0} 0 = \A^{u_0}(\Ks f) = (\A^{u_0} \Ks) f = f\,,
  \end{equation*}
  as all the sums involved are absolutely convergent. The proof for the left
  kernel is similar. The last point follows by considering the functions $f$
  given by the columns of $\A^{u_0}-\mathsf{B}$, where $\mathsf{B}$ is
  an inverse of~$\Ks$ with subexponentially growing coefficients.
\end{proof}

\paragraph{Case 2: liquid phase.} Let $u_0$ be an interior point of $D$.
Recall from Section~\ref{sec:circlepatterns} the definition of the {t-realization} $\Psi_{u_0}$  of the dual graph $\Gs^*$, extended to $\GR$
with a bounded function $\Xi$.
Condition$(*)$ on $\Gs$ implies by~\cite[{Theorem~10}]{KLRR} that $\Psi_{u_0}$
defines a convex embedding of $\Gs^*$. Moreover, Condition~$(\diamondsuit)$
implies that the inner and outer diameter of the faces of $\Gs^*$ in this
embedding are bounded away from 0 and infinity. As a consequence, the distances
measured in the graph $\GR$ or in the {realization} $\Psi_{u_0}$ are
quasi-isometric.

Define the unit complex number $\zeta=\zeta_{\bs,\ws}=\exp(i\mapd(\bs)+i\mapd(\ws)+2i\Re{t})$.
We now have all the ingredients to express the asymptotics of $\A^{u_0}_{\bs,\ws}$
when $\bs$ and $\ws$ are far apart. 

\begin{prop}[Liquid phase]
  \label{prop:asymp_liq1}
  When
  $u_0$ belongs to the interior of the domain $D$, and when the distance $N$ between the black
  vertex $\bs$ and the white vertex $\ws$ is
  large, the following asymptotics for $\A^{u_0}_{\bs,\ws}$ holds:
  \begin{equation*}
    \A^{u_0}_{\bs,\ws} =
    -\frac{\hcancel{\theta'(0)}{1}}{\pi}\Im\left(%
    \frac{%
  g_{\bs,\ws}(u_0)
  }{%
  \Psi_{u_0}(\bs)-\Psi_{u_0}(\ws)
  }
\zeta\right)\bar\zeta
    +O\left(\frac{%
      |g_{\bs,\ws}(u_0)|
}{%
N^{3/2}
}\right).
  \end{equation*}
\end{prop}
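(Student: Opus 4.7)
The strategy is a boundary Laplace/steepest-descent analysis of the contour integral
\[
\A^{u_0}_{\bs,\ws}=\frac{i\theta'(0)}{2\pi}\int_{\Cs_{\bs,\ws}^{u_0}}g_{\bs,\ws}(u)\,du,
\]
whose path joins $\bar u_0$ to $u_0$ crossing $C_0$ but avoiding $C_1$. Writing $N=\operatorname{dist}_\diamond(\bs,\ws)$, $F_{\bs,\ws}:=N^{-1}\log g_{\bs,\ws}$, and $L:=\tfrac{d}{du}\log g_{\bs,\ws}(u_0)=NF'_{\bs,\ws}(u_0)$, Condition~$(\diamondsuit)$ provides uniform (in $\bs,\ws$) control on $F_{\bs,\ws}$ and its derivatives on any fixed compact subset of the interior of~$D$; in particular $|L|$ is of order $N$. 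Consequently $g_{\bs,\ws}$ decays/oscillates fast on contours through $u_0$ in directions transverse to $\bar L$, so the integral is governed by its endpoint behaviour.

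First, I would deform $\Cs_{\bs,\ws}^{u_0}$ into a steepest-descent contour on which $|g_{\bs,\ws}|$ is maximal at $u_0$ and $\bar u_0$ and decays exponentially in $N$ away from them. Standard Laplace analysis at a boundary extremum then yields
\[
\int_{\Cs_{\bs,\ws}^{u_0}}g_{\bs,\ws}(u)\,du=\frac{g_{\bs,\ws}(u_0)}{L}-\frac{g_{\bs,\ws}(\bar u_0)}{\overline L}+O\!\left(\frac{|g_{\bs,\ws}(u_0)|}{N^{3/2}}\right),
\]
the opposite signs reflecting the opposite contour orientations at the two endpoints. A direct computation from the product definition of $g_{\bs,\ws}$, using reality of the half-angles $\alpha_T$ and of $q$ together with the quasi-periodicity $\theta(z+\pi\tau)=-q^{-1}e^{-2iz}\theta(z)$ and the identity $t-\bar t=\pi\tau$ (under which the emerging $q$-powers exactly cancel $e^{-4\Im t}$), produces the conjugation identities
\[
g_{\bs,\ws}(\bar u_0)=\bar\zeta_{\bs,\ws}^{\,2}\,\overline{g_{\bs,\ws}(u_0)},\qquad \tfrac{d}{du}\log g_{\bs,\ws}(\bar u_0)=\overline L.
\]
Writing $w:=g_{\bs,\ws}(u_0)/L$ and using the algebraic identity $w-\bar\zeta^{\,2}\bar w=\bar\zeta\cdot 2i\,\Im(\zeta w)$, substitution gives a leading contribution of $\bar\zeta\cdot 2i\,\Im(\zeta w)$; multiplication by $i\theta'(0)/(2\pi)$ then yields the announced expression with $L$ in place of $\Psi_{u_0}(\bs)-\Psi_{u_0}(\ws)$. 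The final replacement relies on Equation~\eqref{eq:psi_quasi_isom}, which asserts $\Psi_{u_0}(\bs)-\Psi_{u_0}(\ws)=L+O(1)$; since $|L|\asymp N$, this is a relative error of order $1/N$, absorbed in the stated error.

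The delicate step is the uniform contour deformation. One must exhibit a deformed path on which $|g_{\bs,\ws}|$ is dominated by its endpoint values with an exponentially growing gap in $N$, uniformly in $(\bs,\ws)$. Locally this follows from a uniform lower bound on $|F''_{\bs,\ws}(u_0)|$ given by Condition~$(\diamondsuit)$, which supplies a well-defined steepest-descent direction. Globally, the deformation is controlled using Condition~$(*)$ together with~\cite[Theorem~8]{KLRR}: the t-immersion $\Psi_{u_0}$ of Section~\ref{sec:circlepatterns} then defines a convex embedding quasi-isometric to the combinatorial distance on~$\GR$, so harmonic estimates for $\log|g_{\bs,\ws}|$ transfer into the required uniform exponential decay. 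All other steps reduce to uniform Taylor expansion and explicit theta-function identities.
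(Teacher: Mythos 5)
Your argument is correct and follows essentially the same route as the paper: deform the contour so that $|g_{\bs,\ws}|$ is dominated by its values at the endpoints $u_0,\bar u_0$, extract the boundary terms $g_{\bs,\ws}(u_0)/L$ and $-g_{\bs,\ws}(\bar u_0)/\overline{L}$ by integration by parts with an $O(|g_{\bs,\ws}(u_0)|N^{-3/2})$ remainder, and recombine via the conjugation identity $g_{\bs,\ws}(\bar u_0)=\bar\zeta^2\,\overline{g_{\bs,\ws}(u_0)}$ into the stated imaginary part. One small inaccuracy: the identity $\tfrac{d}{du}\log g_{\bs,\ws}(\bar u_0)=\overline{L}$ holds only up to an additive constant (the two theta factors involving $t$ contribute $-2i$ each via the quasi-periodicity relation), but since $|L|\asymp N$ this discrepancy is absorbed in the error term, exactly as in the paper where the relation is stated only up to $O(1)$.
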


This form of asymptotics is very similar to the one found for the inverse
Kasteleyn operator in \cite{KOS} in
the liquid phase for general periodic, bipartite planar graphs, and in \cite{Kenyon:crit}
for the critical dimer model on isoradial graphs. The numerator in the imaginary
part has an oscillating phase, and a modulus growing like $|g_{\bs,\ws}(u_0)|$ which
can be absorbed via gauge
transformations.
The module of the denominator grows linearly with the 
graph distance $N$ between
$\bs$ and $\ws$.

\begin{proof}
  The proof is close in spirit to that of~\cite{Kenyon:crit}.
 Recall that $s_{\bs,\ws}$ is the interval of~$C_0=\mathbb{R}/\pi\mathbb{Z}$ given by
  Lemma~\ref{lem:sep_zeros_poles} containing all the poles of $g_{\bs,\ws}$. Its
  complement contains at least one zero, as $\bs$ and $\ws$ are supposed far enough,
  in particular not neighbors.

  We claim that the contour $\Cs_{\bs,\ws}^{u_0}$ connecting $\bar{u}_0$ to $u_0$
  across $C_0\setminus s_{\bs,\ws}$ can be deformed continuously into a
  contour obtained by concatenating the following three paths:
  \begin{itemize}
    \item a first path from $\bar{u}_0$ to $\bar{u}_1$ for some~$u_1$
    such that $|u_1-u_0|=O(N^{-1/2})$, along which~$|g_{\bs,\ws}(u)|$
    decreases at exponential speed in $N$;
    \item a second path from $\bar{u}_1$ to $u_1$,
    crossing $C_0$,
 along which $|g_{\bs,\ws}(u)|<|g_{\bs,\ws}(u_1)|$;
    \item a third path from $u_1$ to $u_0$ such that $|g_{\bs,\ws}(u)|$ increases (also
      at exponential speed).
  \end{itemize}

  The existence of the third path in $D$ is guaranteed by the maximum
  principle applied to the bounded harmonic function
  $u\mapsto\log|g_{\bs,\ws}(u)|$ on compact sets of the interior of~$D$,
  and by the fact that $g_{\bs,\ws}$ is the product
  of order $N$ terms which can be controlled by constants independent of $N$
  near $\bar{u}_0$  (resp.\@ $u_0$).
Then, the first  path can be chosen as the complex conjugate of the third one.
Finally, to justify the existence of the second path, one can use the fact that
  $g_{\bs,\ws}$ has at least one zero in
  the interval $C_0\setminus s_{\bs\ws}$,
  so that~$\log|g_{\bs,\ws}(u)|$  can be taken sufficiently
  negative in a neighborhood of the intersection between the
  integration contour and $C_0$.

  Following the same steps as in~\cite{Kenyon:crit}, we now estimate the
  contributions of these three pieces to the integral,
  denoted respectively by
  $I_1$, $I_2$ and $I_3$.
  The integral $I_3$ from $u_1$ to $u_0$ along the third path can be
  estimated by writing locally
  $g_{\bs,\ws}(u)=\exp(h(u))$ near $u_0$. The derivative $h'$ does not vanish in a neighborhood of
  $u_0$ and is the sum of $O(N)$ terms which are controlled, so that
  $h'(u)\asymp N$, $h''(u)=O(N)$, uniformly along this path. Integrating by parts, we obtain
  \begin{multline*}
    \frac{i\theta'(0)}{2\pi}\int_{u_1}^{u_0} g_{\bs,\ws}(u)du =
    \frac{i\theta'(0)}{2\pi}\int_{u_1}^{u_0} h'(u) \exp(h(u))\frac{du}{h'(u)}
    =\\
    \frac{i\theta'(0)}{2\pi}\left. \frac{\exp(h(u))}{h'(u)}\right|_{u_1}^{u_0}
      + \int_{u_1}^{u_0} \exp(h(u)) \frac{h''(u)}{h'(u)^2} du.
  \end{multline*}
  In the rightmost integral, the integrand is bounded by
  $\frac{C|g_{\bs,\ws}(u_0)|N}{N^2}$ for some constant $C>0$, so the integral is
  $O(|g_{\bs,\ws}(u_0)|\frac{|u_1-u_0|}{N})$. The evaluation of the integrated term
  at $u_0$ gives the main contribution and the evaluation at $u_1$ is
  negligible as $g_{\bs,\ws}(u_1)$ is exponentially small when compared to
  $g_{\bs,\ws}(u_0)$, yielding:
  \begin{equation*}
    I_3=\frac{i\theta'(0)}{2\pi} \frac{\exp(h(u_0))}{h'(u_0)}
    +O\left(\frac{1}{N^{3/2}}\right)=
  \frac{i\theta'(0)}{2\pi}\frac{g_{\bs,\ws}(u_0)}{g'_{\bs,\ws}(u_0)/g_{\bs,\ws} (u_0)}
  +O\left(\frac{1}{N^{3/2}}\right).
  \end{equation*}
  Using the product form of $g_{\bs,\ws}(u_0)$, and recalling the definition
  and Property~\eqref{eq:psi_quasi_isom} of~$\Psi_{u_0}$ from Section~\ref{sec:circlepatterns} with a bounded $\Xi$, one
  has 
\begin{equation*}
    I_3=
    \frac{i\hcancel{\theta'(0)}}{2\pi}
    \frac{g_{\bs,\ws}(u_0)}{\Psi_{u_0}(\bs)-\Psi_{u_0}(\ws)}\left(1+O\left(\frac{1}{N^{1/2}}\right)\right).
\end{equation*}

The contribution $I_1$ from $\bar{u}_0$ to $\bar{u}_1$ is computed in the same
way:
\begin{equation*}
  I_1 = 
  -
  \frac{i\hcancel{\theta'(0)}}{2\pi}
  \frac{g_{\bs,\ws}(\bar{u}_0)}{\Psi_{\bar{u}_0}(\bs)-\Psi_{\bar{u}_0}(\ws)}\left(1+O\left(\frac{1}{N^{1/2}}\right)\right).
\end{equation*}
By the choice of our contours, the integral from $\bar{u}_1$ to $u_1$ along 
the second piece is negligible when compared to
$I_1$ and $I_3$.

  All the factors $\theta(u-\alpha)$ with real $\alpha$ appearing in $g_{\bs,\ws}$
  satisfy $\theta(\bar{u}_0-\alpha)=\overline{\theta(u_0-\alpha)}$.
  They are all of this form, except two of them (corresponding to the first and
  last step of the path from $\bs$ to $\ws$). We then have that
\begin{align*}
\Psi_{\bar{u}_0}(\bs)-\Psi_{\bar{u}_0}(\ws)&={\frac{1}{\theta'(0)}}
  \frac{g'_{\bs,\ws}(\bar{u}_0)}{g_{\bs,\ws}(\bar{u}_0)}+O(1)= {\frac{1}{\theta'(0)}}\overline{%
    \left(%
      \frac{g'_{\bs,\ws}(u_0)}{g_{\bs,\ws}(u_0)}
    \right)
  } + O(1)\\
  &=
  \overline{\Psi_{u_0}(\bs)-\Psi_{u_0}(\ws)}+O(1).
\end{align*}

However, because of these two factors with non-real parameters involving $t$,
and the fact that $\theta$ is
only quasiperiodic in the vertical direction, it is not generally true
that $g_{\bs,\ws}(\bar{u}_0)=\overline{g_{\bs,\ws}(u_0)}$.
More precisely, using the fact that the imaginary part of $t$ is $\frac{\pi|\tau|}{2}$,
and thus $\bar{t}=t-\pi\tau$, we have
\begin{align*}
  \overline{\theta(u_0+t+\mapd(\bs))}&=\theta(\overline{u_0+t+\mapd(\bs)})=\theta(\bar{u}_0+t+\mapd(\bs)-\pi\tau)
  \\
  &=-q^{-1}e^{2i(\bar{u}_0+t+\mapd(\bs))}\theta(\bar{u}_0+t+\mapd(\bs))\\
  &=-e^{2i(\bar{u}_0+\Re(t)+\mapd(\bs))}\theta(\bar{u}_0+t+\mapd(\bs))
\end{align*}
and similarly
\begin{align*}
  \overline{\theta(u_0-t-\mapd(\ws))}
  &=-q^{-1}e^{-2i(\bar{u}_0-t-\mapd(\ws))}\theta(\bar{u}_0-t-\mapd(\ws))\\
  &=-e^{2i(-\bar{u}_0+\Re(t)+\mapd(\ws))}\theta(\bar{u}_0-t-\mapd(\ws))\,,
\end{align*}
so that
\begin{equation*}
  \frac{\overline{g_{\bs,\ws}(u_0)}}{g_{\bs,\ws}(\bar{u}_0)}=
  \frac{\overline{\theta(u_0+t+\mapd(\bs))}\cdot\overline{\theta(u_0-t-\mapd(\ws))}
  }{\theta(\bar{u}_0+t+\mapd(\bs))\cdot\theta(\bar{u}_0-t-\mapd(\ws))}
  =\exp(2i(\mapd(\ws)+\mapd(\bs)+2i\Re(t))=:\zeta^2\,.
\end{equation*}
This gives the following expression for the asymptotics:
\begin{equation*}
  \A^{u_0}_{\bs,\ws} =
   \frac{i\hcancel{\theta'(0)}}{2\pi}
  \left(%
    \left(\frac{g_{\bs,\ws}(u_0)}{\Psi_{u_0}(\bs)-\Psi_{u_0}(\ws)}\right)
    -\overline{%
  \left(\frac{g_{\bs,\ws}(u_0)}{\Psi_{u_0}(\bs)-\Psi_{u_0}(\ws)}
\right)}
  \zeta^{-2}
\right)
+O\left(\frac{1}{N^{3/2}}\right).
\end{equation*}

One obtains the final expression of Proposition~\ref{prop:asymp_liq1} by noticing
that $|\zeta|=1$ and factoring out $\bar{\zeta}$.
\end{proof}

Using now standard arguments~\cite{KenyonGFF,BeaGFF,KOS,Russkikh}, one readily
obtains the following result for the fluctuations of the height function.

\begin{cor}
If $\Psi$ is used
to draw the graph on the plane, then the centered height function
converges weakly in distribution to $1/\sqrt{\pi}$ times the Gaussian Free Field, with standard
covariance structure given by the full plane Green function.
\end{cor}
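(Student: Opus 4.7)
The plan is to apply the standard determinantal moment method for GFF convergence of dimer height functions, developed in~\cite{KenyonGFF,BeaGFF,KOS,Russkikh}, using as input the liquid-phase asymptotics of Proposition~\ref{prop:asymp_liq1}. Fix faces $\fs_0,\fs_1,\dots,\fs_n$ of $\Gs$ whose images under $\Psi=\Psi_{u_0}$ are well separated, and set $X_i := h(\fs_i)-h(\fs_0) - \EE^{u_0}[h(\fs_i)-h(\fs_0)]$. To establish convergence in distribution of the random distribution $h-\EE^{u_0}[h]$ (smeared against suitable test functions on the image of $\Psi$) to $\tfrac{1}{\sqrt{\pi}}$ times the GFF, it suffices by the method of moments to show that the joint cumulants $\kappa_k(X_{i_1},\dots,X_{i_k})$ vanish for $k\ge 3$, and that the covariances $\EE^{u_0}[X_iX_j]$ converge to $-\tfrac{1}{\pi}\log|\Psi(\fs_i)-\Psi(\fs_j)|$ up to a deterministic term that is absorbed by the centering.

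First I would write each $X_i$ as a signed sum of $\II_{\{\es\in\Ms\}}-\PP^{u_0}(\es)$ over edges $\es$ crossing a dual path in $\GR$ from $\fs_0$ to $\fs_i$, as in~\eqref{eq:height_change} (the reference configuration $\Ms_1$ contributes only to the mean, which is killed by the centering). Theorem~\ref{thm:gibbs_non_perio} then yields an exact determinantal expansion of the joint cumulants as cyclic sums of products of coefficients $\Ks_{\ws,\bs}\A^{u_0}_{\bs,\ws'}$ along chains of edges, via the standard combinatorial identity for cumulants of determinantal processes. By Proposition~\ref{prop:asymp_liq1} each such coefficient between edges at distance~$N$ decays as~$O(1/N)$, and Condition~$(\diamondsuit)$ makes this bound uniform. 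For $k\ge 3$, a chain contributes like $N^{-k}$, whereas the available cyclic configurations along the one-dimensional dual paths grow like $N^{k-1}$ up to logarithms, so the total cumulant is $o(1)$, forcing Gaussianity of the limit.

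The main work is the computation of the pair covariance. Substituting the leading term of Proposition~\ref{prop:asymp_liq1} into the double sum over pairs $(\es_i,\es_j)$ of edges (one on each dual path), the oscillating prefactors $g_{\bs,\ws}(u_0)\zeta_{\bs,\ws}$ combine with $\Ks_{\ws,\bs}$ through the telescopic identity of Corollary~\ref{cor:Faystrisecant}, used as in the proof of Proposition~\ref{prop:ker} and in~\eqref{equ:lemme30}. This is the same telescoping that produced the clean expression for the slope in Theorem~\ref{thm:slope}, and here it cancels all the vertex-dependent phases along each dual path, leaving only the squared Cauchy kernel $\frac{1}{(\Psi(\bs)-\Psi(\ws))^2}$ weighted by a non-oscillating constant. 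The resulting Riemann sum converges to a double contour integral of $\frac{1}{(z-w)^2}$ along smooth curves in the $\Psi$-plane approximating the dual paths, which, integrated by parts, is exactly $-\tfrac{1}{\pi}\log|\Psi(\fs_i)-\Psi(\fs_j)|$ modulo boundary contributions depending only on the endpoints.

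The hard part will be making the telescoping cancellation and the Riemann-sum-to-contour-integral passage rigorous. The kernel $\frac{1}{z-w}$ is not absolutely integrable along $\gamma_i\times\gamma_j$ when the paths come close, so the limit must be interpreted as a principal value and the diagonal contribution handled separately; this is the point at which Condition~$(\diamondsuit)$, ensuring uniform regularity of the immersion, becomes essential. A secondary technical issue is to promote the pointwise error estimate $O(|g_{\bs,\ws}(u_0)|/N^{3/2})$ of Proposition~\ref{prop:asymp_liq1} to a uniform bound along the dual paths so that these remainders contribute negligibly to both the covariance and the higher cumulants. Once these two analytic steps are secured, identification of the limit as the GFF follows by matching the covariance with the full-plane Green function and invoking Wick's theorem.
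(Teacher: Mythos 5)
The paper offers no written proof of this corollary: it simply invokes ``standard arguments'' from the GFF literature with Proposition~\ref{prop:asymp_liq1} as the analytic input, which is exactly the strategy you are reconstructing (height differences as signed sums of edge indicators, determinantal cumulant expansion, covariance from the $1/(\Psi(\bs)-\Psi(\ws))$ asymptotics, Riemann sums converging to $\iint (z-w)^{-2}\,dz\,dw$). Your covariance computation, including the role of the telescoping Fay identity in turning the sum over edges crossing a dual path into a discrete derivative of $\Psi$ (as in the slope computation of Theorem~\ref{thm:slope}), is the right mechanism, and your identification of where Condition~$(\diamondsuit)$ and the uniformity of the error term are needed is accurate.

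The one step that would fail as written is your argument for the vanishing of the cumulants of order $k\ge 3$. The bound ``each chain contributes $N^{-k}$ while there are $N^{k-1}$ cyclic configurations'' gives $O(N^{-1})$, but the same naive counting applied to $k=2$ would also give $O(N^{-1})$ and thus wrongly predict a vanishing covariance; in fact for well-separated macroscopic paths each factor $\Ks\A^{u_0}$ is of order $1/L$ and there are $L^{k}$ choices of edges, so the raw size of the $k$-th cumulant is $O(1)$, not $o(1)$. The actual vanishing comes from the oscillating prefactors: expanding each factor $\Im\bigl(g_{\bs,\ws}(u_0)\zeta/(\Psi(\bs)-\Psi(\ws))\bigr)\bar\zeta$ of Proposition~\ref{prop:asymp_liq1} into its ``analytic'' and ``anti-analytic'' parts, the mixed terms carry non-constant phases that cancel under summation along the paths, and the surviving purely (anti-)analytic cyclic products $\prod_i(\Psi_{i}-\Psi_{i+1})^{-1}$ converge to multiple contour integrals whose combinatorial structure reproduces exactly the Wick pairings; it is this cancellation, not a power-counting bound, that kills the connected contributions for $k\ge 3$ (this is the content of the arguments in the cited references). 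You should replace the counting estimate by this phase-cancellation argument; the rest of your outline then goes through.
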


\paragraph{Case 3: solid phases.}
  According to the single-edge probability computation for solid phases in
  Proposition~\ref{lem:single_edges}, one sees that the state of every edge is
  deterministic: each edge is either a dimer almost surely, or vacant almost
  surely. In particular, the corresponding Gibbs measure is concentrated on a
  single configuration.

\section{Invariance under elementary transformations}
\label{sec:spider}

{The dimer model on a bipartite graph behaves in a controlled way with respect to elementary transformations/moves of the underlying graph, see for example~\cite{Thurston,Postnikov}. In this section, we focus on two such moves, 
\emph{shrinking/expanding of a~$2$-valent vertex} and \emph{spider move}, that are relevant for minimal graphs, see Figure~\ref{fig:elementary}. These two moves were first considered by Kuperberg~\cite{Kuperberg}, then Propp~\cite{Propp} who coined the name \emph{urban renewal}. As mentioned in Section~\ref{sec:tt-per},
Goncharov and Kenyon~\cite{GK}, relying on the work of Thurston~\cite{Thurston}, proved that two minimal graphs on a torus
have the same Newton polygon if and only if they
are related by a sequence of these two moves.}

\begin{figure}[htb]
    \centering
    \includegraphics[width=0.8\linewidth]{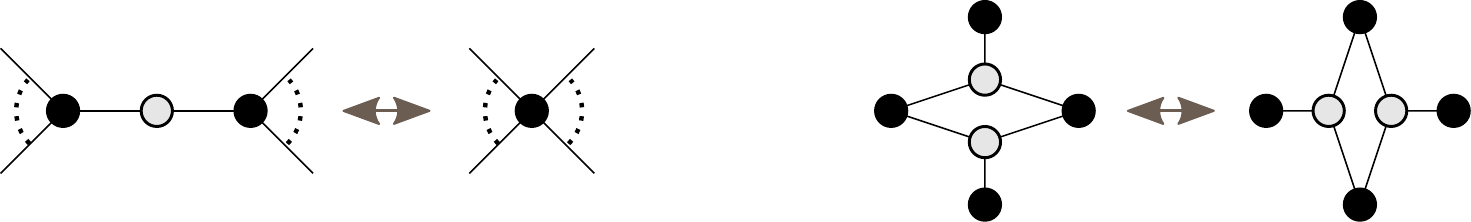}
    \caption{Shrinking/expanding of a~$2$-valent (white) vertex, and spider move (with black boundary vertices).}
    \label{fig:elementary}
  \end{figure}

The aim of this section is to study which dimer models are invariant under these elementary transformations.
{%
  Before doing so, let us make a preliminary remark:
throughout this section, and unlike in the rest of this article, the symbol~$\Gs$ denotes a \emph{finite}
bipartite planar graph, and~$\Ks$ an arbitrary Kasteleyn operator, not necessarily Fock's elliptic operator.
Also, it is convenient to undertake this study in the more general context of ``dimer models'' with
arbitrary (non-necessarily positive) weights.

Recall that a dimer model on a finite, bipartite, planar graph~$\Gs$ comes with an edge weight function,
defining a partition function which can be expressed using face weights as described in Equation~\eqref{eq:Z-face}.
Also, the choice of a Kasteleyn orientation on~$\Gs$ leads to an associated Kasteleyn matrix which defines an
operator~$\K\colon\CC^{\Bs}\to\CC^{\Ws}$, where~$\Bs$ and~$\Ws$ denote the set
of black and white vertices of~$\Gs$, respectively.
Note that any such Kasteleyn operator~$\Ks$ allows to recover the face weights in the obvious way.
Therefore, it makes sense to study the invariance of the face-weight partition function via conditions on the coefficients of~$\Ks$, and this is what we will do.
}

This invariance is with respect to \emph{local moves} of the underlying graph, so the  
{%
Kasteleyn coefficients (or equivalently, the weights) of edges not affected by these moves should remain unchanged, and those of edges affected by the move should be chosen so that the face-weight
partition function remains unchanged. As a consequence,
in the probabilistic setting (i.e.\ when the edge weights are positive), the Boltzmann probability of events not affected by these moves are invariant.  Examples are explicitly carried out in~\cite[Section 11]{Postnikov}, see also~\cite[Theorem 4.7]{GK} for the urban renewal move.

In a nutshell, we will show that, assuming that the Kasteleyn coefficients are functions of the train-track angles,
this invariance is equivalent to the corresponding Kasteleyn coefficients
being antisymmetric and satisfying Fay's identity in the form of Corollary~\ref{cor:FayFock} (see Corollary~\ref{cor:2-valent} and Theorem~\ref{thm:spider} below for the precise statements). In particular, the dimer model with weights given by~\eqref{def:Kast_elliptic} is invariant under these moves, a fact already obtained in another form by Fock (see~\cite[Proposition~1]{Fock}).

After this lengthy introduction, we now begin our study.
}

The following result is fairly straightforward,
and was already observed (at least partially) in~\cite[p.8]{KLRR}.
We include the proof for completeness.

\begin{prop}
\label{prop:2-valent}
{Given a finite, bipartite, planar graph~$\Gs$,} the following conditions are equivalent:
\begin{enumerate}
\item The {face-weight partition function} is invariant under shrinking/expanding of~$2$-valent vertices.
\item Given any white (resp.\@ black)~$2$-valent vertex~$\ws$ (resp.\@~$\bs$) with
  adjacent vertices~$\bs_1,\bs_2$ (resp.~$\ws_1,\ws_2$), we have the
  equalities~$\K_{\ws,\bs_1}+\K_{\ws,\bs_2}=\K_{\ws_1,\bs}+\K_{\ws_2,\bs}=0$.
\end{enumerate}
 In such a case, the kernel of~$\K\colon\CC^\Bs\to\CC^\Ws$ is invariant under shrinking/expanding of~$2$-valent vertices, up to canonical isomorphism.
\end{prop}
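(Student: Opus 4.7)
The plan is to translate the shrinking move into an algebraic operation on the Kasteleyn matrix. The key observation is that a $2$-valent white vertex $\ws$ with neighbors $\bs_1,\bs_2$ corresponds to a row of $\Ks$ with exactly two non-zero entries $a_i:=\Ks_{\ws,\bs_i}$. Writing $a_i=\omega_i\nu_{\ws\bs_i}$ with $\nu_{\ws\bs_i}>0$ and $|\omega_i|=1$, condition~$(2)$, i.e.\@ $a_1+a_2=0$, forces both $\nu_{\ws\bs_1}=\nu_{\ws\bs_2}$ and $\omega_1=-\omega_2$, since $|a_1|=|a_2|$ is equivalent to equality of the positive parts.

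To prove $(2)\Rightarrow(1)$, I would perform the column operation replacing the column indexed by $\bs_2$ with the sum of the columns indexed by $\bs_1$ and $\bs_2$. This leaves the determinant unchanged, and turns the row $\ws$ into a row with a single non-zero entry $a_1$ at column $\bs_1$. Expanding along that row yields $\det\Ks=\pm a_1\det\Ks'$, where $\Ks'$ is indexed by $\Ws\setminus\{\ws\}$ and $(\Bs\setminus\{\bs_1,\bs_2\})\cup\{\bs\}$, with the merged column $\bs$ having entries $\Ks_{\ws',\bs_1}+\Ks_{\ws',\bs_2}$. A local check around each face of $\Gs'$ shows that $\Ks'$ is a Kasteleyn matrix of the shrunk graph for the naturally inherited edge weights and phases, and that the two faces $\fs_1,\fs_2$ adjacent to $\ws$ in $\Gs$ retain their face weights (using $\nu_{\ws\bs_1}=\nu_{\ws\bs_2}$, so that the factor $\nu_{\ws\bs_1}/\nu_{\ws\bs_2}$ appearing in $W_{\fs_1}$ is trivial). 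Since $|a_1|$ is a positive constant independent of the dimer configuration, one obtains $Z(\Gs)=|a_1|\cdot Z(\Gs')$, which is precisely the face-weight invariance statement.

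For $(1)\Rightarrow(2)$, I would examine the lifts of a dimer configuration $\Ms'$ of $\Gs'$ in which $\bs$ is matched to some $\ws^*$: if $\ws^*$ is a neighbor of both $\bs_1$ and $\bs_2$, there are two lifts contributing a total weight $\nu_{\ws^*\bs_1}\nu_{\ws\bs_2}+\nu_{\ws^*\bs_2}\nu_{\ws\bs_1}$, while if $\ws^*$ is a neighbor of only one $\bs_i$, there is a single lift. Requiring that the face weights on the two graphs agree, with the natural inherited choice $\nu'_{\ws^*\bs}=\nu_{\ws^*\bs_i}$, forces $\nu_{\ws\bs_1}=\nu_{\ws\bs_2}$ via the factor $\nu_{\ws\bs_1}/\nu_{\ws\bs_2}$ in $W_{\fs_1}$; tracking Kasteleyn phases through the same bijection and imposing the Kasteleyn condition on $\Gs'$ then forces $\omega_1=-\omega_2$. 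Together these give condition $(2)$.

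The kernel statement follows by a short argument: a function $f\in\CC^\Bs$ lies in $\ker\Ks$ if and only if $(\Ks f)_{\ws'}=0$ for every white vertex $\ws'$. Under condition~$(2)$, the equation at $\ws'=\ws$ reads $a_1(f_{\bs_1}-f_{\bs_2})=0$, i.e.\@ $f_{\bs_1}=f_{\bs_2}$; identifying this common value with the value at the merged vertex $\bs\in\Bs'$ converts the remaining equations into precisely those defining $\ker\Ks'$, yielding the canonical isomorphism. The main obstacle will be in the implication $(1)\Rightarrow(2)$, where one must carefully rule out the possibility that the gauge freedom on $\Gs'$ absorbs a failure of $(2)$: this requires disentangling the interplay between positive edge weights, Kasteleyn phases, and gauge-invariant face weights, and showing that no rescaling on $\Gs'$ can compensate for $\nu_{\ws\bs_1}\neq\nu_{\ws\bs_2}$ or $\omega_1\neq-\omega_2$ while preserving both the face weights of $\Gs$ and the Kasteleyn condition on $\Gs'$.
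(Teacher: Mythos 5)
Your core mechanism for the equivalence of (1) and (2) is the same as the paper's: compare the face weights of $\Gs$ and of the shrunk graph $\Gs'$ with its inherited edge weights, note that only the two faces adjacent to the $2$-valent vertex change, and observe that they change by the factor $\bigl(-\K_{\ws,\bs_1}/\K_{\ws,\bs_2}\bigr)^{\pm 1}$ (the sign coming from the fact that each of these faces loses one vertex of each colour, so the Kasteleyn sign $(-1)^{n+1}$ flips), which is trivial precisely when $\K_{\ws,\bs_1}+\K_{\ws,\bs_2}=0$. The column-operation identity $\det\Ks=\pm a_1\det\Ks'$ is a valid but unnecessary detour, since statement (1) is about face weights rather than the edge-weight partition function; the operative content of your $(2)\Rightarrow(1)$ is the separate check that $W_{\fs_1},W_{\fs_2}$ and the Kasteleyn condition are preserved. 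However, the "main obstacle" you leave open in $(1)\Rightarrow(2)$ is not an obstacle: face weights are gauge-invariant by definition (this is exactly why they are the essential parameters in Section~2.2), so no rescaling of the weights of $\Gs'$ can change its face weights, and if $-\K_{\ws,\bs_1}/\K_{\ws,\bs_2}\neq 1$ the face weights of the two affected faces genuinely differ from those of $\Gs$. You should state this one-line resolution; as written, that direction of your proof is incomplete by your own admission.

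The second genuine gap is in the kernel statement, where you only treat the shrinking of a $2$-valent \emph{white} vertex. There the argument is as you say: the equation at $\ws$ reads $a_1(f_{\bs_1}-f_{\bs_2})=0$, so $\ker\K$ is literally the subspace $\CC^{\Bs'}\subset\CC^{\Bs}$ and coincides with $\ker\K'$. The black case is different and is missing: shrinking a $2$-valent black vertex $\bs$ removes a coordinate from the domain ($\Bs'=\Bs\setminus\{\bs\}$) and merges the two white vertices $\ws_1,\ws_2$ into one, so one must check that the two equations $\sum_{i\in I}k_i g(\bs_i)=-k\,g(\bs)$ and $\sum_{j\in J}k_j g(\bs_j)=k\,g(\bs)$, with $k=\K_{\ws_1,\bs}=-\K_{\ws_2,\bs}\neq 0$, have solution space canonically isomorphic --- via forgetting the coordinate $g(\bs)$, which is determined by the first equation --- to that of the single merged equation $\sum_{i\in I\cup J}k_i g(\bs_i)=0$. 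This is short but not the same argument as in the white case, and the proposition asserts both.
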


\begin{proof}
Let us consider a {finite, bipartite, planar} graph~$\Gs$ with a~$2$-valent white vertex~$\ws$ and
corresponding Kasteleyn coefficients~$\K_{\ws,\bs_1}$ and~$\K_{\ws,\bs_2}$, and
let~$\Gs'$ be the graph obtained by shrinking~$\ws$. The weights of all the
faces of~$\Gs$ and~$\Gs'$ coincide, apart from the two faces adjacent to~$\ws$;
taking into account the Kasteleyn phase, these two face weights get multiplied
by~$-\frac{\K_{\ws,\bs_1}}{\K_{\ws,\bs_2}}$
and~$-\frac{\K_{\ws,\bs_2}}{\K_{\ws,\bs_1}}$, respectively. Therefore, the
partition function is invariant if and only if the
equation~$\K_{\ws,\bs_1}+\K_{\ws,\bs_2}=0$ holds.
The case of a~$2$-valent black vertex is treated in an analogous way, and the equivalence of Conditions~$1$ and~$2$ is checked.

Let us now consider a~$2$-valent white vertex, with notations as above, and
assume that~$\K_{\ws,\bs_1}+\K_{\ws,\bs_2}=0$. The
operators~$\K\colon\CC^B\to\CC^W$ and~$\K'\colon\CC^{\Bs'}\to\CC^{\Ws'}$
for~$\Gs$ and~$\Gs'$ are not defined on the same space, but~$\CC^{\Bs'}$
naturally embeds in~$\CC^\Bs$ as the set of~$g\colon \Bs\to\CC$ such
that~$g(\bs_1)=g(\bs_2)$. By assumption, the operator~$\K$
satisfies~$\K_{\ws,\bs_1}=-\K_{\ws,\bs_2}\neq 0$, so the white vertex~$\ws$
contributes to the kernel of~$\K$ via the equation~$g(\bs_1)=g(\bs_2)$. As a
consequence, the kernel of~$\K$ is included in the
subspace~$\CC^{\Bs'}\subset\CC^\Bs$, and since there is no corresponding white
vertex in~$\Gs'$, it coincides with the kernel of~$\K'$.

Let us finally assume that~$\Gs'$ is obtained from~$\Gs$ by shrinking a black
vertex~$\bs$, with adjacent white vertices~$\ws_1,\ws_2\in \Ws$ merging into a
single vertex~$\ws'\in \Ws'$, and let us assume the
equality~$\K_{\ws_1,\bs}+\K_{\ws_2,\bs}=0$. The vertices~$\ws_1,\ws_2$ induce
two equations of the form~$\sum_{i\in I} k_ig(\bs_i)=-k g(\bs)$ and~$\sum_{i\in J}
k_i g(\bs_i)=kg(\bs)$, with~$k:=\K_{\ws_1,\bs}=-\K_{\ws_2,\bs}\neq 0$. The space of
solutions to these equations is canonically isomorphic to the space of solutions
to the single equation~$\sum_{i\in I\cup J} k_i g(\bs_i)=0$, which is induced by the
vertex~$\ws'\in \Ws'$. This implies that the kernels of~$\K$ and~$\K'$ are canonically isomorphic.
\end{proof}

\begin{cor}
\label{cor:2-valent} {Let $\Gs$ be a finite, bipartite, planar graph, and let us assume that the Kasteleyn
coefficients are functions of train-track angles. Then,
the face-weight} partition function and the kernel of~$\K$ are invariant under shrinking/expanding of~$2$-valent vertices in the following cases:
\begin{enumerate}
\item
{$\K_{\ws,\bs}=e^{2i\beta}-e^{2i\alpha}$, with~$\alpha,\beta$ as in Figure~\ref{fig:around_rhombus},
for any value of~$\mapalpha$.}
\item {$\K^{(t)}_{\ws,\bs}$} defined by~\eqref{def:Kast_elliptic}, for any value of the parameters~$\mapalpha,\tau$ and~$t$.
\end{enumerate}
\end{cor}
\begin{proof}
We only need to check that these weights satisfy Condition~$2$ in
Proposition~\ref{prop:2-valent}. Let us consider the case of a~$2$-valent white
vertex~$\ws$, with adjacent faces~$\fs_1,\fs_2$ in~$G$ and corresponding
faces~$\fs_1',\fs_2'$ in~$\Gs'$.
Finally, let us write~$\alpha,\beta$ for the relevant train-track parameters, as
illustrated below.
\begin{figure}[h]
\centering
\def\svgwidth{0.7\linewidth}
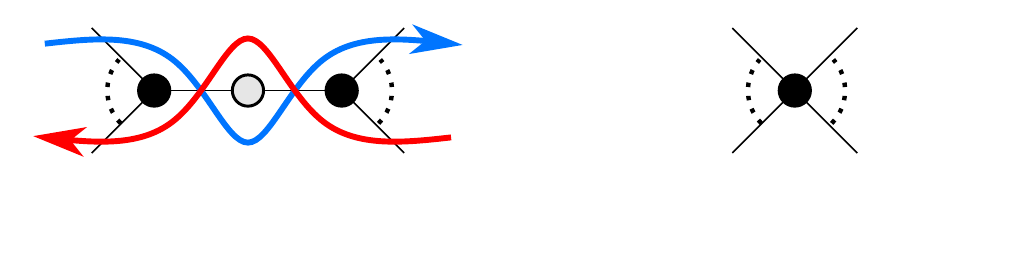
\end{figure}
By definition, the Kasteleyn operator has
coefficients~$\K_{\ws,\bs_1}=e^{2i\beta}-e^{2i\alpha}$,~$\K_{\ws,\bs_2}=e^{2i\alpha}-e^{2i\beta}=-\K_{\ws,\bs_1}$ in the first case, and
\[
  \K^{(t)}_{\ws,\bs_1}=\frac{\theta({\beta-\alpha})}{\theta(t+\eta(\fs_1))\theta(t+\eta(\fs_2))}\,,\quad\K^{(t)}_{\ws,\bs_2}=\frac{\theta({\alpha-\beta})}{\theta(t+\eta(\fs_1))\theta(t+\eta(\fs_2))}=-\K^{(t)}_{\ws,\bs_1}
\]
in the second, so Condition~$2$ is satisfied. The case of a~$2$-valent black vertex is checked in the same way.
\end{proof}

{In particular, if~$\Gs$ is isoradial and the angle map~$\mapalpha$ belongs to the space of isoradial embeddings of~$\Gs$ (see~\cite{KeSchlenk}), then the first point shows that Kenyon's critical weights~$\K^{\mathrm{crit}}_{\ws,\bs}=e^{2i\beta}-e^{2i\alpha}$ define a (probabilistic) model which is invariant under shrinking/expanding of~$2$-valent vertices.
Also, if~$\Gs$ is minimal and the parameters~$\mapalpha,\tau$ and~$t$ satisfy the conditions of Proposition~\ref{prop:kastorient}, then the second point shows that the same can be said of Fock's elliptic weights.}

\begin{wrapfigure}{r}{0.2\textwidth}
  \def\svgwidth{0.2\textwidth}
  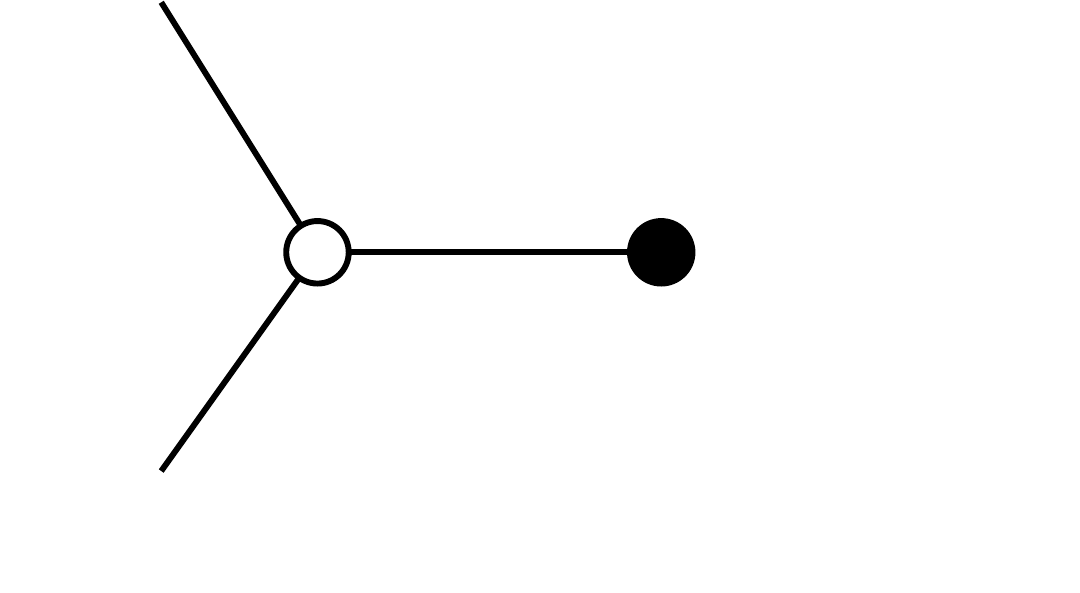
\end{wrapfigure}
Now that the invariance of the model under this first move is {understood}, we turn to the spider move.

First note that any bipartite graph without degree~$1$ vertices can be transformed into a graph with only trivalent white vertices using reduction of~$2$-valent white vertices and expansion of~$2$-valent black vertices. For such graphs, Fock's elliptic weights take a particularly nice form, as explained now.

\begin{lem}
\label{lemma:spider}
Assume that a white vertex~$\ws\in \Ws$ is trivalent, with adjacent train-track parameters~$\alpha,\beta,\gamma$ as illustrated above.
Then the weights defined by~\eqref{def:Kast_elliptic} are gauge equivalent to the weights
\[
\tilde{\Ks}^{(t)}_{\ws,\bs}=
{\theta(\beta-\alpha)\theta(\alpha+\beta-s)=F_{s}(\beta,\alpha)},
\]
where~$s=t+\eta(\ws)+\alpha+\beta+\gamma$, {and $F_s(\beta,\alpha)$ is defined in Corollary~\ref{cor:FayFock}}. Furthermore, this parameter~$s=s(\ws)$ is constant on the four white vertices appearing in any spider move with black boundary vertices.
\end{lem}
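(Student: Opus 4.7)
\emph{Part 1: the gauge equivalence.} Starting from the alternative form of Fock's coefficient given in Remark~\ref{rem:half-angle}(3),
\[
\K^{(t)}_{\ws,\bs}=\frac{\theta(\beta-\alpha)}{\theta(t+\mapd(\ws)+\alpha)\theta(t+\mapd(\ws)+\beta)}\,,
\]
the denominator depends only on white-vertex data and uses only the two of the three train-tracks around $\ws$ that happen to cross the edge $\ws\bs$. The natural candidate for the gauge factor is therefore the symmetric product
\[
\sigma_\ws:=\theta(t+\mapd(\ws)+\alpha)\,\theta(t+\mapd(\ws)+\beta)\,\theta(t+\mapd(\ws)+\gamma),
\]
which is intrinsic to $\ws$ (it does not privilege any of the three edges). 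Setting $\sigma_\bs:=1$ on the three black neighbors of $\ws$ and extending $\sigma\equiv 1$ on the remaining vertices defines a legitimate gauge transformation, and using $s-\alpha-\beta=t+\mapd(\ws)+\gamma$ one obtains directly
\[
\sigma_\ws\,\sigma_\bs\,\K^{(t)}_{\ws,\bs}
=\theta(\beta-\alpha)\,\theta(t+\mapd(\ws)+\gamma)
=\theta(\beta-\alpha)\,\theta(s-\alpha-\beta)
=F_s(\beta,\alpha).
\]

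\emph{Part 2: constancy of $s(\ws)$.} Applying the relation $\mapd(\bs)=\mapd(\ws)+\alpha+\beta$ to any black neighbor $\bs$ of $\ws$ reached along an edge carrying the two train-tracks $\alpha,\beta$, the defining expression for $s(\ws)$ rewrites as
\[
s(\ws)=t+\mapd(\bs)+\gamma,
\]
where $\gamma$ now denotes the train-track around $\ws$ that does \emph{not} cross the edge $\ws\bs$. Now in the spider move with black boundary vertices, four train-tracks $T_1,\dots,T_4$ of half-angles $\alpha_1,\dots,\alpha_4$ traverse the local region, and on each side of the move there are two internal trivalent white vertices (so four in total across the two sides). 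Each such $\ws$ is surrounded by exactly three of the four train-tracks, and we may write $k(\ws)\in\{1,2,3,4\}$ for the index of the missing one; the previous display then becomes
\[
s(\ws)=t+\sum_{j=1}^4\alpha_j+\bigl(\mapd(\ws)-\alpha_{k(\ws)}\bigr),
\]
so the lemma reduces to the claim that $\mapd(\ws)-\alpha_{k(\ws)}$ takes the same value at each of the four internal white vertices.

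\emph{Main obstacle: the combinatorial step.} For any two such whites $\ws,\ws'$, the difference $\mapd(\ws')-\mapd(\ws)$ is read off a path in $\GR$ from $\ws$ to $\ws'$ by signing the train-track crossings according to Fock's rule. The plan is to show, by drawing both configurations of the spider move together with its four train-tracks and analyzing the finitely many cases, that such a path can be chosen to cross only the two ``missed'' train-tracks $T_{k(\ws)}$ and $T_{k(\ws')}$, once each, with the signs yielding
\[
\mapd(\ws')-\mapd(\ws)=\alpha_{k(\ws')}-\alpha_{k(\ws)}.
\]
Geometrically, this expresses the fact that in a spider move the four internal whites sit in the four ``sectors'' cut out by the four train-tracks, and consecutive sectors are separated by exactly the train-tracks corresponding to the whites they bound. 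This case inspection is the delicate point; the rest of the argument is then a tautological substitution.
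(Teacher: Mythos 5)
Your proof is correct and follows essentially the same route as the paper: your gauge factor $\sigma_\ws$ is exactly the paper's product $\theta(t+\eta(\fs_1))\theta(t+\eta(\fs_2))\theta(t+\eta(\fs_3))$ over the three faces adjacent to $\ws$ (rewritten via Remark~\ref{rem:half-angle}(3)), and your reduction of the constancy of $s$ to the identity $\mapd(\ws')-\mapd(\ws)=\alpha_{k(\ws')}-\alpha_{k(\ws)}$ is precisely the chain of equalities $\eta(\ws_1)+a+b+d=\eta(\ws_2)+a+b+c=\eta(\ws_1')+a+c+d=\eta(\ws_2')+b+c+d$ that the paper displays. The combinatorial step you flag as "delicate" and leave as a plan is in fact routine — it is read off directly from the definition of $\mapd$ on the spider-move figure — and the paper asserts it with the same brevity ("by definition of $\eta$"), so there is no genuine gap.
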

\begin{proof}
Let~$\ws\in \Ws$ be any trivalent white vertex, with adjacent
faces~$\fs_1,\fs_2,\fs_3$. Multiplying the weights adjacent to~$\ws$
by~${-}\theta(t+\eta(\fs_1))\theta(t+\eta(\fs_2))\theta(t+\eta(\fs_3))$,
we obtain the gauge equivalent
weights~$\tilde{\Ks}^{(t)}_{\ws,\bs}={-}\theta(\beta-\alpha)\theta(t+\eta(\fs_3))$,
where~$\fs_3$ denotes the face adjacent to~$\ws$ but not to~$\bs$. The
definition of~$\eta$ yields the equality~$\eta(\fs_3)=\eta(\ws)+\gamma$ which, {using also the anti-symmetry of $\theta$}, implies the first statement.
Let us now consider a spider move with black boundary vertices, as illustrated
in Figure~\ref{fig:spider}. If~$\ws_1,\ws_2,\ws_1',\ws_2'$ denote the four white vertices involved in this move,
then by definition of~$\eta$, we have the equalities
\[
\eta(\ws_1)+a+b+d=\eta(\ws_2)+a+b+c=\eta(\ws_1')+a+c+d=\eta(\ws_2')+b+c+d\,.
\]
This implies that~$s=s(\ws)$ is constant on these four vertices.
\end{proof}

The bottom line of this discussion is the following: for the study of the
invariance under spider moves (say, with black boundary vertices)
of {a wide family of weights which includes} Fock's elliptic weights as well as Kenyon's critical weights, it can be assumed that the Kasteleyn
coefficients are of the
form~$\K^{(t)}_{\ws,\bs}
={H_t(\beta,\alpha)}$,
{for some function~$H_t$ of the train-track angles satisfying $H_t(\beta,\alpha)=-H_t(\alpha,\beta)$, and}
with~$t=t(\ws)$
constant on the four white vertices appearing in any such spider move.
Obviously, the study of invariance {of the partition function} under spider moves with white boundary
vertices can be performed in the same way by exchanging the roles of the colors
and making all black vertices trivalent. {However, it does not make sense to
speak about invariance of the kernel of~$\Ks^{(t)}\colon\CC^\Bs\to\CC^\Ws$ under such spider moves,
as the set of black vertices is not preserved.}

We are now ready to state and prove the main result of this section.

\begin{thm}
\label{thm:spider}
Consider a dimer model on a {finite} bipartite, planar graph~$\Gs$, with Kasteleyn coefficients as described above. The following conditions are equivalent:
\begin{enumerate}
\item The {face-weight partition function}, seen as a function of the train-track parameters, is invariant under spider moves with black boundary vertices.
\item For all~${(a,b,c,d)\in\CC^4,\ t\in\CC}$, we have the equality
\begin{equation}
\label{equ:FayFock}
{H_t(a,b)H_t(c,d)+H_t(a,c)H_t(d,b)+H_t(a,d)H_t(b,c)=0\,.}
\end{equation}

\end{enumerate}
In such a case, the kernel of~$\K^{(t)}\colon\CC^\Bs\to\CC^\Ws$ is invariant under spider moves with black boundary vertices.
\end{thm}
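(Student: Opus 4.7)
The plan is to reduce the invariance of the dimer model under a spider move to a local algebraic identity on the four white vertices involved, and then to recognize this identity as Fay's identity in the form~\eqref{equ:FayFock}. First, using Lemma~\ref{lemma:spider} together with the gauge invariance of both the face weights and of the kernel of $\K^{(t)}$, I would replace the Kasteleyn coefficients at the four white vertices $\ws_1,\ws_2,\ws_1',\ws_2'$ of the move by the normalized form $F_s(\beta,\alpha) = \theta(\beta-\alpha)\theta(s-\alpha-\beta)$, with the parameter $s$ the same at all four. Labelling the four train-tracks crossing the boundary of the move region by $a,b,c,d$ in cyclic order (as in Figure~\ref{fig:spider}), the Kasteleyn entry at each of these whites is then a value $F_s(x,y)$ with $x,y\in\{a,b,c,d\}$, so that the local data before and after the move is entirely encoded by the six antisymmetric quantities $F_s(x,y)$.

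For the implication $2\Rightarrow 1$, I would expand the local contribution of the spider move region to the partition function as a generating polynomial in four ``boundary matching'' variables recording, for each boundary black vertex $\bs_i$, whether the dimer at $\bs_i$ points inside or outside of the region. A direct enumeration of the admissible local dimer configurations shows that, both before and after the move, this polynomial has the same monomials, and that the difference of corresponding coefficients is, up to a common nonzero prefactor and to Kasteleyn signs, exactly the combination $F_s(a,b)F_s(c,d) + F_s(a,c)F_s(d,b) + F_s(a,d)F_s(b,c)$. Face weight invariance is therefore equivalent to~\eqref{equ:FayFock}.

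For the converse $1\Rightarrow 2$, it suffices to apply the previous computation to the minimal example consisting of the spider move region with a pendant edge adjoined at each boundary black vertex; since $a,b,c,d$ can be chosen arbitrarily, partition function invariance forces~\eqref{equ:FayFock}. For the kernel statement, note that $\ker \K^{(t)} \subset \CC^\Bs$ is cut out by the family of linear equations $\sum_\bs\K^{(t)}_{\ws,\bs}f(\bs)=0$ indexed by white vertices. Outside the move region these equations are unchanged, while inside, the pair of equations from $\ws_1,\ws_2$ and the pair from $\ws_1',\ws_2'$ span the same subspace of the dual space $(\CC^\Bs)^*$: the identity~\eqref{equ:FayFock} provides precisely the linear relations expressing each ``after'' equation as a combination of the two ``before'' ones, and conversely.

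The main obstacle will be the careful tracking of Kasteleyn signs and of the cyclic orderings of train-tracks around each trivalent white vertex of the move, so that the signs of the three products appearing in the local enumeration match those of~\eqref{equ:FayFock} rather than a sign-altered variant, and so that the planarity constraint fixing the cyclic order of $a,b,c,d$ is compatible with the specific cyclic structure of Fay's identity.
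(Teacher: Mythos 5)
Your overall strategy is sound, and for the main equivalence it is genuinely different from the paper's. The paper does not enumerate local dimer configurations at all: it invokes the Goncharov--Kenyon criterion \cite[Theorem~4.7]{GK}, which says that the partition functions of~$\Gs$ and~$\Gs'$ agree if and only if the five face weights around the move transform by the explicit urban-renewal rules $W'=W^{-1}$, $W'_{1,3}=W_{1,3}(1+W)$, $W'_{2,4}=W_{2,4}(1+W^{-1})^{-1}$; it then computes these face weights directly from $F_t$ and checks that each rule is equivalent to~\eqref{equ:FayFock} via the antisymmetry of~$F_t$. Your route --- comparing the local partition functions of the two graphs over all boundary conditions at $\bs_1,\dots,\bs_4$ --- is the classical urban-renewal computation and would also work, but be aware that what you get is not literally ``the difference of corresponding coefficients equals the Fay combination'': the invariance condition is that the six local partition functions of~$\Gs'$ are \emph{proportional} to those of~$\Gs$ with one common constant, so you obtain several ratio conditions (the two boundary conditions $\{\bs_1,\bs_3\}$ and $\{\bs_2,\bs_4\}$ each match a two-term sum against a single product), and you must verify that after clearing denominators and using $F_t(x,y)=-F_t(y,x)$ \emph{all} of these collapse to the single identity~\eqref{equ:FayFock}. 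That verification, together with the Kasteleyn sign bookkeeping you defer, is precisely where the work lies in your version; the paper's use of gauge-invariant face weights makes the signs essentially automatic. Two smaller points: your ``minimal example with a pendant edge at each boundary black vertex'' for $1\Rightarrow 2$ violates the standing assumption that~$\Gs$ has no degree-$1$ vertices --- but this is unnecessary anyway, since the local computation already shows the relations must hold for the arbitrary parameters $a,b,c,d$ of any spider move in~$\Gs$. For the kernel statement your argument is the same as the paper's up to dualization: the paper exhibits two explicit vectors $g_1,g_2$ (built from values of~$F_t$) lying in the kernel of both rank-$2$ matrices $M$ and $M'$ by~\eqref{equ:FayFock}, whereas you argue that the two row spaces coincide; these are equivalent, but to make yours complete you should write down the explicit $2\times 2$ change-of-basis relating the rows of $M'$ to those of $M$ and check it via~\eqref{equ:FayFock}.
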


\begin{proof}
Let us assume that the bipartite graphs~$\Gs$ and~$\Gs'$ are related by a spider
move with black boundary vertices. We denote by~$\fs,\fs_1,\fs_2,\fs_3,\fs_4$
the five faces of~$\Gs$ involved in this local transformation, and
by~$\fs',\fs'_1,\fs'_2,\fs'_3,\fs'_4$ the corresponding faces in~$\Gs'$, as
illustrated in Figure~\ref{fig:spider}. Finally, let us write~$a,b,c,d$ for the
relevant train-track parameters, also illustrated in this same figure.
\begin{figure}[tb]
    \centering
    \begin{overpic}[width=13cm]{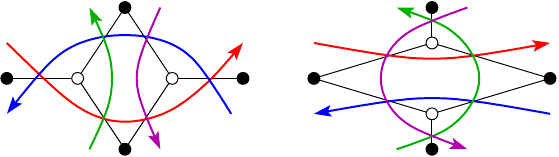}
    \put(22,13){\scriptsize $\fs$}
    \put(76,13){\scriptsize $\fs'$}
    \put(10,22){\scriptsize $\fs_1$}
    \put(10,5){\scriptsize $\fs_2$}
    \put(32,5){\scriptsize $\fs_3$}
    \put(32,22){\scriptsize $\fs_4$}
    \put(44,21){\scriptsize $a$}
    \put(-1,6){\scriptsize $b$}
    \put(28.5,-0.5){\scriptsize $c$}
    \put(15,28){\scriptsize $d$}
    \put(15.5,13.3){\scriptsize $\ws_1$}
    \put(26.2,13.3){\scriptsize $\ws_2$}
    \put(-3,13.2){\scriptsize $\bs_2$}
    \put(21.2,29){\scriptsize $\bs_1$}
    \put(21.2,-2){\scriptsize $\bs_3$}
    \put(45,13.2){\scriptsize $\bs_4$}
    \put(3,26){$\Gs$}
    \put(64,22){\scriptsize $\fs_1'$}
    \put(64,5){\scriptsize $\fs_2'$}
    \put(86,5){\scriptsize $\fs_3'$}
    \put(86,22){\scriptsize $\fs_4'$}
    \put(100,20){\scriptsize $a$}
    \put(54,6){\scriptsize $b$}
    \put(85,0){\scriptsize $c$}
    \put(69,27){\scriptsize $d$}
    \put(73.3,20.9){\scriptsize $\ws'_1$}
    \put(73.5,5.7){\scriptsize $\ws'_2$}
    \put(52,13.2){\scriptsize $\bs_2$}
    \put(76.2,29){\scriptsize $\bs_1$}
    \put(76.2,-2){\scriptsize $\bs_3$}
    \put(100,13.2){\scriptsize $\bs_4$}
    \put(58,26){$\Gs'$}
    \end{overpic}
    \caption{Faces, vertices and train-tracks involved in a spider move.}
    \label{fig:spider}
  \end{figure}
By~\cite[Theorem~4.7]{GK}, {see also~\cite{Postnikov}}, the graphs~$\Gs$ and~$\Gs'$ have identical
partition functions if and only if the
weights~${\Wscr}:={\Wscr}_\fs$,~${\Wscr}':={\Wscr}_{\fs'}$,~${\Wscr}_i:={\Wscr}_{\fs_i}$ and~${\Wscr}_i':={\Wscr}_{\fs_i'}$
satisfy the equalities
\[
{\Wscr}'={\Wscr}^{-1},\quad {\Wscr}'_i={\Wscr}_i(1+{\Wscr})\quad(i=1,3)\quad\text{and}\quad {\Wscr}'_j={\Wscr}_j(1+{\Wscr}^{-1})^{-1}\quad(j=2,4)\,.
\]
Hence, we are left with the analysis of these identities.
(The conventions of~\cite{GK} lead to face weights inverse to ours,
hence the exchange of black and white vertices in our Figure~\ref{fig:spider} 
compared to Figure~21 of~\cite{GK}.)
Computing the face weights of~$\fs$ and~$\fs'$ taking into account the Kasteleyn phase, we obtain
\begin{equation*}
{\Wscr}=-\frac{H_t(b,c)H_t(a,d)}{H_t(a,c)H_t(b,d)}\quad\text{and}\quad
{\Wscr}'=-\frac{H_t(c,a)H_t(d,b)}{H_t(c,b)H_t(d,a)}\,.
\end{equation*}
Therefore, the equality~${\Wscr}'={\Wscr}^{-1}$ follows from the assumption that~$H_t$ is antisymmetric.
We now turn to the identity~${\Wscr}'_1={\Wscr}_1(1+{\Wscr})$. A direct computation of the face weights
of~$\fs_1$ and~$\fs_1'$ yields
\begin{equation*}
{\Wscr}_1=-X\,\frac{H_t(d,b)}{H_t(a,b)}\quad\text{and}\quad {\Wscr}_1'=-X\,\frac{H_t(d,c)}{H_t(a,c)}
\end{equation*}
for some~$X$. Using the equations displayed above, the identity~${\Wscr}'_1={\Wscr}_1(1+{\Wscr})$ gives
\[
\frac{H_t(d,c)}{H_t(a,c)}=\frac{H_t(d,b)}{H_t(a,b)}\left(1-\frac{H_t(b,c)H_t(a,d)}{H_t(a,c)H_t(b,d)}\right)\,,
\]
which, using the antisymmetry of~$H_t$, is immediately seen to be equivalent to Equation~\eqref{equ:FayFock}.
The equality~${\Wscr}'_3={\Wscr}_3(1+{\Wscr})$ gives the same equations, only exchanging the roles
of~$a$ and~$b$, and of~$c$ and~$d$. Since Equation~\eqref{equ:FayFock} is
invariant by these permutations, it is also equivalent to this second equality.
As for the equality~${\Wscr}_2'={\Wscr}_2(1+{\Wscr}^{-1})^{-1}$, it can be restated
as~${\Wscr}_2={\Wscr}_2'(1+{\Wscr}')$. This is nothing but the already analysed
equation~${\Wscr}'_3={\Wscr}_3(1+{\Wscr})$ applied to a~$\pi/2$-rotation of both graphs, which
exchanges the roles of~$\Gs$ and~$\Gs'$. The last equality,
namely~${\Wscr}_4'={\Wscr}_4(1+{\Wscr}^{-1})^{-1}$, can be treated in the same way. This concludes
the proof of the equivalence of Conditions~$1$ and~$2$.

Let us now assume that~$\Gs$ and~$\Gs'$ are related by a spider move with black
boundary vertices, as illustrated in Figure~\ref{fig:spider}. We will assume the
notation of this figure.
The set of black vertices for~$\Gs$ and~$\Gs'$ are identical, and we will
directly show that the kernels of~$\K^{(t)}$ and~$\K'^{(t)}$ coincide
in~$\CC^\Bs=\CC^{\Bs'}$. For both graphs, we have two white vertices defining two
equations that only involve the value of~$g\in\CC^\Bs$ on~$\bs_1,\ldots,\bs_4$, thus
defining a~$4\times 2$-matrix. In the case of~$\Gs$, it is given by
\[
M=\begin{pmatrix}
H_t(b,d) & H_t(a,b) & H_t(d,a) & 0 \\
H_t(c,d) & 0 & H_t(a,c) & H_t(b,a)
\end{pmatrix}
\]
and in the case of~$\Gs'$, by
\[
M'=\begin{pmatrix}
H_t(c,d) & H_t(a,c) & 0 & H_t(d,a)\\
0 & H_t(c,b) & H_t(d,c) & H_t(b,d)
\end{pmatrix}\,.
\]
Now, consider the two vectors
\[
g_1=\begin{pmatrix}
H_t(c,a)\\
H_t(c,d)\\
H_t(c,b)\\
0
\end{pmatrix}
\quad\text{and}\quad
g_2=\begin{pmatrix}
0\\
H_t(a,d)\\
H_t(a,b)\\
H_t(a,c)
\end{pmatrix}\,.
\]
Equation~\eqref{equ:FayFock} easily implies that~$g_1$ and~$g_2$ lie in the kernel
of both~$M$ and~$M'$. Since~$g_1$ and~$g_2$ are linearly independent and the
matrices~$M$ and~$M'$ have rank~$2$, we conclude that the kernels of~$M$
and~$M'$ coincide (with the span of~$g_1$ and~$g_2$). Since each of the other
white vertices define the same equation for~$\Gs$ and~$\Gs'$, this directly implies
that the kernels of~$\K^{(t)}$ and~$\K'^{(t)}$ coincide.
\end{proof}

By direct computation in Case~1 and Fay's identity in the form of
Corollary~\ref{cor:FayFock} in Case~2 {(originally due to Fock~\cite{Fock})}, we immediately obtain
the following result.

\begin{cor}
\label{cor:spider}
{Let $\Gs$ be a finite, bipartite, planar graph.}
The {face-weight} partition function and the kernel of~$\K$ are invariant under
spider moves (with black boundary vertices in the case of~$\K$) in the following
cases:
\begin{enumerate}
\item {$\K_{\ws,\bs}=e^{2i\beta}-e^{2i\alpha}$, with~$\alpha,\beta$ as in Figure~\ref{fig:around_rhombus},
for any value of~$\mapalpha$.}
\item The weights defined by~\eqref{def:Kast_elliptic} for any value of the parameters~$\mapalpha,\tau$ and~$t$.\qed
\end{enumerate}
\end{cor}

{The discussion after Corollary~\ref{cor:2-valent} applies once again:
if~$\Gs$ is isoradial and the angle map defines an isoradial embeddings of~$\Gs$,
then the first point shows that Kenyon's critical weights~$\K^{\mathrm{crit}}_{\ws,\bs}$ define a
model which is invariant under spider move, a fact originally due to Kenyon~\cite{Kenyon:crit}.
Finally, if~$\Gs$ is minimal and the parameters~$\mapalpha,\tau$ and~$t$ satisfy the conditions of Proposition~\ref{prop:kastorient}, then this statement also holds for Fock's elliptic weights.}

\begin{rem}
We conclude this section with three final comments.
\begin{enumerate}
\item
As explicited in Corollaries~\ref{cor:2-valent} and~\ref{cor:spider}, the results of this section
imply that any ``dimer model'' defined by Fock's elliptic weights~\eqref{def:Kast_elliptic} is spider invariant.
More generally, these results actually imply that this holds for Fock's weights~\cite{Fock} defined via any
odd Riemann theta function, whatever the genus of the curve.
This was already observed by Fock in another form, see~\cite[Proposition~1]{Fock},
and will be used in our forthcoming article~\cite{BCdT:genusg}.
\item
With Theorem~\ref{thm:spider} in hand, it is natural to ask for other classes of
weights satisfying Equation~\eqref{equ:FayFock}, and hence giving rise to spider
invariant models. In particular, one might wonder if there are other classes of
weights which, as Kenyon's critical ones, are \emph{local}, in the sense that
the corresponding Kasteleyn coefficients~$\Ks_{\ws,\bs}$ only depend on~$\alpha,\beta$
(and on no additional parameter~$t$). Also, we can further ask these
coefficients to be \emph{rotationally invariant}, \emph{i.e.},\@ to
satisfy~$\Ks_{\ws,\bs}(\alpha+s,\beta+s)=\Ks_{\ws,\bs}(\alpha,\beta)$ for all~$s$. Finally,
it is natural to ask for the corresponding edge weights to be positive. Such a
search is bound to fail. Indeed, it can be shown that any rotationally invariant
local Kasteleyn coefficients satisfying Equation~\eqref{equ:FayFock} and
inducing  positive edge weights are gauge equivalent to Kenyon's critical
weights.
\item
Even relaxing the rotational invariance condition does not help much. It can be proved that for any~$\ZZ^2$-periodic minimal graph,
and any local Kasteleyn coefficients satisfying Equation~\eqref{equ:FayFock} and inducing  positive edge weights,
the corresponding spectral curve is an irreducible rational curve.
Therefore, if one wishes to break free from the rational curves, and this is one of the aims of the present article, then the corresponding edge weights can not be local.
\end{enumerate}
\end{rem}

\section{Connection to known results}
\label{sec:connection}

We now present relations between our work and other dimer models on
isoradial graphs that have already been handled in the literature.
In Section~\ref{sec:rational} we show how Kenyon's critical dimer models~\cite{Kenyon:crit} can be obtained as rational limits of our elliptic setting.
Then, in Section~\ref{sec:previous_genus1}, we explain how
two special classes of bipartite isoradial graphs with local elliptic weights, 
namely the graph $\GQ$~\cite{BdTR2} and the double graph $\GD$~\cite{dT_Mass_Dirac} can be viewed as special cases
of the constructions of this paper.

\subsection{Degeneration to the rational case}
\label{sec:rational}

When $q$ goes to zero, or equivalently when $|\tau|$ goes to $\infty$, the
torus $\TT(q)$ becomes a cylinder, which is mapped to the Riemann sphere
via $u\mapsto\lambda=e^{2iu}$.
In this regime, we have
$\theta(u)=2\sin(u)q^{1/4}(1+O(q))$.
Since the parameter~$t$ belongs to~$\RR+\frac{\pi}{2}\tau$,
the entries of the Kasteleyn operator~$\Ks=\K[t]$
become
\begin{equation*}
  \K_{\ws,\bs}=
  -i q^{3/4} e^{2i(\mapd(\ws)+\Re t)}
  \left(\overline{e^{2i\beta}-e^{2i\alpha}}\right)(1+O(q))\,.
\end{equation*}
Up to a gauge equivalence, this is nothing but
the complex conjugate of
Kenyon's formula
\begin{equation}
  \K^{\text{crit}}_{\ws,\bs}=e^{2i\beta}-e^{2i\alpha}=
  i e^{i(\alpha+\beta)}2\sin(\beta-\alpha)
  \label{eq:Kcrit}
\end{equation}
for critical weights on
isoradial graphs~\cite{Kenyon:crit}.

The author then gives local functions $f_\bs(\lambda)$ in the kernel of $\K^{\text{crit}}$, which we denote by $f_{\bs,\ws}(\lambda)$ to be consistent with our conventions. These functions
can 
also be recovered from rescaled limits of our functions~$g=\g[t]$.
Indeed, when $q\to 0$, for $\bs$ and $\ws$ neighbors in $\Gs$ as in
Figure~\ref{fig:around_rhombus}, one has:
\begin{equation}\label{eq:g_crit}
  g_{\bs,\ws}(u)= -q^{-1}e^{-2i(\mapd(\ws)+\Re(t))}
  \frac{e^{2iu}(1+O(q))}{(e^{2iu}-e^{2i\alpha})(e^{2iu}-e^{2i\beta})}.
\end{equation}
The denominator is exactly what appears when computing the functions
$f_{\bs,\ws}(\lambda)$, the prefactor plays no role, and the $e^{2iu}$ in the
numerator is absorbed
in the Jacobian
when performing the change of variables from $u$
to $\lambda$ 
in Equation~\eqref{eq:Keninv} below.

Using the functions $f_{\bs,\ws}(\lambda)$, Kenyon then obtains a local expression for \emph{one} inverse 
of the Kasteleyn operator~\cite[Theorem 4.2]{Kenyon:crit}, which in the $\ZZ^2$-periodic case, corresponds to $(0,0)$ magnetic field. It is given by
\begin{equation}
\label{eq:Keninv}
  (\Ks^{\text{crit}})^{-1}_{\bs,\ws}=
  \frac{1}{4i\pi^2}
  \oint f_{\bs,\ws}(\lambda)\log(\lambda) d\lambda,
\end{equation}
where the integral is computed along a contour surrounding poles of the integrand and 
avoiding a 
ray from $0$ to $\infty$. We now explain how, using~\eqref{eq:Keninv}, one obtains a local formula 
for a 
\emph{two-parameter} family of inverses while staying in the realm of genus~0, thus recovering the limit of our elliptic operators $\A^{u_0}$ as $q\rightarrow 0$. The main idea is 
to consider gauge equivalent dimer models corresponding to Möbius transformations of 
half-angles assigned to train-tracks. This idea takes its source in~\cite[Section 5.3]{KO:Harnack} which handles the connection between generic rational Harnack curves with triangular Newton polygons and dimers 
on the hexagonal lattice with critical isoradial weights.

Fix $\zeta\in\CC$, $|\zeta|<1$, and consider the Möbius transformation
$U(\lambda)=\frac{\lambda-\zeta}{1-\bar{\zeta}\lambda}$,
preserving the unit circle.
From $U$ construct modified half-angles
$(\tilde{\alpha}_T)_{T\in \T}$
by setting, for every train-track $T$ of $\T$, $e^{2i\tilde{\alpha}_T}=U^{-1}(e^{2i\alpha_T})$. 
Then, an explicit computation yields that non-zero coefficients of the modified 
Kasteleyn operator $\tilde{\Ks}^{\text{crit}}$ are given by
\begin{equation}\label{equ:gauge_K}
\tilde{\Ks}^{\text{crit}}_{\ws,\bs}=e^{2i\tilde{\beta}}-e^{2i\tilde{\alpha}}=
(e^{2i\beta}-e^{2i\alpha})\frac{1-|\zeta|^2}{(e^{2i\beta}\bar{\zeta}+1)(e^{2i\alpha}\bar{\zeta}+1)}=
\frac{1-|\zeta|^2}{\bar{\zeta}^2}f_{\bs,\ws}(-\bar{\zeta}^{-1})\Ks^{\text{crit}}_{\ws,\bs},
\end{equation}
implying that $\Ks^{\text{crit}}$ and $\tilde{\Ks}^{\text{crit}}$ are gauge equivalent.

Now observe that the function $\tilde{f}_{\bs,\ws}(\lambda)$, defined as the function $f_{\bs,\ws}(\lambda)$ with 
modified half-angles $(\tilde{\alpha}_T)_{T\in\T}$, satisfies the relation:
\[
\tilde{f}_{\bs,\ws}(\lambda)=\frac{\bar{\zeta}^2}{f_{\bs,\ws}(-\bar{\zeta}^{-1})(1-\bar{\zeta}\lambda)^2}f_{\bs,\ws}(U(\lambda)).
\]
As a consequence, performing the change of variable $\mu=U(\lambda)$ in 
Kenyon's formula~\eqref{eq:Keninv} for the inverse $(\tilde{\Ks}^{\text{crit}})^{-1}_{\bs,\ws}$ 
of $\tilde{\Ks}^{\text{crit}}$ gives:
{\small
\[
(\tilde{\Ks}^{\text{crit}})^{-1}_{\bs,\ws}=\frac{1}{4i\pi^2}
  \oint \tilde{f}_{\bs,\ws}(\lambda)\log(\lambda) d\lambda=\frac{1}{4i\pi^2}
\frac{\bar{\zeta}^2}{(1-|\zeta|^2)f_{\bs,\ws}(-\bar{\zeta}^{-1})}\oint 
f_{\bs,\ws}(\mu)\log\Bigl(\frac{\mu+\zeta}{1+\bar{\zeta}\mu}\Bigr)d\mu,
\]}
where the second contour of integration avoids a ray from $U^{-1}(0)=\zeta$ to $U^{-1}(\infty)=\bar{\zeta}^{-1}$. 
Finally, using the gauge equivalence relation~\eqref{equ:gauge_K}, we have that for every $\zeta\in\CC$, $|\zeta|<1$, 
\[
\frac{1}{4i\pi^2}\oint f_{\bs,\ws}(\mu)\log\Bigl(\frac{\mu+\zeta}{1+\bar{\zeta}\mu}\Bigr)d\mu=:\A^{\text{crit},\zeta}_{\bs,\ws}
\]
defines an inverse of the operator $\Ks^{\text{crit}}$ with fixed half-angles $(\alpha_T)_{T\in\T}$.
In the periodic case,
the complex number $\zeta$ can be thought of as parameterizing the
``northern hemisphere'' of the spectral curve, or its amoeba, and as such, plays the
role of the magnetic field.

When
performing the additional change of variable $e^{2iu}=\mu$,
the complex plane (deprived from 0) becomes an infinite cylinder, and
$\zeta$ and $\bar{\zeta}^{-1}$
are sent
to some $\bar{u}_0$ and $u_0$. The operator $\As^{\text{crit},\zeta}$ exactly corresponds to the limit of our formula~\eqref{eq:coeff_Kinv_u0_alt} for $\As^{u_0}$ when the
torus $\TT(q)$ degenerates to a cylinder as $q\to 0$.
Taking the limit in Formula~\eqref{eq:coeff_Kinv_u0} instead gives
the alternative expression for $(\Ks^{\text{crit}})^{-1}$ used in the proof of
\cite[Theorem~4.3]{Kenyon:crit} by integrating $f_{\bs,\ws}$ along a ray. 

This critical limit is also interesting from the point of view of the
immersion
of the graph $\GR$ in the plane.
Indeed, a \emph{minimal immersion} of~$\Gs$,
where all the faces of $\GR$ are drawn as
rhombi~\cite{BCdT:immersion}, can be obtained as the limit of the
immersion~$\Psi_{u_0}=\Psi^{(t)}_{u_0}$
described in Section~\ref{sec:circlepatterns},
when both $\tau$ and
$u_0$ go to $i\infty$. 
Indeed, as $q\to 0$ and the imaginary part of $u_0$ goes to infinity, we have
\begin{equation*}
  \frac{\theta'}{\theta}(t+\mapd(\fs))=-i+O(q),\quad
    \frac{\theta'}{\theta}(u_0-\alpha)=-\cot(u_0-\alpha)+O(q)=-i(1+2e^{2iu_0}e^{-2i\alpha})+o(e^{2iu_0}).
\end{equation*}

As a consequence, if we choose $\Xi$ in such way that
$\lim {\theta'(0)}\Xi(\bs) = 0$
for every black vertex $\bs$ and
$\lim {\theta'(0)}\Xi(\ws) = 2i$
for every white vertex $\ws$, we have
\begin{equation*}
  {\theta'(0)}\bigl(\Psi_{u_0}(\bs)-\Psi_{u_0}(\fs)\bigr) \sim -i2e^{2iu_0} e^{-2i\alpha}, \quad
  {\theta'(0)}\bigl(\Psi_{u_0}(\ws)-\Psi_{u_0}(\fs)\bigr) \sim -i2e^{2iu_0} e^{-2i\alpha}.
\end{equation*}

Therefore, the edges of $\GR$ corresponding to a train-track associated with a half-angle
$\alpha$ are drawn as unit vectors $\overline{e^{2i\alpha}}$, up to a global
similarity with a stretch factor $2e^{-2\Im u_0}$ tending to~0.

\subsection{Connection to known elliptic dimer models}
\label{sec:previous_genus1}

In the literature there are two instances of elliptic dimer models where a local formula is proved for an inverse Kasteleyn operator:
the dimer model on a bipartite graph arising from a $Z$-invariant Ising model
on an isoradial graph~\cite{BdTR2}, and the dimer model arising from the $Z$-Dirac operator introduced in~\cite{dT_Mass_Dirac}.
It is however not immediate to see that
these models are specific cases of those of this paper; we now explain these connections.
Note that since the massive Laplacian operator of~\cite{BdTR1}
is related to the $Z$-Dirac operator (see~\cite[Theorem~1]{dT_Mass_Dirac}), it
can also be related to the present work.

Consider an infinite, planar graph $\Gs$, not necessarily bipartite. From $\Gs$ one constructs two bipartite decorated
graphs: the \emph{double graph} $\GD$, see for example~\cite{Kenyon:crit} and the graph $\GQ$, see for example~\cite{WuLin}.
The double $\GD$ is obtained by taking the diagonals of the quad-graph $\Gs^\diamond$ and adding a white vertex at the crossing of the diagonals.
It is bipartite and has two kinds of black vertices corresponding to vertices of $\Gs$ and vertices of $\Gs^*$. The associated quad-graph $(\GD)^{\diamond}$ is obtained by
dividing the quadrangular faces of $\Gs^\diamond$ into four, see Figure~\ref{fig:GD}. 
The graph $\GQ$ is the dual graph of the superimposition of the quad-graph $\Gs^\diamond$ and of the double graph $\GD$. It has three kinds of faces containing in their interior either a vertex of $\Gs$, a vertex of $\Gs^*$, or a white vertex of $\GD$; the latter are
quadrangles whose pair of parallel edges correspond to primal and dual edges of $\Gs$. The quad-graph $(\GQ)^{\diamond}$ is that of $\GD$ with an additional quadrangular face for each edge of $\Gs^\diamond$, which should be thought of
as ``flat'', see Figure~\ref{fig:GQ} and the discussion below.

Each train-track of~$\Gs$ induces two train-tracks that are
 anti-parallel in~$\GD$ and make anti-parallel bigons in~$\GQ$, see Figures~\ref{fig:GD} and~\ref{fig:GQ}.
Therefore, if~$\Gs$ is an isoradial graph, \emph{i.e.}, if its train-tracks do not self-intersect
and two train-tracks never intersect more than once,
then the associated bipartite graphs~$\GD$ and~$\GQ$ are isoradial and minimal, respectively.
Moreover, an isoradial embedding of~$\Gs$ naturally induces an isoradial embedding of~$\GD$ and a
minimal immersion of~$\GQ$, as follows.
As proved by Kenyon-Schlenker~\cite{KeSchlenk}, an isoradial embedding is given by some
half-angle map~$\mapalpha$ on the oriented train-tracks of~$\Gs$, such that the half-angles associated to any given oriented 
train-track and to the same train-track with the opposite orientation differ by~$\frac{\pi}{2}$.
One can then simply define the induced half-angle maps~$\mapalpha^{\scriptscriptstyle{\mathrm{D}}}$
and~$\mapalpha^{\scriptscriptstyle{\mathrm{Q}}}$ by associating to each oriented train-track of~$\GD$ and~$\GQ$
the half-angle of the unique oriented train-track of~$\Gs$ it is parallel to. This is illustrated in Figures~\ref{fig:GD} and~\ref{fig:GQ}.
If~$\mapalpha$ defines an isoradial embedding of~$\Gs$, then so does~$\mapalpha^{\scriptscriptstyle{\mathrm{D}}}$
for~$\GD$ (in particular, it belongs to~$X_{\GD}$), while~$\mapalpha^{\scriptscriptstyle{\mathrm{Q}}}$ belongs to~$X_{\GQ}$
and therefore defines a minimal immersion of the minimal graph~$\GQ$.
In this minimal immersion, the rhombi corresponding to the edges of~$\Gs^\diamond$ are degenerate,
or ``flat": this is the reason why it is not an isoradial embedding.

\begin{figure}[ht]
  \centering
  \begin{overpic}[width=5cm]{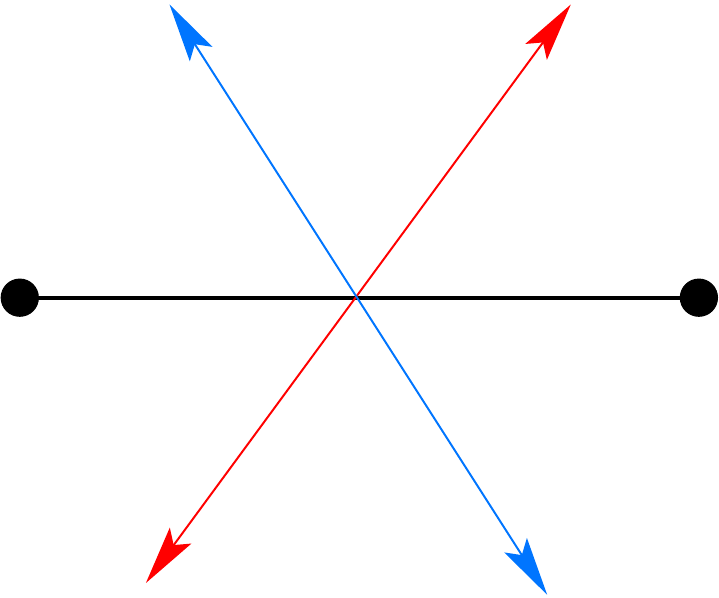}
  \put(-5,40){\scriptsize $\vs$}
    \put(80,85){\scriptsize $\alpha$}
  \put(20,85){\scriptsize $\beta$}
  \put(70,-5){\scriptsize $\beta+\frac{\pi}{2}$}
    \put(10,-5){\scriptsize $\alpha+\frac{\pi}{2}$}
  \end{overpic}
  \hspace{2cm}
  \begin{overpic}[width=7cm]{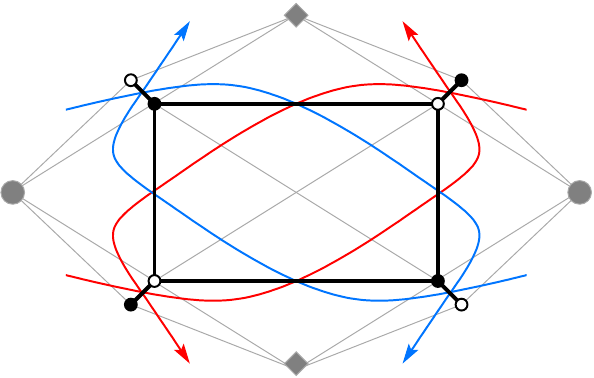}
  \put(-4,30){\scriptsize $\vs$}
  \put(48,64){\scriptsize $\fs$}
  \put(27,18){\scriptsize $\ws$}
  \put(76,18){\scriptsize $\bs_1$}
  \put(27,41){\scriptsize $\bs_2$}
  \put(18,8){\scriptsize $\bs_3$}
  \put(66,64){\scriptsize $\alpha$}
  \put(31,-3){\scriptsize $\alpha+\frac{\pi}{2}$}
  \put(32,64){\scriptsize $\beta$}
  \put(68,-3){\scriptsize $\beta+\frac{\pi}{2}$}
  \end{overpic}
  \medskip
  \caption{Left: an edge of~$\Gs$ with its two adjacent train-tracks.
  Right: the corresponding pieces of~$\GQ$ (black lines and white/black vertices),
  of the quad-graph $(\GQ)^\diamond$ (grey lines and vertices), and of the four
  adjacent  train-tracks (red and blue lines).}
  \label{fig:GQ}
\end{figure}

\paragraph{Connection to the dimer model on the graph $\GQ$ arising from the Ising model}
Consider a half-angle map $\mapalpha^{\scriptscriptstyle{\mathrm{Q}}}\in X_{\GQ}$ as above.
Following Section~\ref{sec:abel_map}, let us compute the discrete Abel map on vertices of the quad-graph $(\GQ)^{\diamond}$,
taking as reference point a vertex $\vs_0$ of $\Gs$;
note that $\vs_0$ is indeed a vertex
of~$(\GQ)^\diamond$, at it should (recall Section~\ref{sec:abel_map}).
Then, using the notation of Figure~\ref{fig:GQ},
and recalling that $\mapd$ is defined as an element of $\RR/\pi\ZZ$, we have
the following equalities modulo~$\pi$
\[\textstyle
\forall\,\vs\in\Vs,\ \mapd(\vs)=0,\quad \forall\,\fs\in\Vs^*,\ \mapd(\fs)=\frac{\pi}{2},\quad
\mapd(\bs_1)=\beta+\frac{\pi}{2},\quad \mapd(\bs_2)=\beta, \quad \mapd(\bs_3)=\alpha\,.
\]
In other words, for every black vertex $\bs$ of $\GQ$, $\mapd(\bs)$ is given by the half-angle of the train-track
crossing the edge $\vs\bs$ of~$(\GQ)^\diamond$,
where $\vs$ is the vertex of $\Gs$ adjacent to $\bs$ in~$(\GQ)^\diamond$.

Returning to Section~\ref{sec:Kast_def}, the weight function $\nu^{(t)}$ of the corresponding elliptic operator~$\K[t]$ is:
\[
\nu^{(t)}_{\ws\xs}=
\begin{cases}
  \hfill
\left|\frac{\theta(\beta-\alpha)}{\theta(t+\frac{\pi}{2}-(\beta-\alpha))\theta(t+\frac{\pi}{2})}\right| 
  \hfill
& \text{if $\xs=\bs_1$}\\
\hfill
\left|\frac{\theta(\alpha+\frac{\pi}{2}-\beta)}{\theta(t)\theta(t+\frac{\pi}{2}+(\beta-\alpha))}
\right|
  \hfill
& \text{if $\xs=\bs_2$}\\
\hfill
\left|
\frac{\theta(\frac{\pi}{2})}{\theta(t)\theta(t-\frac{\pi}{2})}
\right|
  \hfill
& \text{if $\xs=\bs_3$}.
\end{cases}
\]

We now turn to the weight function $\nuQ$ of the dimer model on $\GQ$ arising from the Ising model, see for example~\cite{BdTR2}, which is independent of $t$.
We refer to~\cite[Chap 2]{Lawden} for the definition of Jacobi's elliptic (trigonometric) functions. We here use the functions $\sn$ and $\cn$ which are the elliptic analogues of the trigonometric functions $\sin$ and $\cos$. Let us recall the following relations between the parameters of Jacobi's elliptic and theta functions: $k=\frac{\theta_2^2(0|q)}{\theta_3^2(0|q)},\, k'=\frac{\theta_4^2(0|q)}{\theta_3^2(0|q)},\, K=\frac{\pi}{2}\theta_3^2(0),\, iK'=\tau K$; we simply denote $\sn(v|k)$ as $\sn(v)$ and similarly for $\cn$. Using the notation of Figure~\ref{fig:GQ}, the weight function $\nuQ$ is given by
\[
\nuQ_{\ws\xs}=
\begin{cases}
\sn(\frac{2K}{\pi}(\beta-\alpha))& \text{if $\xs=\bs_1$}\\
\cn(\frac{2K}{\pi}(\beta-\alpha))& \text{if $\xs=\bs_2$}\\
\hfill 1\hfill& \text{if $\xs=\bs_3$}.
\end{cases}
\]
Then, we have:
\begin{prop}
Suppose that $t=\frac{\pi}{2}+\frac{\pi}{2}\tau$, then the elliptic dimer models on the bipartite graph $\GQ$ with weight function $\nu^{(\frac{\pi}{2}+\frac{\pi}{2}\tau)}$ and $\nuQ$ are gauge equivalent.
\end{prop}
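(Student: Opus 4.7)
The plan is to show the stronger statement that the two weight functions $\nu^{(t)}$ and $\nuQ$ differ by a multiplicative \emph{constant}, which will immediately imply gauge equivalence by distributing this constant as a vertex-gauge factor. This is more than what is required, and in fact we expect it to be essentially forced by the particular choice $t = \frac{\pi}{2} + \frac{\pi\tau}{2}$.

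The concrete computation proceeds as follows. Substituting $t = \frac{\pi}{2}+\frac{\pi\tau}{2}$, all the arguments of $\theta=\theta_1$ appearing in the denominators of $\nu^{(t)}$ are of the form $\frac{\pi\tau}{2} + (\text{real})$, so the standard shift identities
\[
\theta_1(z+\tfrac{\pi}{2}) = \theta_2(z), \qquad \theta_1(z+\tfrac{\pi\tau}{2}) = i q^{-1/4} e^{-iz}\,\theta_4(z),
\]
together with $\theta_1(z+\pi) = -\theta_1(z)$, rewrite each factor~$\theta(t+\cdots)$ in terms of $\theta_2(0),\theta_3(0),\theta_4(0)$ or $\theta_4(\beta-\alpha)$, up to a unimodular phase and a factor $q^{-1/4}$. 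Taking absolute values (recall $q\in(0,1)$) gives closed-form expressions for $\nu^{(t)}_{\ws\bs_j}$. The numerators are handled similarly: $\theta(\beta-\alpha)=\theta_1(\beta-\alpha)$ stays as is, $\theta(\alpha+\tfrac{\pi}{2}-\beta) = \theta_2(\beta-\alpha)$, and $\theta(\tfrac{\pi}{2})=\theta_2(0)$.

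The key step is then to introduce the classical representations
\[
\sn\!\left(\tfrac{2K}{\pi}z\right) = \frac{\theta_3(0)\,\theta_1(z)}{\theta_2(0)\,\theta_4(z)}, \qquad \cn\!\left(\tfrac{2K}{\pi}z\right) = \frac{\theta_4(0)\,\theta_2(z)}{\theta_2(0)\,\theta_4(z)}.
\]
Substituting these in, one computes explicitly
\[
\nu^{(t)}_{\ws\bs_1} = c\,\nuQ_{\ws\bs_1},\quad \nu^{(t)}_{\ws\bs_2} = c\,\nuQ_{\ws\bs_2},\quad \nu^{(t)}_{\ws\bs_3} = c\,\nuQ_{\ws\bs_3},
\]
with the \emph{same} positive constant $c = q^{1/2}\,\theta_2(0)/[\theta_3(0)\theta_4(0)]$ in all three cases. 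The miracle that the $\alpha,\beta$-dependent factors $\theta_4(\beta-\alpha)$ in the three denominators match exactly the denominators introduced by $\sn$ and $\cn$ is precisely what makes the choice $t=\frac{\pi}{2}+\frac{\pi\tau}{2}$ special, and is the crux of the argument.

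Once this edgewise equality is established, the gauge equivalence is immediate: set $\sigma_\ws = c$ for every white vertex of $\GQ$ and $\sigma_\xs = 1$ for every black vertex (these are the black vertices of $\GQ$ and the vertices of $\GD$-type, since $\GQ$ is bipartite with white vertices only at the centers of the Ising edges). I expect the main obstacle to be purely bookkeeping: carefully tracking the $q^{-1/4}$ factors and unimodular phases through the shift identities before taking moduli, and making sure the sign conventions used in Figure~\ref{fig:GQ} match those used in defining the half-angle map~$\mapalpha^{\scriptscriptstyle{\mathrm{Q}}}$. No conceptual difficulty is expected beyond this careful chasing.
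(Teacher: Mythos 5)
Your proposal is correct and takes essentially the same route as the paper: both establish the edgewise identity $\nu^{(\frac{\pi}{2}+\frac{\pi}{2}\tau)}=q^{1/2}\frac{\theta_2(0)}{\theta_3(0)\theta_4(0)}\,\nuQ$ using the shift identities $|\theta_1(u+\frac{\pi}{2})|=|\theta_2(u)|$, $|\theta_1(u+\frac{\pi}{2}\tau)|=q^{-1/4}|\theta_4(u)|$, $|\theta_1(u+\frac{\pi}{2}+\frac{\pi}{2}\tau)|=q^{-1/4}|\theta_3(u)|$ together with the theta-quotient formulas for $\sn$ and $\cn$, and then read off gauge equivalence from the constant. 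Your constant $c$ matches the paper's exactly, so no further comment is needed.
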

\begin{proof}
Let us explicitly compute $\nu^{(\frac{\pi}{2}+\frac{\pi}{2}\tau)}$. Using the following identities, see \cite[1.3.6-13.9]{Lawden} and \cite[2.1.1-2.1.3]{Lawden},
\begin{align*}
&\textstyle |\theta(u+\frac{\pi}{2})|=|\theta_2(u)|,\quad |\theta(u+\frac{\pi}{2}\tau)|=q^{-\frac{1}{4}}|\theta_4(u)|,\quad |\theta(u+\frac{\pi}{2}+\frac{\pi}{2}\tau)|=q^{-\frac{1}{4}}|\theta_3(u)|,\\
&\textstyle\sn(\frac{2K}{\pi}u)=\frac{\theta_3(0)}{\theta_2(0)}\frac{\theta(u)}{\theta_4(u)},\quad 
\cn(\frac{2K}{\pi}u)=\frac{\theta_4(0)}{\theta_2(0)}\frac{\theta_2(u)}{\theta_4(u)},
\end{align*}
we obtain that $\nu^{(\frac{\pi}{2}+\frac{\pi}{2}\tau)}=q^{\frac{1}{2}}\frac{\theta_2(0)}{\theta_4(0)\theta_3(0)}\nuQ$. This immediately shows that the two weight functions are gauge equivalent.\qedhere
\end{proof}

\begin{rem}
By explicitly computing the function $g_{\bs,\ws}^{(\frac{\pi}{2}+\frac{\pi}{2}\tau)}(u)$ in the case of $\GQ$, we recover the local expression for the inverse Kasteleyn operator~\cite[Theorem 37]{BdTR2}. 
\end{rem}

\begin{figure}[ht]
  \centering
   \begin{overpic}[width=4cm]{edge.pdf}
  \put(103,40){\scriptsize $\vs$}
    \put(80,85){\scriptsize $\alpha$}
  \put(18,85){\scriptsize $\beta$}
  \put(70,-5){\scriptsize $\beta+\frac{\pi}{2}$}
    \put(10,-5){\scriptsize $\alpha+\frac{\pi}{2}$}
  \end{overpic}
  \hspace{2cm}
  \begin{overpic}[width=5cm]{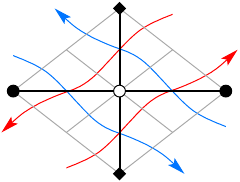}
  \put(100,36){\scriptsize $\vs$}
  \put(52,74){\scriptsize $\fs$}
  \put(54,31){\scriptsize $\ws$}
  \put(102,54){\scriptsize $\alpha$}
  \put(-7,13){\scriptsize $\alpha+\frac{\pi}{2}$}
  \put(18,73){\scriptsize $\beta$}
  \put(76,-3){\scriptsize $\beta+\frac{\pi}{2}$}
  \end{overpic}
  \caption{Left: an edge of~$\Gs$ with its two adjacent train-tracks.
  Right: the corresponding pieces of~$\GD$ (black lines and white/black vertices),
  of the quad-graph $(\GD)^\diamond$ (grey lines), and of the four
  adjacent  train-tracks (red and blue lines).}
  \label{fig:GD}
\end{figure}

\paragraph{Connection to the $Z$-Dirac operator~\cite{dT_Mass_Dirac} on the double graph $\GD$.}
Consider a half-angle map $\mapalpha^{\scriptscriptstyle{\mathrm{D}}}\in X_{\GD}$
as above.
We compute the discrete Abel map $\mapd$ on the quad-graph $(\GD)^\diamond$,
choose once again a vertex $\vs_0$ of $\Gs$ as reference point.
Using the notation of Figure~\ref{fig:GD}, we have the following equalities mod $\pi$,
where~$\WD$ denotes the set of white vertices of~$\GD$:
\[
\forall\,\vs\in\Vs,\ \mapd(\vs)=0, \quad \forall\,\fs\in\Vs^*,\ \mapd(\fs)=\frac{\pi}{2},\quad
\forall\,\ws\in\WD,\ \mapd(\ws)=-(\alpha+\beta)\,.
\]
The weight function $\nu^{(t)}$ of the corresponding elliptic operator~$\K[t]$ is:
\[
\nu^{(t)}_{\ws\xs}=
\begin{cases}
  \hfill
\Bigl|\frac{\theta(\beta-\alpha)}{\theta(t-\beta)\theta(t-\alpha)}\Bigr| 
  \hfill
&
\text{if $\xs=\vs\in\Vs$}\\
  \hfill
\left|\frac{\theta(\alpha+\frac{\pi}{2}-\beta)}{\theta(t-\alpha)\theta(t-\beta+\frac{\pi}{2})}
\right| 
  \hfill
& \text{if $\xs=\fs\in\Vs^*$}.
\end{cases}
\]

We now turn to the $Z$-Dirac operator of~\cite{dT_Mass_Dirac}. We here use the function $\sc$ and $\dn$ which are the elliptic analogues of the trigonometric functions $\tan$ and $1$, see~\cite[Chap 2]{Lawden}.
Using the notation of Figure~\ref{fig:GD} for the half-angles, setting
$u=\frac{2K}{\pi}2s$ in~\cite[Example 14]{dT_Mass_Dirac}, $s\in\RR$, the weight
function $\nuD{s}$ corresponding to the $Z$-Dirac operator on $\GD$ is:
\[
  \nuD{s}_{\ws\xs}=
\begin{cases}\textstyle 
[\sc(\frac{2K}{\pi}(\beta-\alpha))\dn(\frac{2K}{\pi}(s-\alpha))\dn(\frac{2K}{\pi}(s-\beta))]^{\frac{1}{2}} &
\text{if $\xs=\vs\in\Vs$}\\
\textstyle 
k'[\sc(\frac{2K}{\pi}(\alpha+\frac{\pi}{2}-\beta))[\nd(\frac{2K}{\pi}(s-\beta))
\nd(\frac{2K}{\pi}(s-(\alpha+\frac{\pi}{2}))]^{\frac{1}{2}}&\text{if $\xs=\fs\in\Vs^*$}.
\end{cases}
\]

Then we have:

\begin{prop}
Suppose that $t\in \RR+\frac{\pi}{2}\tau$. Then, the
elliptic dimer models on the double graph $\GD$ with weight functions
$\nu^{(t)}$ and $\nuD{t-\frac{\pi}{2}\tau}$ are gauge equivalent.
\end{prop}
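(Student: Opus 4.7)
The plan is to verify gauge equivalence by showing that the ratio $\sigma_{\ws,\xs}:=\nuD{s}_{\ws\xs}/\nu^{(t)}_{\ws\xs}$ depends only on $\ws$. Then, setting $\sigma_\bs\equiv 1$ for all black vertices and $\sigma_\ws$ equal to this common value produces a gauge transformation realizing the desired equivalence. The arithmetic input will be the standard identities between $\theta=\theta_1$ and $\theta_2,\theta_3,\theta_4$, together with the definitions of the Jacobi elliptic functions $\sn,\cn,\dn,\sc,\nd$ in terms of the $\theta_j$.

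First, I would rewrite $\nu^{(t)}$ using $t=s+\tfrac{\pi}{2}\tau$ with $s\in\RR$. Applying~\eqref{eq:theta14} gives $|\theta(t-\alpha)|=q^{-1/4}|\theta_4(s-\alpha)|$, and combining this with $\theta_4(u+\tfrac{\pi}{2})=\theta_3(u)$ yields $|\theta(t-\beta+\tfrac{\pi}{2})|=q^{-1/4}|\theta_3(s-\beta)|$. The numerators become $|\theta(\beta-\alpha)|=|\theta_1(\beta-\alpha)|$ and $|\theta(\alpha+\tfrac{\pi}{2}-\beta)|=|\theta_2(\beta-\alpha)|$. Thus
\[
\nu^{(t)}_{\ws\vs}=q^{1/2}\frac{|\theta_1(\beta-\alpha)|}{|\theta_4(s-\alpha)\theta_4(s-\beta)|},\quad\nu^{(t)}_{\ws\fs}=q^{1/2}\frac{|\theta_2(\beta-\alpha)|}{|\theta_4(s-\alpha)\theta_3(s-\beta)|}.
\]
On the other side, I would substitute $\sn(\tfrac{2K}{\pi}u)=\tfrac{\theta_3(0)}{\theta_2(0)}\tfrac{\theta_1(u)}{\theta_4(u)}$, $\cn(\tfrac{2K}{\pi}u)=\tfrac{\theta_4(0)}{\theta_2(0)}\tfrac{\theta_2(u)}{\theta_4(u)}$, $\dn(\tfrac{2K}{\pi}u)=\tfrac{\theta_4(0)}{\theta_3(0)}\tfrac{\theta_3(u)}{\theta_4(u)}$ into $\nuD{s}$, and handle the $\tfrac{\pi}{2}$-shifts appearing in the dual case via $\sc(v+K)=-(k'\sc(v))^{-1}$ and $\nd(v+K)=\dn(v)/k'$. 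A direct computation then shows that
\[
\nuD{s}_{\ws\vs}=\sqrt{\tfrac{\theta_4(0)}{\theta_3(0)}}\cdot\sqrt{\tfrac{|\theta_1(\beta-\alpha)\theta_3(s-\alpha)\theta_3(s-\beta)|}{|\theta_2(\beta-\alpha)\theta_4(s-\alpha)\theta_4(s-\beta)|}},
\]
and that the explicit factor $k'$ in $\nuD{s}_{\ws\fs}$ is precisely cancelled by the $k'$ coming from $\sc$ and $\nd$ at shifted arguments, leaving
\[
\nuD{s}_{\ws\fs}=\sqrt{\tfrac{\theta_4(0)}{\theta_3(0)}}\cdot\sqrt{\tfrac{|\theta_2(\beta-\alpha)\theta_3(s-\alpha)\theta_4(s-\beta)|}{|\theta_1(\beta-\alpha)\theta_4(s-\alpha)\theta_3(s-\beta)|}}.
\]

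Forming the two ratios, a small miracle occurs: the asymmetric factors $|\theta_1(\beta-\alpha)|$ vs.\@ $|\theta_2(\beta-\alpha)|$ in the denominator of $\nu^{(t)}$ combine with the opposite asymmetry in $\nuD{s}$ to produce the single symmetric expression
\[
\sigma_{\ws,\xs}=q^{-1/2}\sqrt{\tfrac{\theta_4(0)}{\theta_3(0)}}\cdot\frac{|\theta_3(s-\alpha)\theta_4(s-\alpha)|^{1/2}|\theta_3(s-\beta)\theta_4(s-\beta)|^{1/2}}{|\theta_1(\beta-\alpha)\theta_2(\beta-\alpha)|^{1/2}}
\]
for both $\xs=\vs\in\Vs$ and $\xs=\fs\in\Vs^*$. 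Since this depends only on the unordered pair $\{\alpha,\beta\}$ of half-angles of train-tracks of $\Gs$ on the edge of $\Gs$ associated to $\ws$ (and not on the particular neighbor $\xs$, nor on the choice between the two primal or two dual neighbors of $\ws$ in $\GD$), the assignment $\sigma_\ws:=\sigma_{\ws,\xs}$, $\sigma_\bs:=1$ defines a valid positive gauge.

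The main obstacle is the bookkeeping of the many $\tfrac{\pi}{2}$ shifts, which force the use of quarter-period transformations for the Jacobi functions and produce several stray factors of $k'$ and $q^{\pm 1/4}$; keeping track of signs and moduli (recall that the formulas for $\nu^{(t)}$ involve absolute values, so positivity of each $\sc(\cdot),\nd(\cdot)$ at the relevant arguments must be used) requires care. Once the equality $\sigma_{\ws,\vs}=\sigma_{\ws,\fs}$ is established, the rest is immediate: it is the defining property of gauge equivalence with the choice $\sigma$ above.
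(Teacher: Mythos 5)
Your proof is correct, and it takes a genuinely different, constructive route from the paper's. The paper verifies gauge equivalence through the invariant criterion recalled in Section~\ref{sec:dimer_model}: it checks that the alternating products of $\nu^{(t)}$ and of $\nuD{t-\frac{\pi}{2}\tau}$ around each quadrangular face of $\GD$ agree, using the same quarter-period identities for $\theta$ and for the Jacobi functions that you use. You instead exhibit the gauge function explicitly, with $\sigma\equiv 1$ on black vertices and
\[
\sigma_\ws = q^{-1/2}\Bigl(\frac{\theta_4(0)}{\theta_3(0)}\Bigr)^{1/2}
\frac{|\theta_3(s-\alpha)\theta_4(s-\alpha)|^{1/2}\,|\theta_3(s-\beta)\theta_4(s-\beta)|^{1/2}}{|\theta_1(\beta-\alpha)\theta_2(\beta-\alpha)|^{1/2}}
\]
on white vertices; I have checked both of your ratio computations (including the cancellation of the prefactor $k'$ against the quarter-period shifts of the Jacobi functions) and they are right. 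Your version yields strictly more information --- the gauge itself, which is useful if one later wants to transport inverses or correlation functions between the two models --- at the cost of having to treat every edge incident to $\ws$ rather than a single face weight. On that last point, do make the final step explicit: $\ws$ has \emph{two} primal and \emph{two} dual neighbours in $\GD$, and the weights on the two edges not displayed in the statement are obtained from the displayed ones by shifting the crossing train-track half-angles by $\frac{\pi}{2}$. Since $\theta_3(u+\frac{\pi}{2})=\theta_4(u)$ and $\theta_4(u+\frac{\pi}{2})=\theta_3(u)$, while $\beta-\alpha$ is unchanged, the displayed $\sigma_\ws$ is invariant under this shift, so the ratio is indeed the same on all four edges at $\ws$; you assert this in passing, and this one-line verification is what completes the argument.
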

\begin{proof}
The proof amounts to showing that the alternate product of the weight functions
$\nu^{(t)}$ and  $\nuD{t-\frac{\pi}{2}\tau}$ are equal for all faces of $\GD$. Faces of $\GD$ are quadrangles consisting of two black vertices corresponding to a primal and a dual vertex of $\Gs$, and two white vertices. The proof is a rather straightforward explicit computation, so that we do not write out the details; the key identities used are
\begin{align*}
\textstyle
\sc\bigl(\frac{2K}{\pi}u\bigr)&=(k')^{-\frac{1}{2}}\frac{\theta(u)}{\theta(u+\frac{\pi}{2})},\quad\cite[\text{2.1.1--2.1.7}]{Lawden}\\
\sc(u+iK')&=i\nd(u),\quad \cite[\text{2.2.17--2.2.19}]{Lawden}.\qedhere
\end{align*}
\end{proof}

\begin{rem}
Note that it is not immediate to see that the local expression for the inverse massive Dirac operator obtained in~\cite[Corollary 27]{dT_Mass_Dirac} and the local expression stemming from Theorem~\ref{thm:K_inverse_family} of this paper are indeed the same. This can be done using identities relating Jacobi's elliptic and theta functions. 
\end{rem}

\bibliographystyle{alpha}
\bibliography{ellipt_iso}

\end{document}